\newcommand{\W}{W^{\smash{1,p}}_0(\Omega)}
\theoremstyle{definition}
\newtheorem{definition}{Definition}
\newtheorem{remark}[definition]{Remark}
\theoremstyle{plain}
\newtheorem{theorem}[definition]{Theorem}
\newtheorem{corollary}[definition]{Corollary}
\newtheorem{proposition}[definition]{Proposition}
\newtheorem{lemma}[definition]{Lemma}
\newcommand{\p}{\boldsymbol{\mathcal{p}}}
\newcommand{\Ps}{\boldsymbol{\mathcal{P}}}
\begin{document}
	
	\title{The  Deep Ritz Method  for Parametric $p$-Dirichlet Problems}
	
	\date{\today}
	\author[,1]{Alex Kaltenbach\hspace*{0.25mm}\thanks{Email: \texttt{alex.kaltenbach@mathematik.uni-freiburg.de}}\, }
	\author[,2]{Marius Zeinhofer\hspace*{0.25mm}\thanks{Email: \texttt{mariusz@simula.no}\vspace{-1cm}}\, }
	\affil[1]{Department of Applied Mathematics,
		University of Freiburg,
		Ernst--Zermelo--Stra\ss e 1, 79104 Freiburg i. Br., Germany}
	\affil[2]{Department of Numerical Analysis and Scientific Computing,
	Simula Research Laboratory,
	Kristian Augusts Gate 23, 0164 Oslo, Norway}

	\maketitle
	
	\begin{abstract}
		\qquad We establish error estimates for the approximation of parametric $p$-Dirichlet problems deploying the Deep Ritz Method. \!Parametric dependencies include,~e.g.,~\mbox{varying}~\mbox{geometries} and exponents $p\in (1,\infty)$. 
		Combining the derived error estimates with quantitative approximation theorems yields error decay rates and establishes that the Deep Ritz~Method~retains the favorable approximation capabilities of neural networks in the approximation of high dimensional functions which makes the method attractive for parametric problems. Finally, we present numerical examples to illustrate potential applications.
	\end{abstract}
	
	\paragraph{Keywords:} Deep Ritz Method, Parametric Problems, Neural Networks, Non-linear Variational Problems.
	
	\paragraph{AMS MSC (2020): 68T07, 35A35, 65N15}

	\section{Introduction}\label{sec:introduction}
	    \qquad In the present work, we study the Deep Ritz Method  for parametric $p$-Dirichlet problems both theoretically and numerically. More precisely, for a given open set $\Omega\subseteq \smash{\mathbb{R}^d}$, $d\in \mathbb{N}$, a given exponent $p\in (1,\infty)$, and a right-hand side $f\in \smash{L^{p'}(\Omega)}$, we are seeking for a function $u^*\in W^{\smash{1,p}}(\Omega)$ that solves
	    \begin{align}\label{eq:p_laplace}
			-\operatorname{div}(|\nabla u^*|^{p-2}\nabla u^*) & = f \quad \text{in } \Omega\,,
	    \end{align}
	    subjected to various boundary conditions and parametric dependencies. Encoding the boundary conditions and parametric dependencies in a subspace $U$ of $W^{\smash{1,p}}(\Omega)$, 
	    the variational problem \eqref{eq:p_laplace} is equivalently expressible as a minimization problem which is amendable to the Deep Ritz Method. More precisely,  
	    $u^*\in W^{\smash{1,p}}(\Omega)$ solves the variational problem \eqref{eq:p_laplace} if and only if it is minimal for  the $p$-Dirichlet energy $E:U\to \mathbb{R}$,~defined~by
	    \begin{align*}
	        E(v)\coloneqq \frac{1}{p}\int_{\Omega}{\vert \nabla v\vert^p\,\mathrm{d}x}-\int_{\Omega}{f\,v\,\mathrm{d}x}
	    \end{align*}
	    for every $v\in U$.
	    Motivated by recent empirical~success in the application of neural network based methods to parametric problems \cite{hennigh2021nvidia} as well as their relevance to engineering applications, we include parametric dependencies in our analysis.~For~example, using one neural network as an ansatz function, we solve simultaneously for a parametrized family of domains. Another example treats the exponent $p\in (1,\infty)$ in the formulation of the $p$-Dirichlet problem as a parameter. We theoretically analyze the error~made~by~this approach also in the parametric setting.
	    
	    \qquad Our theoretical results decompose the error of the Deep Ritz Method  into optimization accuracy, expressivity of the ansatz class and -- in case the of the boundary penalty method for Dirichlet boundary conditions -- a term corresponding to the penalization parameter. Combining the error estimates with quantitative approximation results from the literature, we can -- at least theoretically -- derive error decay rates. Further, we deduce that the potent expressivity of neural networks, especially in high dimensional settings, is retained by the Deep Ritz Method  for (parametric) $p$-Dirichlet problems. To the best of our knowledge, our results present the first error estimates of the Deep Ritz Method  for non-linear and parametric equations. Finally, we present numerical results illustrating the application of the Deep Ritz Method  to parametric $p$-Dirichlet problems.
	  
	    \paragraph{Neural Network Based Methods to Solve PDEs}
        Investigating artificial neural networks as ansatz clas-ses for the solution of PDEs or PDE solution operators has recently gained interest due to its potential~for parametric families of PDEs, cf. \cite{li2020fourier}, inverse or data enhanced problems,~cf.~\mbox{\cite{zhang2018deep}} or \cite{zhu2019physics}, and the solution of PDEs in high spatial dimensions, cf. \cite{weinan2018deep}, \cite{han2018solving, han2017deep} or \cite{jentzen2018proof}. Among the most popular approaches are physics informed~neural networks, cf. \cite{raissi2019physics}, neural operator methods \cite{li2020fourier} and the Deep Ritz Method, cf.  \cite{weinan2018deep}. Both, the fact that neural network based methods usually circumvent the necessity of mesh formation and the good approximation capabilities of neural networks for high dimensional functions \cite{weinan2019barron, wojtowytsch2020some, jentzen2018proof} motivate the investigation of neural network based methods as an alternative to more traditional numerical schemes, such as finite elements or finite differences for parametric and high dimensional problems.

	    \paragraph{Parametric Problems}
	    In the context of the Deep Ritz Method, we solve PDEs by minimizing their corresponding energy formulation, if available. In this setting, a typical parametric problem is of the form
	    \begin{equation}\label{eq:typical_parametric_problem}
	        u_{\p}^* = \underset{v \in U(\p)}{\operatorname{argmin}} \ E_{\p}(v)\,,
	    \end{equation}
	    where $\p\!\in \!\Ps$ is a fixed parameter from the parameter space $\Ps\!\subseteq\! \mathbb{R}^N$, $N\!\in\! \mathbb{N}$, and $U(\p)$ 
	    is~a~space~of~functions defined on an open set $\smash{\Omega(\p)\!\subseteq\! \mathbb{R}^d}$, $d\!\in \!\mathbb{N}$, usually realized by a Sobolev space. Typical examples~for~the~parametric dependence of $\smash{E_{\p}: U(\p) \to \mathbb{R}}$, $\p\in \Ps$, include parametric forcing terms, PDE coefficients and geometries. 
	    More explicitly, we consider examples in which $\smash{E_{\p}:U(\p)\to \mathbb{R}}$, $\p\in \Ps$, for every $\smash{\p=(p_1,p_2,p_3)^\top\in \Ps\subseteq\mathbb{R}^N}$ and $v\in U(\p)$, takes the form
	    \begin{equation}\label{eq:concrete_form_of_E}
	        E_{\p}(v) = \frac{1}{p_1}\int_{\Omega(p_2)}|\nabla v|^{p_1}\,\mathrm dx -\int_{\Omega(p_2)}{f(p_3,\cdot)\,v\,\mathrm{d}x}\,.
	    \end{equation}
	    The approach to solve parametric problems with the Deep Ritz Method  is to use neural networks that take both a parameter $\p\!=\!(p_1,p_2,p_3)^\top\!\in\! \Ps$ and a spatial variable $x\!\in\!\Omega(p_2)$ as an input, i.e., mapping of the particular form $\smash{((\p,x)^\top\!\mapsto\! \boldsymbol{u}_\theta(\p,x)):\bigcup_{\p\in \Ps}{\{\p=(p_1,p_2,p_3)^\top\}\times \Omega(p_2)}\!\to\! \mathbb{R}}$. Here, by $\theta\!\in\! \Theta$, we~denote~the~neural~network's parameters and by $\Theta$ the neural network's parameter space. Then, we~consider~the~\mbox{minimization}~problem
	    \begin{equation}\label{eq:parametric_loss}
	        \min_{\theta\in\Theta}\mathcal{L}(\theta) = \min_{\theta\in\Theta}\boldsymbol{\mathcal{E}}(\boldsymbol{u}_\theta) =\min_{\theta\in\Theta}\int_{\Ps} E_{\p}(\boldsymbol{u}_\theta(\p,\cdot))\,\mathrm d\mu(\p)\,,
	    \end{equation}
	    for some suitable measure $\mu$ on $\Ps$. Solving this minimization problem yields a solution of \eqref{eq:typical_parametric_problem} simultaneously for the whole parameter space $\Ps$. Incorporating PDE parameters in the above way directly into the ansatz class constitutes a great benefit for engineering applications that often require the exploration of parameter spaces. \!For an application of industrial scale (in the context of physics informed neural networks), we refer to \cite{hennigh2021nvidia}, where a parametric geometry was used to determine the optimal~design~of~a~heat~sink.
	    
	\subsection{Main Contribution and Related Work}
	    \qquad Let the energy of a parametric problem be given, i.e., $\boldsymbol{\mathcal{E}}:\boldsymbol{\mathcal{U}}\to \mathbb{R}$, where $\boldsymbol{\mathcal{U}}$ is a function~space~prescribed through the structure of the dependencies to a parameter space $\Ps\subseteq \mathbb{R}^N$, $N\in \mathbb{N}$, for every $\boldsymbol{v}\in \boldsymbol{\mathcal{U}}$ defined by
	    \begin{equation}\label{eq:abstract_parametric_energy}
	       \boldsymbol{\mathcal{E}}(\boldsymbol{v}) = \int_{\Ps} E_{\p}(\boldsymbol{v})\,\mathrm d\mu(\p)\,,
	    \end{equation}
	    where $\smash{E_{\p}:U(\p)\to \mathbb{R}}$, $\p\in \Ps$, is of the form \eqref{eq:concrete_form_of_E}. Our main results are several C\'ea type estimates for $\boldsymbol{\mathcal{E}}\!:\!\boldsymbol{\mathcal{U}}\!\to\! \mathbb{R}$. Denote by $\boldsymbol{u}^*\in \boldsymbol{\mathcal{U}}$, a minimizer of \eqref{eq:abstract_parametric_energy} and let $\boldsymbol{v}_\theta\in \boldsymbol{\mathcal{U}}$, $\theta\in \Theta$, denote the realization of a neural network with parameter space $\Theta$, then, it holds
	    \begin{equation}\label{eq:abstract_cea_lemma}
	        \boldsymbol{\rho}_1^2(\boldsymbol{v}_\theta, \boldsymbol{u}^*) \leq 
	        \big(\boldsymbol{\mathcal{E}}(\boldsymbol{v}_\theta) - \inf_{\psi\in \Theta}\boldsymbol{\mathcal{E}}(\boldsymbol{v}_\psi)\big) + \inf_{\psi\in\Theta}\boldsymbol{\rho}_2^2(\boldsymbol{v}_\psi,\boldsymbol{u}^*)\eqqcolon\delta(\boldsymbol{v}_{\theta})+\eta(\Theta)\,.
	    \end{equation}
	    Here, $\smash{\boldsymbol{\rho}_1^2,\boldsymbol{\rho}_2^2:\boldsymbol{\mathcal{U}}\times \boldsymbol{\mathcal{U}}\to \mathbb{R}}$ are problem-dependent error measures, in the context of the $p$-Dirichlet problem, usually given (up to multiplicative constants) as the so-called natural distance\footnote{For two functions $g,h:D\to \mathbb{R}$, where $D$ is an arbitrary set, 
	    we write $g\sim h$ if and only if there exit constants $c,C>0$ such that $cg\leq h\leq Cg$ in $D$.\vspace{-0.25cm}}
	    \begin{equation*}
	         \boldsymbol{\rho}_1^2(\boldsymbol{u}^*,\boldsymbol{v})\sim \boldsymbol{\rho}_2^2(\boldsymbol{u}^*,\boldsymbol{v}) \sim  \int_{\Ps}{\big\lVert F_{p_1}(\nabla_x \boldsymbol{u}^*) - F_{p_1}(\nabla_x \boldsymbol{v}) \big\rVert_{L^2(\Omega(p_2))^d}^2\,\mathrm{d}\mu(\p)}\,,
	    \end{equation*}
	    where $F_{p_1}\colon\mathbb{R}^d\to \mathbb{R}^d$, $\p=(p_1,p_2,p_3)^\top\in \Ps$, is defined by $F_{p_1}(a) \coloneqq |a|^{\frac{p_1-2}{2}}a$ for all $a\in\mathbb{R}^d$, compare to Section~\ref{sec:preliminaries} for more details on the natural distance, and by $\nabla_x$ the gradient with respect to the spatial variable $x\in \Omega(p_2)$ only is meant.\newpage
	    
	    The reasons we are interested in the estimate \eqref{eq:abstract_cea_lemma} are the following:
	    \begin{itemize}[noitemsep,topsep=-1mm,labelwidth=\widthof{\textbf{(3.)}},leftmargin=!,font=\upshape\bfseries]
	        \item[1.] It decomposes the error $\boldsymbol{\rho}_1^2(\boldsymbol{v}_\theta,\boldsymbol{u}^*)$ into a contribution $\delta(\boldsymbol{v}_\theta)$  capturing the effect of the (usually incomplete) optimization accuracy and a term $\eta(\Theta)$ that quantifies the expressivity of the ansatz class. This shows the convergence of the Deep Ritz Method  given successful optimization and growing ansatz classes.
	        \item[2.] Using results from the approximation theory literature, we employ the estimate \eqref{eq:abstract_cea_lemma} to deduce -- at least theoretically -- error decay rates for the application of the Deep Ritz Method  to the $p$-Dirichlet problem. Note that for the natural distance no results are known in the literature. Hence, we discuss the relation to Sobolev topologies, where a rich approximation theory is known.
	        \item[3.] Combining the estimate \eqref{eq:abstract_cea_lemma} with quantitative universal approximation theorems such as \cite{GR20}, we show that solving the $p$-Dirichlet problem with the Deep Ritz Method  retains the favorable approximation capabilities of neural networks for smooth functions, compare to Theorem~\ref{thm:quantitative_universal_approximation}. 
	        This is especially useful if the PDE of interest is posed in high spatial dimensions, since here classical solutions schemes are facing the curse of dimensionality. As we do not assume any lower-dimensionality structure on the PDE, it is not possible to obtain a dimension independent result as in \cite{jentzen2018proof} or \cite{barron1993universal}, yet a sufficient amount of smoothness (in the sense of Sobolev spaces) of the solution leads to improved error decay rates. We stress that in all results that break the curse of dimensionality some sort of assumptions are present and we propose the smoothness assumption as yet another.
	    \end{itemize}
	    
	    \qquad Further, we also analyze the effect of the boundary penalty method and derive a result similar to the estimate \eqref{eq:abstract_cea_lemma}, with an additional term accounting for the boundary penalty. The conclusions as above, thus, apply to the boundary penalty method. Finally, we present numerical results indicating that the~Deep~Ritz~Method  is well-suited to solve parametric problems of the form analyzed theoretically.
	    
	    \qquad To the best of our knowledge, there are no results in the literature that estimate the error~of~the~Deep~Ritz Method  for the $p$-Dirichlet problem so far. Existing results, such as \cite{muller2021error, xu2020finite, jiao2021error, duan2021analysis}, treat only linear elliptic equations and none of these works consider parametric settings. Error estimates for the $p$-Dirichlet exist in the finite element literature, e.g., \cite{DR07}. However, the proofs don't generalize to the case of the Deep Ritz Method, as the set of neural networks of a given architecture does not possess a vector space structure and, hence, arguments based on optimality criteria -- such as Galerkin orthogonality -- are not available and need to be circumvented.
	
	\section{Preliminaries}\label{sec:preliminaries}
	
	\subsection{Functional analytical notation}
	
	\qquad For a (real) Banach space $X$ equipped with norm $\|\cdot\|_X:X\to \mathbb{R}_{\ge 0}$, we denote by $X^*$ its topological~dual space equipped with the dual norm $\|\cdot\|_{X^*}:X^*\to \mathbb{R}_{\ge 0}$, defined by $\|x^*\|_{X^*}\coloneqq \sup_{\| x\|_X\leq 1}{\langle x^*,x\rangle_X}$ for every $x^*\in X^*$. Here, $\langle \cdot,\cdot\rangle_X:X^*\times X\to \mathbb{R}_{\ge 0}$ denotes the duality pairing, defined by $\langle x^*,x\rangle_X\coloneqq x^*(x)$ for every ${x^*\in X^*}$,~${x\in X}$.
	
	\subsection{Standard function spaces}
	
	\qquad Throughout the entire section, if not otherwise specified, we denote by $\Omega\!\subseteq \!\mathbb{R}^d$, $d\!\in\! \mathbb{N}$,~a~bounded~domain, i.e., a bounded, connected and open set.
	
	\paragraph{Lebesgue spaces.} For $p\!\in\! [1,\infty]$, we denote by $L^p(\Omega)$, the space of (Lebesgue--)measurable~functions~${u\!:\!\Omega\!\to\! \mathbb{R}}$  that are integrable~in~\mbox{$p$--th} power, i.e., $\smash{\int_{\Omega}{\vert u\vert^p\,\textrm dx}<\infty}$ if $p\in [1,\infty)$ and $\textrm{ess\,sup}_{x\in \Omega}{\vert u(x)\vert}<\infty$ if $p =\infty$. Endowed with the norm $\smash{\|u\|_{L^p(\Omega)}\coloneqq (\int_{\Omega}{\vert u\vert^p\,\textrm dx})^{\smash{\frac{1}{p}}}}$ if $p\in [1,\infty)$ and $\|u\|_{L^p(\Omega)}\coloneqq\textrm{ess\,sup}_{x\in \Omega}{\vert u(x)\vert}<\infty$ if $p =\infty$, 
	the space $L^p(\Omega)$ forms a Banach space, which is separable if $p\in [1,\infty)$ and reflexive if $p\in (1,\infty)$, cf. \cite[Chapter 2]{AF03}.
	
	\paragraph{Sobolev spaces.}  For $k\!\in\! \mathbb{N}$ and $p\!\in\! [1,\infty]$, we denote by $W^{\smash{k,p}}(\Omega)$, the subspace of $L^p(\Omega)$ of functions with partial distributional derivatives up to $k$-th order in $L^p(\Omega)$. Endowed with the norm $\smash{\|u\|_{W^{\smash{k,p}}(\Omega)}\coloneqq \sum_{l=0}^k{\|D^l u\|_{L^p(\Omega)}}}$, 
	the space $W^{\smash{k,p}}(\Omega)$ forms a Banach space,  which is separable if $p\in [1,\infty)$ and reflexive if $p\in (1,\infty)$,  	cf. \cite[Chapter~3]{AF03}.	For $k\in \mathbb{N}$ and $p\!\in\! [1,\infty]$, we denote by $\smash{W^{\smash{k,p}}_0(\Omega)}$, the closure of all compactly supported smooth functions $\smash{C_c^\infty(\Omega)}$~in~$W^{\smash{k,p}}(\Omega)$. If $\Omega\subseteq \smash{\mathbb{R}^d}$, $d\in \mathbb{N}$, is a bounded Lipschitz domain, then
	there exists a linear, continuous trace operator operator $\textup{tr}: W^{\smash{1,p}}(\Omega) \to L^p(\partial\Omega)$ such that $\textup{tr}(u)=u|_{\partial\Omega}$ for all $u\in W^{\smash{1,p}}(\Omega)\cap C^0(\smash{\overline{\Omega}})$ and $\textup{tr}(u)=0$ for all $u\in \W$.  In particular, we will omit~writing~`$\textup{tr}$'~in~this~context, e.g., we will employ the abbreviation
	$\|u\|_{L^p(\partial\Omega)}\!\coloneqq\! \|\textup{tr}(u)\|_{L^p(\partial\Omega)}$. 
	Further, in the context of a penalization~scheme, the following Friedrich's inequality takes a crucial role:

	\begin{proposition}[Friedrich's inequality]\label{friedrich}
		Let $\Omega\subseteq \mathbb{R}^d$, $d\in \mathbb{N}$, be a bounded Lipschitz domain and $p\in \left(1,\infty\right)$. Then, there exists a constant $c_{\textup{Fr}}(p)>0$ such that for every $u\in W^{\smash{1,p}}(\Omega)$, it holds
		\begin{align*}
			\smash{\|u\|_{W^{\smash{1,p}}(\Omega)}^p\leq c_{\textup{Fr}}(p)^p\,\big( \|\nabla u\|_{L^p(\Omega)^d}^p+\| u\|_{L^p(\partial \Omega)}^p\big)\,.}
		\end{align*}
		In particular, we have that $(p\mapsto c_{\textup{Fr}}(p))\in C^0(1,\infty)$.
	\end{proposition}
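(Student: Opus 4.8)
The plan is to reduce the asserted inequality to a Friedrich inequality in its standard form---controlling $\norm{u}_{L^p(\Omega)}$---and then to choose $c_{\textup{Fr}}(p)$ through an explicit formula whose continuity in $p$ is transparent. For $p\in(1,\infty)$ put
\[
 \mu(p)\coloneqq\inf\Big\{\,\norm{\nabla v}_{L^p(\Omega)^d}^p+\norm{v}_{L^p(\partial\Omega)}^p \ :\ v\in W^{1,p}(\Omega),\ \norm{v}_{L^p(\Omega)}=1\,\Big\}\,.
\]
Testing with a nonzero constant gives $\mu(p)\le\mathcal H^{d-1}(\partial\Omega)/|\Omega|<\infty$, and $0$-homogeneity of the underlying quotient turns $\mu(p)>0$ into $\norm{v}_{L^p(\Omega)}^p\le\mu(p)^{-1}\big(\norm{\nabla v}_{L^p(\Omega)^d}^p+\norm{v}_{L^p(\partial\Omega)}^p\big)$ for every $v\in W^{1,p}(\Omega)$. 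Combining this with $\norm{v}_{W^{1,p}(\Omega)}^p=\big(\norm{v}_{L^p(\Omega)}+\norm{\nabla v}_{L^p(\Omega)^d}\big)^p\le 2^{p-1}\big(\norm{v}_{L^p(\Omega)}^p+\norm{\nabla v}_{L^p(\Omega)^d}^p\big)$ proves the estimate with $c_{\textup{Fr}}(p)\coloneqq\big(2^{p-1}(1+\mu(p)^{-1})\big)^{1/p}$. Hence it remains to prove that (i) $\mu(p)>0$ for each fixed $p$, and (ii) $p\mapsto\mu(p)$ is continuous on $(1,\infty)$; together with (i), the latter gives $c_{\textup{Fr}}\in C^0(1,\infty)$.

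Claim (i) is the classical compactness argument. If $\mu(p)=0$, choose $v_n\in W^{1,p}(\Omega)$ with $\norm{v_n}_{L^p(\Omega)}=1$ and $\norm{\nabla v_n}_{L^p(\Omega)^d}\to0$, $\norm{v_n}_{L^p(\partial\Omega)}\to0$; then $(v_n)_{n\in\mathbb N}$ is bounded in $W^{1,p}(\Omega)$, so by reflexivity, the Rellich--Kondrachov theorem and the compactness of the trace operator on bounded Lipschitz domains, a subsequence satisfies $v_n\rightharpoonup v$ in $W^{1,p}(\Omega)$, $v_n\to v$ in $L^p(\Omega)$ and $v_n\to v$ in $L^p(\partial\Omega)$. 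Weak lower semicontinuity forces $\nabla v=0$, hence $v$ is constant since $\Omega$ is connected, and $\norm{v}_{L^p(\partial\Omega)}=0$ then forces $v\equiv0$, contradicting $\norm{v}_{L^p(\Omega)}=\lim_n\norm{v_n}_{L^p(\Omega)}=1$.

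For (ii), upper semicontinuity of $\mu$ at $p_0$ is immediate: a fixed $v\in C^\infty(\overline\Omega)$ with $\norm{v}_{L^{p_0}(\Omega)}=1$ gives $\limsup_{p\to p_0}\mu(p)\le\lim_{p\to p_0}\big(\norm{\nabla v}_{L^p(\Omega)^d}^p+\norm{v}_{L^p(\partial\Omega)}^p\big)\big/\norm{v}_{L^p(\Omega)}^p$ by dominated convergence, and taking the infimum over such $v$ (which equals $\mu(p_0)$ by density of $C^\infty(\overline\Omega)$ in $W^{1,p_0}(\Omega)$ and continuity of the quotient) gives $\limsup_{p\to p_0}\mu(p)\le\mu(p_0)$. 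For lower semicontinuity, let $p_n\to p_0$ with $\mu(p_n)\to\liminf_{p\to p_0}\mu(p)=:\ell$ and pick near-minimizers $v_n$ with $\norm{v_n}_{L^{p_n}(\Omega)}=1$ and $\norm{\nabla v_n}_{L^{p_n}(\Omega)^d}^{p_n}+\norm{v_n}_{L^{p_n}(\partial\Omega)}^{p_n}\le\mu(p_n)+\tfrac1n$. Since $\mu$ is bounded, $(v_n)_{n\in\mathbb N}$ is bounded in $W^{1,q}(\Omega)$ for every $q<p_0$ (Hölder on the bounded set $\Omega$); fixing $q$ close enough to $p_0$ that $W^{1,q}(\Omega)$ embeds compactly into $L^{p_0}(\Omega)$ and, via the trace, into $L^{p_0}(\partial\Omega)$, a subsequence satisfies $v_n\rightharpoonup v$ in $W^{1,q}(\Omega)$, $v_n\to v$ in $L^{p_0}(\Omega)$ and $v_n\to v$ in $L^{p_0}(\partial\Omega)$. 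The step I expect to be the main obstacle is passing to the limit in the three $L^{p_n}$-quantities as $p_n\to p_0$: one needs $\norm{v_n}_{L^{p_n}(\Omega)}\to\norm{v}_{L^{p_0}(\Omega)}=1$ and $\norm{v_n}_{L^{p_n}(\partial\Omega)}\to\norm{v}_{L^{p_0}(\partial\Omega)}$, so that $v$ is admissible, together with the $\Gamma$-liminf inequality $\norm{\nabla v}_{L^{p_0}(\Omega)^d}^{p_0}\le\liminf_n\norm{\nabla v_n}_{L^{p_n}(\Omega)^d}^{p_n}$. These are obtained by combining weak lower semicontinuity at the fixed exponent $q$ with Hölder interpolation as $q\uparrow p_0$, the convergence of the two lower-order norms using in addition that $|v_n|^{p_0}$ is equi-integrable on $\Omega$ and on $\partial\Omega$---a consequence of the Sobolev embedding of the uniform $W^{1,q}$-bound. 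They give $\mu(p_0)\le\norm{\nabla v}_{L^{p_0}(\Omega)^d}^{p_0}+\norm{v}_{L^{p_0}(\partial\Omega)}^{p_0}\le\liminf_n\big(\mu(p_n)+\tfrac1n\big)=\ell$, i.e.\ lower semicontinuity; together with the upper semicontinuity above this shows that $\mu$, hence $c_{\textup{Fr}}$, is continuous on $(1,\infty)$.

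A fully constructive alternative bypasses the variable-exponent analysis: cover a neighbourhood of $\partial\Omega$ by finitely many Lipschitz boundary charts, bound $|u|$ at an interior point by its boundary trace plus the integral of the transversal derivative and apply Jensen's inequality chart by chart, close the estimate on the remaining compact part of $\Omega$ by a chaining argument using connectedness, and collect the resulting constants---a product of $2^{p-1}$, powers of $\operatorname{diam}\Omega$, and a finite, $p$-independent atlas factor---into a single $c_{\textup{Fr}}(p)$ whose continuity in $p$ is then manifest.
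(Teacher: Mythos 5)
Your proposal is correct in substance, but it takes a genuinely different route from the paper: the paper does not prove this proposition at all, it simply cites Grisvard \cite{grisvard2011elliptic}, whereas you give a self-contained argument. Your reduction to the variational quantity $\mu(p)$, the contradiction/compactness proof of $\mu(p)>0$ (reflexivity, Rellich--Kondrachov, compactness of the trace on bounded Lipschitz domains, connectedness to force the weak limit to be a vanishing constant), and the explicit formula $c_{\textup{Fr}}(p)=\bigl(2^{p-1}(1+\mu(p)^{-1})\bigr)^{1/p}$ are all sound and consistent with the paper's definition of $\|\cdot\|_{W^{1,p}(\Omega)}$. What your approach buys is precisely the part of the statement that a bare citation leaves implicit, namely the continuity $p\mapsto c_{\textup{Fr}}(p)$: Grisvard gives the inequality for each fixed $p$, not the continuous dependence of the constant, so your upper/lower semicontinuity analysis of $\mu$ is the real added content. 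The upper semicontinuity via smooth test functions and dominated convergence is complete; the lower semicontinuity is the one place where you still owe details, but the ingredients you name do close it: the uniform $W^{1,q}(\Omega)$ bound for $q<p_0$ and $n$ large, compact embeddings into $L^{p_0}(\Omega)$ and $L^{p_0}(\partial\Omega)$ for $q$ near $p_0$, uniform higher integrability (from $q^*>p_0$ and $q^\sharp>p_0$) to convert $L^{p_n}$-norms into $L^{p_0}$-norms along the sequence, and weak lower semicontinuity at exponent $q$ followed by $q\uparrow p_0$ with the factor $|\Omega|^{1/q-1/p_n}\to1$ for the gradient term. The closing ``fully constructive alternative'' is too vague to stand as a proof (the chaining step in particular), but it is not needed; the main argument suffices and is, if anything, more informative than the paper's reference.
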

	
	\begin{proof}
			See \cite{grisvard2011elliptic}.
		\end{proof}
		
	\paragraph{The space $W^p({\normalfont\textup{div}};\Omega)$.}  For $p\!\in\! [1,\infty]$, we denote by $W^p(\textup{div};\Omega)$, the subspace of $L^p(\Omega)^d$ of vector fields with distributional divergences %, i.e., $\textup{div}\coloneqq \smash{\sum_{i=1}^d{\partial_i}}$,
	in $L^p(\Omega)$.
	Endowed with the norm 	$\|z\|_{W^p(\textup{div};\Omega)}\coloneqq \|\textup{div}(z)\|_{L^p(\Omega)}+\|z\|_{L^p(\Omega)^d}$, 
	the space $W^p(\textup{div};\Omega)$ is a Banach space, which is separable if $p\!\in\![1,\infty)$ and reflexive $p\!\in\! (1,\infty)$,~cf.~\mbox{\cite{Schwarz95}}. For $p\!\in\! [1,\infty]$, we~\mbox{denote}~by~$\smash{W^p_0(\textup{div};\Omega)}$, the closure of all compactly supported smooth vector fields $\smash{C_c^\infty(\Omega)^d}$ in $W^p(\textup{div};\Omega)$. 
	If $\Omega\subseteq \smash{\mathbb{R}^d}$, $d\in \mathbb{N}$, is a bounded Lipschitz domain, then
	there exists a linear and continuous operator $\textup{tr}: W^p(\textup{div};\Omega) \to W^{\smash{-\frac{1}{p},p}}(\partial \Omega)$, called normal trace operator, such that $\textup{tr}(z)\cdot n=z|_{\partial\Omega}\cdot n$ for every $z\in W^p(\textup{div};\Omega)\cap \smash{C^0(\overline{\Omega})^d}$ and $\textup{tr}(z)\cdot n=0$ for every $z\in \smash{W^p_0(\textup{div};\Omega)}$.  Further, we will~omit~writing~`$\textup{tr}$'~in~this~context, e.g., we will employ the abbreviation
	$\|z\cdot n\|_{\smash{W^{\smash{-\frac{1}{p},p}}(\partial \Omega)}}\!\coloneqq\! \|\textup{tr}(z)\cdot n\|_{\smash{W^{\smash{-\frac{1}{p},p}}(\partial \Omega)}}$.   
	In the context of~a~penalization~scheme, we need to resort~to~Green's~formula:
	
	\begin{proposition}[Green's formula]\label{green}
		Let $\Omega\subseteq \mathbb{R}^d$, $d\in \mathbb{N}$, be a bounded Lipschitz domain and $p\in \left(1,\infty\right)$. Then, for every $z\in W^p(\textup{div};\Omega)$ and $v\in W^{1,p'}(\Omega)$, it holds
		\begin{align*}
			\int_{\Omega}{\textup{div}(z)v\,\textup d x}=	\langle z\cdot n,v\rangle_{\smash{W^{\smash{1-\frac{1}{p'},p'}}(\partial\Omega)}}
			-	\int_{\Omega}{z\cdot \nabla v\,\textup d x}\,.
		\end{align*}
	\end{proposition}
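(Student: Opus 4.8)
The plan is to establish the identity first for functions that are smooth up to the boundary and then to obtain the general case by density, passing to the limit with the help of the continuity of the two trace operators introduced above. The analytic ingredients are the classical Gauss--Green theorem on bounded Lipschitz domains, the density of smooth fields in $W^p(\textup{div};\Omega)$ and of smooth functions in $W^{1,p'}(\Omega)$, and the fact that $W^{\smash{-\frac{1}{p},p}}(\partial\Omega)$ is the topological dual of $W^{\smash{1-\frac{1}{p'},p'}}(\partial\Omega)$, which is consistent since $1-\tfrac{1}{p'}=\tfrac1p$.

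First, I would pick, using density on bounded Lipschitz domains (cf.~\cite{Schwarz95} and \cite[Chapter~3]{AF03}), sequences $(z_n)_{n\in\mathbb{N}}\subseteq C^\infty(\overline{\Omega})^d$ and $(v_n)_{n\in\mathbb{N}}\subseteq C^\infty(\overline{\Omega})$ with $z_n\to z$ in $W^p(\textup{div};\Omega)$ and $v_n\to v$ in $W^{1,p'}(\Omega)$. For the smooth pair $z_n,v_n$, the classical divergence theorem on $\Omega$ yields
\begin{align*}
\int_{\Omega}{\textup{div}(z_n)\,v_n\,\textup d x}=\int_{\partial\Omega}{(z_n\cdot n)\,v_n\,\textup d s}-\int_{\Omega}{z_n\cdot\nabla v_n\,\textup d x}\,,
\end{align*}
and, since $z_n$ and $v_n$ are smooth up to $\partial\Omega$, the boundary integral equals $\langle z_n\cdot n,v_n\rangle_{\smash{W^{\smash{1-\frac{1}{p'},p'}}(\partial\Omega)}}$ by the stated compatibility properties $\textup{tr}(z_n)\cdot n=z_n|_{\partial\Omega}\cdot n$ and $\textup{tr}(v_n)=v_n|_{\partial\Omega}$.

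Next, I would pass to the limit $n\to\infty$. The two volume integrals converge by Hölder's inequality, because $\textup{div}(z_n)\to\textup{div}(z)$ in $L^p(\Omega)$ and $v_n\to v$ in $L^{p'}(\Omega)$, respectively $z_n\to z$ in $L^p(\Omega)^d$ and $\nabla v_n\to\nabla v$ in $L^{p'}(\Omega)^d$. For the boundary term, the continuity of the normal trace operator $\textup{tr}\colon W^p(\textup{div};\Omega)\to W^{\smash{-\frac{1}{p},p}}(\partial\Omega)$ gives $\textup{tr}(z_n)\cdot n\to z\cdot n$ in $W^{\smash{-\frac{1}{p},p}}(\partial\Omega)$, and the continuity of the trace operator $\textup{tr}\colon W^{1,p'}(\Omega)\to W^{\smash{1-\frac{1}{p'},p'}}(\partial\Omega)$ gives $\textup{tr}(v_n)\to v$ in $W^{\smash{1-\frac{1}{p'},p'}}(\partial\Omega)$; since the duality pairing between these dual spaces is bilinear and bounded, it follows that $\langle z_n\cdot n,v_n\rangle_{\smash{W^{\smash{1-\frac{1}{p'},p'}}(\partial\Omega)}}\to\langle z\cdot n,v\rangle_{\smash{W^{\smash{1-\frac{1}{p'},p'}}(\partial\Omega)}}$. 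Combining these three limits in the displayed identity gives the claim.

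The main obstacle is the classical Gauss--Green formula on a general bounded Lipschitz domain for functions smooth up to the boundary, together with the simultaneous density of $C^\infty(\overline{\Omega})^d$ in $W^p(\textup{div};\Omega)$; both are standard but nontrivial for Lipschitz (rather than smooth) domains, and I would quote them rather than reprove them. A secondary point needing care is to avoid circularity: the normal trace operator should be used only through its two stated properties (agreement with the classical normal trace on smooth fields and continuity into $W^{\smash{-\frac{1}{p},p}}(\partial\Omega)$), not through a construction already based on Green's formula, and the Sobolev scales on $\partial\Omega$ must be matched so that $\langle\,\cdot\,,\cdot\,\rangle$ is the genuine duality pairing.
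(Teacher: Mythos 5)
Your argument is correct, but note that the paper does not prove this proposition at all: it simply cites \cite[Proposition 2.1.2]{Schwarz95}. What you have written out is essentially the standard proof underlying that reference: classical Gauss--Green for fields and functions smooth up to the boundary of a Lipschitz domain, density of $C^\infty(\overline{\Omega})^d$ in $W^p(\textup{div};\Omega)$ and of $C^\infty(\overline{\Omega})$ in $W^{1,p'}(\Omega)$, and passage to the limit using H\"older's inequality for the volume terms and the boundedness of the duality pairing between $W^{\smash{-\frac{1}{p},p}}(\partial\Omega)$ and $W^{\smash{1-\frac{1}{p'},p'}}(\partial\Omega)$ (with the correct bookkeeping $1-\tfrac{1}{p'}=\tfrac1p$) for the boundary term. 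The one point that genuinely needs the care you flag is the circularity issue: in the literature the normal trace on $W^p(\textup{div};\Omega)$ is usually \emph{constructed} via this very formula, so a self-contained treatment would either build the trace this way (in which case the proposition is a byproduct of the construction) or, as you do, take the two stated properties of the trace operator (continuity into $W^{\smash{-\frac{1}{p},p}}(\partial\Omega)$ and agreement with the classical normal trace on smooth fields) as given from the preliminaries; under that reading your limit argument is complete, with the classical divergence theorem on Lipschitz domains, the density statements, and the sharp trace theorem $W^{1,p'}(\Omega)\to W^{\smash{1-\frac{1}{p'},p'}}(\partial\Omega)$ quoted rather than reproved, exactly as is appropriate here.
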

	
	\begin{proof}
			See \cite[Proposition 2.1.2]{Schwarz95}.
	\end{proof}

	\subsection{Neural networks}
	\qquad Here, we introduce our used notation for the functions represented by a feed-forward neural~network. Consider natural numbers $d,m,L,N_0,...,N_L\in \mathbb{N}$ and let
	\begin{align}
		\theta=\big((A_1,b_1),\dots,(A_L,b_L)\big)^\top\in\Theta\coloneqq \prod_{l=1}^{L}{\mathbb{R}^{N_l\times N_{l-1}} \times\mathbb{R}^{N_l} }\label{eq:params}
	\end{align}
	be a tuple of matrix-vector pairs, where $A_l \in \mathbb{R}^{N_l\times N_{l-1}} $ and $b_l \in  \mathbb{R}^{N_l}$ for $l=1,\dots,L$. In particular,~we~always assume that $N_0 = d$ and $N_L = m$. The matrix-vector pairs $(A_l , b_l )\in \mathbb{R}^{N_l\times N_{l-1}}\times \mathbb{R}^{N_l}$, $l=1,\dots,L$,~induce~\mbox{affine-linear} mappings $T_l \!:\! \mathbb{R}^{N_{l-1}}\!\to\! \mathbb{R}^{N_l}$, $l\!=\!1,\dots,L$. Then, a neural network function $u^{\smash{g}}_\theta\! :\! \mathbb{R}^d\! \to\! \mathbb{R}^m$~with~parameters~$\theta\in \Theta$ and activation function $g :\mathbb{R} \to \mathbb{R}$ is defined by
	\begin{align*}
		u^{\smash{g}}_\theta(x):=T_L(g(T_{L-1}(g(\cdots g(T_1(x))))))\quad\text{ in }\mathbb{R}^m\quad\text{ for all }x\in \mathbb{R}^d.
	\end{align*}
	The set of all neural network functions of a certain architecture $\Theta$ is then given by $\smash{\mathcal{F}^{\smash{g}}_\Theta:=\{u^{\smash{g}}_\theta \mid \theta\in \Theta\}}$. Here, $d$ denotes the \textit{input dimension}, while $m$ denotes the \textit{output dimension} of  the neural network. Apart from that, $L$ is called the \textit{depth} and $W:=\max_{l=0,\dots,L}{N_l}$ the \textit{width} of the neural network. A neural network is called \textit{shallow}, if it has depth $L=2$ and \textit{deep} otherwise. The \textit{total number or parameters} and the \textit{total number of neurons} of such a neural network is given by $\textup{dim}(\Theta)$ and $\smash{\sum_{l=0}^L{N_l}}$, respectively. Throughout what follows, we restrict to the case $m\!=\!1$ since we only consider scalar functions. If we have $u\!= \!\smash{u^{\smash{g}}_\theta}$ for some $\theta\! \in\! \Theta$,~we~say~the~function $u$ can be realized by the neural network $\smash{\mathcal{F}^{\smash{g}}_\Theta}$. Note that we often drop the superscript $g$ if it is clear~from~the~context. 
	
	In the following, we need the square of the ReLU activation function which is defined by $\operatorname{ReLU}^2\coloneqq \max(0,\cdot)^2$.
	
	\begin{theorem}[Quantitative Universal Approximation]\label{thm:quantitative_universal_approximation}
	    Let $\Omega \subseteq \mathbb{R}^d$, $d\in \mathbb{N}$, be a bounded Lipschitz domain. Moreover, let $p\in[1,\infty]$ and $k\in\mathbb{N}$. Then, for every $n\in\mathbb{N}$ and every $u\in \smash{W^{k,p}(\Omega)}$, there exists a fully-connected $\smash{\operatorname{ReLU}^2}$-network $u_n\in W^{\smash{1,p}}(\Omega)$ with parameter space $\Theta_n$ of dimension $\mathcal{O}(n)$ such that, it holds
	    \begin{equation*}
	        \lVert u-u_n \rVert_{W^{\smash{1,p}}(\Omega)} \leq c(p)\, \left( \frac1n \right)^{\frac{k-1}{d}}\lVert u \rVert_{W^{k,p}(\Omega)}\,,
	    \end{equation*}
	    where $c(p)>0$ depends only on $p\in[1,\infty]$ and  $d,k\in \mathbb{N}$.
	\end{theorem}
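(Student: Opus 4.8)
The statement combines three ingredients: a Sobolev extension to pass from $\Omega$ to a cube, classical finite-element / spline approximation theory on that cube, and an \emph{exact} emulation of the resulting spline by a $\operatorname{ReLU}^2$-network; alternatively, the cube step can be outsourced to the quantitative approximation results of \cite{GR20}, whose $W^{1,p}$-rate $n^{-(k-1)/d}$ for $W^{k,p}$-data matches the claim. For $k=1$ the asserted bound is trivial (take $u_n\equiv 0$), so assume $k\ge 2$. \emph{Step 1 (reduction to a cube).} Since $\Omega$ is a bounded Lipschitz domain, Stein's extension theorem gives a linear operator $\mathcal{E}\colon W^{k,p}(\Omega)\to W^{k,p}(\mathbb{R}^d)$, bounded for every $p\in[1,\infty]$ with norm depending only on $\Omega,d,k$; fixing a cube $Q\supseteq\overline{\Omega}$ and putting $\tilde u\coloneqq(\mathcal{E}u)|_Q$, one has $\|\tilde u\|_{W^{k,p}(Q)}\le C\|u\|_{W^{k,p}(\Omega)}$, and any $W^{1,p}(Q)$-approximation of $\tilde u$ restricts to a $W^{1,p}(\Omega)$-approximation of $u$ at the same rate. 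Thus it suffices to approximate $\tilde u$ on $Q$.

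\emph{Step 2 (spline quasi-interpolation).} Partition $Q$ into $N^d$ congruent subcubes of side length $h\sim N^{-1}$, and let $\{B_i\}_i$ be the tensor-product B-splines of the smallest \emph{even} degree $r\ge k-1$ on the corresponding scaled grid: these are nonnegative, $C^{r-1}$ (hence $C^1$), piecewise polynomial, form a partition of unity $\sum_iB_i\equiv 1$ on $Q$, each is supported on a patch $S_i$ of $\mathcal{O}(1)$ cells, and $\|D^\ell B_i\|_{L^\infty}\le C h^{-\ell}$. Set $\Pi_N\tilde u\coloneqq\sum_i(Q_i\tilde u)B_i$, with $Q_i\tilde u$ the averaged Taylor polynomial of degree $k-1$ of $\tilde u$ on $S_i$. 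Since both $Q_i$ and the $B_i$ reproduce polynomials of degree $\le k-1$, so does $\Pi_N$, and the Bramble--Hilbert lemma, applied cell by cell and then summed (with the sum over cells replaced by a maximum if $p=\infty$), yields
\begin{equation*}
  \|\tilde u-\Pi_N\tilde u\|_{W^{1,p}(Q)}\le c(p)\,h^{k-1}\,\|\tilde u\|_{W^{k,p}(Q)}
\end{equation*}
with $c(p)$ depending only on $p,d,k$ (and the fixed $\Omega$); moreover $\Pi_N\tilde u$ is determined by $\mathcal{O}(N^d)$ polynomial coefficients.

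\emph{Step 3 ($\operatorname{ReLU}^2$-emulation).} The identities $t^2=\operatorname{ReLU}^2(t)+\operatorname{ReLU}^2(-t)$ and $xy=\tfrac14\big((x+y)^2-(x-y)^2\big)$ provide a $\operatorname{ReLU}^2$-network of size $\mathcal{O}(\log k)$ that computes $(x,y)\mapsto xy$ exactly; by composition, every monomial of degree $\le k-1$ in $d$ variables — hence every piece $Q_i\tilde u$ — is computed exactly by a network of size depending only on $d,k$. Since $r$ is even, each univariate piece of $B_i$ is a linear combination of truncated powers $(t-a)_+^{r}$, and $(t)_+^{2m}=t^{2m-2}\operatorname{ReLU}^2(t)$ is computed exactly (the polynomial factor via the multiplication gadget); hence each $B_i$, and each product $(Q_i\tilde u)B_i$, is computed exactly by a $\operatorname{ReLU}^2$-network of size $\mathcal{O}(1)$. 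Placing the $\mathcal{O}(N^d)$ such subnetworks in parallel and summing their outputs produces one $\operatorname{ReLU}^2$-network $u_n$ with $\dim(\Theta_n)=\mathcal{O}(N^d)$ satisfying $u_n=\Pi_N\tilde u$ on $Q$; and $\operatorname{ReLU}^2\in C^1(\mathbb{R})$ forces $u_n\in C^1(\mathbb{R}^d)\subseteq W^{1,p}(\Omega)$. Taking $N$ with $N^d\sim n$ and combining with Steps 1--2 gives $\|u-u_n\|_{W^{1,p}(\Omega)}\le c(p)\,(1/n)^{(k-1)/d}\,\|u\|_{W^{k,p}(\Omega)}$.

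\emph{Main obstacle.} Steps 1--2 are routine; the crux is Step 3, where the network emulation must introduce \emph{no} additional $W^{1,p}$-error, which forces an exact representation of a $C^1$ partition of unity. This is precisely where $\operatorname{ReLU}^2$ (as opposed to plain $\operatorname{ReLU}$, which it cannot reproduce) is needed, and it is why only even-degree B-splines are admissible, since odd truncated powers are not exact $\operatorname{ReLU}^2$-networks. One must simultaneously keep $\dim(\Theta_n)=\mathcal{O}(n)$ with depth $\mathcal{O}(1)$, and verify that the extension and Bramble--Hilbert constants can be taken $p$-uniform so that $c(p)<\infty$ for every $p\in[1,\infty]$.
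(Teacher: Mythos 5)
The paper itself does not prove Theorem \ref{thm:quantitative_universal_approximation}: as the remark following it explains, the statement is quoted as a special case of \cite[Theorem 4.9]{GR20}, so your proposal is being compared against a citation rather than an in-house argument. Your route --- Stein extension from the Lipschitz domain to a cube, B-spline quasi-interpolation with a Bramble--Hilbert estimate, and then \emph{exact} emulation of the quasi-interpolant by a $\operatorname{ReLU}^2$-network via $xy=\tfrac14\big((x+y)^2-(x-y)^2\big)$ and $t_+^q=t^{q-2}\operatorname{ReLU}^2(t)$ --- is a genuinely different and essentially sound argument: for RePU-type activations, exact representation of piecewise polynomials is precisely what replaces the approximate partition-of-unity and approximate-multiplication constructions that \cite{GR20} must carry out for general activation functions. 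What the citation buys is generality (a broad class of activations, approximation in higher-order Sobolev norms, and the near-optimality/encodability discussion); what your construction buys is an elementary, self-contained proof with no emulation error, specific to $\operatorname{ReLU}^2$. Incidentally, your restriction to even spline degree is unnecessary: $(t-a)_+^{q}=(t-a)^{q-2}\,\operatorname{ReLU}^2(t-a)$ holds for every integer $q\ge 2$, so any $C^1$ (degree $\ge 2$) B-spline is exactly representable; only the degree-one truncated power, i.e.\ $\operatorname{ReLU}$ itself, is excluded.

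The one point that does not match the statement as formulated here is the parameter count. Summing $\mathcal{O}(N^d)$ constant-size subnetworks in parallel yields a network of width $\mathcal{O}(N^d)$ and constant depth, and with the fully-connected parameter space \eqref{eq:params} this gives $\dim(\Theta_n)=\mathcal{O}(N^{2d})=\mathcal{O}(n^2)$ rather than $\mathcal{O}(n)$ when $N^d\sim n$; the $\mathcal{O}(n)$ complexity is correct only if one counts nonzero weights (as \cite{GR20} does) or adopts a block-sparse architecture convention. This is bookkeeping rather than a flaw in the analytic content --- the paper's own restatement of \cite[Theorem 4.9]{GR20} glosses over the same distinction --- but if you want the theorem verbatim you should either phrase the complexity in terms of nonzero weights or justify the count for the architecture class actually used.
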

	\begin{remark}
	    Theorem~\ref{thm:quantitative_universal_approximation} is a special case of \cite[Theorem 4.9]{GR20}. It is proven there for a wide range of activation functions and higher order Sobolev approximations. Furthermore, it is also shown that the approximation rate is -- up to a logarithmic factor -- optimal, if one assumes that the weights are encodable. We refer the reader to the original work for details.
	\end{remark}

	\section{Brief review of the $p$-Dirichlet problem}\label{review}
	
	\qquad In this section, we give a brief review of the  $p$-Dirichlet problem. \!To keep the presentation~fairly~\mbox{simple}, we initially restrict ourselves to the $p$-Dirichlet problem subject to homogeneous Dirichlet boundary conditions. 
	The latter, for a  fixed exponent $p\in (1,\infty)$ and a fixed right-hand side $f\in \smash{\W^*}$,   seeks~for~a~function $u^*\in \W$ such~that~for~every~${v\in \W}$, it holds
	\begin{align}
		\int_{\Omega}{\vert \nabla u^*\vert^{p-2}\nabla u^*\cdot\nabla v\,\textup{d}x}=\langle f,v\rangle_{\W}\,.\label{variational_pLaplace}
	\end{align}
	Resorting to the celebrated \textit{monotone operator theory}, cf. \cite[Satz 1.39]{Ru04}, it is readily~seen~that~\eqref{variational_pLaplace}~admits a unique solution. In what follows, we reserve the notation $u^*\!\in\! \W$ for this solution.~For~\mbox{being}~amen-able to the Deep Ritz Method, the variational problem \eqref{variational_pLaplace} must be equivalently expressible as a minimization problem. A minimization problem equivalent to \eqref{variational_pLaplace} is given by the minimization of the $p$-Dirichlet energy, i.e., the energy functional $E:\W\to \mathbb{R}$, for every $v\in \W$ defined by
	\begin{align}
		E(v)\coloneqq \frac{1}{p}\int_{\Omega}{\vert \nabla v\vert^p\,\textup{d}x}-\langle f,v\rangle_{\W}\,.\label{functional_pLaplace}
	\end{align}
	Since $E:\W\to \mathbb{R}$ is a proper\footnote{For a Banach space $X$, a functional $E:X\to \mathbb{R}\cup\{+\infty\}$ is called \textit{proper} if $E(x)<\infty$ for some $x\in X$.}, strictly convex, weakly coercive\footnote{For a Banach space $X$, a functional $E:X\to \mathbb{R}\cup\{+\infty\}$ is called \textit{weakly coercive} if from $\|x\|_X\to \infty$, it follows that $E(x)\to \infty$.} and lower semi-continuous\footnote{For a Banach space $X$,  a functional $E:X\to \mathbb{R}\cup\{+\infty\}$ is called \textit{lower semi-continuous} if from $x_n\rightharpoonup x$ in $X$ $(n\to \infty)$,  it follows that $E(x)\leq \liminf_{n\to \infty}{E(x_n)}$.} functional, the \textit{direct method in the calculus of variations}, cf. \cite{Dac08}, implies the existence of a unique minimizer. 
	More precisely, due to the convexity and Frech\'et differentiability of $E\!:\!\W\!\to\! \mathbb{R}$, this minimizer~coincides with the solution $u^*\in \W$ to \eqref{variational_pLaplace}. 
	
	\qquad In \cite[Section 5.2]{DMZ21}, it has been established that the restrictions ${E_n\!\coloneqq \!E|_{M_n}\!:\!M_n\!\to \!\mathbb{R}}$,~${n\!\in \!\mathbb{N}}$, where $(M_n)_{n\in \mathbb{N}}$ is a suitable conformal (i.e., $M_n\subseteq \W$ for all $n\in \mathbb{N}$) and potentially~\mbox{non-linear} sequence of ansatz classes, a class of neural networks, for example, $\Gamma$--converges~to~${E :\W\to \mathbb{R}}$ with respect to weak convergence in $\W$.
	
	\qquad We are interested in error estimates for the minimization problem \eqref{functional_pLaplace} for general classes $M_n\subseteq \W$, $n\in \mathbb{N}$, of ansatz functions, to be realized~by~neural networks. Due to the potential~\mbox{non-linearity} of the ansatz classes $M_n\subseteq \W$, $n\in \mathbb{N}$, we cannot resort to Galerkin orthogonality relations, which usually play a decisive role in the derivation of C\'ea type lemmata and, thus, error estimates,~cf.~\mbox{\cite{DR07}}. Instead, we follow a commonly used approach from convex analysis and replace the missing Galerkin~ortho-gonality relations by co-coercivity properties of the strongly convex $p$-Dirichlet energy. To~this~end,~we~identify  a suitable measure for the co-coercivity of the $p$-Dirichlet energy $E:\smash{\W}\to \mathbb{R}$~at~$u^*\in \smash{\W}$, i.e., we identify bi-variate, symmetric mappings $\smash{\rho_1^2,\rho_2^2}\!\colon\!\smash{\W}\times \smash{\W}\!\to\! \mathbb{R}_{\ge 0}$ such that~for~every~${v\in \W}$, it~holds
	\begin{align}
		\rho_1^2(v,u^*)\leq E(v)-E(u^*)\leq \rho_2^2(v,u^*)\,.\label{two-sided}
	\end{align}
	Then, the two-sided estimate \eqref{two-sided} implies a C\'ea type lemma, which  can be used to derive error estimates. 
	An intuitive -- but also somewhat na\"ive -- approach is to choose (up to some multiplicative constants)~$\smash{\rho_1^2(v,w)}\sim \smash{\rho_2^2(v,w)\sim\|\nabla v-\nabla w\|_{\smash{L^p(\Omega)^d}}^p}$ for all $v,w\in \smash{\W}$. However, it turned out that this choice is not well-suited for both an a priori and an a posteriori error analysis for the $p$-Dirichlet energy  $E:\W\to \mathbb{R}$ (and \eqref{variational_pLaplace}) as, e.g., one obtains convergence rates that are sub-optimal for a discretization using linear finite element~spaces, cf. \cite{BL93}. The optimal choice results from the observation that by the Taylor expansion,~cf.~\eqref{lem:strong_coercivity_p-laplacian.1} for a justification, and the optimality condition $DE(u^*)=0$ in $\W^*$,
	for~every~${v\in \W}$, we have that
	\begin{align}
			\begin{aligned}
				E(v)-E(u^*)&=\langle DE(u^*),v-u^*\rangle_{\W}+\int_0^1{D^2E(\tau v+(1-\tau) u^*)\ [v-u^*,v-u^*]\ (1-\tau)\,\textrm d\tau}\\&=\int_0^1{D^2E(\tau v+(1-\tau) u^*)\ [v-u^*,v-u^*]\ (1-\tau)\,\textrm d\tau}\,.
			\end{aligned}\label{eq:taylor}
	\end{align}
	With \eqref{eq:taylor} we  observe that
    the optimal distance measures $\rho_1^2,\rho_2^2:\W\times \W\to \mathbb{R}_{\ge 0}$ must form upper and lower bounds, resp., 
   for the second variation of $E:\W\to \mathbb{R}$, i.e., \eqref{eq:taylor}$_2$.  To identify such measures, we make the ansatz that, uniformly with respect to $v,w\in \W$, it holds
   \begin{align}
       \|F(\nabla v)-F(\nabla w)\|_{L^2(\Omega)^d}^2\sim \int_0^1{D^2E(\tau v+(1-\tau) w)\ [v-w,v-w]\ (1-\tau)\,\textrm d\tau}\,,\label{ansatz}
   \end{align}
   i.e.,   $\smash{\rho_1^2(v,w)\!\sim\!\rho_2^2(v,w)\!\sim\!\|F(\nabla v)-F(\nabla w)\|_{\smash{L^2(\Omega)^d}}^2}$, for some (possibly non-linear) function $F\!:\!\mathbb{R}^d\!\to\! \mathbb{R}^d$~with~${F(0)\!=\!0}$.
   The ansatz \eqref{ansatz} has the particular advantage that, in terms of Lebesgue norms, we enter~a~linear~level, while all the non-linearity of the $p$-Dirichlet energy is covered by the function $F:\mathbb{R}^d\to \mathbb{R}^d$. But how to identify  $F:\mathbb{R}^d\to \mathbb{R}^d$?
   To this end, we consider the case $w=0\in \W$, so that, uniformly~with~respect~to~${v\in \W}$,
   \begin{align}
       \|F(\nabla v)\|_{L^2(\Omega)^d}^2\sim \int_0^1{D^2E(\tau v)\ [v,v]\ (1-\tau)\,\textrm d\tau}\sim \|\nabla v\|_{L^p(\Omega)^d}^p\,,\label{ansatz2}
   \end{align}
   where we used for the second equivalence that $\min\{1,p-1\}\vert a\vert^{p-2}\vert b\vert^2\leq D^2\phi(a):b\otimes b\leq \max\{1,p-2\}\vert a\vert^{p-2}\vert b\vert^2$\footnote{For quadratic matrices $A=(a_{ij})_{i,j=1,\dots,d},B=(b_{ij})_{i,j=1,\dots,d}\in \mathbb{R}^{d\times d}$, $A:B\coloneqq \sum_{i,j=1}^d{a_{ij}b_{ij}}$ denotes the Frobenius inner product.}\footnote{For vectors $a,b\in \mathbb{R}^d$, the matrix $a\otimes b\in \mathbb{R}^{d\times d}$, defined by $(a\otimes b)_{ij}=a_ib_j$ for all $i,j=1,\dots,d$, denotes the~dyadic~product.\vspace{-0.25cm}} for all $a\in \mathbb{R}^d\setminus \{0\}$ and $b\in \mathbb{R}^d$, where $\phi\in C^1(\mathbb{R}^d)\cap C^2(\mathbb{R}^2\setminus \{0\})$, defined by $\phi(a)\coloneqq  \frac{1}{p}\vert a\vert^p$ for all $a\in \mathbb{R}^d$, denotes the \textit{$p$-Dirichlet density}, as well as that $\smash{\int_0^1{\tau^{p-1}(1-\tau)\,\textup{d}\tau}=\frac{1}{p(p+1)}}$.
   The equivalence \eqref{ansatz2},~in~turn,~suggests~the~choice 
   \begin{align}
       F(a)\coloneqq \smash{\vert a\vert ^{\smash{\frac{p-2}{2}}}}a\quad\textup{ for all }a\in\mathbb{R}^d\,.\label{eq:F}
   \end{align}
    which guarantees that $\smash{\|F(\nabla v)\|_{\smash{L^2(\Omega)^d}}^2=\|\nabla v\|_{\smash{L^p(\Omega)^d}}^p}$ for all $v\in \smash{\W}$ and, thus, is
    sufficient for the ansatz \eqref{ansatz} for the particular case $w=0\in \smash{\W}$.
    That \eqref{ansatz} even holds for all $v,w\in \W$ if $F:\mathbb{R}^d\to \mathbb{R}^d$ is defined by \eqref{eq:F} is shown in the subsequent section and for which we will
    resort to the following key properties of $F:\mathbb{R}^d\to \mathbb{R}^d$. 
	
	\begin{lemma}\label{lem:F_basis}
		Let $p\in \left(1,\infty\right)$ and $d\in \mathbb{N}$. Then, there exists a constant $c(p)>0$, depending only on $d\in \mathbb{N}$ and $p\in\left(1,\infty\right)$, such that
		the following statements apply:
		\begin{description}[noitemsep,topsep=0.0pt,labelwidth=\widthof{\textbf{(ii)}},leftmargin=!,font=\upshape\bfseries]
			\item[(i)] For every $a,b\in \mathbb{R}^d$, it holds
			\begin{align*}
				c(p)^{-1}\,\vert F(a)-F(b)\vert^2\leq (\vert a\vert^{p-2} a-\vert b\vert^{p-2}b )\cdot(a-b)\leq c(p)\,\vert F(a)-F(b)\vert^2\,.
			\end{align*}
			\item[(ii)] For every $a,b\in \mathbb{R}^d$, it holds
			\begin{align*}
				c(p)^{-1}\,\vert F(a)-F(b)\vert^2\leq (\vert a\vert+\vert b\vert)^{p-2}\vert a-b\vert^2\leq c(p)\,\vert F(a)-F(b)\vert^2\,.
			\end{align*}
		\end{description}
	\end{lemma}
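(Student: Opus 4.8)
The plan is to reduce everything to a one-dimensional statement about the scalar function $t\mapsto t^{(p-2)/2}$ and then exploit homogeneity. First I would observe that both (i) and (ii) are invariant under the scaling $(a,b)\mapsto (\lambda a,\lambda b)$ for $\lambda>0$: every term is positively homogeneous of degree $p$, so it suffices to prove the inequalities on the set $\{(a,b)\in\mathbb{R}^d\times\mathbb{R}^d : |a|^2+|b|^2=1\}$, or alternatively to normalize $|a|+|b|=1$. Moreover, by symmetry in $a,b$ and by rotational invariance one may further reduce, for (ii), to understanding the quantity as a function of $|a|$, $|b|$ and the angle between them; the right objects are the three scalars $s\coloneqq|a|$, $t\coloneqq|b|$ and $a\cdot b$.

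The core analytic input is the elementary two-variable estimate
\begin{equation*}
  c(p)^{-1}\,\big|s^{\frac{p-2}{2}}s - t^{\frac{p-2}{2}}t\big|^2 \;\le\; (s+t)^{p-2}\,|s-t|^2 \;\le\; c(p)\,\big|s^{\frac{p-2}{2}}s - t^{\frac{p-2}{2}}t\big|^2
\end{equation*}
for $s,t\ge 0$, which is standard (it follows, e.g., by writing $s^{\frac p2}-t^{\frac p2}=\frac p2\int_t^s \tau^{\frac p2-1}\,\mathrm d\tau$ and comparing $\int_t^s\tau^{\frac p2-1}\,\mathrm d\tau$ with $(s+t)^{\frac{p-2}{2}}(s-t)$, using that $\tau^{\frac p2-1}$ is monotone and that $s+t$ is comparable to $\max\{s,t\}$). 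Granting this, statement (ii) for vectors reduces to comparing $|F(a)-F(b)|^2 = \big||a|^{\frac{p-2}{2}}a-|b|^{\frac{p-2}{2}}b\big|^2$ with $\big||a|^{\frac p2}-|b|^{\frac p2}\big|^2 + (|a|+|b|)^{p-2}\,|a|\,|b|\,|\,\hat a-\hat b\,|^2$ (splitting $a=|a|\hat a$ and separating the radial from the angular part), and then bounding $\big| |b|\,|a|^{\frac{p-2}{2}} - |b|^{\frac p2}\big|$-type cross terms by the radial term. This is a finite-dimensional continuity/compactness argument on the unit sphere once one checks that $F$ is continuous (it is, since $p>1$ makes $|a|^{\frac{p-2}{2}}a$ continuous at $0$) and that the only place the two sides can both vanish is $a=b$, where one uses the local Taylor behavior of $F$.

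For part (i) I would derive it from (ii): the middle quantity $(|a|^{p-2}a-|b|^{p-2}b)\cdot(a-b)$ is itself equivalent to $(|a|+|b|)^{p-2}|a-b|^2$ by the classical monotonicity inequalities for the $p$-Laplacian nonlinearity $a\mapsto|a|^{p-2}a$ (these are exactly the inequalities referenced implicitly via the bound on $D^2\phi$ in the surrounding text, combined with the mean-value identity $|a|^{p-2}a-|b|^{p-2}b=\int_0^1 D^2\phi(b+\tau(a-b))(a-b)\,\mathrm d\tau$ and the two-sided bound $|b+\tau(a-b)|^{p-2}\sim(|a|+|b|)^{p-2}$, which needs a short case distinction according to $p\ge2$ or $p<2$). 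Chaining this with (ii) gives (i) with a constant depending only on $p$ (the dimension $d$ does not actually enter, though it is harmless to keep it).

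The main obstacle is the behavior near the diagonal $a\approx b$ and near the origin simultaneously: the naive compactness argument on $\{|a|+|b|=1\}$ gives a constant, but one must make sure the constant does not degenerate as $a\to b$. The clean way around this is to avoid compactness there entirely and instead prove the pointwise inequalities directly via the integral representation $F(a)-F(b)=\int_0^1 DF(b+\tau(a-b))(a-b)\,\mathrm d\tau$ together with the explicit formula $DF(\xi)=|\xi|^{\frac{p-2}{2}}\big(\mathrm{Id}+\frac{p-2}{2}\,\hat\xi\otimes\hat\xi\big)$, which has eigenvalues $|\xi|^{\frac{p-2}{2}}$ and $\frac p2|\xi|^{\frac{p-2}{2}}$, hence $DF(\xi)v\cdot v\sim|\xi|^{\frac{p-2}{2}}|v|^2$ with constants $\min\{1,p/2\}$ and $\max\{1,p/2\}$; then $|F(a)-F(b)|\sim\big(\int_0^1|b+\tau(a-b)|^{\frac{p-2}{2}}\,\mathrm d\tau\big)|a-b|$ up to a direction-dependent but bounded factor, and the remaining task is the scalar comparison $\int_0^1|b+\tau(a-b)|^{\frac{p-2}{2}}\,\mathrm d\tau\sim(|a|+|b|)^{\frac{p-2}{2}}$, which again separates into the two cases $p\ge2$ (integrand is increasing in $|\xi|$, bounded above by $(|a|+|b|)^{\frac{p-2}{2}}$, and bounded below by integrating over the half of the segment farther from the near-zero point) and $1<p<2$ (integrand is a negative power, handled by the substitution reducing to $\int_0^1\tau^{\frac{p-2}{2}}\,\mathrm d\tau<\infty$ since $p>1$). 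Carrying out these two scalar cases carefully is the only genuinely technical part; everything else is bookkeeping.
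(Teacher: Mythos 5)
Your argument is essentially correct, but note that the paper does not prove this lemma at all: it simply cites the appendices of Diening--Ebmeyer--R\r{u}\v{z}i\v{c}ka (2007) and Diening--Ettwein (2008). What you have reconstructed is, in substance, the standard proof from exactly those references: the integral representation $F(a)-F(b)=\int_0^1 DF(b+\tau(a-b))(a-b)\,\mathrm d\tau$ together with the two-sided bound $DF(\xi)v\cdot v\sim|\xi|^{\frac{p-2}{2}}|v|^2$ (eigenvalues $|\xi|^{\frac{p-2}{2}}$ and $\tfrac p2|\xi|^{\frac{p-2}{2}}$), the analogous representation of $|a|^{p-2}a-|b|^{p-2}b$ via $D^2\phi$ with the bounds $\min\{1,p-1\}|\xi|^{p-2}|v|^2\le D^2\phi(\xi)v\cdot v\le\max\{1,p-1\}|\xi|^{p-2}|v|^2$ that the paper itself quotes from R\r{u}\v{z}i\v{c}ka's lecture notes, and above all the scalar key lemma $\int_0^1|b+\tau(a-b)|^{q}\,\mathrm d\tau\sim(|a|+|b|)^{q}$ for $q>-1$, which is precisely the [DER07, Lemma 6.1] that the paper invokes later in the proof of its point-wise estimate (Lemma~\ref{lem:point-wise_estimate}). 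So your route is not different from the source the paper leans on; what it buys is a self-contained argument in place of a citation, and it also makes visible that the constants ($\min\{1,p/2\}$, $\max\{1,p-1\}$, and the constant of the scalar lemma) depend continuously on $p$, which the paper needs in its Remark following the lemma. Two small points of hygiene: the scaling/compactness reduction and the radial--angular splitting in your first two paragraphs are superfluous once you run the integral-representation argument, and the phrase ``$|b+\tau(a-b)|^{p-2}\sim(|a|+|b|)^{p-2}$'' is false pointwise in $\tau$ (take $a=-b$ and $\tau=\tfrac12$); only the $\tau$-integrated version holds, which is the statement you correctly isolate and prove in your final paragraph, so this is a matter of wording rather than a gap -- also the lower bound $|F(a)-F(b)|\gtrsim\bigl(\int_0^1|b+\tau(a-b)|^{\frac{p-2}{2}}\,\mathrm d\tau\bigr)|a-b|$ should be obtained by testing the representation with $a-b$ and using Cauchy--Schwarz, exactly as your ``bounded direction-dependent factor'' remark suggests.
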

	
	\begin{proof}
		See  \cite[Appendix]{DER07} or \cite[Appendix]{DE08}.
	\end{proof}

	\begin{remark}
		By carefully reviewing the proofs in \cite[Appendix]{DER07}, it can be found that for the constants $c(p)\!>\!0$, $p\!\in\! (1,\infty)$, in Lemma \ref{lem:F_basis} depend continuously on $p\!\in\! (1,\infty)$, i.e., it holds $(p\mapsto c(p))\!\in\! C^0(1,\infty)$.
	\end{remark}
	
	\qquad Eventually, we introduce the compact notation $\rho_F^2:W^{\smash{1,p}}(\Omega)\times W^{\smash{1,p}}(\Omega)\to \mathbb{R}$, for every $v,w\in W^{\smash{1,p}}(\Omega)$
    defined by 
    \begin{align}
        \rho_F^2(v,w)\coloneqq\|F(\nabla v)-F(\nabla w)\|_{L^2(\Omega)^d}^2\,.\label{natural_distance}
    \end{align}
    Since $\rho_F^2:W^{\smash{1,p}}(\Omega)\times W^{\smash{1,p}}(\Omega)\to \mathbb{R}$ arises naturally from the ansatz \eqref{ansatz} and is the optimal distance measure for the $p$-Dirichlet problem because of the two-sided estimate \eqref{two-sided}, it is usually referred to as the \textit{natural distance} in the literature, cf. \cite{DR07,DER07,DE08,kr-phi-ldg}.

    \begin{remark}[$\smash{\varphi}$-Dirichlet problem]
        We could further consider more general convex functions $\phi\in \smash{C^1(\mathbb{R}^d)}$ than the $p$-Dirichlet density. For example, we could consider $\phi\in \smash{C^1(\mathbb{R}^d)}$ to be given as $\phi(a)\coloneqq \varphi(\vert a\vert)$~for~all~$\smash{a\in \mathbb{R}^d}$,
        where $\varphi\!\in\! C^2(0,\infty)$ is a balanced $N$-function,~cf.~\mbox{\cite{DR07,kr-phi-ldg}}, i.e., satisfies the $\Delta_2$- and the $\nabla_2$-condition as well as $\varphi'(a)\sim a\,\varphi''(a)$ uniformly with respect to $a>0$.~In~fact, every result of this section, Section \ref{sec:two_sided_estimates} and Section \ref{sec:penalty} can be generalized to the  $\varphi$-Dirichlet problem, i.e., a non-linear Dirichlet problem with so-called Orlicz-structure. To be more precise, for given right-hand~side~$\smash{f\in W^{\smash{1,\varphi}}_0(\Omega)^*}$, where $\smash{W^{\smash{1,\varphi}}_0(\Omega)\coloneqq \{v\in W^{\smash{1,1}}_0(\Omega)\mid \varphi(\vert \nabla v\vert)\in L^1(\Omega)\}}$ denotes the Orlicz--Sobolev space,
        the $\varphi$-Dirichlet problem seeks for a Orlicz--Sobolev function $u^*\in W^{\smash{1,\varphi}}_0(\Omega)$ such~that for every ${v\in W^{\smash{1,\varphi}}_0(\Omega)}$, it holds
	    \begin{align}
	    	\int_{\Omega}{A(\nabla u^*)\cdot\nabla v\,\textup{d}x}=\langle     f,v\rangle_{W^{\smash{1,\varphi}}_0(\Omega)}\,,\label{variational_phiLaplace}
	    \end{align}
        where $A:\mathbb{R}^d\to \mathbb{R}^d$ for every $a\in \mathbb{R}^d$ is defined by $A(a)\coloneqq D\phi(a)=\smash{\varphi'(\vert a\vert)\frac{a}{\vert a\vert}}$. In this case, the natural distance is defined analogously but with $\smash{F:\mathbb{R}^d\to \mathbb{R}^d}$ for every $a\in \mathbb{R}^d$ is defined by $\smash{F(a)\coloneqq \smash{\sqrt{\vert A(a)\vert\vert a\vert}\frac{a}{\vert a\vert}}}$.
    \end{remark}
	
	\section{Two-Sided Energy Estimates in the natural distance}\label{sec:two_sided_estimates}
	
	\qquad In this section, we establish that the natural distance \eqref{natural_distance} satisfies the ansatz \eqref{ansatz} and, thus, is an optimal distance measure for the $p$-Dirichlet problem, which satisfies the desired two-sided inequality \eqref{two-sided} for the $p$-Dirichlet energy. This, in turn, results in a C\'ea type lemma for the $p$-Dirichlet problem, which forms the basis of an error analysis for approximations deploying the Deep Ritz Method. Unlike in Section \ref{review}, we do not restrict ourselves to homogeneous Dirichlet boundary conditions but examine general subspaces $U$ of $\smash{W^{\smash{1,p}}(\Omega)}$ for which a Poincar\'e inequality applies, such as, e.g., 
	Sobolev functions that vanish on subsets $\Gamma_D$ of the boundary $\partial\Omega$ that have positive $(d-1)$-dimensional Hausdorff measure or that have vanishing integral mean. 
	
	    \begin{theorem}\label{thm:two_sided_estimate}
	        Let $\Omega\subseteq \mathbb{R}^d$, $d\in \mathbb{N}$, be a bounded domain, $f\in W^{\smash{1,p}}(\Omega)^*$, $p\in(1,\infty)$, and $U\subseteq W^{\smash{1,p}}(\Omega)$ a closed subspace such that Poincar\'e's inequality applies, i.e., there exists a constant $c_{\textup{P}}>0$ such that for every $v\in U$, it holds
	        \begin{align}\label{eq:poincare0}
	                \|v\|_{L^p(\Omega)}\leq c_{\textup{P}}\|\nabla v\|_{L^p(\Omega)^d}\,.
	        \end{align}
	        Moreover, let $E:U\to\mathbb{R}$ for every $v\in U$ be defined by
	        \begin{equation*}
	            E(v) \coloneqq \frac1p \int_\Omega |\nabla v|^p\,\mathrm dx - \langle f,v\rangle_{W^{\smash{1,p}}(\Omega)}\,.
	        \end{equation*}
	        Then, the following statements apply:
	        \begin{itemize}[noitemsep,topsep=0.0pt,labelwidth=\widthof{\textbf{(ii)}},leftmargin=!,font=\upshape\bfseries]
	            \item[(i)] There exists a unique minimizer $u^*\in U$ for $E:U\to\mathbb{R}$.
	            \item[(ii)] There exists a constant $c(p)>0$, depending only on $p\in (1,\infty)$ and $d\in \mathbb{N}$ such that for every $v\in U$, it holds
	            \begin{align*}
	                c(p)^{-1}\,\rho_F^2(v,u^*)
	                \leq 
	                E(v) - E(u^*)
	                \leq 
	                c(p)\,\rho_F^2(v,u^*)\,,
	                %\lVert F(\nabla v) - F(\nabla u^*) \rVert^2_{L^2(\Omega)^d}\,,
	            \end{align*}
	            where $F\colon \mathbb{R}^d\to \mathbb{R}^d$ is defined by \eqref{eq:F}. In particular, we can choose $c(p)>0$ such that $(p\mapsto c(p))\in C^0(1,\infty)$.
	        \end{itemize}
	    \end{theorem}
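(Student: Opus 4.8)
The plan is to treat the two items separately, both relying on the convexity structure of $E$ and on Lemma~\ref{lem:F_basis}. For item (i), I would invoke the direct method in the calculus of variations exactly as in Section~\ref{review}: the functional $E\colon U\to\mathbb{R}$ is proper (it is finite, e.g., at $0$), convex (since $a\mapsto\frac1p|a|^p$ is convex and $v\mapsto\langle f,v\rangle$ is linear), and lower semi-continuous with respect to weak convergence in $W^{1,p}(\Omega)$ (convexity plus continuity gives weak lower semi-continuity). The only point requiring the hypothesis on $U$ is \emph{coercivity}: using Poincar\'e's inequality \eqref{eq:poincare0}, one has $\|v\|_{W^{1,p}(\Omega)}\le (1+c_{\textup{P}})\|\nabla v\|_{L^p(\Omega)^d}$ on $U$, so
\begin{align*}
E(v)\ge \tfrac1p\|\nabla v\|_{L^p(\Omega)^d}^p-\|f\|_{W^{1,p}(\Omega)^*}\,(1+c_{\textup{P}})\,\|\nabla v\|_{L^p(\Omega)^d}\longrightarrow\infty
\end{align*}
as $\|v\|_{W^{1,p}(\Omega)}\to\infty$ in $U$. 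Since $U$ is closed, hence weakly closed, the direct method yields a minimizer; strict convexity of $|a|^p$ modulo the null space of the gradient, combined once more with Poincar\'e (which forces $\nabla v=0\Rightarrow v=0$ in $U$), gives uniqueness. The Euler--Lagrange equation $\langle DE(u^*),v\rangle_{W^{1,p}(\Omega)}=0$ for all $v\in U$ follows from Fr\'echet differentiability of $E$.

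For item (ii), I would reproduce the Taylor computation \eqref{eq:taylor}, which is valid verbatim on $U$ because $v-u^*\in U$ and $DE(u^*)$ annihilates $U$:
\begin{align*}
E(v)-E(u^*)=\int_0^1 D^2E(\tau v+(1-\tau)u^*)[v-u^*,v-u^*]\,(1-\tau)\,\mathrm d\tau,
\end{align*}
and since $D^2E$ only involves $\nabla$, the integrand equals $\int_0^1\!\int_\Omega D^2\phi(\nabla u^*+\tau\nabla(v-u^*)) : \nabla(v-u^*)\otimes\nabla(v-u^*)\,(1-\tau)\,\mathrm dx\,\mathrm d\tau$. The core of the argument is then the pointwise equivalence, for fixed $a_0,a_1\in\mathbb{R}^d$,
\begin{align*}
\int_0^1 D^2\phi\big(a_0+\tau(a_1-a_0)\big):(a_1-a_0)^{\otimes 2}\,(1-\tau)\,\mathrm d\tau \;\sim\; \big(|a_1|+|a_0|\big)^{p-2}|a_1-a_0|^2,
\end{align*}
with constants depending only on $p$ and $d$. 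This I would establish using the bound $\min\{1,p-1\}|\xi|^{p-2}|b|^2\le D^2\phi(\xi):b\otimes b\le\max\{1,p-1\}|\xi|^{p-2}|b|^2$ already quoted, reducing the claim to $\int_0^1 |a_0+\tau(a_1-a_0)|^{p-2}(1-\tau)\,\mathrm d\tau\sim(|a_0|+|a_1|)^{p-2}$; this elementary one-variable estimate is standard (it is, in spirit, the content behind Lemma~\ref{lem:F_basis}(ii)) and can be checked by splitting into the cases $p\ge 2$ and $p<2$ and comparing $|a_0+\tau(a_1-a_0)|$ with $|a_0|+|a_1|$ along the segment. Integrating in $x$ and applying Lemma~\ref{lem:F_basis}(ii) with $a=\nabla v(x)$, $b=\nabla u^*(x)$ converts $(|a|+|b|)^{p-2}|a-b|^2$ into $|F(\nabla v)-F(\nabla u^*)|^2$ up to constants $c(p)^{\pm 1}$, and integrating that identity over $\Omega$ produces exactly $c(p)^{-1}\rho_F^2(v,u^*)\le E(v)-E(u^*)\le c(p)\rho_F^2(v,u^*)$.

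The continuous dependence $(p\mapsto c(p))\in C^0(1,\infty)$ follows by tracking constants: the constants $\min\{1,p-1\}$, $\max\{1,p-1\}$, the value $\frac1{p(p+1)}$ of the $\tau$-integral, the constant in the segment estimate above (which depends continuously on $p$, being built from these), and finally the constant in Lemma~\ref{lem:F_basis}(ii), which is continuous in $p$ by the Remark following that lemma, all compose continuously. The main obstacle is purely that segment integral equivalence with $p$-explicit, $d$-independent constants — everything else is assembling ingredients already in the paper; I would be careful that the case $p<2$ (where $|\xi|^{p-2}$ blows up as $\xi\to 0$) still yields an \emph{integrable} singularity in $\tau$ and a finite two-sided bound, which is precisely why the $F$-formulation is used.
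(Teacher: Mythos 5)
Your treatment of (i) is fine and matches the paper (direct method; Poincar\'e gives coercivity; strict convexity plus Poincar\'e gives uniqueness). For (ii) you follow the same overall route as the paper — second-order Taylor expansion with integral remainder, a pointwise estimate along the segment, then Lemma~\ref{lem:F_basis}\,(ii) — but there is a genuine gap at the first step: you assert that the Taylor computation \eqref{eq:taylor} ``is valid verbatim on $U$''. It is not. The functional $E$ is \emph{not} twice continuously Fr\'echet differentiable on $W^{1,p}(\Omega)$: for $p\in(1,2)$ the density $\phi(a)=\frac1p|a|^p$ fails to be $C^2$ at the origin and $|D^2\phi(a)|\sim|a|^{p-2}$ blows up there, so the expression $D^2E(\tau v+(1-\tau)u^*)[v-u^*,v-u^*]$ need not even be defined (the integrand $|\tau\nabla v+(1-\tau)\nabla u^*|^{p-2}|\nabla(v-u^*)|^2$ is not obviously in $L^1(\Omega)$), let alone admit the Taylor formula with integral remainder; and even for $p\ge 2$, two-fold continuous Fr\'echet differentiability of $E$ on $W^{1,p}(\Omega)$ is a claim that requires an argument, not something one gets for free. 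This is exactly the point the paper spends most of its proof on: it introduces the $C^2$ regularizations $\phi_\varepsilon(a)=\frac1p(\varepsilon^2+|a|^2)^{p/2}$, applies Taylor's formula to the regularized energies $E^\varepsilon$ (which \emph{are} twice continuously Fr\'echet differentiable), and then passes to the limit $\varepsilon\to 0$ via dominated convergence, using the growth bounds on $\phi_\varepsilon$, $D\phi_\varepsilon$, $D^2\phi_\varepsilon$ and the optimality condition $DE(u^*)=0$ in $U^*$, to arrive at the identity $E(v)-E(u^*)=\int_\Omega\int_0^1 D^2\phi(\tau\nabla v+(1-\tau)\nabla u^*):\nabla(v-u^*)\otimes\nabla(v-u^*)\,(1-\tau)\,\mathrm d\tau\,\mathrm dx$. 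Your closing caveat about the integrable singularity in $\tau$ concerns only the finite-dimensional segment estimate (the paper's Lemma~\ref{lem:point-wise_estimate}), not this functional-analytic justification, so as written the proposal is missing the central part of the proof.

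The remainder of your sketch is consistent with the paper: the pointwise equivalence you state is precisely Lemma~\ref{lem:point-wise_estimate}, and your plan (two-sided bounds on $D^2\phi$ plus a segment-integral estimate $\int_0^1|\tau a+(1-\tau)b|^{p-2}(1-\tau)\,\mathrm d\tau\sim(|a|+|b|)^{p-2}$, then Lemma~\ref{lem:F_basis}\,(ii)) is how the paper proceeds — the lower bound there is obtained via a Jensen-plus-norm-equivalence trick and the upper bound by citing the corresponding lemma from the finite element literature — so that part would go through once the Taylor identity is actually justified, e.g.\ by reproducing the regularization argument.
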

	    
	    \begin{remark}
	            For $p\!=\!2$, we have $c(p)\!=\!\frac{1}{2}$ and equality, i.e., $\smash{E(v)\!-\!E(u^*)\! = \!\frac{1}{2}\lVert \nabla v \!-\!\nabla u^* \rVert_{L^2(\Omega)^d}^2\! =\! \rho_F^2(v,u^*)}$
	            for all $v\!\in\! U$.
	    \end{remark}
	    \begin{remark}\label{rmk:two_sided_examples}
	            For the closed subspace $U$ of $W^{\smash{1,p}}(\Omega)$, we have, e.g., in mind $\smash{W^{1,p}_{\Gamma_D}(\Omega)}\!\coloneqq \!\{v\!\in\! W^{\smash{1,p}}(\Omega)\mid v\!=\!0\textup{ in }\Gamma_D\}$, where $\Gamma_D\subseteq \partial \Omega$ satisfies $\mathscr{H}^{d-1}(\Gamma_D)>0$, or $W^{\smash{1,p}}(\Omega)/\mathbb{R}\coloneqq \{v\in W^{\smash{1,p}}(\Omega)\mid \smash{\fint_{\Omega}{v\,\mathrm{d}x}}=0\}$, or closed subsets of these spaces.
	    \end{remark}
	    
	    \begin{remark}\label{rmk:poincare_wirtinger}
	           Theorem \ref{thm:two_sided_estimate} also applies for $U=W^{\smash{1,p}}(\Omega)$ if $\smash{f}\in W^{\smash{1,p}}(\Omega)^*$ vanishes on constants  and if we drop the uniqueness in point (i). More precisely, for $U=W^{\smash{1,p}}(\Omega)/\mathbb{R}$, Theorem \ref{thm:two_sided_estimate} already implies the existence of a minimizer $u^*\in W^{\smash{1,p}}(\Omega)/\mathbb{R}$ of $E: W^{\smash{1,p}}(\Omega)/\mathbb{R}\to \mathbb{R}$, cf. Remark \ref{rmk:two_sided_examples}. Since $f\in W^{\smash{1,p}}(\Omega)^*$~vanishes~on~constants, this implies  $E(v)=\smash{E(v-\fint_{\Omega}{v\,\mathrm{d}x})}\ge E(u^*)$ for all  $v\in W^{\smash{1,p}}(\Omega)$, i.e., $u^*\in W^{\smash{1,p}}(\Omega)/\mathbb{R}$ is minimial for ${E: W^{\smash{1,p}}(\Omega)\to \mathbb{R}}$. In particular, for every $c\in \mathbb{R}$, $u^*+c\in W^{\smash{1,p}}(\Omega)$ is minimal for $E: W^{\smash{1,p}}(\Omega)\to \mathbb{R}$,~due~to~${E(u^*+c)=E(u^*)}$.
	    \end{remark}
	    
	    \qquad An immediate consequence of Theorem \ref{thm:two_sided_estimate}
        is the following C\'ea type lemma.
        
        \begin{corollary}[C\'ea Type Lemma]\label{lemma:cea_lemma}
	            Let the assumptions of Theorem~\ref{thm:two_sided_estimate} be satisfied. Moreover, let $M\subseteq U$~be~an~arbitrary subset. \!Then, there exists a constant $c(p)\!>\!0$, depending only on $p\!\in\! (1,\infty)$ and $d\!\in\! \mathbb{N}$, such that for every~${v\!\in\! M}$,~it~holds
	            \begin{equation*}
	                \rho_F^2(v,u^*) \leq  c(p)\,\big(\delta + \inf_{\tilde v \in M}\rho_F^2(\tilde v,u^*)\big)\,,
	            \end{equation*}
	            where $\delta\coloneqq \delta(v)\coloneqq   E(v) - \inf_{\tilde v \in M}E(\tilde v)$. In particular, we can choose $c(p)>0$ such that $(p\mapsto c(p))\in C^0(1,\infty)$.
	    \end{corollary}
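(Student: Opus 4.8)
The plan is to derive the C\'ea type estimate directly from the two-sided energy estimate in Theorem~\ref{thm:two_sided_estimate}(ii) by a short sandwiching argument. Fix $v\in M$. The left inequality of Theorem~\ref{thm:two_sided_estimate}(ii) gives
\begin{equation*}
  \rho_F^2(v,u^*)\leq c(p)\,\big(E(v)-E(u^*)\big)\,.
\end{equation*}
Now I would split $E(v)-E(u^*)$ as
\begin{equation*}
  E(v)-E(u^*) = \big(E(v)-\inf_{\tilde v\in M}E(\tilde v)\big) + \big(\inf_{\tilde v\in M}E(\tilde v)-E(u^*)\big) = \delta(v) + \big(\inf_{\tilde v\in M}E(\tilde v)-E(u^*)\big)\,.
\end{equation*}

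The remaining term is controlled by the \emph{right} inequality of Theorem~\ref{thm:two_sided_estimate}(ii): for every $\tilde v\in M$ one has $E(\tilde v)-E(u^*)\leq c(p)\,\rho_F^2(\tilde v,u^*)$, and taking the infimum over $\tilde v\in M$ yields
\begin{equation*}
  \inf_{\tilde v\in M}E(\tilde v)-E(u^*) = \inf_{\tilde v\in M}\big(E(\tilde v)-E(u^*)\big) \leq c(p)\,\inf_{\tilde v\in M}\rho_F^2(\tilde v,u^*)\,.
\end{equation*}
Combining the three displays, and absorbing the various factors of $c(p)$ into a single constant (still depending only on $p\in(1,\infty)$ and $d\in\mathbb{N}$, and still continuous in $p$ since the constant of Theorem~\ref{thm:two_sided_estimate} is), gives
\begin{equation*}
  \rho_F^2(v,u^*)\leq c(p)\,\big(\delta(v)+\inf_{\tilde v\in M}\rho_F^2(\tilde v,u^*)\big)\,,
\end{equation*}
which is the claim. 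The continuity statement $(p\mapsto c(p))\in C^0(1,\infty)$ is inherited verbatim from Theorem~\ref{thm:two_sided_estimate}(ii), since taking products and sums of finitely many continuous positive functions preserves continuity.

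There is essentially no obstacle here: the content is entirely contained in Theorem~\ref{thm:two_sided_estimate}, and this corollary is a purely formal manipulation that replaces the unavailable Galerkin orthogonality by the co-coercivity sandwich $\rho_F^2\lesssim E(\cdot)-E(u^*)\lesssim \rho_F^2$. The only minor point to be careful about is that $\delta(v)$ could a priori be negative-looking if $\inf_{\tilde v\in M}E(\tilde v)$ is not attained, but in fact $\delta(v)=E(v)-\inf_{\tilde v\in M}E(\tilde v)\geq 0$ for $v\in M$, and the infimum $\inf_{\tilde v\in M}E(\tilde v)\geq E(u^*)$ since $u^*$ minimizes $E$ over all of $U\supseteq M$; neither sign issue affects the chain of inequalities above, so the estimate holds as stated for every $v\in M$.
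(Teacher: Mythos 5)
Your argument is correct and coincides with the paper's own proof: fix $v\in M$, apply the lower bound of Theorem~\ref{thm:two_sided_estimate}\,(ii), split $E(v)-E(u^*)$ as $\delta(v)+\bigl(\inf_{\tilde v\in M}E(\tilde v)-E(u^*)\bigr)$, and control the second term via the upper bound after taking the infimum over $\tilde v\in M$. Your additional remarks on the sign of $\delta(v)$ and the continuity of $p\mapsto c(p)$ are accurate but not needed beyond what the paper already records.
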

	    
	     \begin{remark}
	            For the conformal subset $M$ of $U$, we have, e.g., in mind a set of all neural network realizations $\smash{\mathcal{F}_{\Theta}^\rho}$ of a certain architecture $\smash{\Theta\!\subseteq \!\mathbb{R}^N}$, $N\!\in\! \mathbb{N}$, and activation function $\rho\!:\!\mathbb{R}\!\to\! \mathbb{R}$ or modifications of this~set,~e.g., using multiplicative weights to enforce homogeneous Dirichlet boundary conditions~on~$\Gamma_D$~or~additive~integral mean corrections to enforce a vanishing integral mean constraint.
	    \end{remark}
	    
	    \begin{proof}[Proof of Corollary \eqref{lemma:cea_lemma}]\let\qed\relax
	        Let $v\in M$ be fixed, but arbitrary. Then, by referring to Theorem \ref{thm:two_sided_estimate}, we  find that
		\begin{align*}
			c(p)^{-1}\rho_F^2(v,u^*)
			&\leq E(v)-\inf_{\tilde{v}\in M}{E(\tilde{v})}+\inf_{\tilde{v}\in M}{E(\tilde{v})}-E(u^*)
			\\&\leq
			\delta+c(p)\inf_{\tilde{v}\in M}{\rho_F^2(\tilde{v},u)}\,.\tag*{$\square$}
		\end{align*}
	    \end{proof}
	    
	   \qquad The proof of Theorem \ref{thm:two_sided_estimate} is based on the justification of the Taylor expansion \eqref{eq:taylor} and, then, to establish the equivalence \eqref{ansatz}. To trace the later, in the following lemma, we first fall back to the finite dimensional~case.
	    
	    \begin{lemma}[Point-wise Estimate]\label{lem:point-wise_estimate}
	            Let $p\in \left(1,\infty\right)$ and $d\in \mathbb{N}$. Then, there exits a constant $c(p)>0$, depending only on $d\in \mathbb{N}$ and $p\in\left(1,\infty\right)$, such that for every $a,b\in \mathbb{R}^d$ with $\vert a\vert+\vert b\vert>0$, we have that
		\begin{align*}
			c(p)^{-1}\,\vert F(a)-F(b)\vert^2\leq \int_0^1{D^2\phi(\tau a+(1-\tau) b):(a-b)\otimes(a-b)\ (1-\tau)\,\textup{d} \tau}\leq c(p)\,\vert F(a)-F(b)\vert^2\,,
		\end{align*}
		where $\phi\in C^1(\mathbb{R}^d)\cap C^2(\mathbb{R}^d\setminus\{0\})$, defined by $\phi(a)\coloneqq \smash{\frac{1}{p}}\vert a\vert^p$ for all $a\in \mathbb{R}^d$, denotes the $p$-Dirichlet density. In particular, we can choose $c(p)>0$ such that $(p\mapsto c(p))\in C^0(1,\infty)$.
	    \end{lemma}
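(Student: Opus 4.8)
The plan is to reduce the asserted two-sided estimate to a one-dimensional integral equivalence and then to invoke Lemma~\ref{lem:F_basis}(ii). First, I would use the explicit form
\begin{align*}
	D^2\phi(c):v\otimes v=\vert c\vert^{p-2}\vert v\vert^2+(p-2)\vert c\vert^{p-4}(c\cdot v)^2\qquad\text{for }c\in\mathbb{R}^d\setminus\{0\},\ v\in\mathbb{R}^d,
\end{align*}
whose scalar factor $1+(p-2)\tfrac{(c\cdot v)^2}{\vert c\vert^2\vert v\vert^2}$ lies in $[\min\{1,p-1\},\max\{1,p-1\}]$ since $\tfrac{(c\cdot v)^2}{\vert c\vert^2\vert v\vert^2}\in[0,1]$; this is exactly the pointwise bound on $D^2\phi$ recalled in Section~\ref{review}. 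Choosing $v=a-b$, $c=(1-\sigma)a+\sigma b$, and substituting $\sigma=1-\tau$, the integral in the statement equals $\int_0^1 D^2\phi((1-\sigma)a+\sigma b):(a-b)\otimes(a-b)\,\sigma\,\mathrm{d}\sigma$, whose integrand is well-defined for a.e.\ $\sigma$ because $(1-\sigma)a+\sigma b$ vanishes for at most one $\sigma$ when $R:=\vert a\vert+\vert b\vert>0$. Hence that integral is squeezed between $\min\{1,p-1\}$ and $\max\{1,p-1\}$ times $\vert a-b\vert^2\int_0^1\vert(1-\sigma)a+\sigma b\vert^{p-2}\,\sigma\,\mathrm{d}\sigma$, and so, by Lemma~\ref{lem:F_basis}(ii), the whole statement reduces to proving
\begin{align*}
	\int_0^1\vert(1-\sigma)a+\sigma b\vert^{p-2}\,\sigma\,\mathrm{d}\sigma\;\sim\;R^{p-2}
\end{align*}
uniformly in $a,b\in\mathbb{R}^d$ with $R>0$, with comparison constants continuous in $p\in(1,\infty)$. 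Combining this with the above squeeze and with Lemma~\ref{lem:F_basis}(ii) (whose constants depend continuously on $p$, cf.\ the Remark thereafter) then gives the claim with $c(p)>0$ continuous on $(1,\infty)$ --- the lower comparison constant degenerating like $\min\{1,p-1\}$ as $p\to1^+$, which is admissible since continuity is only required on the open interval.

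For the lower bound of the scalar integral I would, without loss of generality, assume $\vert a\vert\ge\vert b\vert$ (otherwise integrate over $[\tfrac34,1]$ in place of $[0,\tfrac14]$), so that $\vert a\vert\ge R/2$ and $\vert b-a\vert\le2\vert a\vert$. For $\sigma\in[0,\tfrac14]$ one then has $\vert(1-\sigma)a+\sigma b\vert\ge\vert a\vert-\sigma\vert b-a\vert\ge\vert a\vert-\tfrac14(2\vert a\vert)=\vert a\vert/2\ge R/4$, and trivially $\vert(1-\sigma)a+\sigma b\vert\le R$; whether $p\gtrless 2$, this gives $\vert(1-\sigma)a+\sigma b\vert^{p-2}\ge\min\{1,4^{2-p}\}\,R^{p-2}$ on $[0,\tfrac14]$, and integrating against $\sigma$ yields the lower bound with the (continuous) constant $\tfrac{1}{32}\min\{1,4^{2-p}\}$.

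For the upper bound, the case $p\ge2$ is immediate from $\vert(1-\sigma)a+\sigma b\vert\le R$, giving the bound with constant $\tfrac12$. The case $1<p<2$ is the main obstacle, because there the integrand $\vert\cdot\vert^{p-2}$ is large wherever the segment from $a$ to $b$ approaches the origin; I would resolve it by a case split on $\ell:=\vert b-a\vert$. If $\ell\le R/2$, then from $\vert(1-\sigma)a+\sigma b\vert\ge\max\{\vert a\vert-\sigma\ell,\ \vert b\vert-(1-\sigma)\ell\}$ and the fact that the two quantities in the maximum sum to $R-\ell\ge R/2$, one gets $\vert(1-\sigma)a+\sigma b\vert\ge R/4$ for all $\sigma\in[0,1]$, whence the upper bound with constant $4^{2-p}/2$. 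If $\ell>R/2$, I would use the exact identity $\vert(1-\sigma)a+\sigma b\vert^2=\ell^2(\sigma-\sigma^*)^2+m^2$, where $\sigma^*$ minimizes $\sigma\mapsto\vert(1-\sigma)a+\sigma b\vert$ over $\mathbb{R}$ and $m\ge0$ is the distance of the origin to the affine line through $a$ and $b$; dropping $m^2$ and using $p-2<0$ gives $\vert(1-\sigma)a+\sigma b\vert^{p-2}\le\ell^{p-2}\vert\sigma-\sigma^*\vert^{p-2}$, and since $\vert\sigma^*\vert\le\vert a\vert/\ell<2$ while $p-2>-1$, an elementary estimate bounds $\int_0^1\vert\sigma-\sigma^*\vert^{p-2}\,\sigma\,\mathrm{d}\sigma$ by a constant continuous in $p$; as $\ell^{p-2}\le(R/2)^{p-2}$ (again because $p-2<0$) this yields the upper bound, and en route shows that the integral is finite. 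The scalar equivalence then holds with constants continuous in $p\in(1,\infty)$, and, by the reduction of the first paragraph, so does the lemma. (The scalar equivalence and the identity $\vert(1-\sigma)a+\sigma b\vert^2=\ell^2(\sigma-\sigma^*)^2+m^2$ underlying it are classical in the $p$-Laplacian literature, cf.\ \cite[Appendix]{DER07}; the only additional point here is the harmless weight $\sigma$.)
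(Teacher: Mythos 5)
Your proposal is correct, but it follows a genuinely different route than the paper for the core one-dimensional estimate. Both arguments start from the same two ingredients -- the pointwise bounds $\min\{1,p-1\}\,\vert c\vert^{p-2}\vert v\vert^2\leq D^2\phi(c):v\otimes v\leq \max\{1,p-1\}\,\vert c\vert^{p-2}\vert v\vert^2$ and Lemma~\ref{lem:F_basis}~(ii) to convert $(\vert a\vert+\vert b\vert)^{p-2}\vert a-b\vert^2$ into $\vert F(a)-F(b)\vert^2$ -- but they treat the weighted line integral differently. The paper obtains the lower bound by writing $\vert \tau a+(1-\tau)b\vert^{p-2}\ge \vert \tau a+(1-\tau)b\vert^{p}(\vert a\vert+\vert b\vert)^{-2}$, applying Jensen's inequality with respect to the measure $(1-\tau)\,\mathrm{d}\tau$, and then invoking equivalence of norms on $\mathbb{R}^d\times\mathbb{R}^d$ to compare $\int_0^1\vert\tau a+(1-\tau)b\vert(1-\tau)\,\mathrm{d}\tau$ with $\vert a\vert+\vert b\vert$; the upper bound is then outsourced to \cite[Appendix, Lemma 6.1]{DER07}, which provides exactly the estimate $\int_0^1\vert\tau a+(1-\tau)b\vert^{p-2}\,\mathrm{d}\tau\leq c(p)(\vert a\vert+\vert b\vert)^{p-2}$. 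You instead prove the full two-sided equivalence of the weighted scalar integral with $(\vert a\vert+\vert b\vert)^{p-2}$ by hand: the lower bound by restricting to a quarter of the parameter interval near the larger of $a,b$, where the segment stays at distance at least $(\vert a\vert+\vert b\vert)/4$ from the origin, and the upper bound (the only delicate case being $1<p<2$) by the dichotomy $\vert a-b\vert\lessgtr(\vert a\vert+\vert b\vert)/2$ together with the distance-to-line identity, which also settles integrability of the singular integrand. What your version buys is self-containedness and explicit constants (your treatment of the singular regime replaces the citation to \cite{DER07} by an elementary computation, and your handling of the weight $\sigma$ is done directly rather than via Jensen); what the paper's version buys is brevity, since the hardest half of the scalar estimate is already available in the literature. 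Your bookkeeping of the $p$-dependence (constants continuous on $(1,\infty)$, degenerating only as $p\to1^+$) is consistent with what the paper claims, so the continuity assertion goes through as well.
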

	    
	    \begin{proof}
	        We introduce the abbreviation $\eta^2\hspace{-0.1em}:\hspace{-0.1em}\mathbb{R}^d\times \mathbb{R}^d\setminus\{(0,0)^\top\}\hspace{-0.1em}\to\hspace{-0.1em} \mathbb{R}_{\ge 0}$, for every $a,b\hspace{-0.1em}\in\hspace{-0.1em} \mathbb{R}^d$ with $\vert a\vert+\vert b\vert\hspace{-0.1em}>\hspace{-0.1em}0$~defined~by
		\begin{align*}
			\eta^2(a,b)\coloneqq \int_0^1{D^2\phi(\tau a+(1-\tau) b):(a-b)\otimes(a-b)\ (1-\tau)\,\textrm d\tau}\,.
		\end{align*}
		Using $D^2\phi(a):b\otimes b\ge \min\{1,p-1\} \vert a\vert^{p-2}\vert b\vert^2$ for all $a\in \mathbb{R}^d\setminus\{0\}$, $b\in \mathbb{R}^d$ (cf. \cite[p. 73, ineq.~(1.35)]{Ru04}),  for every $a,b\in \mathbb{R}^d$ with $\vert a\vert+\vert b\vert>0$, we obtain  
		\begin{align}
			\eta^2(a,b)\ge \min\{1,p-1\}\int_0^1{\vert \tau a+(1-\tau) b\vert^{p-2}\vert a-b\vert^2\,(1-\tau)\,\textrm d\tau}\,.
			\label{lem:point-wise_estimate.1}
		\end{align}
		Apart from that, with the help of Jensen's inequality applied with respect to the measure $d\mu=(1-\tau)d\tau$, i.e., in particular, we use that $d\mu([0,1])=\frac{1}{2}$,  for every $a,b\in \mathbb{R}^d$ with $\vert a\vert+\vert b\vert>0$, we observe that
		\begin{align}
			\begin{aligned}
				\left(2\int_0^1{\vert \tau a+(1-\tau) b\vert(1-\tau)\,\textrm d\tau}\right)^p
				=\left(\int_0^1{\vert \tau a+(1-\tau) b\vert\,\textrm d\mu(\tau)}\right)^p
				\leq
				\int_0^1{\vert \tau a+(1-\tau) b\vert^p\,\textrm d\mu(\tau)}\,.
			\end{aligned}\label{lem:point-wise_estimate.2}
		\end{align}
		Then, we continue in \eqref{lem:point-wise_estimate.1} by incorporating \eqref{lem:point-wise_estimate.2} and, thus, find that for every $a,b\!\in\! \mathbb{R}^d$ with $\vert a\vert+\vert b\vert\!>\!0$,~it~holds
		\begin{align}
			\begin{aligned}
				\eta^2(a,b)&\ge\min\{1,p-1\}\int_0^1{\vert \tau a +(1-\tau) b\vert^p\,(1-\tau)\,\textrm d\tau}\frac{\vert a-b\vert^2}{(\vert a\vert+\vert b\vert)^2}\\&\ge \min\{1,p-1\}\left(2\int_0^1{\vert \tau a +(1-\tau) b\vert\, (1-\tau)\,\textrm d\tau}\right)^p\frac{\vert a-b\vert^2}{(\vert a\vert+\vert b\vert)^2}\,.
			\end{aligned}\label{lem:point-wise_estimate.3}
		\end{align}
		There exists a constant $c>0$, depending only on $d\in \mathbb{N}$, such that  for every $a,b\in \mathbb{R}^d$, it holds
		\begin{align}
			2\int_0^1{\vert \tau a+(1-\tau) b\vert\ (1-\tau)\,\textrm d\tau}\ge c\,(\vert a\vert +\vert b\vert)\,,\label{lem:point-wise_estimate.4}
		\end{align}
		which readily follows from the fact the both sides define norms on $\mathbb{R}^d\times \mathbb{R}^d$ and, thus, need to be equivalent. Using \eqref{lem:point-wise_estimate.4} in \eqref{lem:point-wise_estimate.3}, for every $a,b\in \mathbb{R}^d$ with $\vert a\vert+\vert b\vert>0$, we deduce  that 
		\begin{align*}
			\begin{aligned}
				\eta^2(a,b)
				\ge \min\{1,p-1\}\,c^p\,(\vert a\vert+\vert b\vert)^{p-2}\,\vert a-b\vert^2\,.
			\end{aligned}
		\end{align*}
		Eventually, resorting to Lemma \ref{lem:F_basis}, we conclude the existence of a constant $c(p)>0$, depending only on $d\in \mathbb{N}$ and $p\in (1,\infty)$, with $(p\mapsto c(p))\in C^0(1,\infty)$, such that for every $a,b\in \mathbb{R}^d$ with $\vert a\vert+\vert b\vert>0$,  it holds 
		\begin{align*}
			\eta^2(a,b)\ge c(p)^{-1}\,\vert F(a)-F(b)\vert^2\,.
		\end{align*}
		On the other hand, since also $D^2\phi(a):b\otimes b\leq \max\{1,p-2\}\vert a\vert^{p-2}\vert b\vert^2$ for all $a\in \mathbb{R}^d\setminus\{0\}$, $b\in \mathbb{R}^d$, which, again, follows very similarly to \cite[p. 73, ineq. (1.35)]{Ru04},
		we find that 
		\begin{align}
			\begin{aligned}
				\eta^2(a,b)&\leq \max\{1,p-2\}\int_0^1{\vert \tau a+(1-\tau )b \vert^{p-2}\ (1-\tau)\,\textrm d\tau}\,\vert a-b\vert^2\,.
			\end{aligned}\label{lem:point-wise_estimate.5}
		\end{align}
		Since, appealing to \cite[Appendix, Lemma 6.1]{DER07}, there is a constant $c(p)>0$, depending only on $d\in \mathbb{N}$ and $p\in (1,\infty)$, with $(p\mapsto c(p))\in C^0(1,\infty)$, such that for every $a,b\in \mathbb{R}^d$ with $\vert a\vert+\vert b\vert>0$, it holds
		\begin{align*}
			\int_0^1{\vert \tau a +(1-\tau) v\vert^{p-2} \,\textrm d\tau}\leq c(p)\,(\vert a\vert +\vert b \vert)^{p-2}\,,
		\end{align*} we deduce from \eqref{lem:point-wise_estimate.5} that  for every $a,b\in \mathbb{R}^d$ with $\vert a\vert+\vert b\vert>0$, it holds $\eta^2(a,b)\leq \max\{1,p-2\}c(p)(\vert a\vert+\vert b\vert)^{p-2}\vert a-b\vert^2$, which, resorting again to Lemma \ref{lem:F_basis}, eventually, completes the proof of Lemma \ref{lem:point-wise_estimate}.
	    \end{proof}
	    
	    \qquad Now we have it all at our disposal to prove Theorem~\ref{thm:two_sided_estimate}.
	    
	    \begin{proof}[Proof of Theorem~\ref{thm:two_sided_estimate}]
	     \textbf{ad (i).} The $p$-Dirichlet energy $E:U\to \mathbb{R}$ is proper, strictly convex, continuous~and,~thus, lower semi-continuous. In addition, the validity of Poincar\'e's inequality \eqref{eq:poincare0}, in a standard manner, i.e., in combination with the $\varepsilon$-Young inequality, cf. \eqref{eq:coercive0} or \eqref{eq:coercive0t},
	     guarantees the weak coercivity of $E:U\to \mathbb{R}$, so that the direct method in the calculus of variations yields, cf. \cite{Dac08}, the existence of a unique minimizer $u^*\in U$ of $E:U\to \mathbb{R}$.
	    
	    \qquad \textbf{ad (ii).} We proceed similar to \cite[Lemma 16.]{DK08}. 
		Again, we employ the notation $\phi\in C^1(\mathbb{R}^d)\cap C^2(\mathbb{R}^d\setminus\{0\})$, defined by $\phi(a)\coloneqq \smash{\frac{1}{p}}\vert a\vert^p$ for all $a\in \mathbb{R}^d$, for the $p$-Dirichlet density. Since $D\phi\in C^0(\mathbb{R}^d)^d$ with $\vert D\phi(a)\vert =\vert a\vert^{p-1} $ for all $a\in \mathbb{R}^d$, %appealing to \cite[Proposition 42.16]{ZeiIII}, 
		the $p$-Dirichlet energy is continuously Frech\'et differentiable with
		\begin{align*}
		    \langle DE(u),v\rangle_U\coloneqq \int_{\Omega}{D\phi(\nabla u)\cdot\nabla v\,\textup{d}x}-\langle f, v\rangle_{W^{\smash{1,p}}(\Omega)}\,.
		\end{align*}
		for all $u,v\!\in\! U$. In particular, due to the minimality of $u^*\!\in\! U$, we have that $DE(u^*)\!=\!0$ in $U^*$,~i.e.,~for~every~${v\!\in\! U}$, it holds
		\begin{align}\label{lem:strong_coercivity_p-laplacian.0.0}
		    \langle DE(u),v\rangle_U=0\,.
		\end{align}
		However, $E:U\to \mathbb{R}$ is not twice  continuously Frech\'et differentiable.~Therefore,~we~consider~regularizations $\smash{(\phi_\varepsilon)_{\varepsilon>0}\subseteq C^2(\mathbb{R}^d)}$, defined by $\smash{\phi_\varepsilon(a)\coloneqq \smash{\frac{1}{p}}(\varepsilon^2+\vert a\vert^2)^{\smash{\frac{p}{2}}}}$ for every $\varepsilon>0$ and $a\in \mathbb{R}^d$, having~the~following~properties:
		\begin{itemize}[noitemsep,topsep=0.0pt,labelwidth=\widthof{($\gamma$)},leftmargin=!,font=\upshape\bfseries]
			\item[($\alpha$)] $\phi_\varepsilon(a)\to \phi(a)$ $(\varepsilon\to 0)$ for all $a\in \mathbb{R}^d$ and $\phi_\varepsilon(a)\leq 2^{\smash{\frac{p}{2}}}/p(\vert a\vert^p+\varepsilon^p)$ for all $a\in \mathbb{R}^d$,
			\item[($\beta$)] $(D \phi_\varepsilon)(a)\to (D\phi)(a)$ $(\varepsilon\to 0)$ for all $a\in \mathbb{R}^d$ and $\vert (D \phi_\varepsilon)(a)\vert\leq 2^{\smash{\frac{p-1}{2}}}(\vert a\vert^{p-1}+\varepsilon^{p-1})$ for all $a\in \mathbb{R}^d$,
			\item[($\gamma$)] $(D^2 \phi_\varepsilon)(a)\to (D^2\phi)(a)$ $(\varepsilon\to 0)$ for all $a\in \mathbb{R}^d\setminus\{0\}$ and $\vert (D^2 \phi_\varepsilon)(a)\vert \leq (p-1)2^{\smash{\frac{p-2}{2}}}(\varepsilon^{p-2}+\vert a\vert^{p-2})$ for all $a\in \mathbb{R}^d$.
		\end{itemize}
		Inasmuch as $(\phi_\varepsilon)_{\varepsilon>0}\hspace*{-0.1em}\subseteq \hspace*{-0.1em} C^2(\mathbb{R}^d)$ satisfies ($\alpha$), ($\beta$) and ($\gamma$), it is easily checked that for every $\varepsilon\hspace*{-0.1em}>\hspace*{-0.1em}0$, the regularized $p$-Dirichlet energy $E^\varepsilon:U\to \mathbb{R}$, for every $v\in U$ defined by 
		\begin{align*}
			E^\varepsilon(v)\coloneqq \int_{\Omega}{\phi_\varepsilon(\nabla v)\,\textrm d x}-\langle f, v\rangle_{W^{\smash{1,p}}(\Omega)}\,,
		\end{align*}
		is twice continuously Frech\'et--differentiable. In consequence, using Taylor's formula and Fubini's theorem, for every $\varepsilon> 0$ and $v\in U$,~we~obtain
		\begin{align}
			E^\varepsilon(v)-E^\varepsilon(u^*)&=\langle DE^\varepsilon(u^*),v-u^*\rangle_U+
			\int_0^1{D^2E^\varepsilon(\tau v+(1-\tau) u^*)\ [v-u^*,v-u^*]\ (1-\tau)\,\textrm d\tau}\label{lem:strong_coercivity_p-laplacian.0}\\&
			=\int_{\Omega}{D\phi_\varepsilon(\nabla u^*)\cdot\nabla (v-u^*)\,\textrm d x}+\int_0^1{\!\!\int_{\Omega}{D^2\phi_\varepsilon(\tau \nabla v+(1-\tau) \nabla u^*)\!:\!\nabla(v-u^*)\!\otimes\!\nabla (v-u^*)\,\textrm d x}\,( 1-\tau)\,\textrm d\tau}\notag\\&
			=\int_{\Omega}{D\phi_\varepsilon(\nabla u^*)\cdot\nabla (v-u^*)\,\textrm d x}+\int_{\Omega}{\int_0^1{\!\!D^2\phi_\varepsilon(\tau \nabla v+(1-\tau) \nabla u^*)\!:\!\nabla(v-u^*)\!\otimes\!\nabla (v-u^*)\,\textrm d x}\,( 1-\tau)\,\textrm d\tau}\,.\notag
		\end{align}
		Next, given both ($\alpha$), ($\beta$) and ($\gamma$), it is allowed to apply Lebesgue's dominated convergence theorem in \eqref{lem:strong_coercivity_p-laplacian.0}. Hence, by passing for $\varepsilon\to 0$ in \eqref{lem:strong_coercivity_p-laplacian.0}, using \eqref{lem:strong_coercivity_p-laplacian.0.0} in doing so, for every $v\in U$,~we~find~that
		\begin{align}
			%\begin{aligned}
				E(v)-E(u^*)&=\int_{\Omega}{D\phi(\nabla u^*)\cdot\nabla (v-u^*)\,\textrm d x}+\int_{\Omega}{\int_0^1{D^2\phi(\tau \nabla v+(1-\tau) \nabla u^*):\nabla(v-u^*)\otimes\nabla (v-u^*)\,(1-\tau)\,\textrm d\tau}\,\textrm d x }\notag\\&=\langle DE(u^*),v-u^*\rangle_U+\int_{\Omega}{\int_0^1{D^2\phi(\tau \nabla v+(1-\tau) \nabla u^*):\nabla(v-u^*)\otimes\nabla (v-u^*)\,(1-\tau)\,\textrm d\tau}\,\textrm d x }\label{lem:strong_coercivity_p-laplacian.1}%+\int_0^1{D^2E(\tau v+(1-\tau) u^*)\ [v-u^*,v-u^*]\ (1-\tau)\,\textrm d\tau}
				\\&=
				\int_{\Omega}{\int_0^1{D^2\phi(\tau \nabla v+(1-\tau) \nabla u^*):\nabla(v-u^*)\otimes\nabla (v-u^*)\,(1-\tau)\,\textrm d\tau}\,\textrm d x }\,.\notag
			%\end{aligned}\label{lem:strong_coercivity_p-laplacian.1}
		\end{align}
		Apart from that, resorting~to~Lemma~\ref{lem:point-wise_estimate}, we deduce  the existence of a constant $c(p)>0$,~depending~only~on ${d\in  \mathbb{N}}$ and ${p\in (1,\infty)}$, with ${(p \mapsto c(p))\in C^0(1,\infty)}$, such that for every $v\in U$, it holds
		\begin{align}
			\begin{aligned}
				c(p)^{-1}\rho_F^2(v,u^*)\leq 	\int_{\Omega}{\int_0^1{D^2\phi(\tau \nabla v+(1-\tau) \nabla u^*):\nabla(v-u^*)\otimes\nabla (v-u^*)\,(1-\tau)\,\textrm d\tau}\,\textrm d x }\leq c(p)\,\rho_F^2(v,u^*)\,.
			\end{aligned}\label{lem:strong_coercivity_p-laplacian.2}
		\end{align}
		Eventually, by combining \eqref{lem:strong_coercivity_p-laplacian.1} and \eqref{lem:strong_coercivity_p-laplacian.2}, we conclude the assertion of Theorem \ref{thm:two_sided_estimate}.
	    \end{proof}
	    
    \section{Boundary Penalty}\label{sec:penalty}
        
        \qquad In the case of Dirichlet boundary conditions, a common approach is to approximately enforce the latter by a soft penalty. More precisely, to approximate homogeneous Dirichlet boundary conditions,~for~given~a $\smash{f\!\in \!W^{\smash{1,p}}(\Omega)^*}$,  $p\!\in\! (1,\infty)$,  and a (large) penalty parameter $\lambda \!>\! 0$,
        we consider the boundary~penalized~\mbox{$p$-Dirichlet} energy $E_\lambda :W^{\smash{1,p}}(\Omega)\to \mathbb{R}$, for every $v\in W^{\smash{1,p}}(\Omega)$~defined~by
        \begin{equation}\label{eq:penalized_dirichlet_energy}
            E_\lambda(v) \coloneqq  \frac1p\int_\Omega |\nabla v|^p\,\mathrm dx + \frac{\lambda}{p}\int_{\partial\Omega}|v|^p\,\mathrm ds - \langle f,v\rangle_{W^{\smash{1,p}}(\Omega)}\,.
        \end{equation}
        In the limit $\lambda\to \infty$, we obtain a homogeneous Dirichlet boundary condition. The natural distance measure, in this case, is the boundary penalized natural distance $\smash{\rho_{F,\lambda}^2:W^{\smash{1,p}}(\Omega)\times W^{\smash{1,p}}(\Omega)\to \mathbb{R} }$, for every $\smash{v,w\in W^{\smash{1,p}}(\Omega)}$ defined~by
        \begin{equation}\label{eq:boundary_penalized_F_metric}
            \rho_{F,\lambda}^2(v,w) \coloneqq  \lVert F(\nabla v) - F(\nabla w) \rVert^2_{L^2(\Omega)^d} + \lambda\, \lVert F(v) - F(w) \rVert_{L^2(\partial\Omega)}^2\,.
        \end{equation}
        Let us denote by $u^*\in \smash{\W}$, the solution of the $p$-Dirichlet problem with homogeneous Dirichlet boundary condition, i.e., the minimizer of \eqref{eq:penalized_dirichlet_energy} over $\smash{\W}$, and by $\smash{u_{\smash{\lambda}}^*}\in \smash{W^{\smash{1,p}}(\Omega)}$ the minimizer of  \eqref{eq:penalized_dirichlet_energy} over $\smash{W^{\smash{1,p}}(\Omega)}$. Then, we can analyze the effect of the penalty.
        
        \begin{theorem}[Boundary Penalty]\label{thm:boundary_penalty}
            Let $\Omega\!\subseteq\!\mathbb{R}^d$, $d\!\in\! \mathbb{N}$, be a bounded domain,~${f\!\in \!L^{p'}(\Omega)}$,~${p\!\in\! (1,\infty)}$,~and~${M\!\subset\! W^{\smash{1,p}}(\Omega)}$. Moreover, assume that $|\nabla u^*|^{p-2}\nabla u^*\cdot n \in L^{p'}(\partial\Omega)$. Then, there exists a constant $c(p)>0$, depending only on $d\in \mathbb{N}$ and $p\in (1,\infty)$, such that for every $v\in M$ and $\lambda \geq 1$, it holds
            \begin{equation}
                \rho_F^2(v,u^*) + \lVert v \rVert^p_{L^p(\partial\Omega)} 
                \leq 
                 2\,\big(\delta_{\lambda}+c(p)\,\big(  \inf_{\tilde{v}\in M}{\rho_{F,\lambda}^2(\tilde{v},u_{\smash{\lambda}}^*)}+\lambda^{-\frac{1}{p}}\big)\big)\,,
            \end{equation}
            where $\delta_{\lambda}\coloneqq\delta_{\lambda}(v)\coloneqq  E_\lambda(v)-\inf_{\tilde{v}\in M}{E_\lambda(\tilde{v})}$.
            In particular, we can choose $c(p)>0$ such that $(p\mapsto c(p))\in C^0(1,\infty)$.
        \end{theorem}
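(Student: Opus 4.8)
The plan is to split the error into three independent contributions: a C\'ea-type estimate for the \emph{penalized} energy $E_\lambda$ with constants that do not blow up as $\lambda\to\infty$; a quantitative comparison between the penalized minimizer $u_\lambda^*$ and the homogeneous-Dirichlet minimizer $u^*$; and an elementary gluing step. For the first contribution, note that the penalty term $\frac{\lambda}{p}\int_{\partial\Omega}|v|^p\,\mathrm ds$ has exactly the same $p$-th power structure as the bulk term and that the trace is continuous from $W^{1,p}(\Omega)$ into $L^p(\partial\Omega)$; hence the whole development of Section~\ref{sec:two_sided_estimates} carries over. Using Friedrich's inequality (Proposition~\ref{friedrich}) in place of Poincar\'e's inequality, $E_\lambda$ is proper, strictly convex, weakly coercive and continuous, so it has a unique minimizer $u_\lambda^*\in W^{1,p}(\Omega)$ with $DE_\lambda(u_\lambda^*)=0$ in $W^{1,p}(\Omega)^*$; and repeating the regularization/Taylor/Fubini/dominated-convergence argument of the proof of Theorem~\ref{thm:two_sided_estimate}, now applying the point-wise bound of Lemma~\ref{lem:point-wise_estimate} both to the bulk integrand on $\mathbb{R}^d$ and to the one-dimensional boundary integrand (for which $|F(t)|^2=|t|^p$), gives $E_\lambda(v)-E_\lambda(u_\lambda^*)\sim\rho_{F,\lambda}^2(v,u_\lambda^*)$ with a constant $c(p)>0$ that depends continuously on $p$ but is \emph{independent of $\lambda$}. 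The argument of Corollary~\ref{lemma:cea_lemma} applied to $E_\lambda$ then yields, for every $v\in M$,
\begin{equation*}
\rho_{F,\lambda}^2(v,u_\lambda^*)\le c(p)\,\big(\delta_\lambda+\inf_{\tilde v\in M}\rho_{F,\lambda}^2(\tilde v,u_\lambda^*)\big)\,.
\end{equation*}

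For the second contribution I would bound the penalty error $\rho_{F,\lambda}^2(u_\lambda^*,u^*)$. Since $f\in L^{p'}(\Omega)$ and $u^*$ solves \eqref{variational_pLaplace}, the field $z\coloneqq|\nabla u^*|^{p-2}\nabla u^*$ satisfies $|z|^{p'}=|\nabla u^*|^p\in L^1(\Omega)$ and $-\operatorname{div}(z)=f\in L^{p'}(\Omega)$, whence $z\in W^{p'}(\operatorname{div};\Omega)$, and the extra hypothesis adds $z\cdot n\in L^{p'}(\partial\Omega)$. Green's formula (Proposition~\ref{green}, with the roles of $p$ and $p'$ interchanged) then upgrades \eqref{variational_pLaplace} to the identity $\int_\Omega|\nabla u^*|^{p-2}\nabla u^*\cdot\nabla w\,\mathrm dx=\int_{\partial\Omega}(z\cdot n)\,w\,\mathrm ds+\int_\Omega f\,w\,\mathrm dx$, valid for \emph{all} $w\in W^{1,p}(\Omega)$ --- namely the weak form of the $p$-Laplacian carrying the boundary-flux term. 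Subtracting from it the Euler--Lagrange equation for $u_\lambda^*$ (which also holds for all $w\in W^{1,p}(\Omega)$) and testing with $w=u_\lambda^*-u^*$, which equals $u_\lambda^*$ on $\partial\Omega$ because $u^*$ vanishes there, the monotonicity estimate of Lemma~\ref{lem:F_basis}\,(i) (applied point-wise to the $p$-Laplacian difference and integrated over $\Omega$) yields
\begin{equation*}
c(p)^{-1}\rho_F^2(u_\lambda^*,u^*)+\lambda\,\|u_\lambda^*\|_{L^p(\partial\Omega)}^p\le\|z\cdot n\|_{L^{p'}(\partial\Omega)}\,\|u_\lambda^*\|_{L^p(\partial\Omega)}\,.
\end{equation*}
Dropping the first summand gives the self-improving bound $\|u_\lambda^*\|_{L^p(\partial\Omega)}\le\big(\|z\cdot n\|_{L^{p'}(\partial\Omega)}/\lambda\big)^{1/(p-1)}$; reinserting it and using that $\rho_{F,\lambda}^2(u_\lambda^*,u^*)=\rho_F^2(u_\lambda^*,u^*)+\lambda\|u_\lambda^*\|_{L^p(\partial\Omega)}^p$ (again because $u^*$ vanishes on $\partial\Omega$ and $|F(t)|^2=|t|^p$) shows $\rho_{F,\lambda}^2(u_\lambda^*,u^*)\le c(p)\,\lambda^{-1/(p-1)}\le c(p)\,\lambda^{-1/p}$ for $\lambda\ge1$, where the constant now also depends on $\|\,|\nabla u^*|^{p-2}\nabla u^*\cdot n\,\|_{L^{p'}(\partial\Omega)}$ and on $d$.

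For the gluing step, $v\mapsto\big(F(\nabla v),\sqrt\lambda\,F(v)\big)$ realizes $\rho_{F,\lambda}$ as the pseudometric induced by a norm on $L^2(\Omega)^d\times L^2(\partial\Omega)$, so $\rho_{F,\lambda}^2(v,u^*)\le 2\rho_{F,\lambda}^2(v,u_\lambda^*)+2\rho_{F,\lambda}^2(u_\lambda^*,u^*)$; moreover, for $\lambda\ge1$, $\rho_F^2(v,u^*)+\|v\|_{L^p(\partial\Omega)}^p\le\rho_F^2(v,u^*)+\lambda\|F(v)-F(u^*)\|_{L^2(\partial\Omega)}^2=\rho_{F,\lambda}^2(v,u^*)$, using $u^*=0$ on $\partial\Omega$ and $|F(t)|^2=|t|^p$. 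Chaining these with the two displayed bounds gives the claimed inequality (after relabelling $c(p)$), and $(p\mapsto c(p))\in C^0(1,\infty)$ is inherited from Theorem~\ref{thm:two_sided_estimate}, Lemma~\ref{lem:F_basis} and Proposition~\ref{friedrich}. I expect the second contribution to be the main obstacle: one must recognize that the right reference equation for $u^*$ is its Green's-formula weak form with the normal-flux term on $\partial\Omega$ --- which is precisely what the regularity assumption $|\nabla u^*|^{p-2}\nabla u^*\cdot n\in L^{p'}(\partial\Omega)$ is there to license --- and then read off the correct power of $\lambda$ from the resulting algebraic inequality via the self-improving bound on $\|u_\lambda^*\|_{L^p(\partial\Omega)}$; a secondary, more routine point is to verify that the constants in the two-sided estimate for $E_\lambda$ are genuinely $\lambda$-independent, which is what makes the C\'ea bound in the first contribution uniform in $\lambda$.
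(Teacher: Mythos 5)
Your proposal is correct and follows the same three-step architecture as the paper's proof: a two-sided estimate $E_\lambda(v)-E_\lambda(u_\lambda^*)\sim\rho_{F,\lambda}^2(v,u_\lambda^*)$ obtained by repeating the regularization/Taylor argument of Theorem~\ref{thm:two_sided_estimate} (with the point-wise Lemma~\ref{lem:point-wise_estimate} applied also to the scalar boundary density), then a comparison of $u^*$ and $u_\lambda^*$ via Green's formula and the subtracted Euler--Lagrange equations tested with $u^*-u_\lambda^*$, and finally a squared triangle inequality for $\rho_{F,\lambda}$. The one genuine divergence is in how you control $\lVert u_\lambda^*\rVert_{L^p(\partial\Omega)}$: the paper derives it a priori from the energy comparison $E_\lambda(u_\lambda^*)\le E_\lambda(u^*)=E(u^*)$ combined with Friedrich's inequality and Young's inequality, giving $\lVert u_\lambda^*\rVert_{L^p(\partial\Omega)}^p\lesssim\lambda^{-1}$ with a constant depending on $E(u^*)$ and $\lVert f\rVert_{L^{p'}(\Omega)}$, and then drops the term $-\lambda\int_{\partial\Omega}|u_\lambda^*|^p\,\mathrm ds\le 0$ in the subtracted identity; you instead keep that term on the left and extract the self-improving bound $\lVert u_\lambda^*\rVert_{L^p(\partial\Omega)}\le(\lVert z\cdot n\rVert_{L^{p'}(\partial\Omega)}/\lambda)^{1/(p-1)}$ directly from the same identity. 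This buys a slightly sharper decay, $\rho_{F,\lambda}^2(u_\lambda^*,u^*)\lesssim\lambda^{-1/(p-1)}\le\lambda^{-1/p}$ for $\lambda\ge1$, makes Friedrich's inequality and the energy a priori estimate unnecessary for the rate (you still need coercivity for existence of $u_\lambda^*$), and yields a data dependence only through the normal flux; the paper's route, conversely, does not require recycling the subtracted identity twice. One cosmetic caveat: your final chain produces $2c(p)\big(\delta_\lambda+\inf_{\tilde v\in M}\rho_{F,\lambda}^2(\tilde v,u_\lambda^*)\big)+2c\,\lambda^{-1/p}$ rather than the literal form with bare coefficient $2$ on $\delta_\lambda$, so the statement is recovered only after relabelling constants --- but the paper's own last display has exactly the same looseness, since its Step~I constant also multiplies $\delta_\lambda$.
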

        
        \begin{proof}\let\qed\relax We divide the proof into three main steps:\newline
            \textbf{Step I.}
            Repeating the regularization arguments in the proof of Theorem \ref{thm:two_sided_estimate}, we are able to show that for every $v\in M$ and $\lambda\ge 1$, it holds
            \begin{align}
                c(p)^{-1}\, \rho_{F,\lambda}^2(v,u_{\smash{\lambda}}^*) \leq E_\lambda(v) - E_\lambda(u_{\smash{\lambda}}^*) \leq c(p)\,\rho_{F,\lambda}^2(v,u_{\smash{\lambda}}^*)\,.\label{thm:penaly_rates.0}
            \end{align}
            
            \textbf{Step II.}
            Next, we need to estimate the distance of $u^*\in \W$ and $u_{\smash{\lambda}}^*\in W^{\smash{1,p}}(\Omega)$. 
            To this end, let  $\lambda\ge 1$~be~fixed, but arbitrary. Then,
		the minimality of $u_{\smash{\lambda}}^*\in W^{\smash{1,p}}(\Omega)$ and $u^* =0$ in $L^p(\partial\Omega)$ yield
		\begin{align}
			E_\lambda(u_{\smash{\lambda}}^*)\leq E_\lambda(u^*)=E(u^*)\,.\label{thm:penaly_rates.1}
		\end{align}
		Thus, using for every $\varepsilon>0$, the $\varepsilon$-Young inequality with constant $c(p,\varepsilon)\coloneqq  \smash{\frac{(p\varepsilon)^{1-p'}}{p'}}$, we deduce from \eqref{thm:penaly_rates.1} that
		\begin{align}
			\begin{aligned}
				\frac{1}{p}\,\|\nabla u^*_\lambda\|_{L^p(\Omega)^d}^p+\frac{	\lambda}{p}\,\|u_{\smash{\lambda}}^*\|_{L^p(\partial\Omega)}^p
				\leq E(u^*)+c(p,\varepsilon)\,\|f\|_{L^{p'}(\Omega)}^{p'}+\varepsilon\,\|u_{\smash{\lambda}}^*\|_{L^p(\Omega)}^p\,.
			\end{aligned}\label{thm:penaly_rates.2}
		\end{align}
		In addition, owing to Friedrich's inequality (cf. Theorem \ref{friedrich}), there exists a constant $c_{\textup{Fr}}(p)>0$, only depending on $d\in \mathbb{N}$ and $p\in (1,\infty)$, with $(p\mapsto c_{\textup{Fr}}(p))\in C^0(1,\infty)$, such that
		\begin{align}
			\|u_{\smash{\lambda}}^*\|_{L^p(\Omega)}^p\leq c_{\textup{Fr}}(p)^p\,\big(\|\nabla u^*_\lambda\|_{L^p(\Omega)^d}^p+\|u_{\smash{\lambda}}^*\|_{L^p(\partial\Omega)}^p\big)\,.\label{thm:penaly_rates.3}
		\end{align}
		Hence, choosing $\varepsilon\coloneqq \frac{1}{2pc_{\textup{Fr}}(p)^p}\min\{\lambda,1\}$, i.e., $\varepsilon= \frac{1}{2pc_{\textup{Fr}}(p)^p}$ if $\lambda\ge 1$, in \eqref{thm:penaly_rates.2}, using \eqref{thm:penaly_rates.3} in doing so, we find that
		\begin{align}
			\frac{1}{p}\,\|\nabla u^*_\lambda\|_{L^p(\Omega)^d}^p+\frac{	\lambda}{p}\,\|u_{\smash{\lambda}}^*\|_{L^p(\partial\Omega)}^p
				\leq E(u^*)+c(p,\varepsilon)\,\|f\|_{L^{p'}(\Omega)}^{p'}+\frac{1}{2p}\,\|\nabla u^*_\lambda\|_{L^p(\Omega)^d}^p+\frac{	\lambda}{2p}\,\|u_{\smash{\lambda}}^*\|_{L^p(\partial\Omega)}^p\,.\label{thm:penaly_rates.4}
		\end{align}
		Absorbing the last two terms on the right-hand side of \eqref{thm:penaly_rates.4} in the left-hand side,~we~obtain
		\begin{align}
		    \begin{aligned}
		    \frac{\lambda}{2p}\,\|u_{\smash{\lambda}}^*\|_{L^p(\partial\Omega)}^p&\leq 	\frac{1}{2p}\,\|\nabla u^*_\lambda\|_{L^p(\Omega)^d}^p+\frac{	\lambda}{2p}\,\|u_{\smash{\lambda}}^*\|_{L^p(\partial\Omega)}^p
				\\&\leq E(u^*)+c(p,\varepsilon)\,\|f\|_{L^{p'}(\Omega)}^{p'}\,.
				\end{aligned}\label{thm:penaly_rates.4.1}
		\end{align}
		As  $u^*\hspace{-0.1em}\in\hspace{-0.1em} \W$ is minimal for $E\hspace{-0.1em}:\hspace{-0.1em}\W\hspace{-0.1em}\to\hspace{-0.1em} \mathbb{R}$, which,~in~turn, is Frech\'et differentiable,~for~every~${v\hspace{-0.1em}\in\hspace{-0.1em} \W}$, we have that
		\begin{align}
			\int_{\Omega}{\vert \nabla u^*\vert^{p-2}\nabla u^*\cdot \nabla v\,\textrm dx}=\int_{\Omega}{f\,v\,\textrm dx}\,.\label{thm:penaly_rates.5}
		\end{align}
		Due to $f\in L^{p'}(\Omega)$, from \eqref{thm:penaly_rates.5}, we deduce that $\vert \nabla u^*\vert^{p-2}\nabla u^*\in W^{p'}(\textup{div};\Omega)$ with $-\textup{div}(\vert \nabla u^*\vert^{p-2}\nabla u^*)=f$ in $L^{p'}(\Omega)$. In particular, appealing to Proposition \ref{green}, for every $v\in W^{\smash{1,p}}(\Omega)$, we have that
		\begin{align}
			\int_{\Omega}{\vert \nabla u^*\vert^{p-2}\nabla u^*\cdot \nabla v\,\textrm dx}-\int_{\partial \Omega}{\vert \nabla u^*\vert^{p-2}\nabla u^*\cdot n\, v\,\textrm ds}=\int_{\Omega}{f\,v\,\textrm dx}\,.\label{thm:penaly_rates.6}
		\end{align}
		Similarly,  as $u_{\smash{\lambda}}^*\! \in \!W^{\smash{1,p}}(\Omega)$ is minimal for $E_\lambda\!:\!W^{\smash{1,p}}(\Omega)\!\to\! \mathbb{R}$, which is Frech\'et differentiable,~for~every~${v\!\in\! W^{\smash{1,p}}(\Omega)}$, we have that
		\begin{align}
			\int_{\Omega}{\vert \nabla u^*_\lambda\vert^{p-2}\nabla u^*_\lambda\cdot \nabla v\,\textrm dx}+\lambda \int_{\partial \Omega}{\vert u_{\smash{\lambda}}^*\vert^{p-2} u_{\smash{\lambda}}^*\, v\,\textrm ds}=\int_{\Omega}{f\,v\,\textrm dx}\,.\label{thm:penaly_rates.6.2}
		\end{align}
		Subtracting \eqref{thm:penaly_rates.6.2} from \eqref{thm:penaly_rates.6}, choosing $v\coloneqq u^*-u_{\smash{\lambda}}^*\in W^{\smash{1,p}}(\Omega)$, 
		we observe, using that $u^*=0$ on $L^p(\partial\Omega)$ and ${- (\vert  u_{\smash{\lambda}}^*\vert^{p-2} u_{\smash{\lambda}}^*)u_{\smash{\lambda}}^* =-\vert u_{\smash{\lambda}}^*\vert^p\leq 0}$ and \eqref{thm:penaly_rates.4.1},~that
		\begin{align}
			\begin{aligned}
				\int_{\Omega}{\big(\vert \nabla u^*\vert^{p-2}\nabla u^*-\vert \nabla u^*_\lambda\vert^{p-2}\nabla u^*_\lambda\big)\cdot (\nabla u^*-\nabla u^*_\lambda)\,\textrm dx}&=\int_{\partial \Omega}{\big(\vert \nabla u^*\vert^{p-2}\nabla u^*\cdot n+\lambda \vert u_{\smash{\lambda}}^*\vert^{p-2} u_{\smash{\lambda}}^*\big)\, (u-u_{\smash{\lambda}}^*)\,\textrm ds}
				\\&\leq -\int_{\partial \Omega}{\vert \nabla u^*\vert^{p-2}\nabla u^*\cdot n\, u_{\smash{\lambda}}^*\,\textrm ds}\\&
				\leq
				c(p)\,\big\|\vert \nabla u^*\vert^{p-2}\nabla u^*\cdot n\big\|_{L^{p'}(\partial\Omega)}\,\big(E(u^*)+\|f\|_{L^{p'}(\Omega)}^{p'}\big)^{\frac{1}{p}}\lambda^{-\frac{1}{p}}\,.
			\end{aligned}\label{thm:penaly_rates.8}
		\end{align}
		Thus, appealing to Lemma \ref{lem:F_basis}, i.e., there exists a constant $c(p)>0$, depending only on $d\in \mathbb{N}$~and~${p\in (1,\infty)}$, with $(p\mapsto c(p))\in C^0(1,\infty)$, such that
		\begin{align*}
			c(p)^{-1}\,\rho_F^2(u^*,u_{\smash{\lambda}}^*)\leq \int_{\Omega}{\big(\vert \nabla u^*\vert^{p-2}\nabla u^*-\vert \nabla u^*_\lambda\vert^{p-2}\nabla u^*_\lambda\big)\cdot (\nabla u^*-\nabla u^*_\lambda)\,\textrm dx}\,,
		\end{align*}
		we conclude from \eqref{thm:penaly_rates.8} that
		\begin{align}
		    \rho_F^2(u^*,u_{\smash{\lambda}}^*)\leq c(p)\,\big\|\vert \nabla u^*\vert^{p-2}\nabla u^*\cdot n\big\|_{L^{p'}(\partial\Omega)}\,\big(E(u^*)+\|f\|_{L^{p'}(\Omega)}^{p'}\big)^{\frac{1}{p}}\lambda^{-\frac{1}{p}}\,.\label{thm:penaly_rates.9}
		\end{align}
            
            \textbf{Step III.} 
            Combining \eqref{thm:penaly_rates.0}, \eqref{thm:penaly_rates.4} and \eqref{thm:penaly_rates.9},	we obtain a constant  $c(p)>0$, depending only on ${d\in \mathbb{N}}$~and~${p\in \left(1,\infty\right)}$, with $(p\mapsto c(p))\in C^0(1,\infty)$, such that for every $v\in M$ and $\lambda\ge 1$, it holds
		\begin{align*}
			\rho_F^2(v,u^*)+\|v\|_{L^p(\partial\Omega)}^p&\leq 2\,\big(\rho_F^2(v,u_{\smash{\lambda}}^*)+\|F(v)-F(u_{\smash{\lambda}}^*)\|_{L^2(\partial\Omega)}^2+\rho_F^2(u^*,u_{\smash{\lambda}}^*)+\|F(u_{\smash{\lambda}}^*)\|_{L^2(\partial\Omega)}^2 \big)
			\\&\leq 2\,\big(\rho_{F,\lambda}^2(v,u_{\smash{\lambda}}^*)+\rho_F^2(u^*,u_{\smash{\lambda}}^*)+\|u_{\smash{\lambda}}^*\|_{L^p(\partial\Omega)}^p\big)
			\\&\leq 2\,\big(\delta_{\lambda}+c(p)\,\big(  \inf_{\tilde{v}\in M}{\rho_{F,\lambda}^2(\tilde{v},u_{\smash{\lambda}}^*)}+\lambda^{-\frac{1}{p}}\big)\big)\,.\tag*{$\square$}
		\end{align*}
        \end{proof}
        
    \section{Parametric Problems}\label{sec:parametric_problems}
    
    \qquad In this section, we generalize our results, in particular, Theorem \ref{thm:two_sided_estimate}, to parametric problems.
    In principle, the procedure is quite analogous: We establish the existence of a minimizer of our parametric problem. This, again, is closely related to the validity of a corresponding parametric Poincar\'e inequality.~Then,~we~deduce that the minimizer of our parametric problem for each fixed parameter is minimizer~of~the~respective~original $p$-Dirichlet problem and resort to    Theorem \ref{thm:two_sided_estimate}.
    
    \qquad To start with, we examine a parametric problem with a varying exponent. Meaning that -- in the simplest case -- we are looking for a function $((\p,x)\mapsto u^*(\p,x)):\Ps\times\Omega\to \mathbb{R}$ such that $u^*(\p,\cdot)$ solves the $p(\p)$-Dirichlet problem with exponent $p(\p)$. The following proposition formalizes and generalizes this idea, allowing the exponent to be a function $p:\Ps \to \mathbb R$. Treating a parametric problem of this form as a minimization problem requires non-standard function spaces.
        
        \begin{proposition}[Variable Exponents]\label{cor:variable_exponents}
            Let $\Omega\!\subseteq \!\mathbb{R}^d$, $d\!\in\! \mathbb{N}$, and $\Ps\!\subseteq\!\mathbb{R}^N$, $N\!\in\! \mathbb{N}$, be bounded~domains~and~${p\!\in\! L^\infty(\Ps)}$ such that there exist
            $p^-,p^+\in(1,\infty)$ with $p^-\leq p(\p)\leq p^+$ for a.e. $\p\in \Ps$.
            Moreover,
            we define the~variable~exponent Lebesgue space\footnote{Here, $L^0(\Ps\times\Omega)$ denotes the space of scalar (Lebesgue--)measurable functions on $\Ps\times\Omega$.}
            \begin{align*}
                L^{p(\cdot)}(\Ps\times\Omega)\coloneqq  \bigg\{\boldsymbol{v}\in L^0(\Ps\times\Omega)\;\bigg|\; \int_{\Ps}\int_\Omega{\vert \boldsymbol{v}(\p,x)\vert^{p(\p)}\,\mathrm{d}x\,\mathrm{d}\p}<\infty\bigg\}\,,
            \end{align*}
            and the variable exponent Bochner--Lebesgue space
            \begin{align*}
                \boldsymbol{\mathcal{U}}\coloneqq \big\{ \boldsymbol{v}\in L^{p(\cdot)}(\Ps\times\Omega)\mid \boldsymbol{v}(\p,\cdot)\in W^{\smash{1,p(\p)}}_0(\Omega)\textup{ for a.e. }\p\in \Ps, \vert \nabla_x \boldsymbol{v}\vert\in L^{p(\cdot)}(\Ps\times\Omega) \big\}\,,
            \end{align*}
            where the gradient $\nabla_x$ for a.e. $\p\!\in\! \Ps$ is to be understood with respect to the variable $x\!\in\! \Omega$ only.
            For fixed $\boldsymbol{f}\!\in\! L^{p'(\cdot)}(\Ps\times\Omega)$, i.e., $\boldsymbol{f}\in L^0(\Ps\times \Omega)$ and $\int_{\Ps}\int_\Omega{\vert \boldsymbol{f}(\p,x)\vert^{p'(\p)}\,\mathrm{d}x\,\mathrm{d}\p}<\infty$, where $p'\in L^\infty(\Ps)$ is defined by $p'(\p)\coloneqq  \smash{\frac{p(\p)}{p(\p)-1}}$ for all $\p\in \Ps$, we define variable exponent $p$-Dirichlet energy $\boldsymbol{\mathcal{E}}:\boldsymbol{\mathcal{U}}\to\mathbb{R}$ for every $\boldsymbol{v}\in \boldsymbol{\mathcal{U}}$  by
            \begin{equation*}
                \mathcal{E}(\boldsymbol{v}) \coloneqq \int_{\Ps}{\, \Bigg[\frac{1}{p(\p)}\int_\Omega{ |\nabla_x \boldsymbol{v}(\p,\cdot)|^{p(\p)}\,\mathrm dx}-\int_\Omega{\boldsymbol{f} (\p,\cdot)\,\boldsymbol{v}(\p,\cdot)\,\mathrm dx}\Bigg]\,\mathrm d\p}\,.
            \end{equation*}
            Then, the following statements apply:
            \begin{itemize}[noitemsep,topsep=0.0pt,labelwidth=\widthof{\textbf{(iii)}},leftmargin=!,font=\upshape\bfseries]
                \item[(i)] There exists a unique (parametric) minimizer $\boldsymbol{u}^*\in \boldsymbol{\mathcal{U}}$ of $\boldsymbol{\mathcal{E}}:\boldsymbol{\mathcal{U}}\to \mathbb{R}$.
                \item[(ii)] For a.e. $\p\hspace{-0.15em}\in\hspace{-0.15em} \Ps$, $\boldsymbol{u}^*(\p,\cdot)\hspace{-0.15em}\in\hspace{-0.15em} W^{\smash{1,p(\p)}}_0(\Omega)$ is a unique minimizer of $E_{\p}\hspace{-0.15em}:\hspace{-0.15em} W^{\smash{1,p(\p)}}_0(\Omega)\hspace{-0.15em}\to\hspace{-0.15em} \mathbb{R}$, for every $v\hspace{-0.15em}\in\hspace{-0.15em} W^{\smash{1,p(\p)}}_0(\Omega)$~defined by 
                \begin{align*}
                    E_{\p}(v)\coloneqq \frac{1}{p(\p)}\int_\Omega{ |\nabla v|^{p(\p)}\mathrm dx}-\int_\Omega{\boldsymbol{f} (\p,\cdot)\,v\,\mathrm dx}\,.
                \end{align*}
                \item[(iii)] For a.e.  $\p\in \Ps$ and $v\in W^{\smash{1,p(\p)}}_0(\Omega)$,  it holds
                \begin{align*}
                    c(p(\p))^{-1}\,\big\lVert F_{\p}(\nabla v) - F_{\p}(\nabla_x \boldsymbol{u}^*(\p,\cdot)) \big\rVert_{L^2(\Omega)^d}^2
                \leq 
                E_{\p}(v) - E_{\p}(\boldsymbol{u}^*(\p,\cdot))
                \leq c(p(\p))\,\big\lVert F_{\p}(\nabla v) - F_{\p}(\nabla_x \boldsymbol{u}^*(\p,\cdot)) \big\rVert_{L^2(\Omega)^d}^2\,,
                \end{align*}
                where $F_{\p}:\mathbb{R}^d\to \mathbb{R}^d$, $\p\in \Ps$, for every $\p\in \Ps$ is defined by $F_{\p}(a)\coloneqq  \vert a\vert^{\frac{p(\p)-2}{2}}a$ for all $a\in \mathbb{R}^d$ and $c(p(\p))>0$~is~the constant from Theorem \ref{thm:two_sided_estimate}.
            \end{itemize}
        \end{proposition}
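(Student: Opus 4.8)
The plan is to reduce everything to the already established non-parametric theory of Theorem~\ref{thm:two_sided_estimate}, applied parameter-by-parameter, and then to handle the ``gluing'' of the pointwise-in-$\p$ statements into a statement about the parametric energy $\boldsymbol{\mathcal{E}}$. For part~(i), I would verify that $\boldsymbol{\mathcal{E}}:\boldsymbol{\mathcal{U}}\to\mathbb{R}$ is well-defined, proper, strictly convex, weakly coercive and weakly lower semi-continuous on the variable exponent Bochner--Lebesgue space $\boldsymbol{\mathcal{U}}$, so that the direct method applies. Well-definedness and finiteness of the forcing term follow from a Bochner-type H\"older inequality in $L^{p(\cdot)}$ against $\boldsymbol{f}\in L^{p'(\cdot)}(\Ps\times\Omega)$; coercivity requires a \emph{parametric Poincar\'e inequality}, namely $\|\boldsymbol{v}(\p,\cdot)\|_{L^{p(\p)}(\Omega)}\le c_{\mathrm{P}}(p(\p))\|\nabla_x\boldsymbol{v}(\p,\cdot)\|_{L^{p(\p)}(\Omega)^d}$ for a.e.\ $\p$, with the constant uniformly bounded over $\p\in\Ps$ thanks to $p^-\le p(\p)\le p^+$ and the continuity of the Poincar\'e constant in the exponent on a fixed bounded domain $\Omega$. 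Lower semicontinuity follows from convexity of the integrand together with Fatou/weak lower semicontinuity of the modular; uniqueness follows from strict convexity of $a\mapsto\frac1{p(\p)}|a|^{p(\p)}$ and the fact that two minimizers must then share the same gradient a.e.\ and, being in $W^{1,p(\p)}_0(\Omega)$, coincide.

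For part~(ii), the key observation is a ``disintegration'' or localization argument: if $\boldsymbol{u}^*$ minimizes $\boldsymbol{\mathcal{E}}$, then for a.e.\ fixed $\p$ the slice $\boldsymbol{u}^*(\p,\cdot)$ must minimize $E_{\p}$ over $W^{1,p(\p)}_0(\Omega)$. I would argue by contradiction/variation: suppose on a set $\Ps'\subseteq\Ps$ of positive measure one had $E_{\p}(w_{\p})<E_{\p}(\boldsymbol{u}^*(\p,\cdot))$ for some competitor $w_{\p}$; then, after a measurable selection of such improved slices (using e.g.\ that the optimal slice is the unique minimizer of $E_{\p}$, which depends measurably on $\p$, so one may simply take $w_{\p}$ to be that minimizer), splicing $w_{\p}$ in on $\Ps'$ and keeping $\boldsymbol{u}^*$ elsewhere yields an admissible element of $\boldsymbol{\mathcal{U}}$ with strictly smaller energy, contradicting minimality. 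One must check the spliced function genuinely lies in $\boldsymbol{\mathcal{U}}$, i.e.\ that $|\nabla_x(\cdot)|\in L^{p(\cdot)}(\Ps\times\Omega)$ — this is where the measurable selection and an a priori energy bound on the slice minimizers enter. Uniqueness of the slice minimizer is exactly Theorem~\ref{thm:two_sided_estimate}(i) applied on $U=W^{1,p(\p)}_0(\Omega)$, whose Poincar\'e inequality holds for each admissible exponent.

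Part~(iii) is then immediate: for a.e.\ fixed $\p$, the functional $E_{\p}$ on $W^{1,p(\p)}_0(\Omega)$ is precisely of the form covered by Theorem~\ref{thm:two_sided_estimate} with exponent $p(\p)\in(1,\infty)$, forcing term $\boldsymbol{f}(\p,\cdot)\in L^{p'(\p)}(\Omega)\hookrightarrow W^{1,p(\p)}_0(\Omega)^*$, and $U=W^{1,p(\p)}_0(\Omega)$ (which admits Poincar\'e's inequality). Since by part~(ii) the slice $\boldsymbol{u}^*(\p,\cdot)$ is the minimizer of $E_{\p}$, Theorem~\ref{thm:two_sided_estimate}(ii) yields exactly the claimed two-sided estimate with $F_{\p}=F$ for exponent $p(\p)$ and constant $c(p(\p))$, noting that $(p\mapsto c(p))\in C^0(1,\infty)$ so the constant is finite and, on $[p^-,p^+]$, uniformly bounded. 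I expect the main obstacle to be the rigorous justification in part~(ii) that the pointwise-optimal slices can be selected measurably and that the resulting function lies in $\boldsymbol{\mathcal{U}}$ — i.e., controlling $\int_{\Ps}\|\nabla_x\boldsymbol{u}^*(\p,\cdot)\|_{L^{p(\p)}(\Omega)^d}^{p(\p)}\,\mathrm d\p<\infty$ via a uniform-in-$\p$ energy estimate obtained from testing $E_{\p}$ against $0\in W^{1,p(\p)}_0(\Omega)$ and the uniform Poincar\'e constant; everything else is a routine transcription of the non-parametric arguments.
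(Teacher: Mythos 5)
Your parts (i) and (iii) follow essentially the same path as the paper: for (i) the direct method on the reflexive space $\boldsymbol{\mathcal{U}}$, with weak coercivity obtained from the slicewise Poincar\'e inequality (whose constant is controlled uniformly in $\p$ via $p^-\leq p(\p)\leq p^+$, cf. \eqref{eq:poincare} and \eqref{eq:coercive0}) and the $\varepsilon$-Young inequality, and strict convexity for uniqueness; for (iii) a slicewise application of Theorem~\ref{thm:two_sided_estimate} once (ii) is available, noting $\boldsymbol{f}(\p,\cdot)\in L^{p'(\p)}(\Omega)\hookrightarrow (W^{\smash{1,p(\p)}}_0(\Omega))^*$.

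For part (ii), however, you take a genuinely different route, and that is where your argument has a gap. The paper never splices competitors: it shows that $\boldsymbol{\mathcal{E}}$ is continuously Fr\'echet differentiable with $\langle D\boldsymbol{\mathcal{E}}(\boldsymbol{u}^*),\boldsymbol{v}\rangle_{\boldsymbol{\mathcal{U}}}=\int_{\Ps}{\langle DE_{\p}(\boldsymbol{u}^*(\p,\cdot)),\boldsymbol{v}(\p,\cdot)\rangle_{W^{\smash{1,p(\p)}}_0(\Omega)}\,\mathrm d\p}$, tests the optimality condition with $\boldsymbol{v}=\varphi\,\psi_k$, where $\varphi\in C^\infty_0(\Ps)$ and $(\psi_k)_{k\in\mathbb{N}}$ is a countable dense subset of $W^{\smash{1,p^+}}_0(\Omega)$ (hence dense in $W^{\smash{1,p(\p)}}_0(\Omega)$ for a.e. $\p$), and concludes $DE_{\p}(\boldsymbol{u}^*(\p,\cdot))=0$ for a.e. $\p$ via the fundamental lemma of the calculus of variations and a countable-union-of-null-sets argument; strict convexity of $E_{\p}$ then upgrades this Euler--Lagrange identity to minimality of the slice. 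Your splicing argument instead hinges on the assertion that the slice minimizers $w_{\p}$ ``depend measurably on $\p$'', i.e., that $(\p,x)\mapsto w_{\p}(x)$ is jointly measurable (and that the improvement set $\Ps'$ is measurable), so that the spliced function is even an element of $L^0(\Ps\times\Omega)$. This is precisely the hard point of your approach and you do not prove it: with the fiber $W^{\smash{1,p(\p)}}_0(\Omega)$ varying with $\p$, it requires a measurable-selection theorem for normal integrands or a Galerkin construction over the countable family $(\psi_k)$ --- essentially the same density machinery the paper deploys, but used to build a selection rather than to test the Euler--Lagrange equation. Your a priori bound (testing with $0$ plus the uniform Poincar\'e constant) does suffice to place the spliced competitor in $\boldsymbol{\mathcal{U}}$ once measurability is settled, so the route is salvageable; but as written the central measurability step is assumed rather than established, whereas the paper's differentiability/density argument avoids it entirely.
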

        
        \begin{remark}\label{rmk:variable_exponents1}
            \begin{itemize}[noitemsep,topsep=0.0pt,labelwidth=\widthof{\textbf{(iii)}},leftmargin=!,font=\upshape\bfseries]
                \item[(i)] For the variable exponent $p\in L^\infty(\Ps)$, we actually have in mind the identity mapping, i.e., $p(\p)=p_1$ for all $\p=(p_1,\dots,p_N)^\top\in \Ps$. Since, however, Proposition \ref{cor:variable_exponents} also applies for general $p\in L^\infty(\Ps)$  such that there exist $p^-,p^+\in(1,\infty)$ with $p^-\leq p(\p)\leq p^+$ for a.e. $\p\in \Ps$, we~immediately~consider~this~case, in order to keep potential future applications within the realm of possibility as well.
                
                \item[(ii)] Since $(p\mapsto c(p))\in C^0(1,\infty)$ in Theorem \ref{thm:two_sided_estimate} as well as $p\in L^\infty(\Ps)$ in Proposition \ref{cor:variable_exponents}, from Proposition~\ref{cor:variable_exponents}~(iii), for every $\boldsymbol{v}\in \boldsymbol{\mathcal{U}}$, it follows that
                \begin{align*}
                    \textrm{ess\,inf}_{\p\in \Ps}{c(p(\p))^{-1}}\,\boldsymbol{\rho}_{\boldsymbol{\mathcal{F}}}^2(\boldsymbol{v},\boldsymbol{u}^*)
                \leq 
               \boldsymbol{\mathcal{E}}(\boldsymbol{v}) - \boldsymbol{\mathcal{E}}(\boldsymbol{u}^*)
                \leq \textrm{ess\,sup}_{\p\in \Ps}{c(p(\p))}\,\boldsymbol{\rho}_{\boldsymbol{\mathcal{F}}}^2(\boldsymbol{v},\boldsymbol{u}^*)\,,
                \end{align*}
                where $\boldsymbol{\rho}_{\boldsymbol{\mathcal{F}}}^2(\boldsymbol{v},\boldsymbol{u}^*)\coloneqq \int_{\Ps}{\big\lVert F_{\p}(\nabla_x \boldsymbol{v}(\p,\cdot)) - F_{\p}(\nabla_x \boldsymbol{u}^*(\p,\cdot)) \big\rVert_{L^2(\Omega)^d}^2\,\mathrm d\p}$ for all $\boldsymbol{v}\in \boldsymbol{\mathcal{U}}$.
            \end{itemize}
        \end{remark}
        \begin{proof}
            
            \textbf{ad (i).} The space $\boldsymbol{\mathcal{U}}$ equipped with the norm $\|\cdot\|_{\boldsymbol{\mathcal{U}}}\coloneqq \|\cdot\|_{L^{p(\cdot)}(\Ps\times\Omega)}+\|\,\vert \nabla_x\cdot\vert\,\|_{L^{p(\cdot)}(\Ps\times\Omega)}$, where
            \begin{align*}
                \|\boldsymbol{v}\|_{L^{p(\cdot)}(\Ps\times\Omega)}\coloneqq \inf\bigg\{\lambda>0\;\Big|\;\int_{\Ps}\int_\Omega{\bigg\vert \frac{\boldsymbol{v}(\p,x)}{\lambda}\bigg\vert^{p(\p)}\,\mathrm{d}x\,\mathrm{d}\p}\leq 1\bigg\}
            \end{align*}
            denotes the Luxembourg norm, cf. \cite{DHHR11}, forms a reflexive Banach space, cf. \cite[\!Proposition \!3.7 \!\&\! Proposition \!3.9]{alex-diss} \!or \!\cite[\!Proposition \!3.6 \!\&\! Proposition~\!3.7]{KR21}\footnote{More precisely, these references prove only the case $N = 1$, since therein $\Ps$ represents a time interval in an unsteady fluid flow problem. However, the proofs can be generalized verbatimly to the case $N>1$, so that we will refrain from proving these results again at this point.}. Apparently, $\boldsymbol{\mathcal{E}}:\boldsymbol{\mathcal{U}}\to\mathbb{R}$ is strictly convex and continuous. In addition, for every $\boldsymbol{v}\in \boldsymbol{\mathcal{U}}$,~due~to~Poincar\'e's inequality applied for~a.e.~fixed~${\p\in \Ps}$, which is allowed since $\boldsymbol{v}(\p,\cdot)\in W^{\smash{1,p(\p)}}_0(\Omega)$ for a.e. $\p\in \Ps$,~we~have~that 
            \begin{align}
            \begin{aligned}\label{eq:poincare}
                \int_{\Ps}\int_\Omega{\vert \boldsymbol{v}(\p,x)\vert^{p(\p)}\,\mathrm{d}x\,\mathrm{d}\p}&\leq \int_{\Ps}{(2\textup{diam}(\Omega))^{p(\p)}\int_\Omega{\vert \nabla_x \boldsymbol{v}(\p,x)\vert^{p(\p)}\,\mathrm{d}x}\,\mathrm{d}\p}
                \\&\leq (1+2\textup{diam}(\Omega))^{p^+}\int_{\Ps}{\int_\Omega{\vert \nabla_x \boldsymbol{v}(\p,x)\vert^{p(\p)}\,\mathrm{d}x}\,\mathrm{d}\p}\,,
            \end{aligned}
            \end{align}
            which for every $\boldsymbol{v}\in \boldsymbol{\mathcal{U}}$ and $\varepsilon\in(0,\frac{1}{p^-}]$, using for a.e. $\p\in \Ps$, the $\varepsilon$-Young inequality~with~$\smash{c(p(\p),\varepsilon)\coloneqq \smash{\frac{(p(\p)\varepsilon)^{1-p'(\p)}}{p'(\p)}}}$, implies that
            \begin{align}
            \begin{aligned}\label{eq:coercive0}
                \boldsymbol{\mathcal{E}}(\boldsymbol{v})&\ge \int_{\Ps}{\frac{1}{p(\p)}\int_\Omega{  |\nabla_x \boldsymbol{v}(\p,\cdot)|^{p(\p)}\,\mathrm dx\,}\mathrm d\p\,}- \int_{\Ps}{\int_\Omega{c(p(\p),\varepsilon) \vert \boldsymbol{f} (\p,\cdot)\vert^{p'(\p)}-\varepsilon\vert \boldsymbol{v}(\p,\cdot)\vert^{p(\p)}\,\mathrm dx\,}\mathrm d\p\,}\\&\ge
               \bigg(\frac{1}{p^+}-\varepsilon(1+2\textup{diam}(\Omega))^{p^+}\bigg) \int_{\Ps}{\int_\Omega{  |\nabla_x \boldsymbol{v}(\p,\cdot)|^{p(\p)}\,\mathrm dx\,}\mathrm d\p\,}- \frac{(p^-\varepsilon)^{1-(p^-)'}}{(p^+)'} \int_{\Ps}{\int_\Omega{\vert \boldsymbol{f} (\p,\cdot)\vert^{p'(\p)}\,\mathrm dx}\,\mathrm d\p}\,.
               \end{aligned}
            \end{align}
            Hence, since $\int_{\Ps}{\int_\Omega{\vert \boldsymbol{v}(\p,\cdot)\vert^{p(\p)}+\vert \nabla_x\boldsymbol{v}(\p,\cdot)\vert^{p(\p)}\,\mathrm dx}\,\mathrm d\p}\to\infty$ if $\|\boldsymbol{v}\|_{\boldsymbol{\mathcal{U}}}\to \infty$ (cf.~\cite[Lemma 3.2.4]{DHHR11}) from \eqref{eq:poincare} and \eqref{eq:coercive0} for $\varepsilon\in (0,\smash{\frac{1}{p^-}}]$ sufficiently small, we conclude that from $\|\boldsymbol{v}\|_{\boldsymbol{\mathcal{U}}}\to \infty$, it follows that ${\boldsymbol{\mathcal{E}}(\boldsymbol{v})\to \infty}$, i.e., $\boldsymbol{\mathcal{E}}\!:\!\boldsymbol{\mathcal{U}}\!\to \!\mathbb{R}$ is weakly coercive, so that the direct method in the calculus of variations,~cf.~\cite{Dac08}, yields the existence of a unique minimizer $\boldsymbol{u}^*\in \boldsymbol{\mathcal{U}}$ of $\boldsymbol{\mathcal{E}}:\boldsymbol{\mathcal{U}}\to \mathbb{R}$.
            
            \qquad \textbf{ad (ii).} A standard calculation shows that $\boldsymbol{\mathcal{E}}:\boldsymbol{\mathcal{U}}\to\mathbb{R}$ is continuously Frech\'et differentiable with
            \begin{align*}
                \langle D\boldsymbol{\mathcal{E}}(\boldsymbol{u}),\boldsymbol{v}\rangle_{\boldsymbol{\mathcal{U}}}
                =%\int_{\Ps}{\int_{\Omega}{ \vert \nabla_x \boldsymbol{u}(\p,\cdot)\vert^{p(\p)-2}\nabla_x \boldsymbol{u}(\p,\cdot)\cdot\nabla_x \boldsymbol{v}(\p,\cdot)\,\mathrm{d}x}\,\mathrm{d}\p}=
                \int_{\Ps}{\langle DE_{\p}(\boldsymbol{u}(\p,\cdot)),\boldsymbol{v}(\p,\cdot)\rangle_{W^{\smash{1,p(\p)}}_0(\Omega)}\,\mathrm{d}\p}
            \end{align*}
            for all $\boldsymbol{u},\boldsymbol{v}\in \boldsymbol{\mathcal{U}}$. Therefore, due to the minimality of $\boldsymbol{u}^*\in \boldsymbol{\mathcal{U}}$, for every $\boldsymbol{v}\in \boldsymbol{\mathcal{U}}$, we necessarily have that
            \begin{align}\label{eq:some_label0}
                0=\langle D\boldsymbol{\mathcal{E}}(\boldsymbol{u}^*),\boldsymbol{v}\rangle_{\boldsymbol{\mathcal{U}}}
                =\int_{\Ps}{\langle DE_{\p}(\boldsymbol{u}^*(\p,\cdot)),\boldsymbol{v}(\p,\cdot)\rangle_{W^{\smash{1,p(\p)}}_0(\Omega)}\,\mathrm d\p}\,.
            \end{align}
            Inasmuch as $W^{\smash{1,p^+}}_0(\Omega)\hookrightarrow W^{\smash{1,p(\p)}}_0(\Omega)$ densely for a.e. $\p\in \Ps$ and $W^{\smash{1,p^+}}_0(\Omega)$ is separable and, thus,~contains~a countable dense subset $(\psi_k)_{k\in \mathbb{N}}\subseteq W^{\smash{1,p^+}}_0(\Omega)$, the subset  $(\psi_k)_{k\in \mathbb{N}}$ lies even densely in $W^{\smash{1,p(\p)}}_0(\Omega)$~for~a.e.~${\p\in \Ps}$. Next,
            choosing  $\boldsymbol{v}=\varphi \psi_k\in \boldsymbol{\mathcal{U}}$ in \eqref{eq:some_label0} for arbitrary $\varphi\in C^\infty_0(\Ps)$ and $k\in\mathbb{N}$, we further deduce that
            \begin{align}\label{eq:some_label00}
                \int_{\Ps}{\langle DE_{\p}(\boldsymbol{u}^*(\p,\cdot)),\psi_k\rangle_{W^{\smash{1,p(\p)}}_0(\Omega)}\varphi(\p)\,\mathrm d\p}=0\,,
            \end{align}
            so that for each fixed $k\!\in\!\mathbb{N}$, the fundamental lemma of calculus of variations implies that for a.e. $\p\!\in\! \Ps$, it holds $\smash{\langle DE_{\p}(\boldsymbol{u}^*(\p,\cdot)),\psi_k\rangle_{W^{\smash{1,p(\p)}}_0(\Omega)}}=0$.
            This, since the countable union of sets of zero measure~has~still~zero~measure, we deduce from \eqref{eq:some_label00} that for a.e. $\p\in \Ps$, it holds for all $k\in \mathbb{N}$
            \begin{align}\label{eq:some_label1}
                \langle DE_{\p}(\boldsymbol{u}^*(\p,\cdot)),\psi_k\rangle_{W^{\smash{1,p(\p)}}_0(\Omega)}=0\,.
            \end{align}
            As $(\psi_k)_{k\in \mathbb{N}}$ is dense in $W^{\smash{1,p(\p)}}_0(\Omega)$ for a.e. $\p\!\in \!\Ps$, from \eqref{eq:some_label1} we infer that for a.e. $\p\!\in\! \Ps$,~it~holds~for~all~$v\!\in\! W^{\smash{1,p(\p)}}_0(\Omega)$
            \begin{align*}
                \langle DE_{\p}(\boldsymbol{u}^*(\p,\cdot)),v\rangle_{W^{\smash{1,p(\p)}}_0(\Omega)}=0\,.
            \end{align*}
            Eventually, since for a.e. $\p\in \Ps$, the $p(\p)$-Dirichlet energy $E_{\p}: W^{\smash{1,p(\p)}}_0(\Omega)\to \mathbb{R}$ is strictly convex, for~a.e.~${\p\in \Ps}$, the slice $\boldsymbol{u}^*(\p,\cdot)\in W^{\smash{1,p(\p)}}_0(\Omega)$ is a unique minimizer of $E_{\p}: W^{\smash{1,p(\p)}}_0(\Omega)\to \mathbb{R}$. 
            
            \qquad \textbf{ad (iii).} Follows from point (ii) and Theorem \ref{thm:two_sided_estimate}.
        \end{proof}
        
        \begin{remark}\label{rmk:variable_exponents2}
            Proposition \ref{cor:variable_exponents} also applies for the variable exponent Bochner--Lebesgue space
                \begin{align*}
                     \boldsymbol{\mathcal{U}}\coloneqq \big\{ \boldsymbol{v}\in L^{p(\cdot)}(\Ps\times\Omega)\mid \boldsymbol{v}(\p,\cdot)\in 
                     U(\p)\textup{ for a.e. }\p\in \Ps, \vert \nabla_x \boldsymbol{v}\vert\in L^{p(\cdot)}(\Ps\times\Omega) \big\}\,,
                \end{align*}
                where either $U(\p)\coloneqq \smash{W^{\smash{1,p(\p)}}_{\Gamma_D}(\Omega)}$ for $\Gamma_D\subseteq \partial\Omega$ with $\mathscr{H}^{d-1}(\Gamma_D)>0$ or $U(\p)\coloneqq W^{\smash{1,p(\p)}}(\Omega)/\mathbb{R}$. In fact, analogous arguments as in \cite[Proposition 3.7 \& Proposition 3.9]{alex-diss} show that $\boldsymbol{\mathcal{U}}$ equipped with $\|\cdot\|_{\boldsymbol{\mathcal{U}}}\coloneqq \|\cdot\|_{L^{p(\cdot)}(\Ps\times\Omega)}+\|\,\vert \nabla_x\cdot\vert\,\|_{L^{p(\cdot)}(\Ps\times\Omega)}$ forms a reflexive Banach space for these choices and for a.e. $\p\in \Ps$, a Poincar\'e inequality with a constant which can be bounded independently of $\p\!\in\! \Ps$ applies.~Then,~the~same~arguments as in Remark \ref{rmk:poincare_wirtinger} show if $\smash{\boldsymbol{f}\!\in\! L^{p'(\cdot)}(\Ps\times\Omega)}$ satisfies $\smash{\fint_{\Omega}{\boldsymbol{f}(\p,\cdot)\,\mathrm{d}x}}\!=\!0$ for a.e. $\p\!\in\! \Ps$, then Proposition \ref{cor:variable_exponents} also applies for the variable exponent Bochner--Lebesgue space
                \begin{align*}
                     \boldsymbol{\mathcal{U}}\coloneqq \big\{ \boldsymbol{v}\in L^{p(\cdot)}(\Ps\times\Omega)\mid \boldsymbol{v}(\p,\cdot)\in 
                     W^{\smash{1,p(\p)}}(\Omega)\textup{ for a.e. }\p\in \Ps, \vert \nabla_x \boldsymbol{v}\vert\in L^{p(\cdot)}(\Ps\times\Omega) \big\}\,,
                \end{align*}
                if we drop the uniqueness in point (i) in Proposition \ref{cor:variable_exponents}.
        \end{remark}
        \qquad Next, we examine a parametric problem with a varying right hand side.
        
        \begin{corollary}[Variable Right-Hand Sides]\label{corollary:variable_rhs}
        Let $\Omega\subseteq \mathbb{R}^d$, $d\in \mathbb{N}$, and $\Ps\subseteq\mathbb{R}^N$, $N\in \mathbb{N}$, be bounded domains and $p\in(1,\infty)$.  Moreover, we define Bochner--Lebesgue space
            \begin{align*}
                \boldsymbol{\mathcal{U}}\coloneqq L^p(\Ps,W^{\smash{1,p}}_0(\Omega))\,.
            \end{align*}
            For fixed $\boldsymbol{f}\in L^{p'}(\Ps\times \Omega)$, we define the variable right-hand side $p$-Dirichlet energy $\boldsymbol{\mathcal{E}}:\boldsymbol{\mathcal{U}}\to\mathbb{R}$ for every $\boldsymbol{v}\in \boldsymbol{\mathcal{U}}$ by 
            \begin{equation*}
                \boldsymbol{\mathcal{E}}(\boldsymbol{v}) \coloneqq \int_{\Ps}{\,\Bigg[\frac{1}{p} \int_\Omega{|\nabla_x \boldsymbol{v}(\p,\cdot)|^{p}\,\mathrm dx\,}-\int_\Omega{\textbf{f} (\p,\cdot)\,\boldsymbol{v}(\p,\cdot)\,\mathrm dx}\Bigg]\,\mathrm d\p}\,.
            \end{equation*}
            Then, the following statements apply:
            \begin{itemize}[noitemsep,topsep=0.0pt,labelwidth=\widthof{\textbf{(iii)}},leftmargin=!,font=\upshape\bfseries]
                \item[(i)] There exists a unique (parametric) minimizer $\boldsymbol{u}^*\in \boldsymbol{\mathcal{U}}$ of $\boldsymbol{\mathcal{E}}:\boldsymbol{\mathcal{U}}\to \mathbb{R}$.
                \item[(ii)] For a.e.  $\p\in \Ps$, $\boldsymbol{u}^*(\p,\cdot)\in W^{\smash{1,p}}_0(\Omega)$ is a unique minimizer of $E_{\p}\colon W^{\smash{1,p}}_0(\Omega)\to \mathbb{R}$, for every $v\in W^{\smash{1,p}}_0(\Omega)$~defined~by 
                \begin{align*}
                    E_{\p}(v)\coloneqq \frac{1}{p} \int_\Omega{ |\nabla v|^p\,\mathrm dx}-\int_\Omega{\boldsymbol{f} (\p,\cdot)\,v\,\mathrm dx}\,.
                \end{align*}
                \item[(iii)] For a.e.  $\p\in \Ps$ and $v\in W^{\smash{1,p}}_0(\Omega)$,  it holds
                \begin{align*}
                    c(p)^{-1}\,\lVert F(\nabla v) - F(\nabla_x \boldsymbol{u}^*(\p,\cdot)) \rVert_{L^2(\Omega)^d}^2
                \leq 
                E_{\p}(v) - E_{\p}(\boldsymbol{u}^*(\p,\cdot))
                \leq c(p)\,\lVert F(\nabla v) - F(\nabla_x \boldsymbol{u}^*(\p,\cdot)) \rVert_{L^2(\Omega)^d}^2\,,
                \end{align*}
                where $c(p)>0$ is the constant from Theorem \ref{thm:two_sided_estimate}.
            \end{itemize}
    \end{corollary}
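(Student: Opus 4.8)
The proof will run along exactly the three steps used for Proposition~\ref{cor:variable_exponents}, now with the constant exponent $p$ in place of $p(\cdot)$; indeed, Corollary~\ref{corollary:variable_rhs} may be viewed as the special case $p(\p)\equiv p$ of Proposition~\ref{cor:variable_exponents}, since then $L^{p'(\cdot)}(\Ps\times\Omega)=L^{p'}(\Ps\times\Omega)$ and the variable exponent Bochner--Lebesgue space reduces to $\boldsymbol{\mathcal{U}}=L^p(\Ps,\W)$. For \textbf{(i)}, I would first record that $\boldsymbol{\mathcal{U}}=L^p(\Ps,\W)$ is a reflexive Banach space because $\W$ is reflexive and separable for $p\in(1,\infty)$. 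The functional $\boldsymbol{\mathcal{E}}$ is proper (e.g.\ $\boldsymbol{\mathcal{E}}(0)=0$), strictly convex (strict convexity of $\boldsymbol{v}\mapsto\int_\Ps\frac1p\|\nabla_x\boldsymbol{v}(\p,\cdot)\|_{L^p(\Omega)^d}^p\,\mathrm d\p$ on $\boldsymbol{\mathcal{U}}$, using that $\|\nabla\cdot\|_{L^p(\Omega)^d}$ is a norm on $\W$ and $t\mapsto t^p$ is strictly convex), and continuous, hence weakly lower semi-continuous; the continuity of the forcing term uses H\"older's inequality in $x$ and $\p$ together with $\boldsymbol{f}\in L^{p'}(\Ps\times\Omega)$. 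Weak coercivity follows by applying Poincar\'e's inequality \eqref{eq:poincare0} for a.e.\ fixed $\p\in\Ps$, integrating over $\Ps$ to get $\|\boldsymbol{v}\|_{L^p(\Ps\times\Omega)}\le c_{\textup P}\,\|\,|\nabla_x\boldsymbol{v}|\,\|_{L^p(\Ps\times\Omega)}$, and then estimating the forcing term via the $\varepsilon$-Young inequality and absorbing the $\varepsilon\,\|\boldsymbol{v}\|^p$-contribution, exactly as in \eqref{eq:coercive0}. The direct method in the calculus of variations then yields the unique minimizer $\boldsymbol{u}^*\in\boldsymbol{\mathcal{U}}$.

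For \textbf{(ii)}, I would compute that $\boldsymbol{\mathcal{E}}$ is continuously Fr\'echet differentiable with $\langle D\boldsymbol{\mathcal{E}}(\boldsymbol{u}),\boldsymbol{v}\rangle_{\boldsymbol{\mathcal{U}}}=\int_\Ps\langle DE_{\p}(\boldsymbol{u}(\p,\cdot)),\boldsymbol{v}(\p,\cdot)\rangle_{\W}\,\mathrm d\p$, so that minimality of $\boldsymbol{u}^*$ forces this integral to vanish for every $\boldsymbol{v}\in\boldsymbol{\mathcal{U}}$. Testing with $\boldsymbol{v}=\varphi\,\psi_k\in\boldsymbol{\mathcal{U}}$, where $\varphi\in C^\infty_0(\Ps)$ is arbitrary and $(\psi_k)_{k\in\mathbb{N}}$ is a fixed countable dense subset of the separable space $\W$, the fundamental lemma of the calculus of variations gives, for each $k$, a set of full measure in $\Ps$ on which $\langle DE_{\p}(\boldsymbol{u}^*(\p,\cdot)),\psi_k\rangle_{\W}=0$; taking the (countable) intersection, this holds for all $k$ simultaneously for a.e.\ $\p$, and density of $(\psi_k)$ in $\W$ upgrades it to all of $\W$. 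Since every $E_\p:\W\to\mathbb{R}$ is strictly convex, this Euler--Lagrange identity identifies $\boldsymbol{u}^*(\p,\cdot)$ as the unique minimizer of $E_\p$ for a.e.\ $\p\in\Ps$. Finally, \textbf{(iii)} is immediate from \textbf{(ii)} and Theorem~\ref{thm:two_sided_estimate} applied for a.e.\ fixed $\p$ with $U=\W$ (for which \eqref{eq:poincare0} holds) and $f=\boldsymbol{f}(\p,\cdot)\in L^{p'}(\Omega)\subseteq\W^*$, the constant $c(p)$ there being independent of $\p$.

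The step requiring most care is \textbf{(ii)}: one must make rigorous the passage from the single scalar identity $\int_\Ps\langle DE_\p(\boldsymbol{u}^*(\p,\cdot)),\boldsymbol{v}(\p,\cdot)\rangle_{\W}\,\mathrm d\p=0$ to a genuinely pointwise-in-$\p$ Euler--Lagrange equation. This requires checking that $\varphi\psi_k\in\boldsymbol{\mathcal{U}}$, that $\p\mapsto\langle DE_\p(\boldsymbol{u}^*(\p,\cdot)),\psi_k\rangle_{\W}$ is integrable so the fundamental lemma is applicable, and the bookkeeping that a countable union of null sets is null. All remaining ingredients are routine Bochner-space functional analysis, or can simply be inherited from Proposition~\ref{cor:variable_exponents}.
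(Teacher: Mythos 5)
Your proposal is correct and follows essentially the paper's own route: the paper proves this corollary in one line, as the special case of Proposition~\ref{cor:variable_exponents} with the constant exponent $p(\cdot)=p\in L^\infty(\Ps)$, which is exactly the observation you lead with. The additional three-step derivation you sketch simply re-traces the proof of that proposition in the constant-exponent setting and is sound, but it is not needed once the specialization is noted.
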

    
    \begin{proof}
            Follows from Proposition \ref{cor:variable_exponents} for constant exponent $p(\cdot)=p\in L^\infty(\Ps)$.
    \end{proof}
        
        \qquad To conclude this section, we examine a parametric problem with a varying domain.
        
        \begin{proposition}[Variable Domains]\label{cor:variable_domains}
            Let $\Omega\subseteq \mathbb{R}^d$, $d\in \mathbb{R}^d$, a bounded Lipschitz domain and $p\in (1,\infty)$. Moreover, let $\varphi_{\p}\!:\!\Omega\!\to\! \Omega(\p)$, $\p\!\in\! \Ps\!\coloneqq\!  (0,T)$, $T\!>\!0$, the induced flow
            of a smooth, compactly supported vector field $\smash{\mathbf{v}\!:\!\mathbb{R}\times \mathbb{R}^d\!\to\! \mathbb{R}^d}$,~cf. \cite[Chapter 4]{DZ11}. 
            For the non-cylindrical~domain~${Q\!\coloneqq \! \bigcup_{\p\in \Ps}{\{\p\}\!\times \!\Omega(\p)}}$, we define the variable domain Bochner--Lebesgue space
            \begin{align*}
                \boldsymbol{\mathcal{U}}\coloneqq L^p(\Ps,W^{\smash{1,p}}_0(\Omega(\cdot)))\coloneqq\big\{ \boldsymbol{u}\in L^p(Q)\mid \boldsymbol{u}(\p,\cdot)\in W^{\smash{1,p}}_0(\Omega(\p))\textup{ for all }\p\in \Ps, \vert \nabla_x \boldsymbol{u}\vert\in L^{p}(Q) \big\}\,,
            \end{align*}
            where the gradient $\nabla_x$ for a.e. $\p\!\in\! \Ps $ is to be understood with respect to the variable $x\!\in\! \Omega(\p)$ only.
            For fixed $\boldsymbol{f}\!\in\! L^{p'}(Q)$, we define the variable domain $p$-Dirichlet energy $ \boldsymbol{\mathcal{E}}\colon \boldsymbol{\mathcal{U}}\to\mathbb{R}$ for every $\boldsymbol{v}\in \boldsymbol{\mathcal{U}}$ by
            \begin{equation*}
                \boldsymbol{\mathcal{E}}(\boldsymbol{v}) \coloneqq \int_{\Ps}{\,\Bigg[ \frac{1}{p} \int_{\Omega(\p)}{|\nabla_x \boldsymbol{v}(\p,\cdot)|^{p}\,\mathrm dx}-\int_{\Omega(\p)}{ \boldsymbol{f}(\p,\cdot)\,\boldsymbol{v}(\p,\cdot)\,\mathrm dx}\Bigg]\,\mathrm d\p}\,.
            \end{equation*}
            Then, the following statements apply:
            \begin{itemize}[noitemsep,topsep=0.0pt,labelwidth=\widthof{\textbf{(iii)}},leftmargin=!,font=\upshape\bfseries]
                \item[(i)] There exists a unique (parametric) minimizer $\boldsymbol{u}^*\in \boldsymbol{\mathcal{U}}$ of $\boldsymbol{\mathcal{E}}:\boldsymbol{\mathcal{U}}\to \mathbb{R}$.
                \item[(ii)] For a.e. $\p\in \Ps$, $\boldsymbol{u}^*(\p,\cdot)\in W^{\smash{1,p}}_0(\Omega(\p))$ is a unique minimizer of $E_{\p}: W^{\smash{1,p}}_0(\Omega(\p))\to \mathbb{R}$, for every $v\in W^{\smash{1,p}}_0(\Omega(\p))$ defined by 
                \begin{align*}
                    E_{\p}(v)\coloneqq \frac{1}{p}\int_{\Omega(\p)}{ |\nabla v|^p\,\mathrm dx}-\int_{\Omega(\p)}{\boldsymbol{f} (\p,\cdot)\,v\,\mathrm dx}\,.
                \end{align*}
                \item[(iii)] For a.e.  $\p\in \Ps$ and $v\in W^{\smash{1,p(\p)}}_0(\Omega)$, it holds
                \begin{align*}
                    c(p)^{-1}\,\lVert F(\nabla v) - F(\nabla_x \boldsymbol{u}^*(\p,\cdot)) \rVert_{L^2(\Omega(\p))^d}^2
                \leq 
                E_{\p}(v) - E_{\p}(\boldsymbol{u}^*(\p,\cdot))
                \leq c(p)\,\lVert F(\nabla v) - F(\nabla_x \boldsymbol{u}^*(\p,\cdot)) \rVert_{L^2(\Omega(\p))^d}^2\,,
                \end{align*}
                where $c(p)>0$ is the constant from Theorem \ref{thm:two_sided_estimate}.
            \end{itemize}
        \end{proposition}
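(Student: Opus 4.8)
The plan is to follow the blueprint of the proof of Proposition~\ref{cor:variable_exponents}, transporting everything to the fixed reference domain $\Omega$ via the flow $\varphi_{\p}$. Since $\mathbf{v}\colon\mathbb{R}\times\mathbb{R}^d\to\mathbb{R}^d$ is smooth with compact support and $\Ps=(0,T)$ is bounded, each $\varphi_{\p}\colon\Omega\to\Omega(\p)$ is a $C^\infty$-diffeomorphism whose Jacobian $D\varphi_{\p}$, its inverse, and the associated determinants are bounded from above and below by positive constants uniformly in $\p\in\Ps$, and depend smoothly on $\p$; in particular every $\Omega(\p)$ is again a bounded Lipschitz domain. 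Hence the pullback operator $\Phi$, given by $(\Phi\boldsymbol{v})(\p,x)\coloneqq\boldsymbol{v}(\p,\varphi_{\p}(x))$, together with the chain rule $\nabla_x(\Phi\boldsymbol{v})(\p,x)=D\varphi_{\p}(x)^{\top}(\nabla_x\boldsymbol{v})(\p,\varphi_{\p}(x))$ and the change of variables formula, defines a Banach space isomorphism $\Phi\colon\boldsymbol{\mathcal{U}}\to L^p(\Ps,\W)$. Since $p\in(1,\infty)$, the latter is a reflexive Banach space, so $\boldsymbol{\mathcal{U}}$ equipped with $\|\cdot\|_{\boldsymbol{\mathcal{U}}}\coloneqq\|\cdot\|_{L^p(Q)}+\|\,\vert\nabla_x\cdot\vert\,\|_{L^p(Q)}$ is a reflexive Banach space. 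Pulling Poincar\'e's inequality on $\Omega$ back through $\varphi_{\p}$, one also obtains a Poincar\'e inequality on $W^{\smash{1,p}}_0(\Omega(\p))$ with a constant $c_{\textup{P}}>0$ that can be chosen independently of $\p\in\Ps$.

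For point~(i), I would then repeat the argument of Proposition~\ref{cor:variable_exponents}~(i) almost verbatim: $\boldsymbol{\mathcal{E}}\colon\boldsymbol{\mathcal{U}}\to\mathbb{R}$ is proper, strictly convex and continuous, and combining the $\p$-uniform slice-wise Poincar\'e inequality with the $\varepsilon$-Young inequality (as in~\eqref{eq:coercive0}) shows that $\boldsymbol{\mathcal{E}}(\boldsymbol{v})\to\infty$ whenever $\|\boldsymbol{v}\|_{\boldsymbol{\mathcal{U}}}\to\infty$, i.e.\ $\boldsymbol{\mathcal{E}}$ is weakly coercive; the direct method in the calculus of variations, cf.\ \cite{Dac08}, then yields the unique minimizer $\boldsymbol{u}^*\in\boldsymbol{\mathcal{U}}$.

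For point~(ii), a standard computation shows that $\boldsymbol{\mathcal{E}}$ is continuously Fr\'echet differentiable with $\langle D\boldsymbol{\mathcal{E}}(\boldsymbol{u}),\boldsymbol{v}\rangle_{\boldsymbol{\mathcal{U}}}=\int_{\Ps}\langle DE_{\p}(\boldsymbol{u}(\p,\cdot)),\boldsymbol{v}(\p,\cdot)\rangle_{W^{\smash{1,p}}_0(\Omega(\p))}\,\mathrm d\p$, so minimality of $\boldsymbol{u}^*$ forces the right-hand side to vanish for all $\boldsymbol{v}\in\boldsymbol{\mathcal{U}}$. To localize in $\p$, I would fix a countable set $(\psi_k)_{k\in\mathbb{N}}\subseteq C_c^\infty(\Omega)$ that is dense in $\W$ and push it forward along the flow: for every $\p$, the function $\psi_k\circ\varphi_{\p}^{-1}$ lies in $C_c^\infty(\Omega(\p))$, the family $(\psi_k\circ\varphi_{\p}^{-1})_{k\in\mathbb{N}}$ is dense in $W^{\smash{1,p}}_0(\Omega(\p))$ (as $w\mapsto w\circ\varphi_{\p}^{-1}$ is a topological isomorphism $\W\to W^{\smash{1,p}}_0(\Omega(\p))$), and $\boldsymbol{v}_{k,\chi}\coloneqq\chi\,(\psi_k\circ\varphi_{\cdot}^{-1})\in\boldsymbol{\mathcal{U}}$ for any $\chi\in C_0^\infty(\Ps)$, as one checks at once by applying $\Phi$, which maps $\boldsymbol{v}_{k,\chi}$ to $\chi\otimes\psi_k\in L^p(\Ps,\W)$. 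Testing with $\boldsymbol{v}_{k,\chi}$ and invoking the fundamental lemma of the calculus of variations gives $\langle DE_{\p}(\boldsymbol{u}^*(\p,\cdot)),\psi_k\circ\varphi_{\p}^{-1}\rangle_{W^{\smash{1,p}}_0(\Omega(\p))}=0$ for a.e.\ $\p\in\Ps$ and every $k\in\mathbb{N}$; discarding a countable union of null sets and using density yields $DE_{\p}(\boldsymbol{u}^*(\p,\cdot))=0$ in $W^{\smash{1,p}}_0(\Omega(\p))^*$ for a.e.\ $\p$, and the strict convexity of $E_{\p}$ identifies $\boldsymbol{u}^*(\p,\cdot)$ as the unique minimizer of $E_{\p}\colon W^{\smash{1,p}}_0(\Omega(\p))\to\mathbb{R}$. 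Point~(iii) is then immediate from~(ii) and Theorem~\ref{thm:two_sided_estimate}, applied on the bounded Lipschitz domain $\Omega(\p)$ with $U=W^{\smash{1,p}}_0(\Omega(\p))$ and right-hand side $\boldsymbol{f}(\p,\cdot)\in L^{p'}(\Omega(\p))\hookrightarrow W^{\smash{1,p}}_0(\Omega(\p))^*$ (valid for a.e.\ $\p$ by Fubini), for which Poincar\'e's inequality holds.

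The main obstacle is the first paragraph: turning $\boldsymbol{\mathcal{U}}=L^p(\Ps,W^{\smash{1,p}}_0(\Omega(\cdot)))$ over the genuinely $\p$-dependent family of domains into a well-behaved reflexive Banach space and extracting a $\p$-uniform Poincar\'e constant. Everything hinges on the smoothness and uniform non-degeneracy of the flow $\varphi_{\p}$; once the change-of-variables isomorphism $\Phi$ is established, the remaining steps are the verbatim analogues of the corresponding steps in the proof of Proposition~\ref{cor:variable_exponents}.
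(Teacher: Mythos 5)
Your proposal is correct and follows essentially the same route as the paper: reflexivity of $\boldsymbol{\mathcal{U}}$ together with a $\p$-uniform slice-wise Poincar\'e inequality and the direct method give (i), localizing the first-order optimality condition in $\p$ via a countable dense family of test functions transported along the flow plus the fundamental lemma of the calculus of variations gives (ii), and a slice-wise application of Theorem~\ref{thm:two_sided_estimate} gives (iii). The only difference is that you justify the functional-analytic preliminaries (reflexivity of $\boldsymbol{\mathcal{U}}$, the uniform Poincar\'e constant, density of the pushed-forward $\psi_k\circ\varphi_{\p}^{-1}$, and membership of $\chi\,(\psi_k\circ\varphi_{\cdot}^{-1})$ in $\boldsymbol{\mathcal{U}}$) directly through the pullback isomorphism $\Phi$, whereas the paper outsources exactly these facts to the cited literature on non-cylindrical function spaces.
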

        
        \begin{remark}\label{rmk:variable_domains1}
            \begin{itemize}[noitemsep,topsep=0.0pt,labelwidth=\widthof{\textbf{(iii)}},leftmargin=!,font=\upshape\bfseries]
                \item[(i)] For the induced flow $\smash{\varphi_{\p}:\Omega\to \Omega(\p)}$, $\p\in \Ps\coloneqq  (0,T)$, $T<0$, we actually have in mind the expansion mapping, i.e., $\varphi_{\p}(x)=p_1 x$ for all $x\in \Omega$ and $\p=(p_1,\dots,p_N)^\top\in \Ps$, where $\Omega\subseteq \smash{\mathbb{R}^d}$, $d\in \mathbb{N}$, is star-shaped with respect to a ball containing the origin, e.g., $\Omega\coloneqq\smash{B_1^d(0)}$. Since, however,~Proposition~\ref{cor:variable_domains} applies for general induced flows $\varphi_{\p}\!:\!\Omega\!\to\! \Omega(\p)$, $\p\!\in\! \Ps\!\coloneqq  \!(0,T)$, $T\!>\!0$, we~immediately~consider~this~case, in order to keep potential future applications within the realm of possibility.
                
                \item[(ii)] Since $(p\mapsto c(p))\in C^0(1,\infty)$ in Theorem \ref{thm:two_sided_estimate}, from Proposition \ref{cor:variable_domains} (iii), for every $\boldsymbol{v}\in \boldsymbol{\mathcal{U}}$, it follows that
                \begin{align*}
                   \textup{ess\,inf}_{\p\in \Ps}{c(p)^{-1}}\,\boldsymbol{\rho}_{\boldsymbol{\mathcal{F}}}^2(\boldsymbol{v},\boldsymbol{u}^*)
                \leq 
               \boldsymbol{\mathcal{E}}(\boldsymbol{v}) - \boldsymbol{\mathcal{E}}(\boldsymbol{u}^*)
                \leq \textup{ess\,sup}_{\p\in \Ps}{c(p)}\,\boldsymbol{\rho}_{\boldsymbol{\mathcal{F}}}^2(\boldsymbol{v},\boldsymbol{u}^*)\,,
                \end{align*}
                where $\boldsymbol{\rho}_{\boldsymbol{\mathcal{F}}}^2(\boldsymbol{v},\boldsymbol{u}^*)\coloneqq \int_{\Ps}{\lVert F(\nabla_x \boldsymbol{v}(\p,\cdot)) - F(\nabla_x \boldsymbol{u}^*(\p,\cdot))\rVert_{L^2(\Omega(\p))^d}^2\,\mathrm d\p}$ for all $\boldsymbol{v}\in \boldsymbol{\mathcal{U}}$.
            \end{itemize}
        \end{remark}
        
        \begin{proof}
            \textbf{ad (i).} \!The space $\boldsymbol{\mathcal{U}}$ equipped with the norm $\|\cdot\|_{\boldsymbol{\mathcal{U}}}\!\coloneqq\! \|\cdot\|_{L^p(Q)}\!+\!\|\,\vert \nabla\cdot\vert\,\|_{L^{p}(Q)}$, \!forms a reflexive~Banachspace, cf. \cite[Proposition 3.17 \& Corollary 3.25]{Nae15}~or~\cite{NRL16,NR17}.~Apparently, $\boldsymbol{\mathcal{E}}:\boldsymbol{\mathcal{U}}\to\mathbb{R}$ is strictly convex and continuous. Apart from that, for every $\boldsymbol{v}\in \boldsymbol{\mathcal{U}}$, due to Poincar\'e's inequality applied for~each~fixed~${\p\in \Ps}$, which is allowed since $\boldsymbol{v}(\p,\cdot)\in W^{\smash{1,p}}_0(\Omega(\p))$ for all $\p\in \Ps$,~we~have~that 
            \begin{align}
            \begin{aligned}\label{eq:poincaret}
                \int_{\Ps}\int_{\Omega(\p)}{\vert \boldsymbol{v}(\p,x)\vert^p\,\mathrm{d}x\,\mathrm{d}\p}&\leq \int_{\Ps}{(2\textup{diam}(\Omega(\p)))^p\int_{\Omega(\p)}{\vert \nabla_x \boldsymbol{v}(\p,x)\vert^p\,\mathrm{d}x}\,\mathrm{d}\p}
                \\&\leq \Big(1+2\sup_{\p\in \Ps}{\textup{diam}(\Omega(\p))}\Big)^p\int_{\Ps}{\int_{\Omega(\p)}{\vert \nabla_x \boldsymbol{v}(\p,x)\vert^{p(\p)}\,\mathrm{d}x}\,\mathrm{d}\p}\,,
            \end{aligned}
            \end{align}
            which for any $\boldsymbol{v}\in \boldsymbol{\mathcal{U}}$ and $\varepsilon\in(0,1]$, using for each $\p\in \Ps$, the $\varepsilon$-Young inequality with constant $c(p,\varepsilon)\coloneqq \smash{\frac{(p\varepsilon)^{1-p'}}{p'}}$, implies that
            \begin{align}
            \begin{aligned}\label{eq:coercive0t}
                \mathcal{E}(\boldsymbol{v})&\ge \int_{\Ps}{ \frac{1}{p}\int_{\Omega(\p)}{ |\nabla_x \boldsymbol{v}(\p,\cdot)|^p\,\mathrm dx\,}\mathrm d\p\,}- \int_{\Ps}{\int_{\Omega(\p)}{c(p,\varepsilon) \vert \boldsymbol{f} (\p,\cdot)\vert^{p'}-\varepsilon\vert \boldsymbol{v}(\p,\cdot)\vert^p\,\mathrm dx\,}\mathrm d\p\,}\\&\ge
               \bigg(\frac{1}{p}-\varepsilon\Big(1+2\sup_{\p\in \Ps}{\textup{diam}(\Omega(\p))}\Big)^p\bigg) \int_{\Ps}{\int_{\Omega(\p)}{  |\nabla_x \boldsymbol{v}(\p,\cdot)|^p\,\mathrm dx\,}\mathrm d\p\,}- \frac{(p\varepsilon)^{1-p'}}{p'} \int_{\Ps}{\int_{\Omega(\p)}{\vert \boldsymbol{f} (\p,\cdot)\vert^{p'}\,\mathrm dx\,}\mathrm d\p\,}\,.
               \end{aligned}
            \end{align}
            From \eqref{eq:poincaret} and \eqref{eq:coercive0t} for $\varepsilon\!>\!0$ sufficiently small, using that, by assumption, $\smash{\sup_{\p\in \Ps}{\textup{diam}(\Omega(\p))}\!<\!\infty}$\footnote{Here, we exploit that there exists $K\!>\!0$ such that $K^{-1}\!\leq \! \det(D\varphi_{\p}) \!\leq\! K$ in $\Omega(\p)$ for all $\p\in \Ps$, cf.~\mbox{\cite[(3.1)]{NRL16}}.},~we~\mbox{conclu}-de that from $\|\boldsymbol{v}\|_{\boldsymbol{\mathcal{U}}}\!\to\! \infty$, it~follows~that~${\boldsymbol{\mathcal{E}}(\boldsymbol{v})\!\to\! \infty}$,
            i.e., $\boldsymbol{\mathcal{E}}\!:\!\boldsymbol{\mathcal{U}}\!\to\! \mathbb{R}$ is weakly coercive, so that the direct method in the calculus of variations, cf. \cite{Dac08}, yields the existence of a unique~\mbox{minimizer}~${\boldsymbol{u}^*\!\in\! \boldsymbol{\mathcal{U}}}$~of~${\boldsymbol{\mathcal{E}}\!:\!\boldsymbol{\mathcal{U}}\!\to\! \mathbb{R}}$.
            
            \qquad \textbf{ad (ii).} A direct calculation shows that $\boldsymbol{\mathcal{E}}:\boldsymbol{\mathcal{U}}\to\mathbb{R}$ is continuously Frech\'et differentiable with
            \begin{align*}
                \langle D\boldsymbol{\mathcal{E}}(\boldsymbol{u}),\boldsymbol{v}\rangle_{\boldsymbol{\mathcal{U}}}%&
                =%\int_{\Ps}{\int_{\Omega(\p)}{ \vert \nabla_x \boldsymbol{u}(\p,\cdot)\vert^{p-2}\nabla_x \boldsymbol{u}(\p,\cdot)\cdot\nabla_x \boldsymbol{v}(\p,\cdot)\,\mathrm{d}x}\,\mathrm{d}\p}=%\\&=
                \int_{\Ps}{\langle DE_{\p}(\boldsymbol{u}(\p,\cdot)),\boldsymbol{v}(\p,\cdot)\rangle_{W^{\smash{1,p}}_0(\Omega(\p))}\,\mathrm{d}\p}
            \end{align*}
            for all $\boldsymbol{u},\boldsymbol{v}\in \boldsymbol{\mathcal{U}}$. Therefore, due to the minimality of $\boldsymbol{u}^*\in \boldsymbol{\mathcal{U}}$, for every $\boldsymbol{v}\in \boldsymbol{\mathcal{U}}$, we necessarily have that
            \begin{align}\label{eq:some_label2}
                0=\langle D\boldsymbol{\mathcal{E}}(\boldsymbol{u}^*),\boldsymbol{v}\rangle_{\boldsymbol{\mathcal{U}}}=\int_{\Ps}{\langle DE_{\p}(\boldsymbol{u}^*(\p,\cdot)),\boldsymbol{v}(\p,\cdot)\rangle_{W^{\smash{1,p}}_0(\Omega(\p))}\,\mathrm d\p}\,.
            \end{align}
            Since $W^{\smash{1,p}}_0(\Omega(0))$ is separable, there exists a countable dense subset $(\psi_k)_{k\in \mathbb{N}}\subseteq W^{\smash{1,p}}_0(\Omega(0))$. Apart~from~that, appealing to \cite[Lemma 2.1]{Nae15}, for any $\p\!\in\! \Ps$, the pull-backs ${((\varphi_{\p}^{-1})^*\psi_k)_{k\in \mathbb{N}}\!\coloneqq\!(\psi_k\!\circ\! \varphi_{\p}^{-1})_{k\in \mathbb{N}}\!\subseteq\! W^{\smash{1,p}}_0(\Omega(\p))}$, are dense in $W^{\smash{1,p}}_0(\Omega(\p))$. In addition, \cite[p. 6 ff.]{NRL16} shows that ${(\boldsymbol{\psi}_k)_{k\in \mathbb{N}}\coloneqq (t\mapsto (\varphi_{\p}^{-1})^*\psi_k)_{k\in \mathbb{N}}\subseteq \boldsymbol{\mathcal{U}}}$. Next,
            choosing  $\boldsymbol{v}=\varphi \boldsymbol{\psi}_k\in \boldsymbol{\mathcal{U}}$ in \eqref{eq:some_label2} for arbitrary $\varphi\in C^\infty_0(\Ps)$ and $k\in \mathbb{N}$, we further deduce that
            \begin{align*}
                \int_{\Ps}{\langle DE_{\p}(\boldsymbol{u}^*(\p,\cdot)),\boldsymbol{\psi}_k(\p,\cdot)\rangle_{W^{\smash{1,p(\p)}}_0(\Omega)}\,\varphi(\p)\,\mathrm d\p}=0\,,
            \end{align*}
            so that, owing to the countability of $(\boldsymbol{\psi}_k)_{k\in \mathbb{N}}\subseteq \boldsymbol{\mathcal{U}}$,
            the fundamental lemma of calculus of variations implies that for a.e. $\p\in \Ps$, it holds for all $k\in \mathbb{N}$
            \begin{align*}
                \langle DE_{\p}(\boldsymbol{u}^*(\p,\cdot)),(\varphi_{\p}^{-1})^*\psi_k\rangle_{W^{\smash{1,p(\p)}}_0(\Omega)}=0\,.
            \end{align*}
            As $((\varphi_{\p}^{-1})^*\psi_k)_{k\in \mathbb{N}}$ is dense in $W^{\smash{1,p}}_0(\Omega(\p))$ for all $\p\in \Ps$, we find that for a.e. $\p\in \Ps$, it holds for all $v\in W^{\smash{1,p}}_0(\Omega(\p))$
            \begin{align*}
                \langle DE_{\p}(\boldsymbol{u}^*(\p,\cdot)),v\rangle_{W^{\smash{1,p(\p)}}_0(\Omega)}=0\,.
            \end{align*}
            Eventually, since for every $\p\in \Ps$,
            the $p$-Dirichlet energy $E_{\p}: W^{\smash{1,p}}_0(\Omega(\p))\to \mathbb{R}$ is strictly convex,~for~a.e.~${\p\in \Ps}$, the slice $\boldsymbol{u}^*(\p,\cdot)\in W^{\smash{1,p}}_0(\Omega(\p))$ is a unique minimizer of $E_{\p}: W^{\smash{1,p}}_0(\Omega(\p))\to \mathbb{R}$. 
            
            \qquad \textbf{ad (iii).} Follows from point (ii) and Theorem \ref{thm:two_sided_estimate}.
        \end{proof}
        
        \begin{remark}\label{rmk:variable_domains2}
            Proposition \ref{cor:variable_domains} also applies for the variable domain Bochner--Lebesgue space
                \begin{align*}
                     \boldsymbol{\mathcal{U}}\coloneqq L^p(\Ps,U(\cdot))\coloneqq\big\{ \boldsymbol{v}\in L^{p}(Q)\mid \boldsymbol{v}(\p,\cdot)\in 
                     U(\p)\textup{ for a.e. }\p\in \Ps, \vert \nabla_x \boldsymbol{v}\vert\in L^{p}(Q) \big\}\,,
                \end{align*}
                where either $U(\p)\coloneqq \smash{W^{\smash{1,p}}_{\Gamma_D}(\Omega(\p))}$ for $\Gamma_D\subseteq \partial\Omega$ with $\mathscr{H}^{d-1}(\Gamma_D)>0$ or $U(\p)\coloneqq W^{\smash{1,p}}(\Omega(\p))/\mathbb{R}$.~In~fact,~analogous arguments as in \cite[Proposition 3.17 \& Corollary 3.25]{Nae15} show that $\boldsymbol{\mathcal{U}}$ equipped with $\|\cdot\|_{\boldsymbol{\mathcal{U}}}\coloneqq \|\cdot\|_{L^p(Q)}+\|\,\vert \nabla_x\cdot\vert\,\|_{L^p(Q)}$ forms a reflexive Banach space for these choices and for every $\p\in \Ps$, a Poincar\'e inequality with a constant that can be bounded independently of $\p\in \Ps$ applies. Then, the same arguments as in Remark \ref{rmk:poincare_wirtinger} show if $\boldsymbol{f}\!\in\! \smash{L^{p'}(Q)}$ satisfies $\smash{\fint_{\Omega(\p)}{\boldsymbol{f}(\p,\cdot)\,\mathrm{d}x}}\!=\!0$ for a.e. $\p\!\in\! \Ps$, then Proposition \ref{cor:variable_exponents} also~applies~for the variable domain Bochner--Lebesgue space
                \begin{align*}
                     \boldsymbol{\mathcal{U}}\coloneqq L^p(\Ps,W^{\smash{1,p}}(\Omega(\cdot)))\coloneqq \big\{ \boldsymbol{v}\in L^{p}(Q)\mid \boldsymbol{v}(\p,\cdot)\in 
                     W^{\smash{1,p}}(\Omega(\p))\textup{ for a.e. }\p\in \Ps, \vert \nabla_x \boldsymbol{v}\vert\in L^{p}(Q) \big\}\,,
                \end{align*}
                if we drop the uniqueness in point (i) in Proposition \ref{cor:variable_domains}.
        \end{remark}
    
    \section{Error Decay Rates and Implications to High Dimensional Problems}
        
        \qquad In this section, we derive error decay rates combining the results of both Section~\ref{sec:two_sided_estimates} and Section~\ref{sec:parametric_problems}~with Theorem~\ref{thm:quantitative_universal_approximation}. Here, we discuss two exemplary settings. First, we compute the error decay rate for a $p$-Dirichlet problem with homogeneous Neumann boundary conditions. Second, we consider a $p$-Laplace problem with a parametric variable exponent, again, including the case of
        a parametric variable right-hand side, and  a $p$-Laplace problem with a parametric variable domain. Recall our central estimate from Section~\ref{sec:two_sided_estimates} states for every $v\in M$ that
        \begin{equation}\label{eq:cea_estimate_repeat}
            \rho_F^2(v,u^*) \leq c(p)\left( \delta + c(p)\inf_{\tilde v \in M}\rho_F^2(\tilde v, u^*) \right)\,,
        \end{equation}
        where $u^*\!\in\! U$ minimizes the $p$-Dirichlet energy $E\!:\!U\!\to\! \mathbb{R}$ over the closed subspace $U\!\subseteq\! W^{\smash{1,p}}(\Omega)$~and~$M\!\subseteq\! U$~is~an arbitrary subset. Further, $\smash{\rho_F^2}\!:\!U\times U\!\to\! \mathbb{R}$, again, denotes the natural distance, $c(p)\!>\!0$~is~a~constant~\mbox{depending} (continuously) on $p\in (1,\infty)$ and $d\in \mathbb{N}$, and $\delta\coloneqq\delta(v) \coloneqq E(v) - \inf_{\tilde v \in M}E(\tilde v)$ quantifies the energy mismatch between $v\in M$ an the~energy~minimum~over~$M$. Note that for parametric problems considered in Section~\ref{sec:parametric_problems}, we derived similar estimates, adapting the choice of $\smash{\rho_F^2}:U\times U\to \mathbb{R}$ and the space $U$, cf. Proposition \ref{cor:variable_exponents} and Remark \ref{rmk:variable_exponents1} as well as Proposition \ref{cor:variable_domains} and Remark \ref{rmk:variable_domains1}.
        
        \qquad To derive error decay rates from equation~\eqref{eq:cea_estimate_repeat}, we need to estimate the term involving the infimum. Note that, with respect to the natural distance $\smash{\rho_F^2}$, the error decay rate equals the approximation rate with respect to $\smash{\rho_F^2}$ for functions in $U$. However, in the context of neural networks, the natural distance $\smash{\rho_F^2}$ has not yet been studied from an approximation theoretic viewpoint. Therefore, we require its relation~to~Sobolev~topologies, where approximation results are known, cf. Theorem \ref{thm:quantitative_universal_approximation}.
        
        \begin{lemma}[Relation between natural distance and $W^{1,p}$-semi norm]\label{lem:relations}
		Let $\Omega\subseteq \mathbb{R}^d$, $d\in \mathbb{N}$, be a bounded domain and $p\in \left(1,\infty\right)$. Then, there exists a constant $c(p)>0$, depending only on $d\in \mathbb{N}$ and $p\in (1,\infty)$, such that the following relations apply:
		\begin{description}[noitemsep,topsep=0.0pt,labelwidth=\widthof{\textbf{(ii)}},leftmargin=!,font=\upshape\bfseries]
			\item[(i)] If $p\in [2,\infty)$, then for every $u,v\in W^{\smash{1,p}}(\Omega)$, it holds
			\begin{align*}
				c(p)^{-1}\,\|\nabla u-\nabla v\|_{L^p(\Omega)^d}^p\leq \rho_F^2(u,v)\leq c(p)\,\big(\|\nabla u\|_{L^p(\Omega)^d}+\|\nabla v\|_{L^p(\Omega)^d}\big)^{p-2}\|\nabla u-\nabla v\|_{L^p(\Omega)^d}^2\,.
			\end{align*}
			\item[(ii)] If $p\in (1,2)$, then for every $v,w\in W^{\smash{1,p}}(\Omega)$, it holds
			\begin{align*}
				c(p)^{-1}\,\rho_F^2(u,v)\leq \|\nabla u-\nabla v\|_{L^p(\Omega)^d}^p\leq c(p)\,\big(\|\nabla u\|_{L^p(\Omega)^d}+\|\nabla v\|_{L^p(\Omega)^d}\big)^{\smash{\frac{p(2-p)}{2}}}\rho_F^2(u,v)^{\frac{p}{2}}\,.
			\end{align*}
		\end{description}
		In particular,  we have that $(p\mapsto c(p))\in C^0(1,\infty)$.
	\end{lemma}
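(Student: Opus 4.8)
The plan is to deduce both pairs of inequalities from the pointwise equivalence $|F(a)-F(b)|^2\sim(|a|+|b|)^{p-2}|a-b|^2$ furnished by Lemma~\ref{lem:F_basis}~(ii) (read with the convention that the middle quantity is $0$ when $a=b$, consistently with $|F(a)-F(b)|^2$), and then to pass from these pointwise inequalities on $\mathbb{R}^d$ to integral inequalities on $\Omega$ by means of the triangle inequality $|a-b|\le|a|+|b|$ and Hölder's inequality, with $a=\nabla u(x)$, $b=\nabla v(x)$. The continuity $(p\mapsto c(p))\in C^0(1,\infty)$ is inherited from the corresponding property of the constant in Lemma~\ref{lem:F_basis}, since all further constants introduced are Hölder constants equal to $1$ (or fixed powers of $c(p)$).

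\textbf{Case $p\in[2,\infty)$.} For the lower bound, $p-2\ge0$ and $|a-b|\le|a|+|b|$ give pointwise $|a-b|^p=|a-b|^{p-2}|a-b|^2\le(|a|+|b|)^{p-2}|a-b|^2\le c(p)\,|F(a)-F(b)|^2$; integrating over $\Omega$ yields $\|\nabla u-\nabla v\|_{L^p(\Omega)^d}^p\le c(p)\,\rho_F^2(u,v)$. For the upper bound, Lemma~\ref{lem:F_basis}~(ii) gives $\rho_F^2(u,v)\le c(p)\int_\Omega(|\nabla u|+|\nabla v|)^{p-2}|\nabla u-\nabla v|^2\,\mathrm dx$; for $p>2$, applying Hölder's inequality with the conjugate exponents $\tfrac{p}{p-2}$ and $\tfrac{p}{2}$ bounds the integral by $\bigl\||\nabla u|+|\nabla v|\bigr\|_{L^p(\Omega)}^{p-2}\|\nabla u-\nabla v\|_{L^p(\Omega)^d}^2$, and the claim follows from $\bigl\||\nabla u|+|\nabla v|\bigr\|_{L^p(\Omega)}\le\|\nabla u\|_{L^p(\Omega)^d}+\|\nabla v\|_{L^p(\Omega)^d}$, while $p=2$ is immediate since there $\rho_F^2(u,v)=\|\nabla u-\nabla v\|_{L^2(\Omega)^d}^2$.

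\textbf{Case $p\in(1,2)$.} Now $p-2<0$, so $|a-b|\le|a|+|b|$ gives $(|a|+|b|)^{p-2}\le|a-b|^{p-2}$ for $a\ne b$, whence pointwise $|F(a)-F(b)|^2\le c(p)(|a|+|b|)^{p-2}|a-b|^2\le c(p)\,|a-b|^p$, and integration yields the lower bound $\rho_F^2(u,v)\le c(p)\,\|\nabla u-\nabla v\|_{L^p(\Omega)^d}^p$. The upper bound is the main point: the idea is to split, pointwise on $\{\nabla u\ne\nabla v\}$,
\[
|\nabla u-\nabla v|^p=\bigl[(|\nabla u|+|\nabla v|)^{p-2}|\nabla u-\nabla v|^2\bigr]^{p/2}\,(|\nabla u|+|\nabla v|)^{\frac{p(2-p)}{2}}\,,
\]
the identity extending trivially to $\{\nabla u=\nabla v\}$ where both sides vanish, and then to apply Hölder's inequality on $\Omega$ with the conjugate exponents $\tfrac{2}{p}$ and $\tfrac{2}{2-p}$, which gives
\[
\|\nabla u-\nabla v\|_{L^p(\Omega)^d}^p\le\Bigl(\int_\Omega(|\nabla u|+|\nabla v|)^{p-2}|\nabla u-\nabla v|^2\,\mathrm dx\Bigr)^{p/2}\Bigl(\int_\Omega(|\nabla u|+|\nabla v|)^p\,\mathrm dx\Bigr)^{\frac{2-p}{2}}\,.
\]
Bounding the first factor by $\bigl(c(p)\,\rho_F^2(u,v)\bigr)^{p/2}$ via Lemma~\ref{lem:F_basis}~(ii) and rewriting the second as $\bigl\||\nabla u|+|\nabla v|\bigr\|_{L^p(\Omega)}^{\frac{p(2-p)}{2}}\le\bigl(\|\nabla u\|_{L^p(\Omega)^d}+\|\nabla v\|_{L^p(\Omega)^d}\bigr)^{\frac{p(2-p)}{2}}$ completes the proof.

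I expect the only genuinely non-routine step to be the algebraic splitting of $|\nabla u-\nabla v|^p$ in the sub-quadratic regime together with the correct choice of Hölder exponents; the remaining bookkeeping — the degenerate cases $\nabla u=\nabla v$ (where every expression is interpreted as $0$), the finiteness of all integrals appearing since $\nabla u,\nabla v\in L^p(\Omega)^d$, and the continuity of the constant in $p$ — is routine.
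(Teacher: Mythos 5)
Your proposal is correct and follows essentially the same route as the paper: both arguments rest on the pointwise equivalence from Lemma~\ref{lem:F_basis}~(ii) with $a=\nabla u$, $b=\nabla v$, the elementary bound $|a-b|\le|a|+|b|$ (used in opposite directions according to the sign of $p-2$), and Hölder's inequality with the conjugate pairs $\bigl(\tfrac{p}{2},\tfrac{p}{p-2}\bigr)$ for $p\ge 2$ and $\bigl(\tfrac{2}{p},\tfrac{2}{2-p}\bigr)$ for $p<2$. Your explicit handling of the degenerate set $\{\nabla u=\nabla v\}$ and of the case $p=2$ is a minor tidiness the paper leaves implicit; no substantive difference.
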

	\begin{proof}
	        The proof of this Lemma is deferred to the end of the section.
	\end{proof}
	
	\qquad We are now in the position to derive error decay rates. As a first result, we consider a pure Neumann problem without parametric dependencies. We use a Neumann problem as this corresponds to an unconstrained minimization problem over the space $\smash{W^{\smash{1,p}}(\Omega)}$ and this simplifies the derivation of error~decay~rates. However, pure Dirichlet boundary conditions via penalization can also be considered~using~Theorem~\ref{thm:boundary_penalty}.
	
	\begin{theorem}\label{thm:error_decay_rates_pure_neumann} 
	    Let $f\in W^{\smash{1,p}}(\Omega)^*$, $p\in (1,\infty)$, be such that $\smash{\langle f,c\rangle_{ W^{\smash{1,p}}(\Omega)}}=0$ for all $c\in \mathbb{R}$.
	    Moreover, let $u^*\in W^{\smash{1,p}}(\Omega)$ a weak solution of the $p$-Laplace problem with homogeneous Neumann boundary conditions, i.e., $u^*\in \smash{W^{\smash{1,p}}(\Omega)}$~is~minimal for $E:W^{\smash{1,p}}(\Omega)\to \mathbb{R}$, for every $v\in W^{\smash{1,p}}(\Omega)$ defined by
	    \begin{align}
		    E(v) =  \frac1p \int_\Omega |\nabla v|^p\, \mathrm dx - \langle f,v\rangle_{W^{\smash{1,p}}(\Omega)}\,.
	    \end{align}
	    Assume that $u^*\in W^{k,p}(\Omega)$ for some $k>1$. Then, for every $n\in \mathbb{N}$, there exists a 
	    parameter~space~$\Theta_n$~of~dimension~$\mathcal{O}(n)$ such that for any $\theta\in\Theta_n$, the corresponding fully connected $\operatorname{ReLU}^2$-network $u_\theta\in W^{\smash{1,p}}(\Omega)$ satisfies
	    \begin{align*}
	         \lVert \nabla u_{\theta} - \nabla u^* \rVert_{L^p(\Omega)^d}\leq c(p)\cdot\begin{cases}
	         \delta_n^{\frac{1}{p}} + \lVert u^* \rVert_{W^{k,p}(\Omega)}\left( \frac1n \right)^{\frac2p\cdot\frac{k-1}{d}}&\textrm{ if }p\in [2,\infty) \\
	         \delta_n^{\frac{1}{2}} + \lVert u^* \rVert_{W^{k,p}(\Omega)} \left( \frac1n\right)^{\frac{p}{2}\cdot\frac{k-1}{d}}&\textrm{ if }p\in (1,2) 
	         \end{cases}\,,
	    \end{align*}
	    where   $\delta_n\!\coloneqq\!\delta_n(u_\theta)\!\coloneqq\! E(u_{\theta}) - \inf_{\psi \in \Theta_n}E(u_\psi)$ is the optimization error and $c(p)>0$ depends~only~on~${p\!\in\! (1,\infty)}$~and~${d\!\in\! \mathbb{N}}$.
	\end{theorem}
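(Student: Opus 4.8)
The plan is to concatenate the three ingredients already established: the C\'ea type inequality \eqref{eq:cea_estimate_repeat} (Corollary~\ref{lemma:cea_lemma}), the quantitative approximation result Theorem~\ref{thm:quantitative_universal_approximation}, and the comparison of the natural distance with the $W^{1,p}$-semi-norm in Lemma~\ref{lem:relations}. First I would fix the functional setting. Since $\langle f,c\rangle_{W^{\smash{1,p}}(\Omega)}=0$ for all $c\in\mathbb{R}$, I pass to the quotient $U\coloneqq W^{\smash{1,p}}(\Omega)/\mathbb{R}$, on which Poincar\'e--Wirtinger holds, so that the hypotheses of Theorem~\ref{thm:two_sided_estimate} and hence of Corollary~\ref{lemma:cea_lemma} are met, and by Remark~\ref{rmk:poincare_wirtinger} the minimizer there also minimizes $E$ over $W^{\smash{1,p}}(\Omega)$; because $\rho_F^2$, $E$ and $\|\nabla\,\cdot\,\|_{L^p(\Omega)^d}$ are invariant under adding constants, a network realization $u_\theta$ and its class in $U$ may be identified. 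Applying Corollary~\ref{lemma:cea_lemma} with $M$ taken to be the set of $\smash{\operatorname{ReLU}^2}$-network realizations of a parameter space $\Theta_n$ then yields, for every $\theta\in\Theta_n$,
\[
\rho_F^2(u_\theta,u^*)\;\leq\; c(p)\big(\delta_n+c(p)\,\inf_{\tilde v\in M}\rho_F^2(\tilde v,u^*)\big)\,.
\]

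Second, I would estimate the infimum via Theorem~\ref{thm:quantitative_universal_approximation} applied to $u^*\in W^{\smash{k,p}}(\Omega)$: for each $n$ it supplies the claimed parameter space $\Theta_n$ of dimension $\mathcal{O}(n)$ together with a realization $u_n\in M$ obeying $\|\nabla u^*-\nabla u_n\|_{L^p(\Omega)^d}\leq\|u^*-u_n\|_{W^{\smash{1,p}}(\Omega)}\leq c(p)(1/n)^{\frac{k-1}{d}}\|u^*\|_{W^{\smash{k,p}}(\Omega)}$, and hence also $\|\nabla u_n\|_{L^p(\Omega)^d}\leq c(p)\|u^*\|_{W^{\smash{k,p}}(\Omega)}$ since $(1/n)^{(k-1)/d}\leq 1$. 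Feeding this into Lemma~\ref{lem:relations} converts the $W^{1,p}$-rate into a rate for $\rho_F^2(u_n,u^*)$: for $p\in[2,\infty)$, the right inequality of Lemma~\ref{lem:relations}(i) gives $\rho_F^2(u_n,u^*)\leq c(p)\|u^*\|_{W^{\smash{k,p}}(\Omega)}^{p-2}\|\nabla u_n-\nabla u^*\|_{L^p(\Omega)^d}^2\leq c(p)(1/n)^{\frac{2(k-1)}{d}}\|u^*\|_{W^{\smash{k,p}}(\Omega)}^p$, the gradient prefactor being absorbed by the bound on $\|\nabla u_n\|$; for $p\in(1,2)$, the left inequality of Lemma~\ref{lem:relations}(ii) gives directly $\rho_F^2(u_n,u^*)\leq c(p)\|\nabla u_n-\nabla u^*\|_{L^p(\Omega)^d}^p\leq c(p)(1/n)^{\frac{p(k-1)}{d}}\|u^*\|_{W^{\smash{k,p}}(\Omega)}^p$. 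Substituting back,
\[
\rho_F^2(u_\theta,u^*)\;\leq\; c(p)\,\delta_n+c(p)\,(1/n)^{\alpha}\|u^*\|_{W^{\smash{k,p}}(\Omega)}^p\,,\qquad \alpha=\tfrac{2(k-1)}{d}\ (p\geq 2),\quad \alpha=\tfrac{p(k-1)}{d}\ (p<2)\,.
\]

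Third, I would return from $\rho_F^2(u_\theta,u^*)$ to $\|\nabla u_\theta-\nabla u^*\|_{L^p(\Omega)^d}$. For $p\in[2,\infty)$ this is clean and immediate: the left inequality of Lemma~\ref{lem:relations}(i) gives $\|\nabla u_\theta-\nabla u^*\|_{L^p(\Omega)^d}\leq(c(p)\rho_F^2(u_\theta,u^*))^{1/p}$, and subadditivity of $t\mapsto t^{1/p}$ on $[0,\infty)$ splits the right-hand side into $c(p)\delta_n^{1/p}$ and $c(p)(1/n)^{\frac2p\frac{k-1}{d}}\|u^*\|_{W^{\smash{k,p}}(\Omega)}$, which is the asserted bound. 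For $p\in(1,2)$ the only available tool is the right inequality of Lemma~\ref{lem:relations}(ii),
\[
\|\nabla u_\theta-\nabla u^*\|_{L^p(\Omega)^d}^p\;\leq\; c(p)\big(\|\nabla u_\theta\|_{L^p(\Omega)^d}+\|\nabla u^*\|_{L^p(\Omega)^d}\big)^{\frac{p(2-p)}{2}}\rho_F^2(u_\theta,u^*)^{p/2}\,,
\]
and here the prefactor still involves $\|\nabla u_\theta\|_{L^p(\Omega)^d}$, which is not controlled a priori; dispatching this prefactor is the main obstacle. I would do so by a case distinction: if $\|\nabla u_\theta-\nabla u^*\|_{L^p(\Omega)^d}>\|\nabla u^*\|_{L^p(\Omega)^d}$, then $\|\nabla u_\theta\|_{L^p(\Omega)^d}\leq 2\|\nabla u_\theta-\nabla u^*\|_{L^p(\Omega)^d}$, so the prefactor may be absorbed into the left-hand side, leaving $\|\nabla u_\theta-\nabla u^*\|_{L^p(\Omega)^d}\leq c(p)\rho_F^2(u_\theta,u^*)^{1/p}$; if instead $\|\nabla u_\theta-\nabla u^*\|_{L^p(\Omega)^d}\leq\|\nabla u^*\|_{L^p(\Omega)^d}$, the prefactor is bounded by $c(p)\|u^*\|_{W^{\smash{k,p}}(\Omega)}^{\frac{p(2-p)}{2}}$ and combines with $\rho_F^2(u_\theta,u^*)^{p/2}$ after taking $p$-th roots. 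In either case one inserts the bound on $\rho_F^2(u_\theta,u^*)$ from the second step, applies $(s+t)^{q}\leq s^q+t^q$ for $q\in(0,1]$, and uses the elementary comparisons $\delta_n^{1/p}\leq\delta_n^{1/2}$ and $(1/n)^{(k-1)/d}\leq(1/n)^{\frac p2\frac{k-1}{d}}$ available for $p<2$ to arrive at the stated bound; should extra control on $\|\nabla u_\theta\|_{L^p(\Omega)^d}$ be required, it is available from the a priori energy estimate $\|\nabla u_\theta\|_{L^p(\Omega)^d}^p\leq c(p)\big(E(u_\theta)+\|f\|_{W^{\smash{1,p}}(\Omega)^*}^{p'}\big)$ (Young plus Poincar\'e--Wirtinger, using that $f$ annihilates constants) together with $E(u_\theta)=\delta_n+\inf_{\psi\in\Theta_n}E(u_\psi)\leq\delta_n+E(u_n)$. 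Throughout, continuity of every constant in $p$ and its dependence on $p,d$ only is inherited termwise from Theorem~\ref{thm:two_sided_estimate}, Corollary~\ref{lemma:cea_lemma}, Theorem~\ref{thm:quantitative_universal_approximation} and Lemma~\ref{lem:relations}, and $\textup{dim}(\Theta_n)=\mathcal{O}(n)$ comes directly from Theorem~\ref{thm:quantitative_universal_approximation}.
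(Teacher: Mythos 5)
Your proof is correct and rests on the same three ingredients as the paper (C\'ea's Lemma~\ref{lemma:cea_lemma}, the approximation result of Theorem~\ref{thm:quantitative_universal_approximation}, and the norm comparison of Lemma~\ref{lem:relations}); for $p\in[2,\infty)$ your argument and the paper's coincide step by step. The genuine difference lies in how the prefactor $\bigl(\lVert\nabla u_\theta\rVert_{L^p(\Omega)^d}+\lVert\nabla u^*\rVert_{L^p(\Omega)^d}\bigr)^{\frac{2-p}{2}}$ is controlled for $p\in(1,2)$: you use a case distinction, absorbing the prefactor into the left-hand side when $\lVert\nabla u_\theta-\nabla u^*\rVert_{L^p(\Omega)^d}$ dominates $\lVert\nabla u^*\rVert_{L^p(\Omega)^d}$ and otherwise bounding it by a power of $\lVert u^*\rVert_{W^{k,p}(\Omega)}$, whereas the paper derives the a priori coercivity bound $\lVert\nabla u_\theta\rVert_{L^p(\Omega)^d}\le c(p)\,\bigl(\delta_n^{1/p}+\lVert f\rVert_{W^{1,p}(\Omega)^*}^{1/(p-1)}\bigr)$ --- precisely the fallback estimate \eqref{eq:coercivity_p_dirichlet} you mention --- from the Poincar\'e--Wirtinger and $\varepsilon$-Young inequalities, and then recombines exponents via $\delta_n^{\frac{2-p}{2p}}\,\delta_n^{\frac12}=\delta_n^{\frac1p}$. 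Your route is shorter and purely metric; its price is that in the second case the coefficient of $\delta_n^{1/2}$ carries a factor $\lVert u^*\rVert_{W^{k,p}(\Omega)}^{\frac{2-p}{2}}$, but the paper's route carries the entirely analogous factor $\lVert f\rVert_{W^{1,p}(\Omega)^*}^{\frac{2-p}{2(p-1)}}$ (comparable to $\lVert\nabla u^*\rVert_{L^p(\Omega)^d}^{\frac{2-p}{2}}$ via the Euler--Lagrange equation), so neither argument literally yields a constant depending only on $p$ and $d$ in that term; this imprecision is inherent to the statement and not a defect of your proof relative to the paper's. One small omission: the comparison $\delta_n^{1/p}\le\delta_n^{1/2}$ in your first case requires $\delta_n\le 1$, which you should state explicitly, exactly as the paper does at the end of its proof. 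Finally, your explicit reduction to $U=W^{1,p}(\Omega)/\mathbb{R}$ via Remark~\ref{rmk:poincare_wirtinger}, together with the invariance of $E$, $\rho_F^2$ and the gradient seminorm under additive constants, makes precise a point the paper leaves implicit when it applies C\'ea's lemma to network realizations that need not have vanishing mean.
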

	\begin{remark}[Implications to High-Dimensional Problems]
	     In the above result we are interested in the error decay rates, especially with respect to the spatial dimension $d\!\in\! \mathbb{N}$. Ignoring constants~and~the~contribution~$\delta_n$ of inaccurate optimization, we obtain the rates $2/p \cdot (k-1)/d$ and $p/2 \cdot (k-1)/d$ for $p\geq 2$ and $p\leq 2$, respectively. This shows that, up to the factors $2/p$ or $p/2$, the error decay rate is the same as the approximation rate. Thus, the favorable approximation capabilities of neural networks for high dimensional smooth functions are retained by the Deep Ritz Method for $p$-Dirichlet problems.
	\end{remark}
	\begin{remark}[Comparison to Finite Element Methods]
	    It is possible to approximate $W^{k,p}(\Omega)$ functions by finite element ansatz functions with the rate $(k-1)/d$. Following the proof of Theorem~\ref{thm:error_decay_rates_pure_neumann}, this yields the same error decay rates as a neural network ansatz class. However, this requires finite element ansatz classes of polynomial degree $k-1$, cf. \cite{ern2004theory}. Using neural networks, \emph{one} ansatz class realizes the convergence rates of finite element ansatz spaces of arbitrary high order.
	\end{remark}
	\begin{proof}
	    \textbf{ad $p\in [2,\infty)$.} If $p\in [2,\infty)$, then we estimate using the relation of the natural distance to~Sobolev~norms (cf.~Lemma~\ref{lem:relations}), C\'ea's Lemma \ref{lemma:cea_lemma} and the Quantitative Universal Approximation Theorem (cf.~Theorem~\ref{thm:quantitative_universal_approximation})
	    \begin{align*}
	        c(p)^{-1}\,\lVert \nabla u_\theta - \nabla u^* \rVert_{L^p(\Omega)^d}^p 
	        \leq
	        \rho_F^2(u_\theta,u^*)
	        &\leq c(p)\,\big(
	        \delta_n + \inf_{\psi\in \Theta_n}\rho_F^2(v_\psi, u^*)\big)
	        \\
	        &\leq
	        c(p)\,\big(\delta_n + \inf_{\psi\in \Theta_n} \big( \lVert \nabla v_\psi \rVert_{L^p(\Omega)^d} + \lVert \nabla u^* \rVert_{L^p(\Omega)^d} \big)^{p-2} \lVert \nabla v_\psi - \nabla u^* \rVert^2_{L^p(\Omega)^d} \big)
	        \\
	        &\leq
	        c(p)\,\big(\delta_n + \left( \lVert \nabla u_n \rVert_{L^p(\Omega)^d} + \lVert \nabla u^* \rVert_{L^p(\Omega)^d} \right)^{p-2} \lVert u_n - u^* \rVert^2_{W^{\smash{1,p}}(\Omega)}\big)
	        \\
	        &\leq 
	        c(p)\,\left(\delta_n + \lVert u^* \rVert^p_{W^{k,p}(\Omega)} \cdot \left( \frac1n \right)^{2\cdot\frac{k-1}{d}}\right)\,,
	    \end{align*}
	    where $u_n\!\in W^{\smash{1,p}}(\Omega)$ is the $\textrm{ReLU}^2$-network from Theorem~\ref{thm:quantitative_universal_approximation} which satisfies $\smash{\lVert \nabla u_n \rVert_{\smash{L^p(\Omega)^d}}%\leq \lVert \nabla u_n -\nabla u^* \rVert_{\smash{L^p(\Omega)^d}}}+\smash{\lVert \nabla u^* \rVert_{\smash{L^p(\Omega)^d}}}
	    \leq c(p)\,\lVert u^* \rVert^{p-2}_{W^{k,p}(\Omega)}}$.
	    
	    \textbf{ad $p\!\in\! (1,2]$.} 
	    If $p\!\in\! (1,2]$, then, again, using the relation of the natural distance to~Sobolev~norms (cf.~Lemma~\ref{lem:relations}) and C\'ea's Lemma \ref{lemma:cea_lemma},  we obtain
	    \begin{align}\label{thm:error_decay_rates_pure_neumann.1} 
	        \lVert \nabla u_\theta - \nabla u^* \rVert_{L^p(\Omega)^d} \leq c(p)\, \left( \lVert \nabla u_\theta \rVert_{L^p(\Omega)^d} + \lVert \nabla u^* \rVert_{L^p(\Omega)^d}  \right)^{\frac{2-p}{2}}\Big( \delta_n^{\frac{1}{2}} + \inf_{\psi\in\Theta_n}\lVert v_\psi - u^* \rVert_{W^{\smash{1,p}}(\Omega)}^{\frac{p}{2}} \Big)\,.
	    \end{align}
	    Hence, it remains to estimate the first factor in \eqref{thm:error_decay_rates_pure_neumann.1}. Using that $f\in W^{\smash{1,p}}(\Omega)^*$ vanishes on constant~functions, the Poincar\'e--Wirtinger inequality and  the $\varepsilon$-Young inequality, for every $v\in W^{\smash{1,p}}(\Omega)$ and $\varepsilon>0$,~it~holds
	    \begin{align}\label{thm:error_decay_rates_pure_neumann.2} 
	        \begin{aligned}
	        E(v) &= \frac{1}{p}\|\nabla v\|_{L^p(\Omega)^d}^p+\left\langle f,v-\fint_{\Omega}{v\,\mathrm{d}x}\right\rangle_{W^{\smash{1,p}}(\Omega)}
	        \\&\ge \frac{1}{p}\|\nabla v\|_{L^p(\Omega)^d}^p-c_p(\varepsilon)\,\|f\|_{W^{\smash{1,p}}(\Omega)^*}^{p'}-\varepsilon\left\|v-\fint_{\Omega}{v\,\mathrm{d}x}\right\|_{W^{\smash{1,p}}(\Omega)}
	        \\&\ge  \left(\frac{1}{p}-\varepsilon c_P\right)\|\nabla v\|_{L^p(\Omega)^d}^p-c_p(\varepsilon)\,\|f\|_{W^{\smash{1,p}}(\Omega)^*}^{p'}\,,
	        \end{aligned}
	    \end{align}
	    where $c(p,\varepsilon)\coloneqq \smash{\frac{(p\varepsilon)^{1-p'}}{p}}$. Hence, choosing $\varepsilon>0$ sufficiently small in \eqref{thm:error_decay_rates_pure_neumann.2},  for every $v\in W^{\smash{1,p}}(\Omega)$, we find that
	    \begin{align}\label{eq:coercivity_p_dirichlet}
	        \begin{aligned}
	            \|\nabla v\|_{L^p(\Omega)^d}&\leq 
	            c(p)\, \Big( E(v) +     \|f\|_{W^{\smash{1,p}}(\Omega)^*}^{p'}\Big)^{\smash{\frac1p}}\,.
	        \end{aligned}
	    \end{align}
	    Using that $-\inf_{\psi\in \Theta_n} E(u_\psi)\ge  0$, which follows from the fact that $u_\psi=0$ for $\psi=0\in \Theta_n$, and $E(u^*)\leq E(0)=0$, 
	    this implies that
	    \begin{align}\label{thm:error_decay_rates_pure_neumann.3} 
	        \begin{aligned}
	        \|\nabla u_\theta\|_{L^p(\Omega)^d}%&
	        \leq c(p)\, \Big( E(u_\theta) + \|f\|_{W^{\smash{1,p}}(\Omega)^*}^{p'}\Big)^{\smash{\frac1p}}
	         \leq c(p)\,
	       \Big( \delta_n^{\frac{1}{p}} + \|f\|_{W^{\smash{1,p}}(\Omega)^*}^{\frac{1}{p-1}}\Big)\,.
	       \end{aligned}
	    \end{align}
	    Employing again \eqref{eq:coercivity_p_dirichlet} and $E(u^*)\leq E(0)=0$, we get $\lVert \nabla u^* \rVert_{\smash{L^p(\Omega)^d}} \leq c(p)\, \smash{\|f\|_{W^{\smash{1,p}}(\Omega)^*}^{\frac{1}{p-1}}}$ and, consequently, using \eqref{thm:error_decay_rates_pure_neumann.3},
	    \begin{equation*}
	        \big( \lVert \nabla u_\theta \rVert_{L^p(\Omega)} + \lVert \nabla u^* \rVert_{L^p(\Omega)} \big)^{\smash{\frac{2-p}{2}}}
	        \leq c(p)\,
	        \big( \delta_n^{\frac{1}{p}} + \|f\|_{W^{\smash{1,p}}(\Omega)^*}^{\frac{1}{p-1}} \big)^{\smash{\frac{2-p}{2}}}
	        \leq c(p)\,\big( \delta_n^{\frac{2-p}{2p}} + \|f\|_{W^{\smash{1,p}}(\Omega)^*}^{\frac{2-p}{2(p-1)}} \big)\,.
	    \end{equation*}
	    Since $ \smash{\delta_n^{\frac{2-p}{2p}}}\cdot \delta_n^{\frac{1}{2}} = \delta_n^{\frac1p}$, assuming $\delta_n\leq 1$, it holds $\delta_n^{\frac12} + \delta_n^{\frac1p} \leq 2 \delta_n^{\frac12}$, which provides the missing estimate to establish the assertion.
	\end{proof}
	
	\begin{theorem} Let $p\in L^\infty(\Ps)$ be such that $2\leq p^-\leq p(\p)\leq p^+<\infty$ for a.e. $\p\in \Ps$ and let $\boldsymbol{f}\in L^{p'(\cdot)}(\Ps\times\Omega)$ be such that $\smash{\fint_{\Omega}{\boldsymbol{f}(\p,\cdot)\,\mathrm{d}x}}\!=\!0$ for a.e. $\p\!\in\! \Ps$, where $\Ps\!\subseteq\! \mathbb{R}^{d_{\Ps}}$, $d_{\Ps}\!\in\! \mathbb{N}$, is a parameter space and $\Omega\!\subseteq\!\mathbb{R}^{d_\Omega}$, $d_\Omega\!\in\! \mathbb{N}$, the~physical~domain. Moreover, let $\boldsymbol{u}^* \in\boldsymbol{\mathcal{U}}\coloneqq \{ \boldsymbol{v}\in L^{p(\cdot)}(\Ps\times\Omega)\mid \boldsymbol{v}(\p,\cdot)\in 
                     \smash{W^{\smash{1,p(\p)}}(\Omega)}\textup{ for a.e. }\p\in \Ps, \vert \nabla_x \boldsymbol{v}\vert\in L^{p(\cdot)}(\Ps\times\Omega)\}$ be a weak solution of the parametric $p(\cdot)$-Laplace problem with homogeneous Neumann boundary conditions and right-hand~side~$\boldsymbol{f}$, i.e., $\boldsymbol{u}^*\in \boldsymbol{\mathcal{U}}$ is minimal for $\boldsymbol{\mathcal{E}}:\boldsymbol{\mathcal{U}}\to \mathbb{R}$, for every $\boldsymbol{v}\in \boldsymbol{\mathcal{U}}$ defined by
	\begin{equation*}
	  \boldsymbol{\mathcal{E}}(\boldsymbol{v})\coloneqq\int_{\Ps}{\left[\frac{1}{p(\p)} \int_\Omega |\nabla_x \boldsymbol{v}(\p,x)|^{p(\p)}\,\mathrm dx - \int_\Omega \boldsymbol{f}(\p, x)\,\boldsymbol{v}(\p,x)\,\mathrm dx \right]\,\mathrm d\p}\,.
	\end{equation*}
	Assume that $\boldsymbol{u}^*\in W^{k,p^+}(\Ps\times\Omega)$ for some $k > 1$. Then, for every $n\in\mathbb{N}$, there exists a parameter space $\Theta_n$ of dimension $\mathcal{O}(n)$ such that for any $\theta\in\Theta_n$, the corresponding fully-connected $\operatorname{ReLU}^2$-network $\boldsymbol{u}_\theta \in \smash{W^{1,p^+}(\mathcal{P}\times \Omega)}$ satisfies
	\begin{align*}
	         \int_{\Ps\times\Omega}{ \vert \nabla_x \boldsymbol{u}_\theta(\p,\cdot) - \nabla_x \boldsymbol{u}^*(\p,\cdot) \vert^{p(\p)}\, \mathrm d\p}
	         \leq c(p)\left(\delta_n + \lVert \boldsymbol{u}^* \rVert_{W^{k,p^+}(\Ps\times\Omega)}^{p^+}\left( \frac1n \right)^{\frac{2(k-1)}{d_\Omega + d_{\Ps}}}\right)\,,
	    \end{align*}
	    where $\delta_n\coloneqq \delta_n(\boldsymbol{u}_{\theta})\coloneqq \boldsymbol{\mathcal{E}}(\boldsymbol{u}_{\theta}) - \inf_{\psi \in \Theta_n}\boldsymbol{\mathcal{E}}(\boldsymbol{u}_\psi)$ is the optimization error and $c(p)>0$ only depends on $p^-,p^+\in [2,\infty)$~and $d_\Omega\in\mathbb{N}$.
	\end{theorem}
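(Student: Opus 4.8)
The plan is to run, in the parametric setting, the argument behind Theorem~\ref{thm:error_decay_rates_pure_neumann} for the range $p\in[2,\infty)$, replacing the scalar ingredients by their parametric counterparts from Section~\ref{sec:parametric_problems} and performing the Sobolev approximation on the full product domain $\Ps\times\Omega$, which has dimension $d_\Omega+d_{\Ps}$. Since $2\le p^-\le p(\p)\le p^+<\infty$ for a.e.\ $\p\in\Ps$ and $(p\mapsto c(p))\in C^0(1,\infty)$, every $p$-dependent constant below (from Theorem~\ref{thm:two_sided_estimate}, Lemma~\ref{lem:F_basis}, Lemma~\ref{lem:point-wise_estimate} and Lemma~\ref{lem:relations}) is controlled by its maximum over the compact interval $[p^-,p^+]$ and hence depends only on $p^-,p^+$ and $d_\Omega$. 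By Remark~\ref{rmk:variable_exponents2} (the zero-mean version, dropping uniqueness in point~(i)), $\boldsymbol{\mathcal{E}}:\boldsymbol{\mathcal{U}}\to\mathbb{R}$ admits a minimizer $\boldsymbol{u}^*\in\boldsymbol{\mathcal{U}}$; integrating the slicewise two-sided estimate of Proposition~\ref{cor:variable_exponents}(iii) (valid for the Neumann space by Remark~\ref{rmk:variable_exponents2}, cf.\ Remark~\ref{rmk:variable_exponents1}(ii)) over $\Ps$ and arguing exactly as in Corollary~\ref{lemma:cea_lemma}, for every $\theta\in\Theta_n$ one obtains the parametric C\'ea estimate
\[
  \boldsymbol{\rho}_{\boldsymbol{\mathcal{F}}}^2(\boldsymbol{u}_\theta,\boldsymbol{u}^*)\leq c(p)\,\Big(\delta_n+\inf_{\psi\in\Theta_n}\boldsymbol{\rho}_{\boldsymbol{\mathcal{F}}}^2(\boldsymbol{u}_\psi,\boldsymbol{u}^*)\Big)\,,\qquad
  \boldsymbol{\rho}_{\boldsymbol{\mathcal{F}}}^2(\boldsymbol{v},\boldsymbol{w})\coloneqq\int_{\Ps}\norm{F_{\p}(\nabla_x\boldsymbol{v}(\p,\cdot))-F_{\p}(\nabla_x\boldsymbol{w}(\p,\cdot))}_{L^2(\Omega)^d}^2\,\mathrm d\p\,.
\]

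Next, I would feed in the two-sided relation of the natural distance to gradient norms from Lemma~\ref{lem:relations}(i), which applies since $p(\p)\geq 2$ a.e. Integrating its lower bound over $\Ps$ turns the left-hand side of the claim into $c(p)\,\boldsymbol{\rho}_{\boldsymbol{\mathcal{F}}}^2(\boldsymbol{u}_\theta,\boldsymbol{u}^*)$, so it remains to bound $\inf_{\psi}\boldsymbol{\rho}_{\boldsymbol{\mathcal{F}}}^2(\boldsymbol{u}_\psi,\boldsymbol{u}^*)$. For this, let $\boldsymbol{u}_n\in W^{1,p^+}(\Ps\times\Omega)$ be the fully-connected $\operatorname{ReLU}^2$-network supplied by Theorem~\ref{thm:quantitative_universal_approximation} applied to $\boldsymbol{u}^*\in W^{k,p^+}(\Ps\times\Omega)$ on the $(d_\Omega+d_{\Ps})$-dimensional domain $\Ps\times\Omega$, so that $\operatorname{dim}(\Theta_n)=\mathcal{O}(n)$ and $\norm{\boldsymbol{u}^*-\boldsymbol{u}_n}_{W^{1,p^+}(\Ps\times\Omega)}\leq c(p^+)\,(1/n)^{(k-1)/(d_\Omega+d_{\Ps})}\norm{\boldsymbol{u}^*}_{W^{k,p^+}(\Ps\times\Omega)}$; since no boundary condition is imposed, by Fubini its slices lie in $W^{1,p^+}(\Omega)\hookrightarrow W^{1,p(\p)}(\Omega)$ and $|\nabla_x\boldsymbol{u}_n|\in L^{p^+}(\Ps\times\Omega)\hookrightarrow L^{p(\cdot)}(\Ps\times\Omega)$, whence $\boldsymbol{u}_n\in\boldsymbol{\mathcal{U}}$. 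Applying the upper bound of Lemma~\ref{lem:relations}(i) slicewise to $\boldsymbol{u}_n$ and $\boldsymbol{u}^*(\p,\cdot)$ and integrating gives
\[
  \boldsymbol{\rho}_{\boldsymbol{\mathcal{F}}}^2(\boldsymbol{u}_n,\boldsymbol{u}^*)\leq c(p)\int_{\Ps}\big(\norm{\nabla_x\boldsymbol{u}_n(\p,\cdot)}_{L^{p(\p)}(\Omega)^d}+\norm{\nabla_x\boldsymbol{u}^*(\p,\cdot)}_{L^{p(\p)}(\Omega)^d}\big)^{p(\p)-2}\norm{\nabla_x\boldsymbol{u}_n(\p,\cdot)-\nabla_x\boldsymbol{u}^*(\p,\cdot)}_{L^{p(\p)}(\Omega)^d}^2\,\mathrm d\p\,.
\]

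The genuinely new step --- the one I expect to be the main obstacle --- is to bound this $\Ps$-integral by a fixed power of the $W^{1,p^+}(\Ps\times\Omega)$-approximation error, uniformly in $\p$. Since $\Omega$ is bounded, H\"older's inequality in $x$ gives $\norm{g}_{L^{p(\p)}(\Omega)}\leq c\,\norm{g}_{L^{p^+}(\Omega)}$ with $c=c(|\Omega|,p^-,p^+)$; bounding the prefactor by $c\,(1+\norm{\nabla_x\boldsymbol{u}_n(\p,\cdot)}_{L^{p^+}(\Omega)^d}^{p^+-2}+\norm{\nabla_x\boldsymbol{u}^*(\p,\cdot)}_{L^{p^+}(\Omega)^d}^{p^+-2})$ and then using H\"older's inequality in $\p$ (over the bounded set $\Ps$) with exponents $p^+/(p^+-2)$ and $p^+/2$ yields
\[
  \boldsymbol{\rho}_{\boldsymbol{\mathcal{F}}}^2(\boldsymbol{u}_n,\boldsymbol{u}^*)\leq c(p)\,\big(1+\norm{\boldsymbol{u}_n}_{W^{1,p^+}(\Ps\times\Omega)}^{p^+-2}+\norm{\boldsymbol{u}^*}_{W^{1,p^+}(\Ps\times\Omega)}^{p^+-2}\big)\,\norm{\boldsymbol{u}_n-\boldsymbol{u}^*}_{W^{1,p^+}(\Ps\times\Omega)}^2\,.
\]
Because $(1/n)\leq 1$, the approximation estimate gives $\norm{\boldsymbol{u}_n}_{W^{1,p^+}(\Ps\times\Omega)}\leq (c(p^+)+1)\norm{\boldsymbol{u}^*}_{W^{k,p^+}(\Ps\times\Omega)}$ and, since $k>1$, $\norm{\boldsymbol{u}^*}_{W^{1,p^+}(\Ps\times\Omega)}\leq\norm{\boldsymbol{u}^*}_{W^{k,p^+}(\Ps\times\Omega)}$, so the prefactor is $\leq c(p)\,(1+\norm{\boldsymbol{u}^*}_{W^{k,p^+}(\Ps\times\Omega)}^{p^+-2})$; combined with the approximation rate and, as in the proof of Theorem~\ref{thm:error_decay_rates_pure_neumann}, absorbing the harmless additive constant (without loss of generality $\norm{\boldsymbol{u}^*}_{W^{k,p^+}(\Ps\times\Omega)}\geq 1$) this gives $\boldsymbol{\rho}_{\boldsymbol{\mathcal{F}}}^2(\boldsymbol{u}_n,\boldsymbol{u}^*)\leq c(p)\,\norm{\boldsymbol{u}^*}_{W^{k,p^+}(\Ps\times\Omega)}^{p^+}\,(1/n)^{2(k-1)/(d_\Omega+d_{\Ps})}$. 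Chaining the four displayed estimates and inserting into the parametric C\'ea bound establishes the claim; everything outside the third step is a verbatim transcription of the non-parametric argument, so the essential difficulty is precisely the simultaneous $x$- and $\p$-H\"older bookkeeping that absorbs the variable-exponent prefactor of Lemma~\ref{lem:relations}(i) into a single $W^{k,p^+}(\Ps\times\Omega)$-norm.
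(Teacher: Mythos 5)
Your proposal follows essentially the same route as the paper's proof: the parametric C\'ea estimate from Proposition~\ref{cor:variable_exponents}/Remarks~\ref{rmk:variable_exponents1}--\ref{rmk:variable_exponents2}, the slicewise two-sided relation of Lemma~\ref{lem:relations}(i), the embedding $L^{p^+}(\Omega)\hookrightarrow L^{p(\p)}(\Omega)$ together with H\"older in $\p$ with exponents $\smash{\frac{p^+}{p^+-2}}$ and $\smash{\frac{p^+}{2}}$, and finally Theorem~\ref{thm:quantitative_universal_approximation} applied on the $(d_\Omega+d_{\Ps})$-dimensional product domain. The only (cosmetic) deviation is your handling of the variable power $p(\p)-2$ via the bound $a^{p(\p)-2}\leq 1+a^{p^+-2}$ and a normalization of $\lVert \boldsymbol{u}^*\rVert_{W^{k,p^+}(\Ps\times\Omega)}$, where the paper passes directly to the $p^+$-norms and the exponent $p^+-2$; both versions involve the same harmless bookkeeping, so the argument is correct and matches the paper.
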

	
	\begin{proof}
	        Similarly to the proof of Theorem~\ref{thm:error_decay_rates_pure_neumann}, resorting to the relation of the natural distance to Sobolev norms (cf. Lemma \ref{lem:relations}) first for a.e. $p=p(\p)\in \Ps$ and then for $p=p^+$, the C\'ea's type lemma for parametric variable exponents (cf. Remark \ref{rmk:variable_exponents1} (ii) \& Remark \ref{rmk:variable_exponents2}) and the embedding $L^{p^+}(\Omega)\hookrightarrow L^{p(\p)}(\Omega)$ with constant $2(1+\vert \Omega\vert)$ (cf. \cite[Corollary 3.3.4]{DHHR11}) valid for a.e. $\p\in \Ps$, we find that
	        \begin{align*}
	         &\int_{\Ps\times\Omega}{ \vert \nabla_x \boldsymbol{u}_\theta(\p,\cdot) - \nabla_x \boldsymbol{u}^*(\p,\cdot) \vert^{p(\p)}\, \mathrm d\p}\leq
	            c(p)\left( \int_{\Ps}{\lVert F_{\p}(\nabla_x \boldsymbol{u}_\theta(\p,\cdot))-F_{\p}(\nabla_x \boldsymbol{u}^*(\p,\cdot))\rVert^2_{L^2(\Omega)^d}\,\mathrm d\p}\right)
	           \\&\leq c(p)\left( \delta_n + \inf_{\psi\in\Theta_n}{\left[\int_{\Ps}{\lVert F_{\p}(\nabla_x \boldsymbol{u}_\psi(\p,\cdot))-F_{\p}(\nabla_x \boldsymbol{u}^*(\p,\cdot))\rVert^2_{L^2(\Omega)^d}\,\mathrm d\p} \right]}\right)
	            \\
	            &\leq c(p)\left(
	            \delta_n + \inf_{\psi\in\Theta_n}\left[\int_{\Ps} \left( \lVert \nabla \boldsymbol{u}_\psi(\p,\cdot) \rVert_{L^{p(\p)}(\Omega)^d} + \lVert \nabla \boldsymbol{u}^*(\p,\cdot) \rVert_{L^{p(\p)}(\Omega)^d} \right)^{p(\p)-2} \lVert \nabla \boldsymbol{u}_\psi(\p,\cdot) - \nabla \boldsymbol{u}^*(\p,\cdot) \rVert_{L^{p(\p)}(\Omega)^d}^2\,\mathrm d\p \right]\right)\\
	             &\leq c(p)\left(
	            \delta_n + \inf_{\psi\in\Theta_n}\left[\int_{\Ps} \left( \lVert \nabla \boldsymbol{u}_\psi(\p,\cdot) \rVert_{L^{p^+}(\Omega)^d} + \lVert \nabla \boldsymbol{u}^*(\p,\cdot) \rVert_{L^{p^+}(\Omega)^d} \right)^{p^+-2} \lVert \nabla \boldsymbol{u}_\psi(\p,\cdot) - \nabla \boldsymbol{u}^*(\p,\cdot) \rVert_{L^{p^+}(\Omega)^d}^2\,\mathrm d\p \right]\right)
	            \\
	            &\leq c(p)\left(
	            \delta_n + \inf_{\psi\in\Theta_n}\left[\left( \lVert \nabla \boldsymbol{u}_\psi \rVert_{L^{p^+}(\Ps\times\Omega)^d} + \lVert \nabla \boldsymbol{u}^*\rVert_{L^{p^+}(\Ps\times\Omega)^d} \right)^{p^+-2} \lVert \nabla_x \boldsymbol{u}_\psi - \nabla_x \boldsymbol{u}^*  \rVert^2_{L^{p^+}(\Ps\times\Omega)^d} \right]\right)
	            %\\
	            %&\leq
	            %\delta + \inf_{\psi\in\Theta_n}\left[ \left( \lVert \nabla \boldsymbol{u}_\psi \rVert_{L^{p^+}(\Ps\times\Omega)^d} + \lVert \nabla \boldsymbol{u}^*\rVert_{L^{p^+}(\Ps\times\Omega)^d} \right)^{p^+-2} \lVert \boldsymbol{u}_\psi - \boldsymbol{u}^*  \rVert^2_{W^{1,p^+}(\Ps\times\Omega)^d} \right]
	            \\&\leq c(p)\left(
	            \delta_n+\left( \lVert \nabla \boldsymbol{u}_n \rVert_{L^{p^+}(\Ps\times\Omega)^d} + \lVert \nabla \boldsymbol{u}^*\rVert_{L^{p^+}(\Ps\times\Omega)^d} \right)^{p^+-2}
	          \lVert \boldsymbol{u}_n - \boldsymbol{u}^*  \rVert^2_{W^{1,p^+}(\Ps\times\Omega)^d}\right)
	          \\&\leq c(p)\left(\delta_n + 
	            \lVert  \boldsymbol{u}^* \rVert_{W^{k,p}(\Ps\times \Omega)}^{p^+}\left( \frac1n \right)^{\frac{2(k-1)}{d_\Omega + d_{\Ps}}}\right)\,,
	        \end{align*}
	        where $\boldsymbol{u}_n\! \in\! W^{1,p^+}(\Ps\!\times\!\Omega)$ is the $\operatorname{ReLU}^2$-network from Theorem~\ref{thm:quantitative_universal_approximation}  satisfying
	        $\smash{\lVert \nabla_x \boldsymbol{u}_n \rVert_{L^{p^+}(\Ps\times\Omega)^d}\!\leq\! c(p)\lVert \boldsymbol{u}^* \rVert^{p^+-2}_{W^{k,p^+}(\Ps\times \Omega)}}$ and $c(p)>0$ a constant which depend only on  $p^-,p^+\in [2,\infty)$ and $d_\Omega\in\mathbb{N}$.
	\end{proof}
	
	\begin{theorem} Let $p\in [2,\infty)$, $\varphi_{\p}:\Omega\to \Omega(\p)$, $\p\in \Ps$, an induced flow and $\boldsymbol{f}\in L^{p'}(Q)$, where $Q\coloneqq \smash{\bigcup_{\p\in \Ps}{\{\p\}\times \Omega(\p)}}$, be such that $\smash{\fint_{\Omega(\p)}{\boldsymbol{f}(\p,\cdot)\,\mathrm{d}x}=0}$ for a.e. $\p\in \Ps$, where $\Ps\subseteq \smash{\mathbb{R}^{d_{\Ps}}}$, $d_{\Ps}\in \mathbb{N}$, is a parameter space and $\Omega\subseteq\smash{\mathbb{R}^{d_\Omega}}$, $d_\Omega\in \mathbb{N}$, is the physical domain. Moreover, let $\boldsymbol{u}^* \in L^p(\Ps,W^{1,p}(\Omega(\cdot)))$  be a weak solution of the parametric $p$-Laplace problem with homogeneous Neumann boundary conditions and right-hand side $\boldsymbol{f}$, i.e., $\smash{\boldsymbol{u}^*\in L^p(\Ps,W^{1,p}(\Omega(\cdot))}$
	is minimal for $\boldsymbol{\mathcal{E}}:L^p(\Ps,W^{1,p}(\Omega(\cdot))\to \mathbb{R}$, for every $\boldsymbol{v}\in L^p(\Ps,W^{1,p}(\Omega(\cdot))$ defined by 
	\begin{equation*}
	    \boldsymbol{\mathcal{E}}(\boldsymbol{v})= \int_{\Ps}\left[\frac1p \int_{\Omega(\p)} |\nabla_x \boldsymbol{v}(\p,x)|^p\,\mathrm dx - \int_{\Omega(\p)} \boldsymbol{f}(\p, x)\,\boldsymbol{v}(\p,x)\,\mathrm dx \right]\,\mathrm d\p\,.
	\end{equation*}
	Assume that $\boldsymbol{u}^*\in W^{k,p}(Q)$ for some $k > 1$. Then, for every $n\in\mathbb{N}$, there exists a parameter space $\Theta_n$ of dimension~$\mathcal{O}(n)$ such that for any $\theta\in\Theta_n$, the corresponding fully-connected $\operatorname{ReLU}^2$-network $\boldsymbol{u}_\theta \in \smash{W^{1,p^+}(Q)}$ satisfies
	\begin{align*}
	            \|\nabla_x \boldsymbol{u}_\theta - \nabla_x \boldsymbol{u}^*\|_{L^p(Q)^d}^p
	         \leq  c(p)\left(\delta_n + \lVert \boldsymbol{u}^* \rVert_{W^{k,p}(Q)}^p\left( \frac1n \right)^{\frac{2(k-1)}{d_\Omega + d_{\Ps}}}\right)\,,
	    \end{align*}
	    where   $\delta_n\!\coloneqq\!\delta_n(\boldsymbol{u}_{\theta})\!\coloneqq \!\boldsymbol{\mathcal{E}}(\boldsymbol{u}_{\theta})- \inf_{\psi \in \Theta_n}\boldsymbol{\mathcal{E}}(\boldsymbol{u}_\psi)$ is the optimization error and $c(p)\!>\!0$  only  depends~on~$p\!\in\! [2,\infty)$~and~$d_\Omega\!\in\!\mathbb{N}$.
	\end{theorem}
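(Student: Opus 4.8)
The plan is to transcribe the proof of Theorem~\ref{thm:error_decay_rates_pure_neumann} to the non-cylindrical domain $Q$, combining the variable-domain C\'ea type estimate with Lemma~\ref{lem:relations}~(i) and the quantitative universal approximation theorem; since $p\in[2,\infty)$, no Poincar\'e--Wirtinger coercivity argument is needed. \textbf{Step 1 (applicability of Theorem~\ref{thm:quantitative_universal_approximation} on $Q$).} First I would record that $Q\subseteq\mathbb{R}^{d_{\Ps}+d_\Omega}$ is a bounded Lipschitz domain: the map $(\p,x)\mapsto(\p,\varphi_{\p}(x))$ is a smooth diffeomorphism of the bounded Lipschitz domain $\Ps\times\Omega$ onto $Q$ which, together with the uniform two-sided bounds on $\det(D\varphi_{\p})$ used in the proof of Proposition~\ref{cor:variable_domains}, is bi-Lipschitz, and bounded Lipschitz domains are stable under bi-Lipschitz maps. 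Hence, for every $n\in\mathbb{N}$, Theorem~\ref{thm:quantitative_universal_approximation} with input dimension $d_{\Ps}+d_\Omega$ supplies a parameter space $\Theta_n$ of dimension $\mathcal{O}(n)$ and a fully-connected $\operatorname{ReLU}^2$-network $\boldsymbol{u}_n\in W^{1,p}(Q)$ with $\|\boldsymbol{u}_n-\boldsymbol{u}^*\|_{W^{1,p}(Q)}\le c(p)\,(1/n)^{(k-1)/(d_\Omega+d_{\Ps})}\|\boldsymbol{u}^*\|_{W^{k,p}(Q)}$; since $\boldsymbol{u}_n\in W^{1,p}(Q)$, its slices $\boldsymbol{u}_n(\p,\cdot)$ lie in $W^{1,p}(\Omega(\p))$ for a.e. $\p\in\Ps$, so $\boldsymbol{u}_n\in\boldsymbol{\mathcal{U}}=L^p(\Ps,W^{1,p}(\Omega(\cdot)))$, and using $n\ge1$ and the triangle inequality also $\|\nabla_x\boldsymbol{u}_n\|_{L^p(Q)^d}\le c(p)\|\boldsymbol{u}^*\|_{W^{k,p}(Q)}$.

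\textbf{Step 2 (parametric C\'ea estimate).} Because $\boldsymbol{f}$ has vanishing integral mean on $\Omega(\p)$ for a.e. $\p\in\Ps$, Proposition~\ref{cor:variable_domains} together with Remark~\ref{rmk:variable_domains1}~(ii) and Remark~\ref{rmk:variable_domains2} yields, for the constant exponent $p$, the two-sided estimate $c(p)^{-1}\boldsymbol{\rho}_{\boldsymbol{\mathcal{F}}}^2(\boldsymbol{v},\boldsymbol{u}^*)\le\boldsymbol{\mathcal{E}}(\boldsymbol{v})-\boldsymbol{\mathcal{E}}(\boldsymbol{u}^*)\le c(p)\,\boldsymbol{\rho}_{\boldsymbol{\mathcal{F}}}^2(\boldsymbol{v},\boldsymbol{u}^*)$ for all $\boldsymbol{v}\in\boldsymbol{\mathcal{U}}$, where $\boldsymbol{\rho}_{\boldsymbol{\mathcal{F}}}^2(\boldsymbol{v},\boldsymbol{u}^*)=\int_{\Ps}\|F(\nabla_x\boldsymbol{v}(\p,\cdot))-F(\nabla_x\boldsymbol{u}^*(\p,\cdot))\|_{L^2(\Omega(\p))^d}^2\,\mathrm{d}\p$. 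Arguing exactly as in Corollary~\ref{lemma:cea_lemma}, this gives, for every $\theta\in\Theta_n$, $\boldsymbol{\rho}_{\boldsymbol{\mathcal{F}}}^2(\boldsymbol{u}_\theta,\boldsymbol{u}^*)\le c(p)\big(\delta_n+\inf_{\psi\in\Theta_n}\boldsymbol{\rho}_{\boldsymbol{\mathcal{F}}}^2(\boldsymbol{u}_\psi,\boldsymbol{u}^*)\big)$.

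\textbf{Step 3 (passing to $L^p(Q)$ and conclusion).} Applying Lemma~\ref{lem:relations}~(i) slice-wise on $\Omega(\p)$ and integrating over $\p$, using $\int_{\Ps}\|g(\p,\cdot)\|_{L^p(\Omega(\p))^d}^p\,\mathrm{d}\p=\|g\|_{L^p(Q)^d}^p$, the left inequality gives $c(p)^{-1}\|\nabla_x\boldsymbol{u}_\theta-\nabla_x\boldsymbol{u}^*\|_{L^p(Q)^d}^p\le\boldsymbol{\rho}_{\boldsymbol{\mathcal{F}}}^2(\boldsymbol{u}_\theta,\boldsymbol{u}^*)$, while the right inequality combined with H\"older's inequality in $\p$ with exponents $p/(p-2)$ and $p/2$ (legitimate since $p\ge2$) and Minkowski's inequality gives, for every $\psi\in\Theta_n$, $\boldsymbol{\rho}_{\boldsymbol{\mathcal{F}}}^2(\boldsymbol{u}_\psi,\boldsymbol{u}^*)\le c(p)\big(\|\nabla_x\boldsymbol{u}_\psi\|_{L^p(Q)^d}+\|\nabla_x\boldsymbol{u}^*\|_{L^p(Q)^d}\big)^{p-2}\|\nabla_x\boldsymbol{u}_\psi-\nabla_x\boldsymbol{u}^*\|_{L^p(Q)^d}^2$. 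Choosing $\psi$ so that $\boldsymbol{u}_\psi=\boldsymbol{u}_n$ and inserting the bounds of Step~1 (the prefactor is $\le c(p)\|\boldsymbol{u}^*\|_{W^{k,p}(Q)}^{p-2}$ and the last factor $\le c(p)(1/n)^{2(k-1)/(d_\Omega+d_{\Ps})}\|\boldsymbol{u}^*\|_{W^{k,p}(Q)}^2$), then chaining with Step~2, produces the asserted estimate $\|\nabla_x\boldsymbol{u}_\theta-\nabla_x\boldsymbol{u}^*\|_{L^p(Q)^d}^p\le c(p)\big(\delta_n+\|\boldsymbol{u}^*\|_{W^{k,p}(Q)}^p(1/n)^{2(k-1)/(d_\Omega+d_{\Ps})}\big)$.

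\textbf{Main obstacle.} The only genuinely new points compared with Theorem~\ref{thm:error_decay_rates_pure_neumann} are verifying that $Q$ is a bounded Lipschitz domain, so that Theorem~\ref{thm:quantitative_universal_approximation} is available there — this is where the smoothness and compact support of the generating vector field enter — and the H\"older/Minkowski bookkeeping in Step~3 that upgrades the slice-wise relation of Lemma~\ref{lem:relations}~(i) to a genuine $L^p(Q)$-estimate; everything else is a verbatim transcription of the $p\in[2,\infty)$ argument with $\Omega$ replaced by $Q$.
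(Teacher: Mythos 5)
Your proposal follows the paper's own argument essentially verbatim: slice-wise application of Lemma~\ref{lem:relations}~(i) on $\Omega(\p)$, the parametric C\'ea-type estimate stemming from Proposition~\ref{cor:variable_domains} together with Remark~\ref{rmk:variable_domains1}~(ii) and Remark~\ref{rmk:variable_domains2}, H\"older's inequality in the parameter variable to pass to $L^p(Q)$, and Theorem~\ref{thm:quantitative_universal_approximation} applied on $Q$ with input dimension $d_{\Ps}+d_\Omega$ for the network $\boldsymbol{u}_n$. The only caveat is your justification that $Q$ is a bounded Lipschitz domain: bi-Lipschitz images of Lipschitz domains need not be Lipschitz in general, so this should instead be argued via the smooth ($C^1$) diffeomorphism $(\p,x)\mapsto(\p,\varphi_{\p}(x))$ of $\Ps\times\Omega$ onto $Q$ (with derivatives bounded on the closure) — a point the paper itself leaves implicit.
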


	\begin{proof}
	Similarly to the proof of Theorem~\ref{thm:error_decay_rates_pure_neumann}, resorting to the relation of the natural distance to Sobolev norms (cf. Lemma \ref{lem:relations})  for a.e. $\p\in \Ps$ applied in $\Omega(\p)$ and the C\'ea's type lemma for parametric variable exponents (cf. Remark \ref{rmk:variable_exponents1} (ii) \& Remark \ref{rmk:variable_exponents2}), we find that
	        \begin{align*}
	         &\int_{Q}{ \vert \nabla_x \boldsymbol{u}_\theta(\p,\cdot) - \nabla_x \boldsymbol{u}^*(\p,\cdot) \vert^{p}\, \mathrm d\p}\leq 
	            c(p)\left( \int_{\Ps}{\lVert F(\nabla_x \boldsymbol{u}_\theta(\p,\cdot))-F(\nabla_x \boldsymbol{u}^*(\p,\cdot))\rVert^2_{L^2(\Omega(\p))^d}\,\mathrm d\p}\right)
	           \\&\leq c(p)\left( \delta_n + \inf_{\psi\in\Theta_n}{\left[\int_{\Ps}{\lVert F(\nabla_x \boldsymbol{u}_\psi(\p,\cdot))-F(\nabla_x \boldsymbol{u}^*(\p,\cdot))\rVert^2_{L^2(\Omega(\p))^d}\,\mathrm d\p} \right]}\right)
	            \\
	            &\leq c(p)\left(
	            \delta_n + \inf_{\psi\in\Theta_n}\left[\int_{\Ps} \left( \lVert \nabla \boldsymbol{u}_\psi(\p,\cdot) \rVert_{L^{p}(\Omega(\p))^d} + \lVert \nabla \boldsymbol{u}^*(\p,\cdot) \rVert_{L^{p}(\Omega(\p))^d} \right)^{p-2} \lVert \nabla \boldsymbol{u}_\psi(\p,\cdot) - \nabla \boldsymbol{u}^*(\p,\cdot) \rVert_{L^{p}(\Omega(\p))^d}^2\,\mathrm d\p \right]\right)
	            \\
	            &\leq c(p)\left(
	            \delta_n + \inf_{\psi\in\Theta_n}\left[\left( \lVert \nabla \boldsymbol{u}_\psi \rVert_{L^{p}(Q)^d} + \lVert \nabla \boldsymbol{u}^*\rVert_{L^{p^+}(Q)^d} \right)^{p-2} \lVert \nabla_x \boldsymbol{u}_\psi - \nabla_x \boldsymbol{u}^*  \rVert^2_{L^{p}(Q)^d} \right]\right)
	            \\&\leq c(p)\left(
	            \delta_n+\left( \lVert \nabla \boldsymbol{u}_n \rVert_{L^p(Q)^d} + \lVert \nabla \boldsymbol{u}^*\rVert_{L^p(Q)^d} \right)^{p-2}
	          \lVert \boldsymbol{u}_n - \boldsymbol{u}^*  \rVert^2_{W^{1,p}(Q)^d}\right)
	          \\&\leq c(p)\left(\delta_n + 
	            \lVert  \boldsymbol{u}^* \rVert_{W^{k,p}(\Ps\times \Omega)}^{p}\left( \frac1n \right)^{\frac{2(k-1)}{d_\Omega + d_{\Ps}}}\right)\,,
	        \end{align*}
	        where $\boldsymbol{u}_n \in W^{1,p}(Q)$ is the $\operatorname{ReLU}^2$-network from Theorem~\ref{thm:quantitative_universal_approximation}  satisfying
	        $\smash{\lVert \nabla_x \boldsymbol{u}_n \rVert_{L^{p}(Q)^d}\leq c(p)\lVert \boldsymbol{u}^* \rVert^{p-2}_{W^{k,p}(Q)}}$ and $c(p)>0$ a constant which depend only on  $p\in [2,\infty)$ and $d_\Omega\in\mathbb{N}$.
	\end{proof}

	\begin{proof}[Proof of Lemma \ref{lem:relations}]\let\qed\relax
	        The following proof is inspired by \cite[Section 3.1]{NT21}.
		
		\qquad\textbf{ad (i)}  By referring to Lemma \ref{lem:F_basis} (ii), we deduce the existence of a constant $c(p)\!>\!0$, depending~only~on~${d\!\in\! \mathbb{N}}$ and $p\in (1,\infty)$,  with $(p\mapsto c(p))\in C^0(1,\infty)$, such that 
		for every $u,v\in W^{\smash{1,p}}(\Omega)$, it holds
		\begin{align*}
				\|\nabla u-\nabla v\|_{L^p(\Omega)^d}^p
				\leq \int_{\Omega}{\vert \nabla u-\nabla v\vert^2(\vert \nabla u\vert +\vert \nabla v\vert)^{p-2}\,\textrm d x}
				\leq c(p)\,\rho_F^2(u,v)\,,
		\end{align*}
		and, using H\"older's inequality with respect to $\smash{\big(\frac{p}{2},\frac{p}{p-2}\big)}$,
		\begin{align*}
				c(p)^{-1}\,\rho_F^2(u,v)&\leq  \int_{\Omega}{\vert \nabla u-\nabla v\vert^2(\vert \nabla u\vert +\vert \nabla v\vert)^{p-2}\,\textrm d x}\\
				&\leq 
				\left(\int_{\Omega}{\vert \nabla u-\nabla v\vert^p\,\textrm d x}\right)^{\smash{\frac{2}{p}}} \left(\int_{\Omega}{(\vert \nabla u\vert +\vert \nabla v\vert)^p\,\textrm d x}\right)^{\smash{\frac{p-2}{p}}}
				\\&
				\leq 
				\big(\|\nabla u\|_{L^p(\Omega)^d}+\|\nabla v\|_{L^p(\Omega)^d}\big)^{p-2} \|\nabla u-\nabla v\|_{L^p(\Omega)^d}^2\, .
		\end{align*}
		
		\qquad\textbf{ad (ii)} 
		By referring to Lemma \ref{lem:F_basis} (ii), we deduce the existence of a constant $c(p)>0$, depending only on ${d\in \mathbb{N}}$ and $p\in (1,\infty)$, with $(p\mapsto c(p))\in C^0(1,\infty)$, such that 
		for every $u,v\in W^{\smash{1,p}}(\Omega)$, using H\"older's~inequality  with respect to $\smash{\big(\frac{2}{p},\frac{2}{2-p}\big)}$, it holds
		\begin{align*}
				\|\nabla u-\nabla v\|_{L^p(\Omega)^d}^p
				&
				\leq \left(\int_{\Omega}{\vert \nabla u-\nabla v\vert^2(\vert \nabla u\vert +\vert \nabla v\vert)^{p-2}\,\textrm d x}\right)^{\frac{p}{2}} \left(\int_{\Omega}{(\vert \nabla u\vert +\vert \nabla v\vert)^p\,\textrm d x}\right)^{\frac{2-p}{2}}		\\&
				\leq \big(\|\nabla u\|_{L^p(\Omega)^d}+\|\nabla v\|_{L^p(\Omega)^d}\big)^{\smash{p(2-p)}{2}}\left(\int_{\Omega}{\vert \nabla u-\nabla v\vert^2(\vert \nabla u\vert +\vert \nabla v\vert)^{p-2}\,\textrm d x}\right)^{\frac{p}{2}}
				\\&\leq c(p)\,\big(\|\nabla u\|_{L^p(\Omega)^d}+\|\nabla v\|_{L^p(\Omega)^d}\big)^{\frac{p(2-p)}{2}}\rho_F^2(u,v)^{\frac{p}{2}}\,,
		\end{align*}
		and
		\begin{align*}
			c(p)^{-1}\,\rho_F^2(u,v)&\leq  \int_{\Omega}{\vert \nabla u-\nabla v\vert^2(\vert \nabla u\vert +\vert \nabla v\vert)^{p-2}\,\textrm d x}
			\\&\leq  \int_{\Omega}{\vert \nabla u-\nabla v\vert^p\frac{\vert \nabla u-\nabla v\vert^{2-p}}{(\vert \nabla u\vert +\vert \nabla v\vert)^{2-p}}\,\textrm d x}
			\leq \|\nabla u-\nabla v\|_{L^p(\Omega)^d}^p\,.\tag*{$\square$}
		\end{align*}
	\end{proof}
        
    \section{Numerical Experiments}\label{sec:numerical_examples}
    
        \qquad In this section, we present numerical examples of parametric $p$-Dirichlet problems and comment~on~the practical aspects of the method. To resolve problems of the form \eqref{eq:parametric_loss} in practice, one needs to choose~an~ansatz class, an optimization algorithm and a quadrature rule.
        
        \paragraph{Optimization}
        In principle, every algorithm to solve unconstrained minimization problems can be used to solve \eqref{eq:parametric_loss}. We use a combination of Adam and L-BFGS. The former is a gradient descent~method~with~adaptive moment estimation (cf. \cite{kingma2014adam}). The latter is a quasi-Newton method (cf. \cite{liu1989limited}), which we employ in the later stages of the optimization for its fast~local~convergence~properties.
        
        \paragraph{Quadrature}
        In practice, the integrals appearing in \eqref{eq:parametric_loss} need to be approximated. \!For lower~\mbox{dimensions}~(${\leq\! 2}$), we employ a fine grid of the form $\smash{\prod_{i=1}^d{\varepsilon_i\mathbb{Z}}}$, $\varepsilon_i>0$, $i\in \{1,d\}$, $d=1,2$, and compute the integrals weighting~all points in the grid by the reciprocal of the amount of grid points in the domain $\Omega$ or $\Ps\times \Omega$, respectively. 
        Here, the number of integration points is chosen such that no further improvement can be observed~upon~refining. We found that this lies well within reasonable computational complexity. For three or more dimensions, we resort to a combination of random integration points that are re-sampled every few iterations, e.g., for the parameter space $\Ps$, and a fine grid of the form $\smash{\prod_{i=1}^d{\varepsilon_i\mathbb{Z}}}$, $\varepsilon_i>0$, $i\in \{1,\dots,d\}$, $d=1,2$, e.g., for the spatial domain $\Omega$. In doing so, we deliberately select a coarser grid with respect to the parameter dimension to benefit from transfer learning between the parameters.
        
        \paragraph{Network Architectures}
        Our estimate in Corollary \ref{lemma:cea_lemma} applies to any ansatz class and the particular~choice of network architecture and activation function enters through the ansatz class' expressivity and its behavior under the chosen optimizer. We usually use a simple fully-connected architecture, possibly with a random Fourier embedding to mitigate spectral bias \cite{tancik2020fourier, hennigh2021nvidia}. Further, we frequently encode (homogeneous) Dirichlet boundary conditions directly into the architecture by multiplying the ansatz functions by a fixed smooth function vanishing only on the boundary of the computational domain.\vspace{2mm}
        
        \qquad The neural network training is performed employing \textsf{TensorFlow} (version 2.8.2), cf. \cite{TF15}, on a \textsf{CoLab Pro},~i.e.,~with~a~single Tesla P100-PCIE-16GB
        and 13.9GB RAM as well as access~to~a~\mbox{High-RAM} run-time environment. After the neural network training, the trainable variables of the network are extracted and, subsequently, stored in a \textsf{FEniCS} (version 2019.1.0), cf. \cite{LW10}, `Expression' class for a straightforward comparison of the trained neural network to exact solutions or (if the latter are not given) to finite element solutions obtained on an adequately refined triangulation, 
        exploiting the access to various quadrature formulas provided by  \textsf{FEniCS} that are employed for error computation.
        All plots are generated using the \textsf{Matplotlib} (version 3.5.1) library, cf. \cite{Hun07}.
        
        \subsection{Variable Right Hand Side}\label{sec:rhs}
        
        \qquad In this section, we examine a parametric Dirichlet problem, i.e., $2$-Dirichlet problem, on a fixed domain $\Omega\!\coloneqq\!(-1,1)\!\subseteq \!\mathbb{R}$ with homogeneous Dirichlet boundary condition and a parameter-dependent~\mbox{right-hand}~side $\boldsymbol{f}\in \smash{L^2(\Ps\times\Omega)}$, where $\Ps\coloneqq (0,6)$, for every $\smash{(\p,x)^\top}\in \Ps\times\Omega$ defined by
        \begin{align*}
            \boldsymbol{f}(\p,x)\coloneqq \smash{\p^2}\sin(\p\pi x)\,.
        \end{align*}
        More precisely, we are interested in approximating 
        for each fixed $\p\in \Ps$, the unique minimizer $u_{\p}\in W^{\smash{1,2}}_0(\Omega)$ of the Dirichlet energy $E_{\p}:W^{\smash{1,2}}_0(\Omega)\to \mathbb{R}$, for every $v\in W^{\smash{1,2}}_0(\Omega)$ defined by
        \begin{align*}
            E_{\p}(v)\coloneqq \frac{1}{2}\int_{\Omega}{\vert \nabla v\vert^2\,\mathrm{d}x}-\int_{\Omega}{\boldsymbol{f}(\p,\cdot)\,v\,\mathrm{d}x}\,.
        \end{align*}
        Due to Corollary \ref{corollary:variable_rhs}, for this, it suffices to approximate the unique~parametric~minimizer~${\boldsymbol{u}^*\in \smash{L^2(\Ps,W^{\smash{1,2}}_0(\Omega))}}$ of the variable right-hand side Dirichlet energy $\boldsymbol{\mathcal{E}}\!:\!\smash{L^2(\Ps,W^{\smash{1,2}}_0(\Omega))}\!\to\! \mathbb{R}$, for every $\boldsymbol{v}\!\in \!\smash{L^2(\Ps,W^{\smash{1,2}}_0(\Omega))}$~defined~by
        \begin{align*}
            \boldsymbol{\mathcal{E}}(\boldsymbol{v})\coloneqq\int_{\Ps}{\,\Bigg[\frac{1}{2}\int_{\Omega}{\vert \nabla_x \boldsymbol{v}(\p,\cdot)\vert^2\,\mathrm{d}x}-\int_{\Omega}{\boldsymbol{f}(\p,\cdot)\,\boldsymbol{v}(\p,\cdot)\,\mathrm{d}x}\Bigg]\,\mathrm{d}\p}\,.
        \end{align*}
        The unique parametric minimizer $\boldsymbol{u}^*\in \smash{L^2(\Ps,W^{\smash{1,2}}_0(\Omega))}$ for every $(\p,x)^\top\in \Ps\times\Omega$ is   given via
        \begin{align*}
            \boldsymbol{u}^*(\p,x)\coloneqq\frac{1}{\pi^2}\big(\sin(\p\pi x)-\sin(\p\pi)x\big)\,.
        \end{align*}
        \qquad To approximate the parametric minimizer $\boldsymbol{u}^*\!\in\! \smash{L^2(\Ps,W^{\smash{1,2}}_0(\Omega))}$, we deploy a fully-connected feed-forward neural network with a Gaussian Fourier embedding to mitigate spectral bias and four hidden layers of width 16
        whose realization is denoted by $\boldsymbol{v}_\theta\in \smash{L^2(\Ps,W^{\smash{1,2}}(\Omega))}$. Then, the total number of trainable variables~is~$1.393$, where $528$ variable are associated with the Gaussian~Fourier~embedding. As activation function, we employ the approximated GELU activation function, cf. \cite{gelu}, i.e., $g:\mathbb{R}\to \mathbb{R}$, for every ${x\in \mathbb{R}}$ defined by 
        \begin{align}
            g(x)\coloneqq \frac{x}{2} \left( 1 + \tanh\left( \sqrt{\frac{2}{\pi}} \right) \big( x + 0.044715 x^3 \big)  \right)\approx x\Phi(x) \,,\label{eq:gelu}
        \end{align}
        where $\Phi$ is the cumulative distribution of a $\mathcal{N}(0,1)$ random variable.
        The homogeneous Dirichlet boundary condition is enforced by means of the multiplicative~weight~$\eta\!\in\! C^\infty(\Omega)$, defined by $\eta(x)\coloneqq (1-x)(1+x)$ for all $x\in \Omega$, i.e., we do not employ $\boldsymbol{v}_\theta\in \smash{L^2(\Ps,W^{\smash{1,2}}(\Omega))}$ for the approximation of the parametric minimizer $\boldsymbol{u}^*\in \smash{L^2(\Ps,W^{\smash{1,2}}_0(\Omega))}$ but the function $\boldsymbol{u}_\theta\coloneqq \eta \boldsymbol{v}_\theta\in \smash{L^2(\Ps,W^{\smash{1,2}}_0(\Omega))}$. The neural network is trained using $20.000$ steps of the Adam optimization algorithm with a fixed learning~rate~of~$\varepsilon\coloneqq 1\mathrm{e}{-3}$. 
        At each training step, we employ the same $n_{\textrm{int}}=100.000$ equi-distant interior points in $\Ps \times\Omega$. To be more precise,~at~each~training~step, we employ the same Cartesian grid generated by $n_{\mathcal{p}}= 100$ equi-distant~points $\smash{\{\p_1,\dots,\p_{\smash{n_{\mathcal{p}}}}\}}$ in $\Ps$ and $n_x=1000$  equi-distant points $\smash{\{x_1,\dots,x_{\smash{n_{x}}}\}}$ in $\Omega$,
        i.e., we employ $\smash{\{\p_1,\dots,\p_{\smash{n_{\mathcal{p}}}}\}}\times \smash{\{x_1,\dots,x_{\smash{n_{x}}}\}}$.
        Here, we deliberately select a coarser grid with respect to the parameter dimension to benefit from transfer learning between the parameters.

        \qquad In Figure \ref{fig:VariableRHS}, we depict the trained parametric neural network~realization~$\smash{\boldsymbol{u}_\theta\!\in\! L^2(\Ps,W^{\smash{1,2}}_0(\Omega))}$~and~the~pa-rametric minimizer $\boldsymbol{u}^*\in \smash{L^2(\Ps,W^{\smash{1,2}}_0(\Omega))}$, their gradients and  respective point-wise errors. 
        In it,~we~clearly~observe that the error at the limiting parameters $\p=6$ is relatively high, which may be traced back to the fact that transfer learning with respect to the parameters in this case is restricted to one direction.
        
        \qquad In Figure \ref{fig:VariableRHSSlices.1} and Figure \ref{fig:VariableRHSSlices.2}, for $\p=2,3,4,5$, we compare the slice  $\boldsymbol{u}_\theta(\p,\cdot)\in \smash{W^{\smash{1,2}}_0(\Omega)}$ of the trained parametric neural network realization $\boldsymbol{u}_\theta\in \smash{L^2(\Ps,W^{\smash{1,2}}_0(\Omega))}$ to the slice $\smash{u_{\p}^*}=\boldsymbol{u}^*(\p,\cdot)\in W^{\smash{1,2}}_0(\Omega)$
        of the parametric minimizer $\boldsymbol{u}^*\in \smash{L^2(\Ps,W^{\smash{1,2}}_0(\Omega))}$. In it, we observe that the errors are evenly distributed and not concentrated anywhere.\vspace{-1mm}
    
        \begin{figure}[H]
            \centering
            \includegraphics[width=8cm]{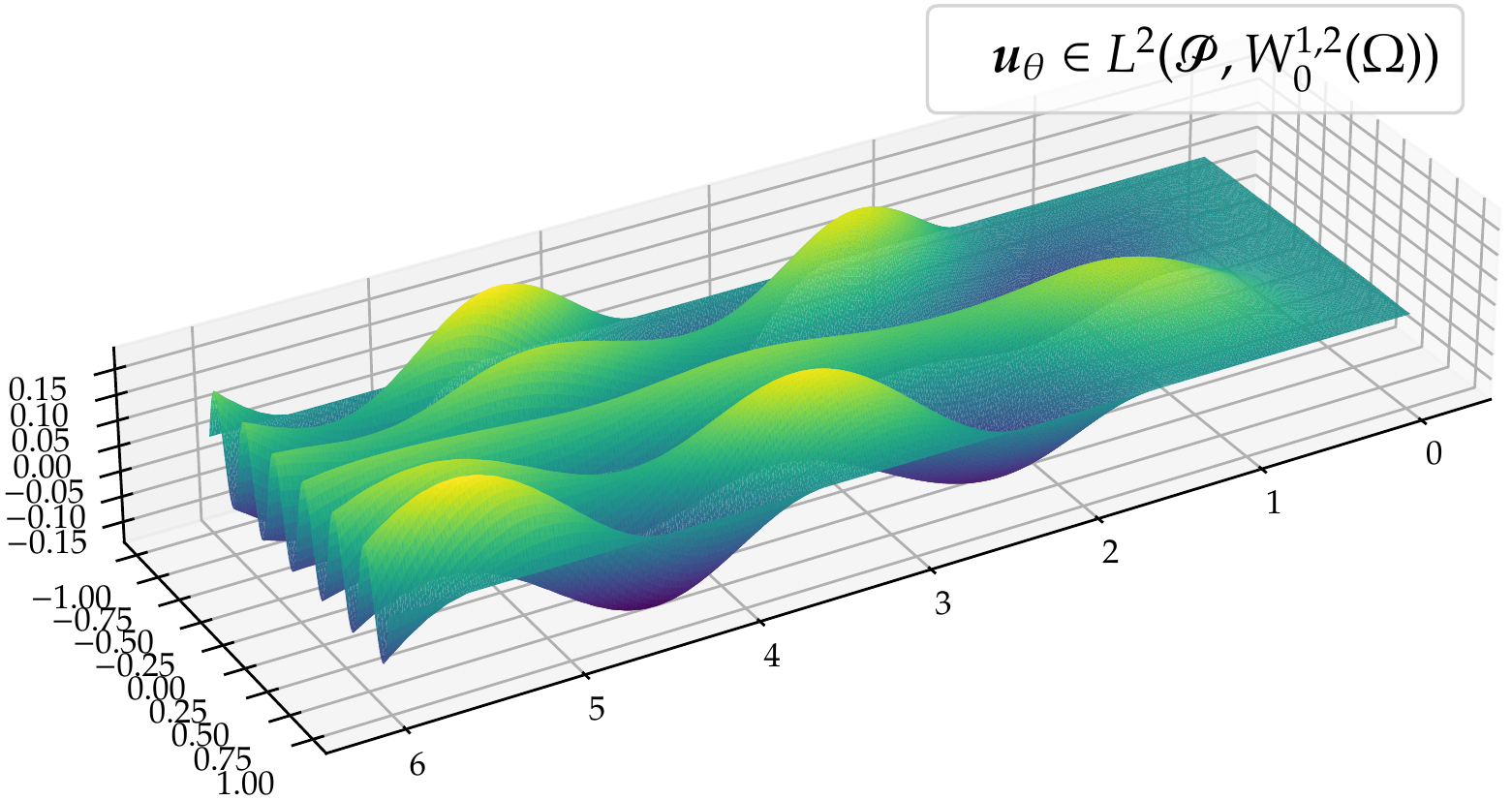}\hspace{3mm}\includegraphics[width=8cm]{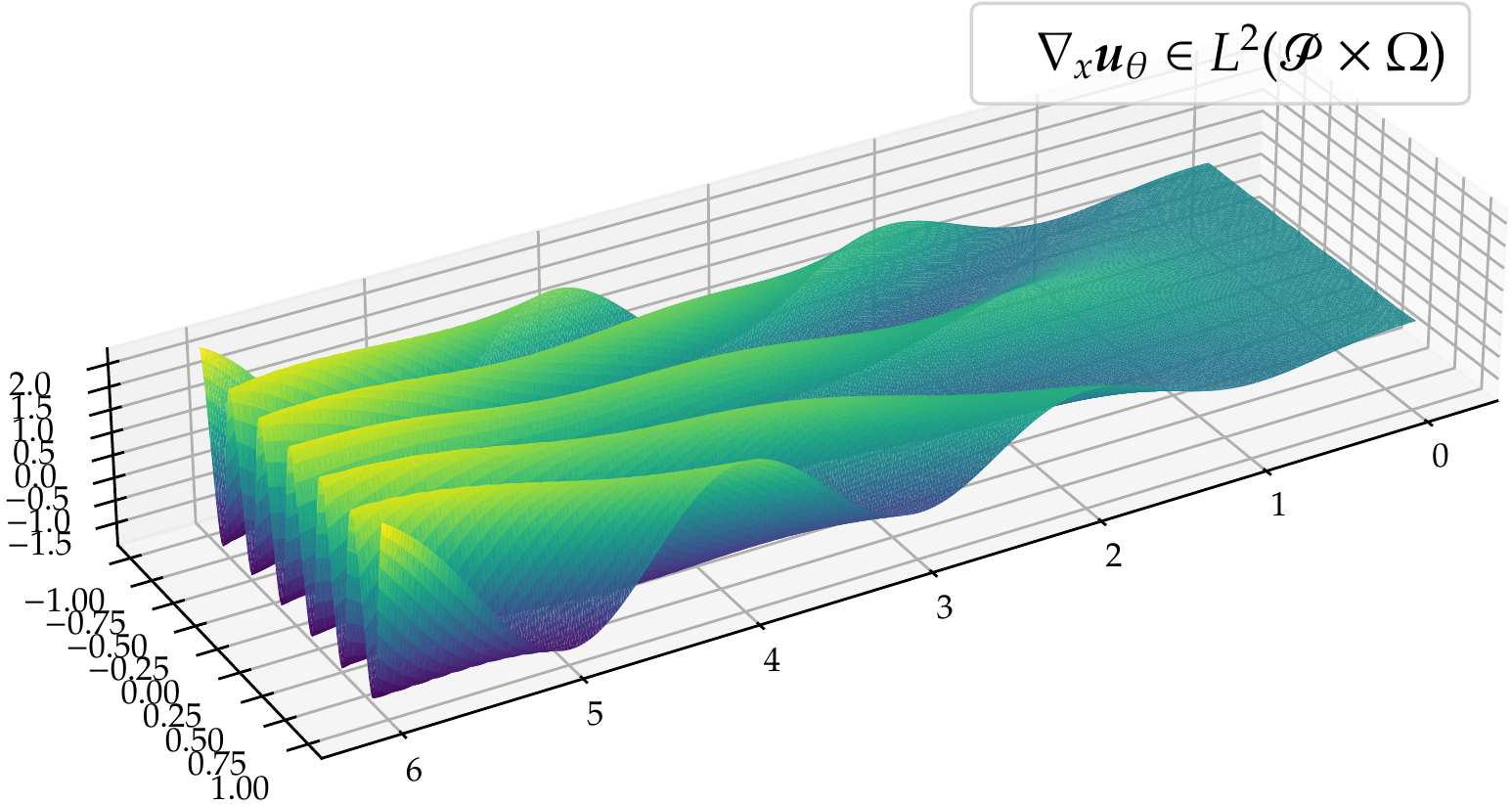}\vspace{-12.5mm}
            \includegraphics[width=8cm]{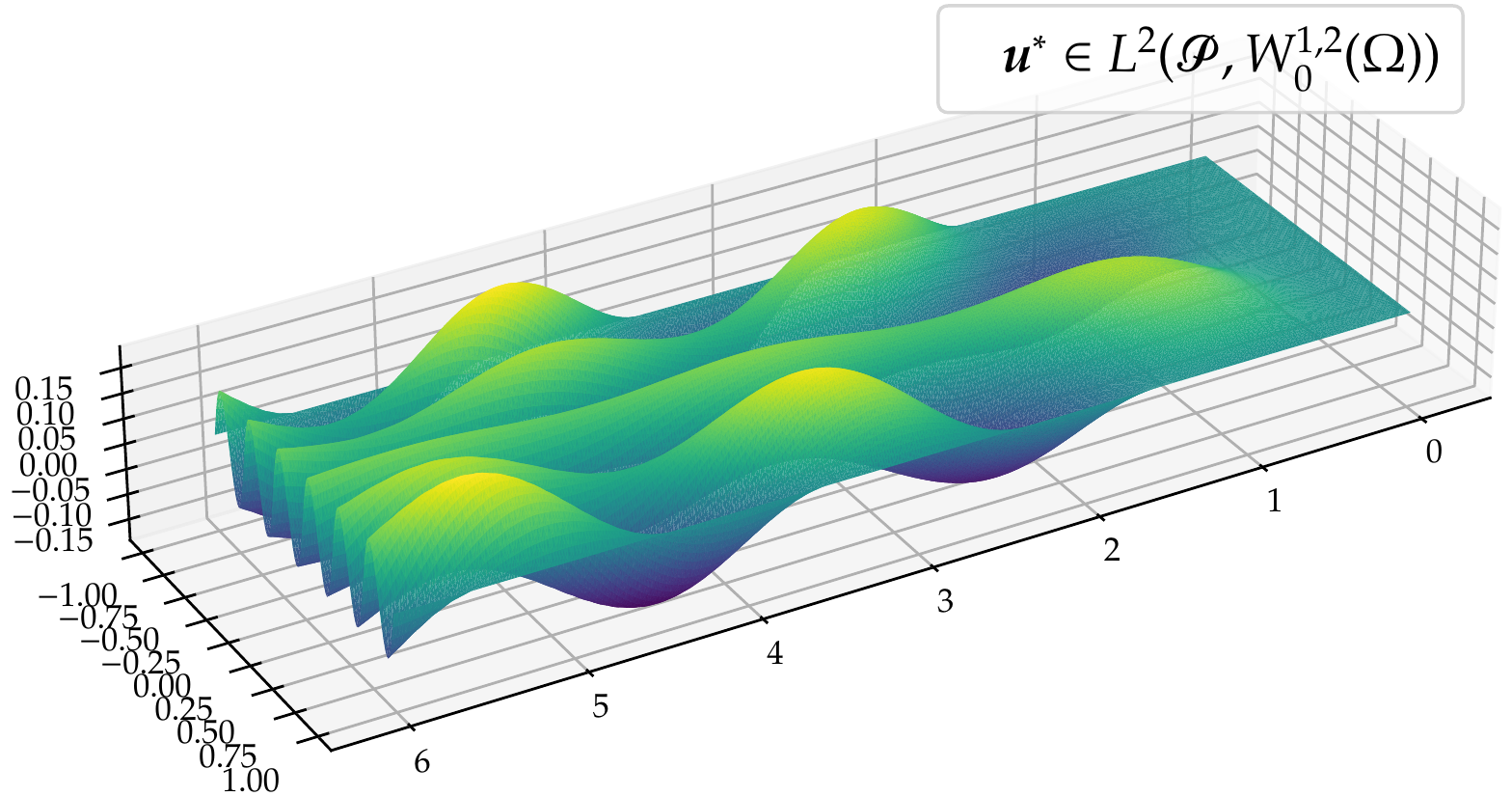}\hspace{3mm}\includegraphics[width=8cm]{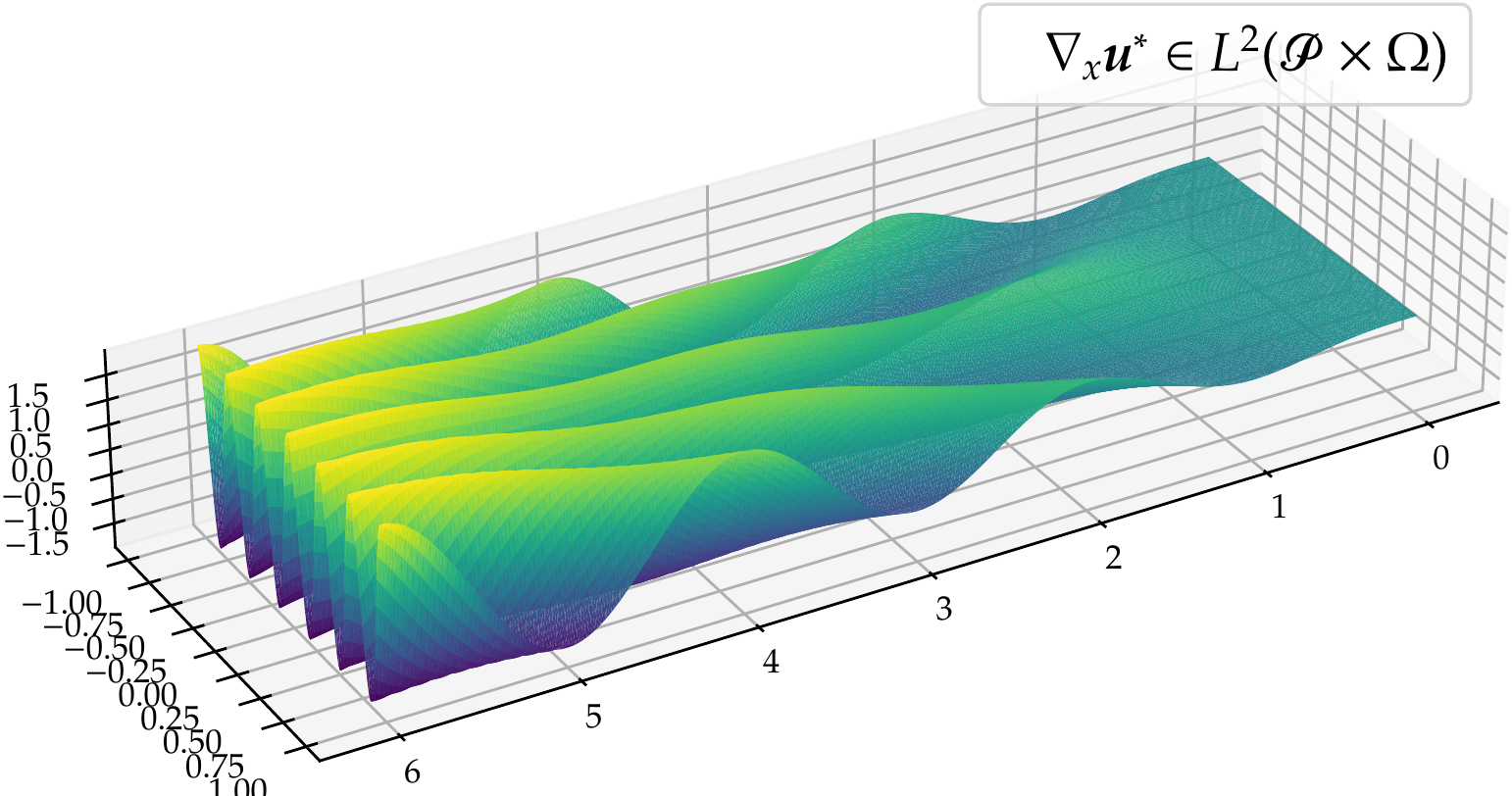}\vspace{-12.5mm}
            \includegraphics[width=8cm]{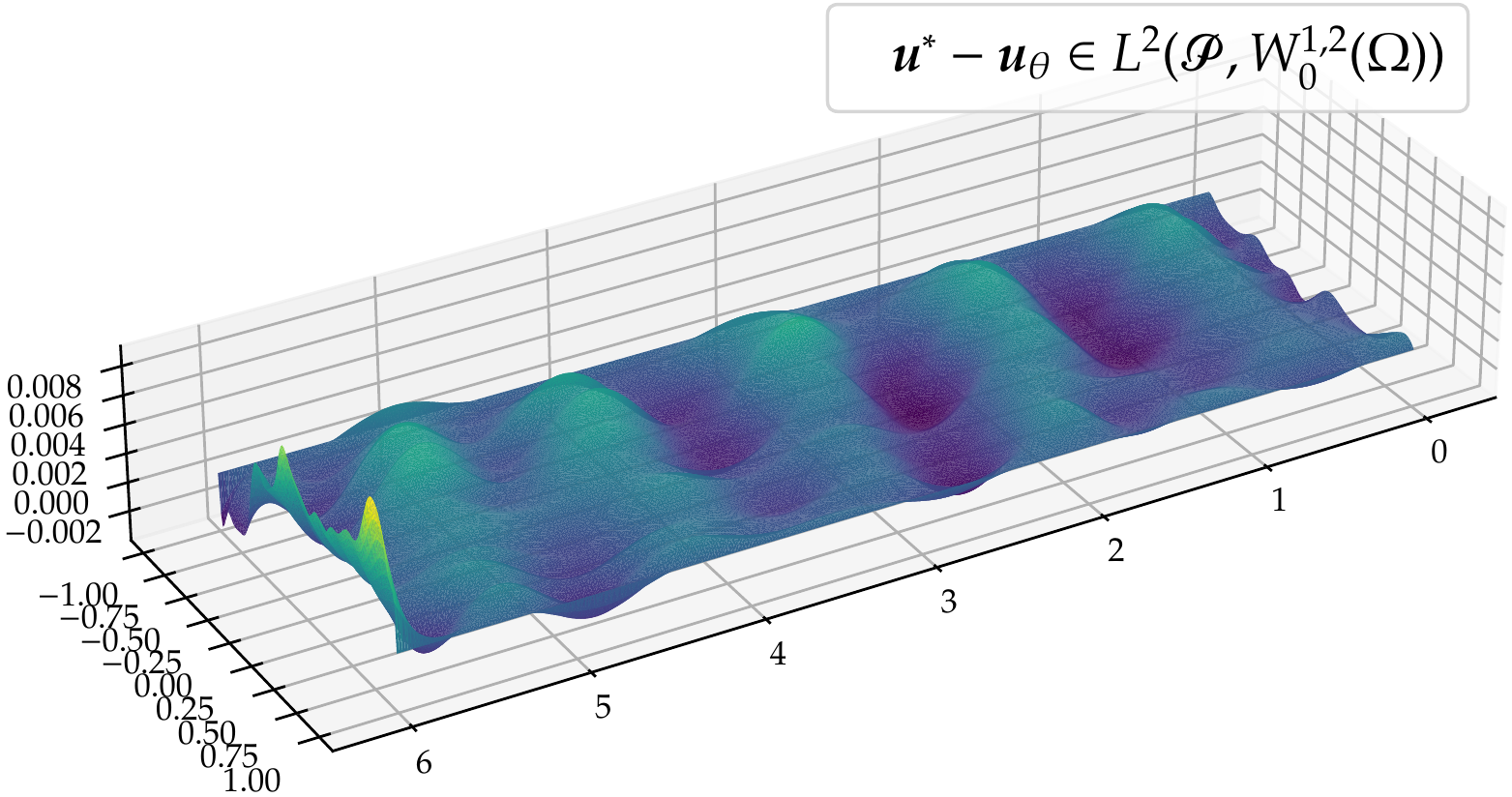}\hspace{3mm}\includegraphics[width=8cm]{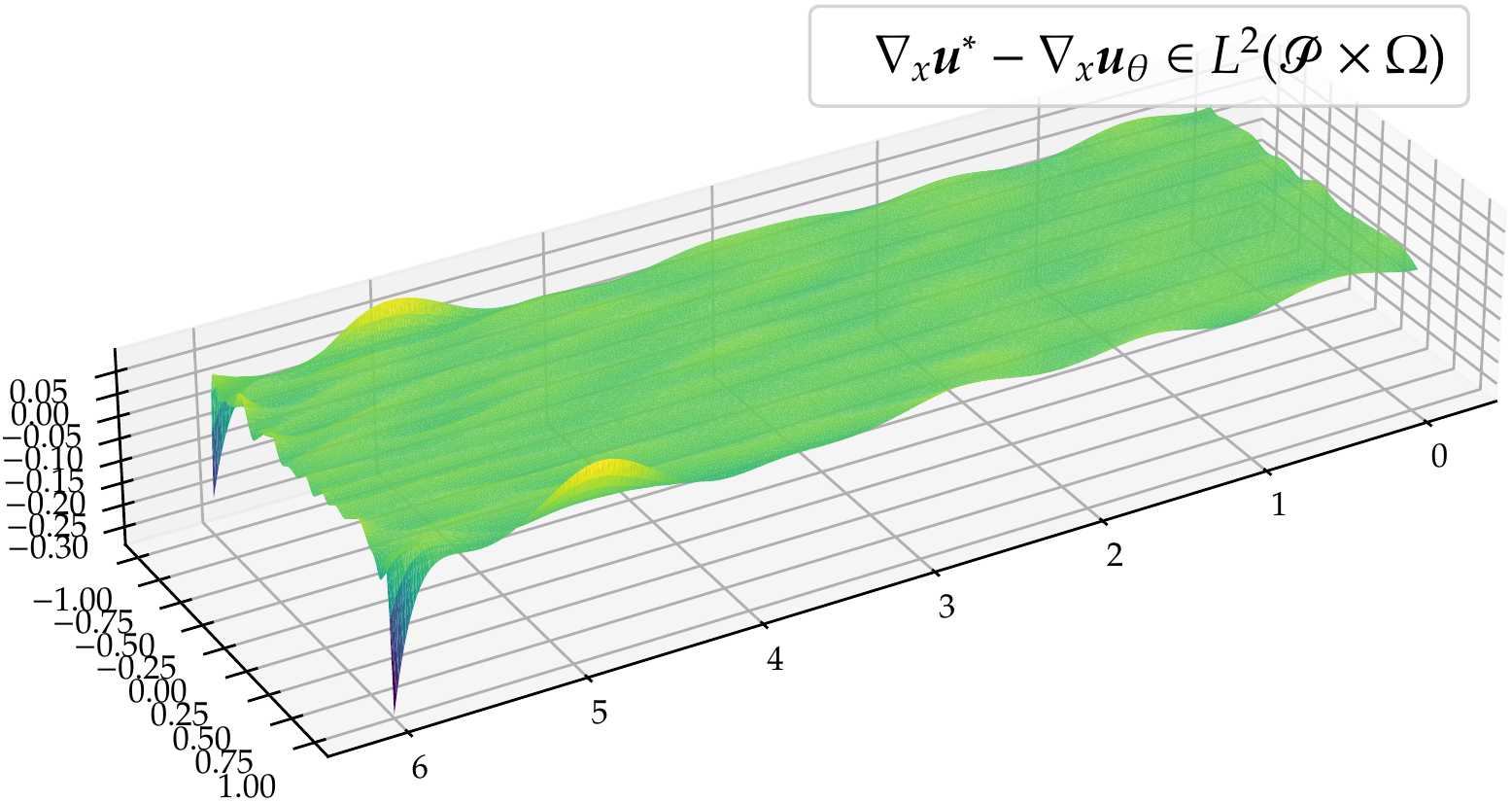}
             \caption{Plots of the trained parametric neural network  realization $\boldsymbol{u}_\theta\in \smash{L^2(\Ps,W^{\smash{1,2}}_0(\Omega))}$ (top left)~and~its~spatial gradient $\nabla_x\boldsymbol{u}_\theta\in L^2(\Ps\times \Omega)$ (top right), the parametric minimizer $\boldsymbol{u}^*\in \smash{L^2(\Ps,W^{\smash{1,2}}_0(\Omega))}$ (middle left) and its spatial gradient $\nabla_x\boldsymbol{u}^*\in L^2(\Ps\times \Omega)$ (middle right),
            and the error $\boldsymbol{u}_\theta-\boldsymbol{u}^*\in \smash{L^2(\Ps,W^{\smash{1,2}}_0(\Omega))}$ (bottom left) and its spatial gradient $\nabla_x\boldsymbol{u}^*-\nabla_x\boldsymbol{u}_\theta\in \smash{L^2(\Ps\times \Omega)}$ (bottom right).\vspace{-2.5mm}}
            \label{fig:VariableRHS}
        \end{figure}
        
        \begin{figure}[H]
            \centering
            \includegraphics[width=16.5cm]{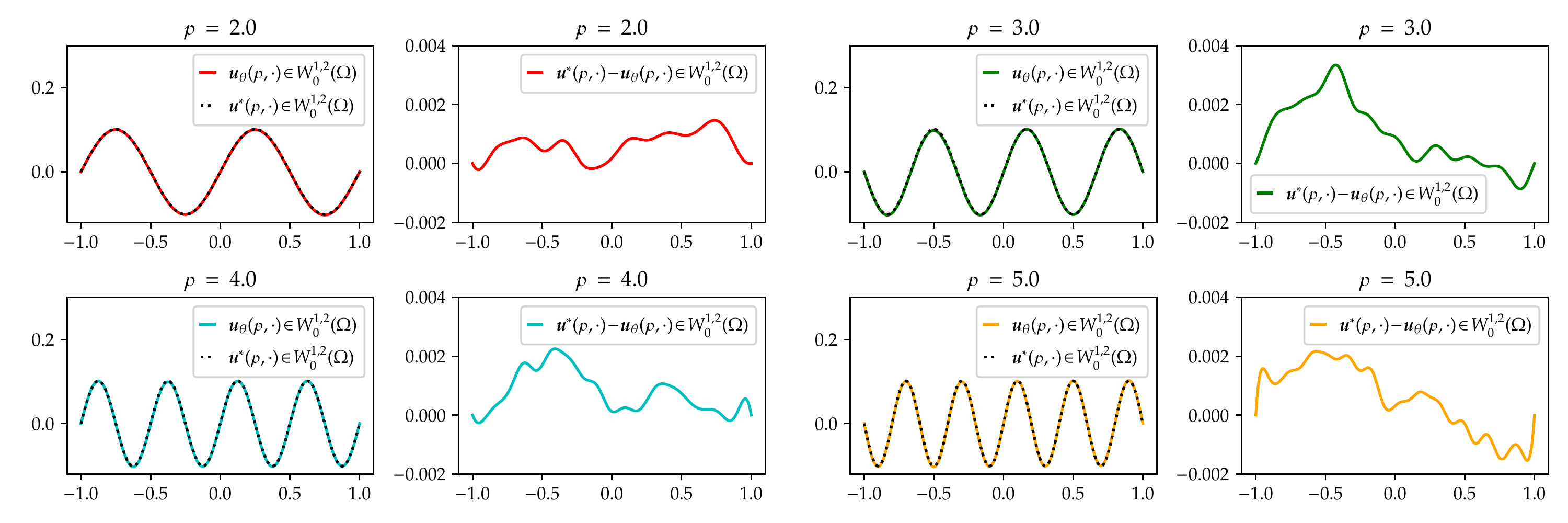}\vspace{-7mm}
            \caption{For $\p=2,3,4,5$, plots of the slice  $\boldsymbol{u}_\theta(\p,\cdot)\in \smash{W^{\smash{1,2}}_0(\Omega)}$ (solid colored line; left) of the trained parametric neural network realization $\boldsymbol{u}_\theta\in \smash{L^2(\Ps,W^{\smash{1,2}}_0(\Omega))}$, of the slice $\boldsymbol{u}^*(\p,\cdot)\in W^{\smash{1,2}}_0(\Omega)$ (dashed black~line;~left) of the
            parametric minimizer $\boldsymbol{u}^*\in \smash{L^2(\Ps,W^{\smash{1,2}}_0(\Omega))}$,  and the point-wise error $\boldsymbol{u}^*(\p,\cdot)-\boldsymbol{u}_\theta(\p,\cdot)\in  W^{\smash{1,2}}_0(\Omega)$ (solid colored line; right).\vspace{-6mm}}
            \label{fig:VariableRHSSlices.1}
        \end{figure}
        
        \begin{figure}[H]
            \centering\includegraphics[width=16.5cm]{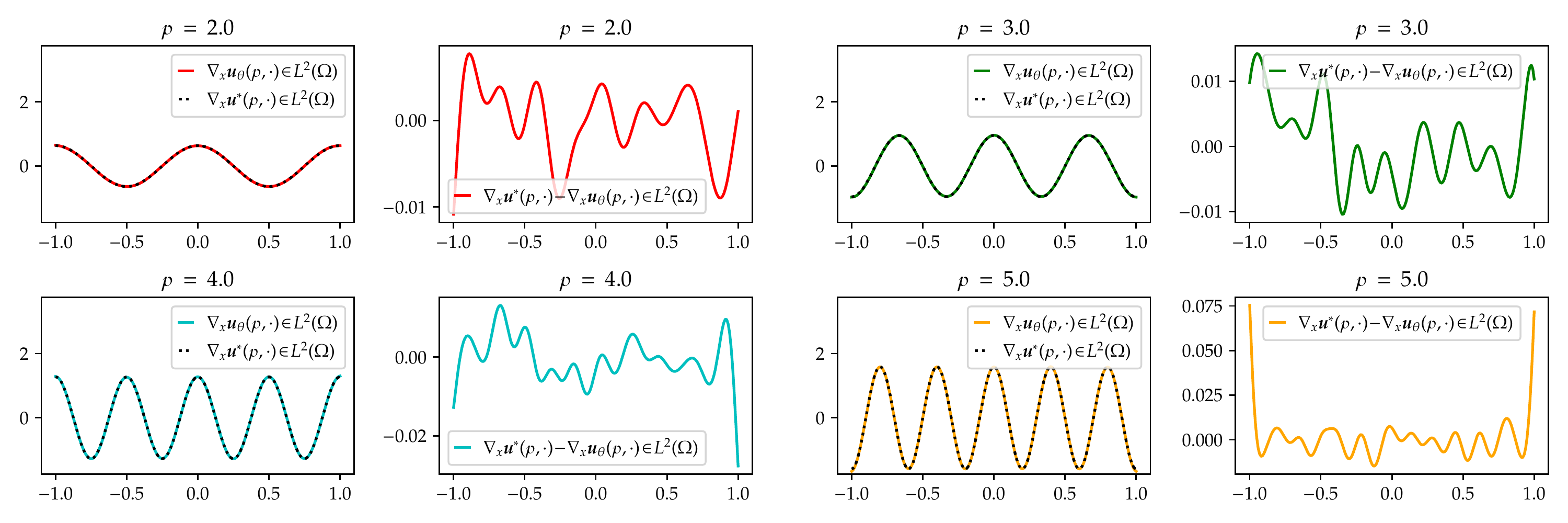}\vspace{-7mm}
            \caption{For $\p=2,3,4,5$, plots of the slice  $\nabla_x\boldsymbol{u}_\theta(\p,\cdot)\in \smash{L^2(\Omega)}$ (solid colored line; left) of the gradient of the trained parametric neural network realization $\boldsymbol{u}_\theta\in \smash{L^2(\Ps,W^{\smash{1,2}}_0(\Omega))}$, of the slice $\nabla_x\boldsymbol{u}^*(\p,\cdot)\in L^2(\Omega)$ (dashed black line; left) of the gradient of the 
            parametric minimizer $\boldsymbol{u}^*\in \smash{L^2(\Ps,W^{\smash{1,2}}_0(\Omega))}$,  and the point-wise error $\nabla_x \boldsymbol{u}^*(\p,\cdot)-\nabla_x\boldsymbol{u}_\theta(\p,\cdot)\in  L^2(\Omega)$ (solid colored line; right).}
            \label{fig:VariableRHSSlices.2}
        \end{figure}

        \subsection{Variable Exponent}
        
        \qquad In this section, we examine a parametric $p$-Dirichlet problem on a fixed domain $\Omega\coloneqq(-1,1)\subseteq \mathbb{R}$~with homogeneous Dirichlet boundary condition, a fixed right-hand side $f\coloneqq1$ and  a parameter-dependent~exponent $p\in C^\infty(\Ps)$, defined by $p(\p)\coloneqq \p$ for all $\p\in \Ps$, where $\Ps\coloneqq (1.5,6)$. More precisely, we are interested in approximating 
        for each fixed $\p\in \Ps$, the unique minimizer $u_{\p}\in W^{\smash{1,p(\p)}}_0(\Omega)$ of the $p(\p)$-Dirichlet energy $E_{\p}:W^{\smash{1,p(\p)}}_0(\Omega)\to \mathbb{R}$, for every $v\in W^{\smash{1,p(\p)}}_0(\Omega)$ defined by
        \begin{align}
            E_{\p}(v)\coloneqq \frac{1}{p(\p)}\int_{\Omega}{\vert \nabla v\vert^{p(\p)}\,\mathrm{d}x}-\int_{\Omega}{v\,\mathrm{d}x}\,.
        \end{align}
        Due to Proposition \ref{cor:variable_exponents}, for this, it suffices to approximate the unique parametric minimizer $\boldsymbol{u}^*\in \boldsymbol{\mathcal{U}}$,~where~$\boldsymbol{\mathcal{U}}$ is the variable exponent Bochner--Lebesgue space defined in Proposition \ref{cor:variable_exponents}, of the variable exponent $p(\cdot)$-Dirichlet energy $\boldsymbol{\mathcal{E}}:\boldsymbol{\mathcal{U}}\to \mathbb{R}$, for every $\boldsymbol{v}\in \boldsymbol{\mathcal{U}}$ defined by
        \begin{align*}
            \boldsymbol{\mathcal{E}}(\boldsymbol{v})\coloneqq\int_{\Ps}{\Bigg[\frac{1}{p(\p)}\int_{\Omega}{\vert \nabla_x \boldsymbol{v}(\p,\cdot)\vert^{p(\p)}\,\mathrm{d}x}-\int_{\Omega}{\boldsymbol{v}(\p,\cdot)\,\mathrm{d}x}\Bigg]\,\mathrm{d}\p}\,.
        \end{align*}
        The unique parametric minimizer $\boldsymbol{u}^*\in \boldsymbol{\mathcal{U}}$  for every $(\p,x)^\top\in \Ps\times\Omega$ is given via
        \begin{align*}
            \boldsymbol{u}^*(\p,x)\coloneqq\frac{1}{p'(\p)}\big(1-\vert x\vert^{p'(\p)}\big)\,.
        \end{align*}
         \qquad To approximate the parametric minimizer $\boldsymbol{u}^*\in  \boldsymbol{\mathcal{U}}$, we deploy a fully-connected feed-forward neural network with four hidden layers of width 16. The total number of trainable variables is $881$. In accordance with \cite{LXZ20}, 
        as activation function, we employ the s2ReLU activation function, i.e., $g:\mathbb{R}\to \mathbb{R}$, for every $x\in \mathbb{R}$ defined by 
        \begin{align*}
            g(x)\coloneqq \sin(2\pi x)\max\{x,0\}\max\{1-x,0\}\,.
        \end{align*}
        Similar to Section \ref{sec:rhs}, the homogeneous Dirichlet boundary condition is enforced by means of the multipli-cative weight $\eta\in C^\infty(\Omega)$, defined by $\eta(x)\coloneqq (1-x)(1+x)$ for all $x\in \Omega$. Then, the resulting neural network realization is again denoted by $\boldsymbol{u}_\theta\in \boldsymbol{\mathcal{U}}$.
        At each training step, we employ the same $n_{\textrm{int}}=100.000$ equi-distant interior points in $\Ps\times\Omega$, as in  Section \ref{sec:rhs}, i.e., 
         a Cartesian grid generated by $n_{\mathcal{p}}= 100$ equi-distant~points $\smash{\{\p_1,\dots,\p_{\smash{n_{\mathcal{p}}}}\}}$ in $\Ps$ and $n_x=1000$  equi-distant points $\smash{\{x_1,\dots,x_{\smash{n_{x}}}\}}$ in $\Omega$, with a coarser grid with respect to the parameter dimension to benefit from transfer learning between the parameters.
        
        \qquad In Figure \ref{fig:VariableExponent}, we depict the trained parametric neural network realization $\boldsymbol{u}_\theta\in \boldsymbol{\mathcal{U}}$ 
        and the~parametric~minimizer $\boldsymbol{u}^*\in \boldsymbol{\mathcal{U}}$, their gradients and  respective point-wise errors. 
        In it, we clearly observe~that~for~each fixed parameter $\p\!\in\! \Ps$ with $\p\!\ge\! 3$, the errors are mostly concentrated near the origin $x\!=\!0$. The~same~observation is made in Figure \ref{fig:VariableExponentSlices.1} and Figure \ref{fig:VariableExponentSlices.2}, which contain plots of slices of trained parametric neural network realization $\boldsymbol{u}_\theta\in \boldsymbol{\mathcal{U}}$,  the parametric minimizer $\boldsymbol{u}^*\in\boldsymbol{\mathcal{U}}$, their gradients and respective point-wise errors
        for $\p=2,3,4,5$. This observation may be traced back to the fact that for each fixed parameter $\p\in \Ps$, the parametric minimizer $\boldsymbol{u}^*(\p,\cdot)\!\in\! \smash{W^{\smash{1,p(\p)}}_0(\Omega)}$ has its point of lowest regularity at the origin $x\!=\!0$ and is otherwise~smooth.~In~addition, we find that for each fixed parameter $\p\in \Ps$ with $\p\leq 3$, the errors are not only concentrated at the origin.
        Apart from that, at the limiting parameters $\p\!=\!1.5$ and $\p\!=\!6$ the errors are highest, which, as in~Section~\ref{sec:rhs}, may be traced back to the fact that transfer learning with respect to the parameters direction in this case is limited to one direction.

        \qquad In Figure \ref{fig:VariableExponentSlices.1} and Figure \ref{fig:VariableExponentSlices.2}, for $\p=2,3,4,5$, we compare the slice  $\boldsymbol{u}_\theta(\p,\cdot)\in \smash{W^{\smash{1,p(\p)}}_0(\Omega)}$ of the trained~parametric neural network realization $\boldsymbol{u}_\theta\in \smash{\boldsymbol{\mathcal{U}}}$ to the slice $\boldsymbol{u}^*(\p,\cdot)\in W^{\smash{1,p(\p)}}_0(\Omega)$~of~the~parametric~minimizer~${\boldsymbol{u}^*\in \smash{\boldsymbol{\mathcal{U}}}}$.
        
        \begin{figure}[H]
            \centering
             \includegraphics[width=8cm]{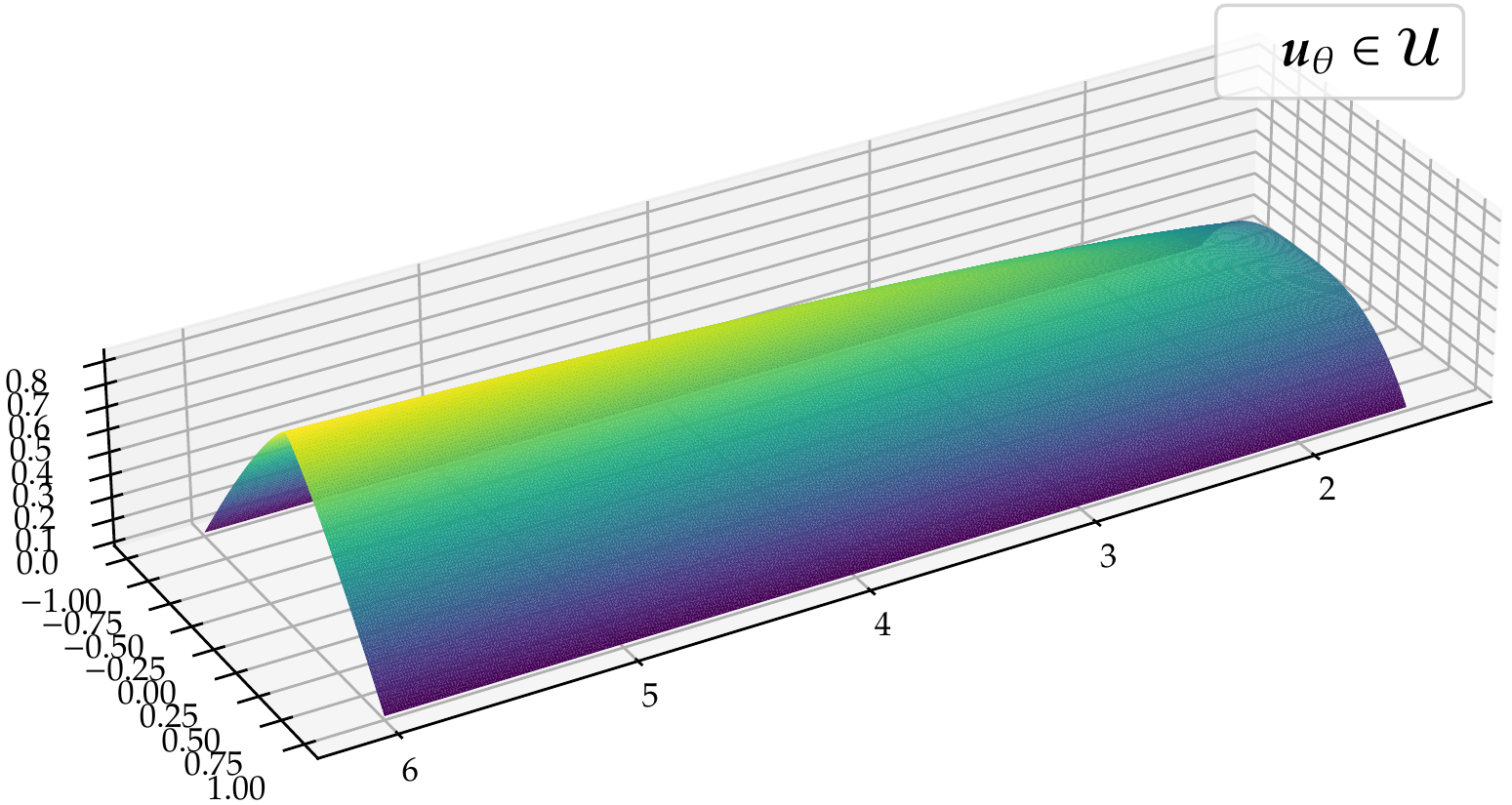}\hspace{3mm}\includegraphics[width=8cm]{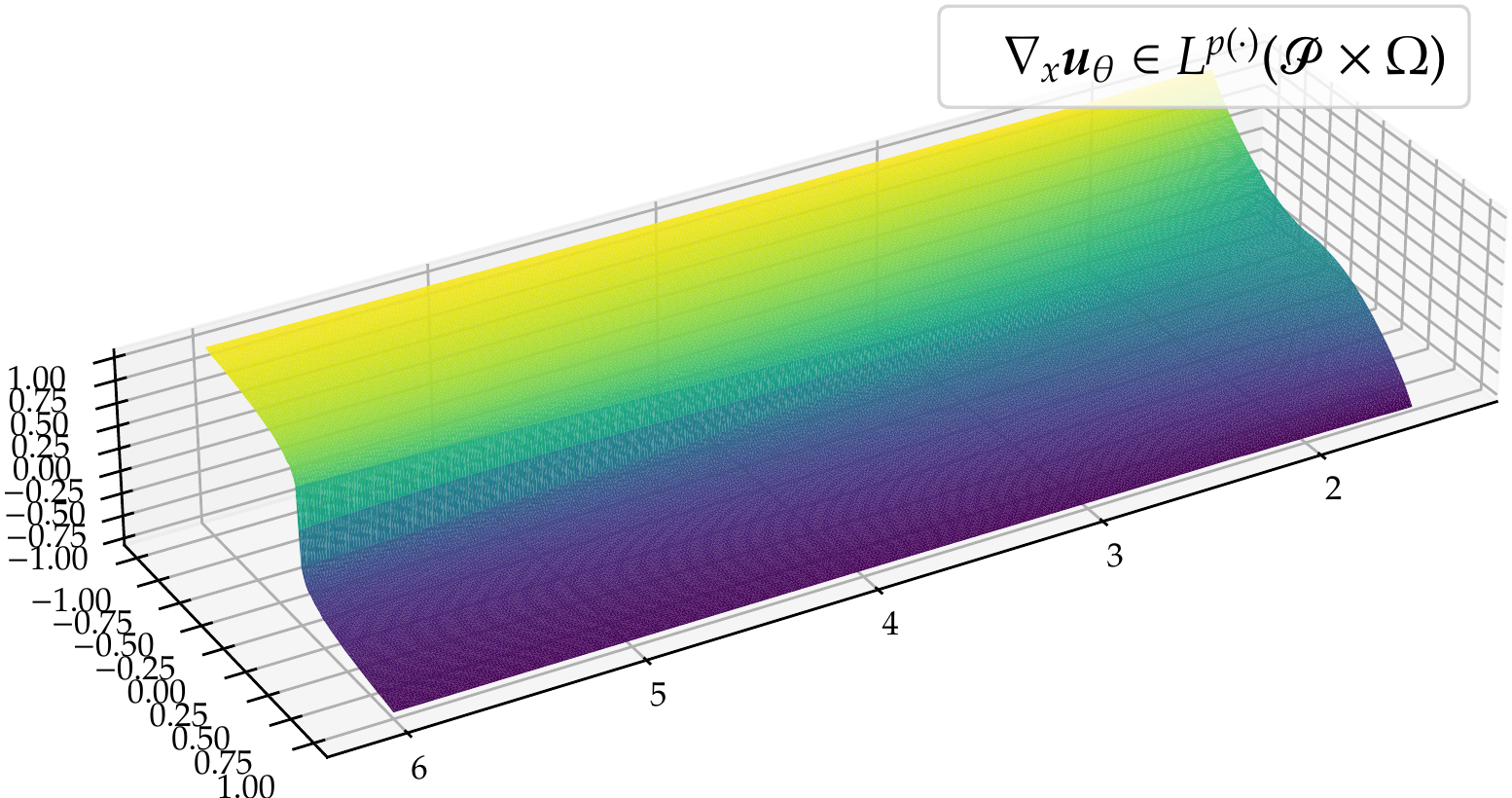}\vspace{-12.5mm}
            \includegraphics[width=8cm]{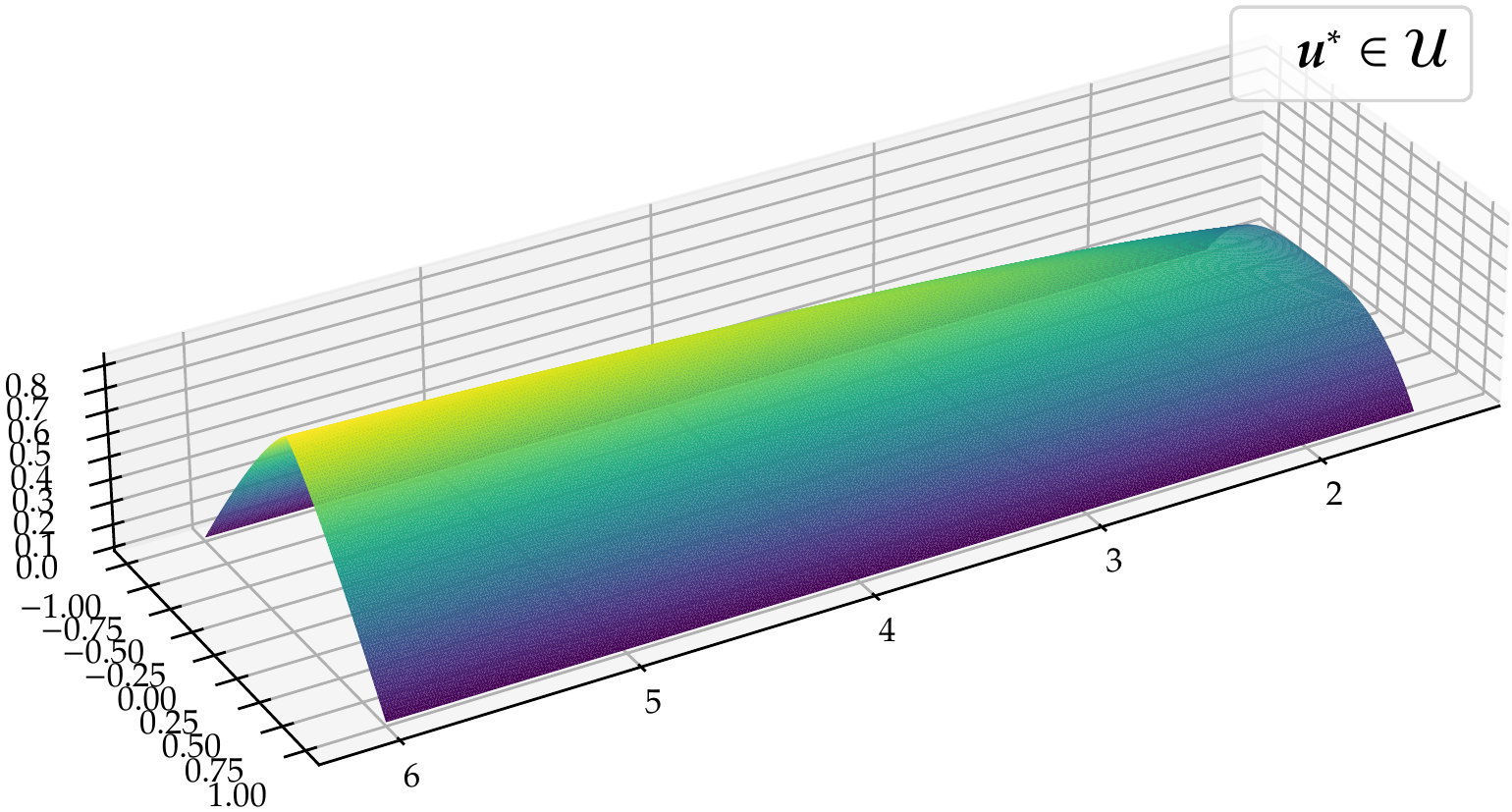}\hspace{3mm}\includegraphics[width=8cm]{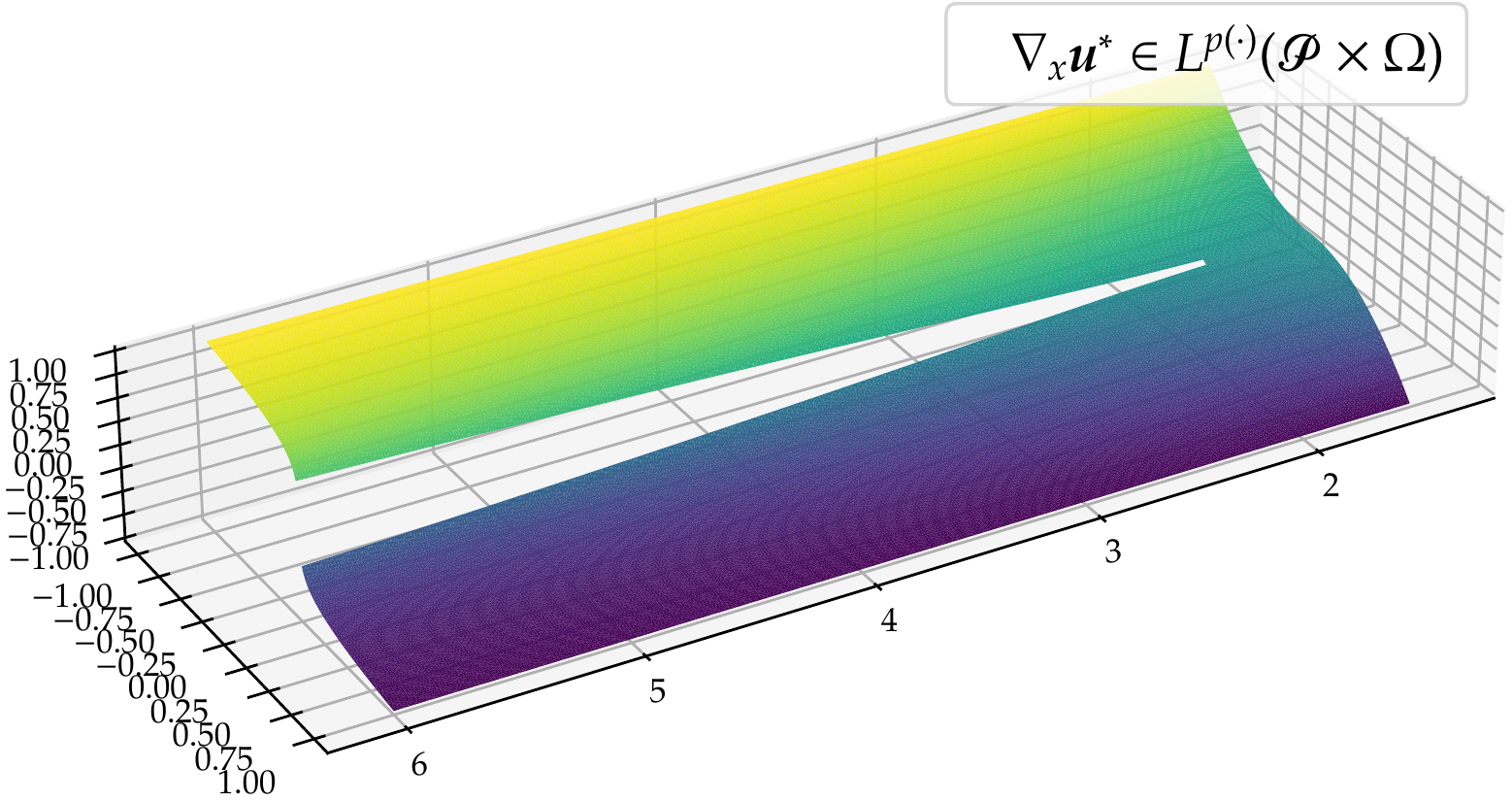}\vspace{-12.5mm}
            \includegraphics[width=8cm]{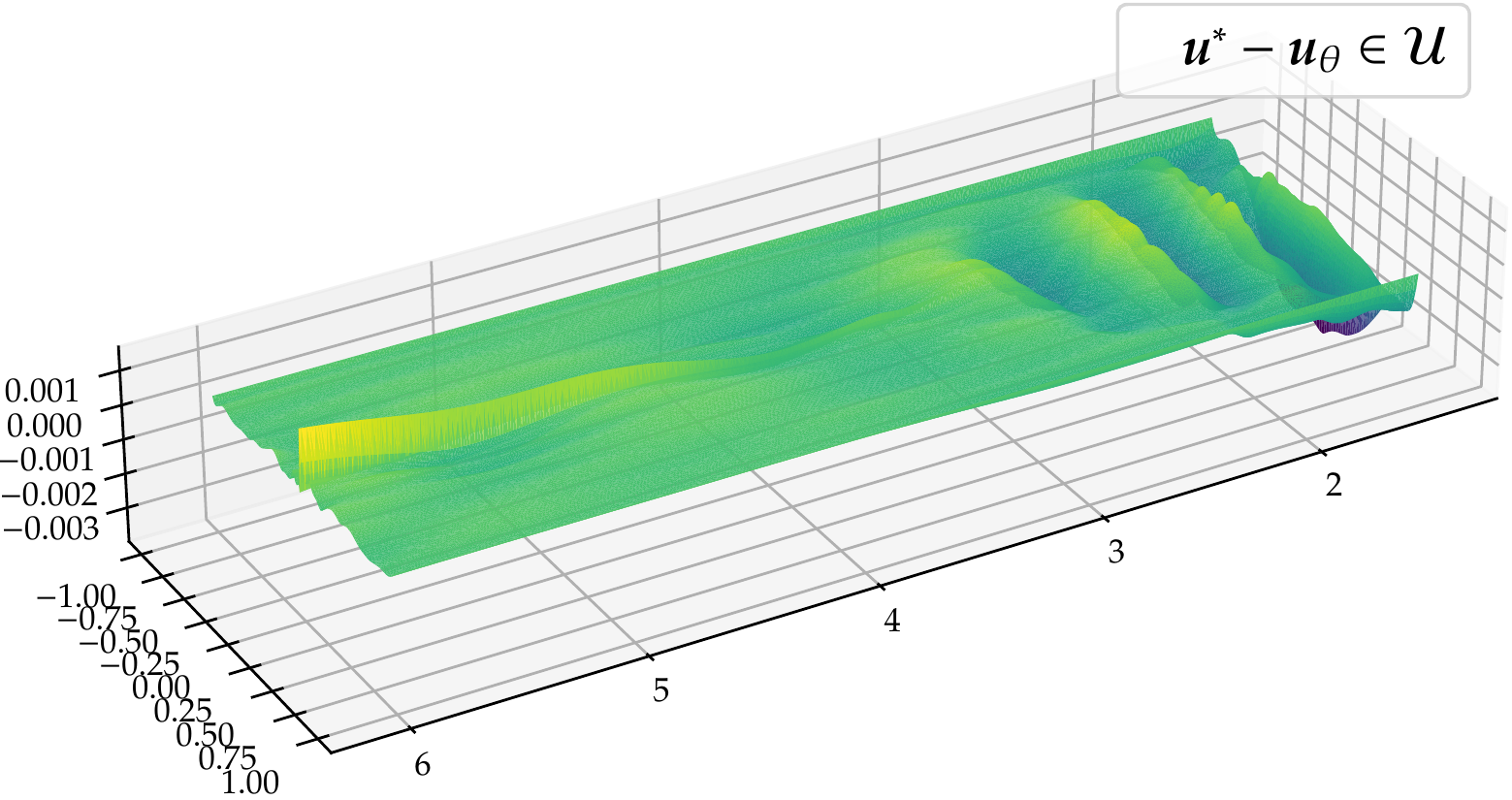}\hspace{3mm}\includegraphics[width=8cm]{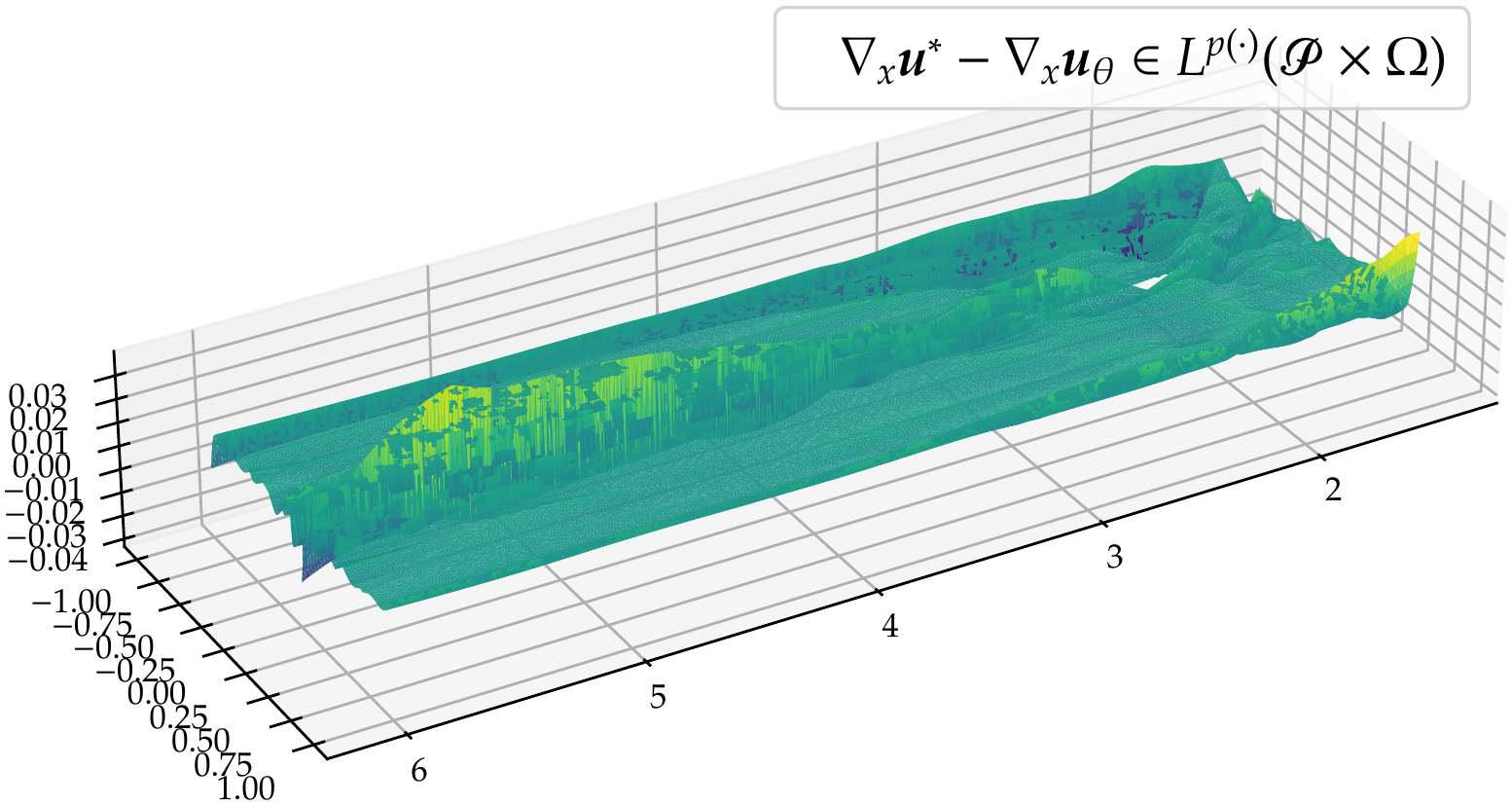}
            \caption{Plots of the trained parametric neural network  realization $\boldsymbol{u}_\theta\in \boldsymbol{\mathcal{U}}$ (top left)~and~its~spatial gradient $\nabla_x\boldsymbol{u}_\theta\in L^{p(\cdot)}(\Ps\times \Omega)$ (top right), the minimizer $\boldsymbol{u}\in \boldsymbol{\mathcal{U}}$ (middle left) and its spatial gradient $\nabla_x\boldsymbol{u}^*\in L^{p(\cdot)}(\Ps\times \Omega)$ (middle right),
            and the errors $\boldsymbol{u}^*-\boldsymbol{u}_\theta\in \boldsymbol{\mathcal{U}}$ (bottom left) and its spatial gradient $\nabla_x\boldsymbol{u}^*-\nabla_x\boldsymbol{u}_\theta\in L^{p(\cdot)}(\Ps\times \Omega)$ (bottom right).}
            \label{fig:VariableExponent}
        \end{figure}
        
        \begin{figure}[H]
            \centering
            \includegraphics[width=16.5cm]{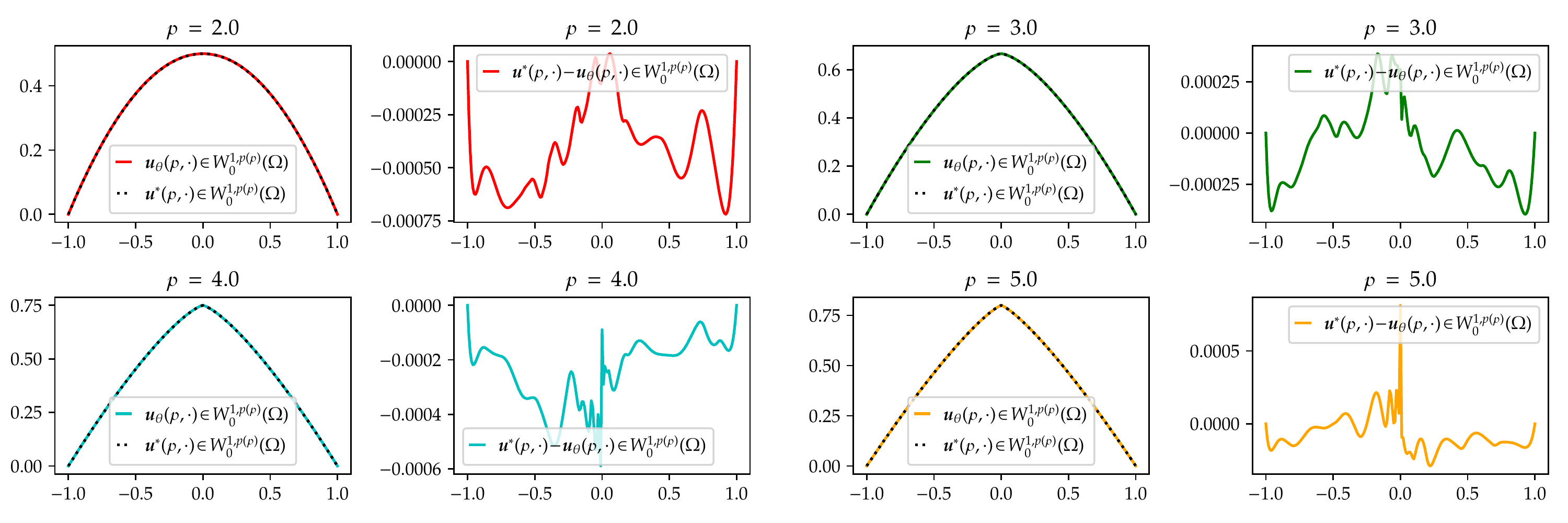}\vspace{-7mm}
            \caption{For $\p=2,3,4,5$, plots of the slice  $\boldsymbol{u}_\theta(\p,\cdot)\in \smash{W^{\smash{1,p(\p)}}_0(\Omega)}$ (solid colored line; left) of the trained parametric neural network realization $\boldsymbol{u}_\theta\in \smash{\boldsymbol{\mathcal{U}}}$, of the slice $\boldsymbol{u}^*(\p,\cdot)\in W^{\smash{1,p(\p)}}_0(\Omega)$ (dashed black~line;~left) of the
            parametric minimizer $\boldsymbol{u}^*\in \smash{\boldsymbol{\mathcal{U}}}$,  and the point-wise error $\boldsymbol{u}^*(\p,\cdot)-\boldsymbol{u}_\theta(\p,\cdot)\in  W^{\smash{1,p(\p)}}_0(\Omega)$ (solid colored~line;~right).\vspace{-6mm}}
            \label{fig:VariableExponentSlices.1}
        \end{figure}
        
        \begin{figure}[H]
            \centering
            \includegraphics[width=16.5cm]{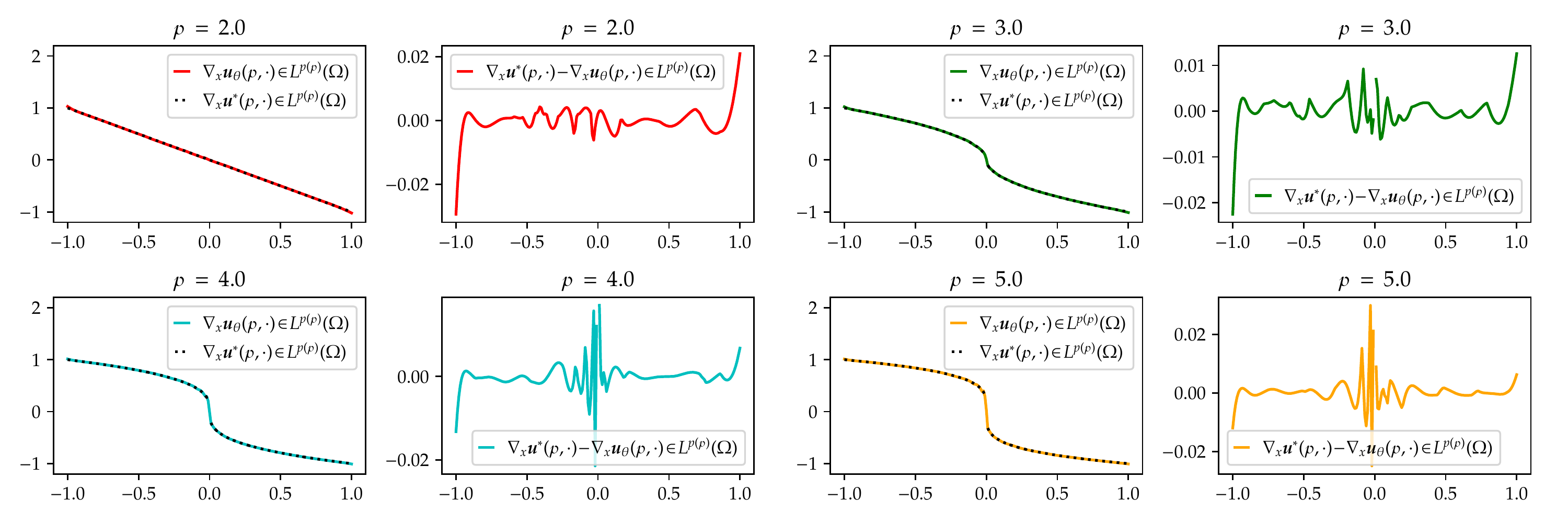}\vspace{-7mm}
            \caption{For $\p=2,3,4,5$, plots of the slice  $\nabla_x\boldsymbol{u}_\theta(\p,\cdot)\in \smash{L^{p(\p)}(\Omega)}$ (solid colored line; left) of the gradient of the trained parametric neural network realization $\boldsymbol{u}_\theta\in \smash{\boldsymbol{\mathcal{U}}}$, of the slice $\nabla_x\boldsymbol{u}^*(\p,\cdot)\in L^{p(\p)}(\Omega)$ (dashed black line; left) of the gradient of the 
            parametric minimizer $\boldsymbol{u}^*\in \smash{\boldsymbol{\mathcal{U}}}$,  and the point-wise error $\nabla_x \boldsymbol{u}^*(\p,\cdot)-\nabla_x\boldsymbol{u}_\theta(\p,\cdot)\in  L^{p(\p)}(\Omega)$ (solid colored line; right).}
            \label{fig:VariableExponentSlices.2}
        \end{figure}
        
        \subsection{Variable Domain}
        
        \qquad In this section, we examine a parametric Dirichlet problem, i.e., $2$-Dirichlet problem, on~the~variable~domain $\Omega(\p)\coloneqq(-\p,\p)$, $\p\in \Ps$, where $\Ps\coloneqq(1,2)$, with homogeneous Dirichlet boundary condition, and a fixed right-hand side $\boldsymbol{f}\coloneqq 1\in \smash{L^{2}(Q)}$, where $\smash{Q\coloneqq\bigcup_{\p\in \Ps}{\{\p\}\times \Omega(\p)}}$.  More precisely, we are interested in approximating 
        for each fixed $\p\in \Ps$, the unique minimizer $u_{\p}\in W^{\smash{1,2}}_0(\Omega(\p))$ of the Dirichlet energy $E_{\p}:W^{\smash{1,2}}_0(\Omega(\p))\to \mathbb{R}$, for every $v\in W^{\smash{1,2}}_0(\Omega(\p))$ defined by
        \begin{align*}
            E_{\p}(v)\coloneqq \frac{1}{2}\int_{\Omega(\p)}{\vert \nabla v\vert^2\,\mathrm{d}x}-\int_{\Omega(\p)}{\,v\,\mathrm{d}x}\,.
        \end{align*}
        Due to Proposition \ref{cor:variable_domains}, for this, it suffices to approximate the unique~parametric~minimizer  $\boldsymbol{u}^*\in L^2(\Ps,$ $W^{\smash{1,2}}_0(\Omega(\cdot)))$, where     $\smash{L^2(\Ps,\!W^{\smash{1,2}}_0(\Omega(\cdot)))}$ is the variable domain Bochner--Lebesgue space defined in~\mbox{Proposition}~\ref{cor:variable_domains}, 
        of the variable domain Dirichlet energy $\boldsymbol{\mathcal{E}}:\smash{L^2(\Ps,W^{\smash{1,2}}_0(\Omega(\cdot)))}\to \mathbb{R}$, for every $\boldsymbol{v}\in \smash{L^2(\Ps,W^{\smash{1,2}}_0(\Omega(\cdot)))}$~defined~by
        \begin{align*}
            \boldsymbol{\mathcal{E}}(\boldsymbol{v})\coloneqq\int_{\Ps}{\,\Bigg[\frac{1}{2}\int_{\Omega(\p)}{\vert \nabla_x \boldsymbol{v}(\p,\cdot)\vert^2\,\mathrm{d}x}-\int_{\Omega(\p)}{\boldsymbol{v}(\p,\cdot)\,\mathrm{d}x}\Bigg]\,\mathrm{d}\p}\,.
        \end{align*}
        The unique parametric minimizer $\boldsymbol{u}^*\in L^2(\Ps,W^{\smash{1,2}}_0(\Omega(\cdot)))$ for every $(\p,x)^\top\in 
        Q$  is   given via
        \begin{align*}
            \boldsymbol{u}^*(\p,x)\coloneqq \frac{\p^2-x^2}{2}\,.
        \end{align*}
        \qquad To approximate the parametric minimizer $\boldsymbol{u}^*\!\in\! \smash{L^2(\Ps,W^{\smash{1,2}}_0(\Omega(\cdot)))}$, we deploy~a~\mbox{fully-connected}~\mbox{feed-for}-ward neural network with four hidden layers of width 16
        and realization~$\smash{\boldsymbol{v}_\theta\in L^2(\Ps,W^{\smash{1,2}}(\Omega(\cdot)))}$. Then, the total number of trainable variables is $881$. As activation function, we employ the~approximated~GELU activation function, cf. \eqref{eq:gelu}.
        Similar to Section \ref{sec:rhs}, the~homogeneous~Dirichlet boundary condition is enforced by means of the multiplicative weight $\eta\in C^\infty(\Ps\times \Omega)$, defined by $\eta(\p,x)\!\coloneqq\! (\p-x)(\p+1)/\p^2$~for~all~${(\p,x)^\top\!\in\! \Omega}$, i.e., we do not use $\boldsymbol{v}_\theta\in \smash{L^2(\Ps,W^{\smash{1,2}}(\Omega(\cdot)))}$ for the approximation of the~parametric~minimizer~$\boldsymbol{u}^*\!\in\! \smash{L^2(\Ps,W^{\smash{1,2}}_0(\Omega(\cdot)))}$ but the function $\boldsymbol{u}_\theta\coloneqq \eta \boldsymbol{v}_\theta\in \smash{L^2(\Ps,W^{\smash{1,2}}_0(\Omega(\cdot)))}$. The neural network is trained using $20.000$ steps of the Adam optimization algorithm with a fixed learning~rate~of~$\varepsilon\coloneqq 1\mathrm{e}{-3}$. 
        At each training step, we employ the same $n_{\textrm{int}}=685.608$ equi-distant interior points in $\Ps\times\Omega$. 
        To be more precise, at each training step, we employ the same grid generated by first choosing $n_{\p}=100$ equi-distant interior points $\smash{\{\p_1,\dots,\p_{\smash{n_{\p}}}\}}$ in $\Ps$ and, then, 
        for each of these points $\p \in \Ps$ choosing $n_x(\p)=2000\p$ equi-distant  interior points $\smash{\{x_1(\p),\dots,x_{\smash{n_x(\p)}}(\p)\}}$ in $\Omega(\p)$, i.e.,
        we employ $\bigcup_{i=1}^{n_{\smash{\p}}}{\{\p_i\}\times\{x_1(\p_i),\dots,x_{\smash{n_x(\p_i)}}(\p_i)\}}$.
        We deliberately select a coarser grid with~respect~to~the~parameter dimension to benefit from transfer learning~between~the~parameters.

        \qquad In Figure \ref{fig:VariableExponent}, we depict the trained parametric neural network realization $\boldsymbol{u}_\theta\in \smash{L^2(\Ps,W^{\smash{1,2}}_0(\Omega(\cdot)))}$ 
        and the parametric minimizer $\boldsymbol{u}^*\in \smash{L^2(\Ps,W^{\smash{1,2}}_0(\Omega(\cdot)))}$, their gradients and  respective point-wise errors.

        \qquad In Figure \ref{fig:VariableExponentSlices.1} and Figure \ref{fig:VariableExponentSlices.2}, for $\p=2,3,4,5$, we compare the slice  $\boldsymbol{u}_\theta(\p,\cdot)\in \smash{W^{\smash{1,2}}_0(\Omega(\p))}$ of the trained~parametric neural network realization $\boldsymbol{u}_\theta\in \smash{L^2(\Ps,W^{\smash{1,2}}_0(\Omega(\cdot)))}$ to the slice $u_{\p}^*=\boldsymbol{u}^*(\p,\cdot)\in W^{\smash{1,2}}_0(\Omega(\p))$~of~the~parametric~minimizer $\boldsymbol{u}^*\in \smash{L^2(\Ps,W^{\smash{1,2}}_0(\Omega(\cdot)))}$.\vspace{-1mm}
        
        \begin{figure}[H]
            \centering
            \includegraphics[width=8cm]{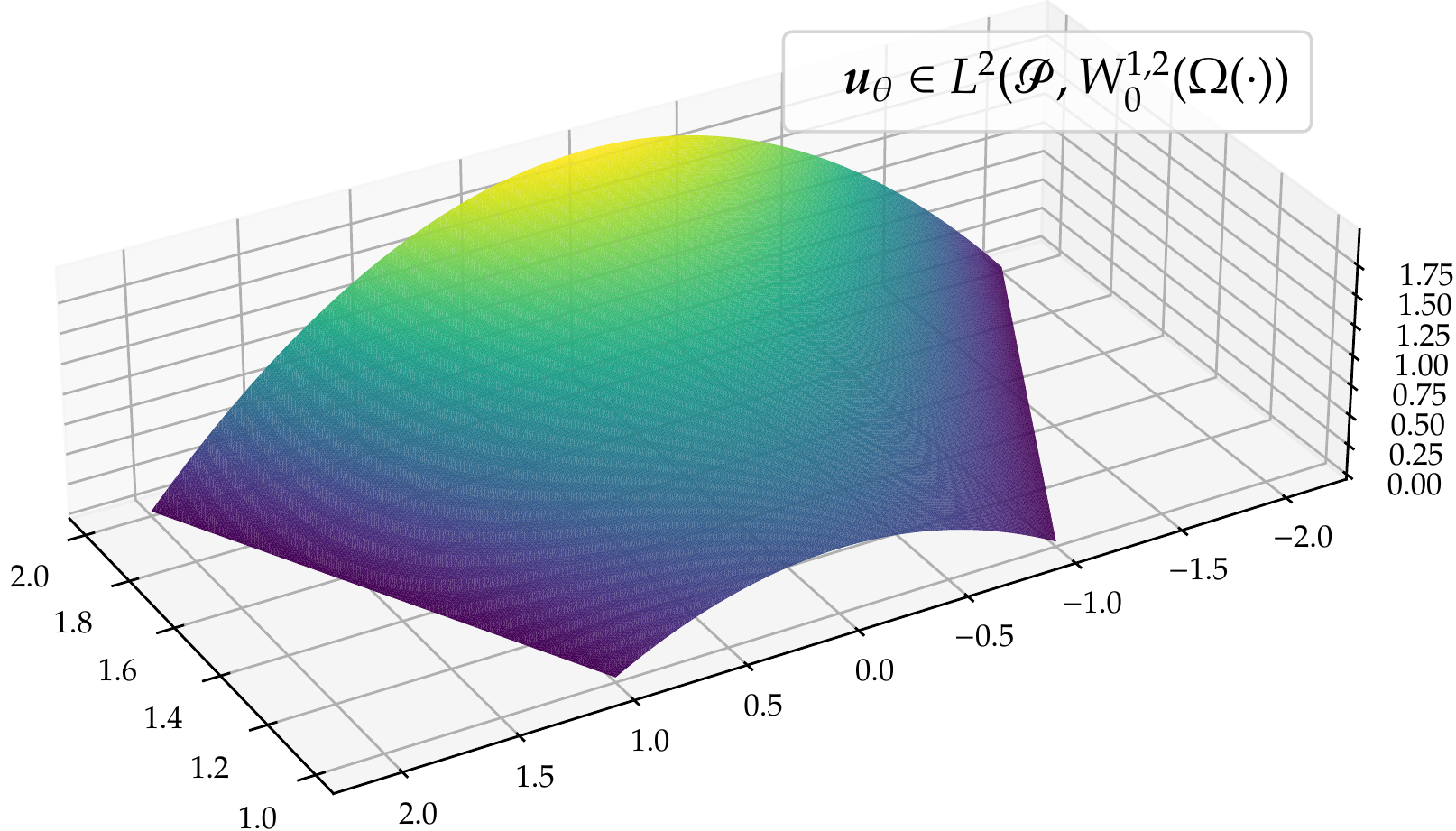}\hspace{3mm}\includegraphics[width=8cm]{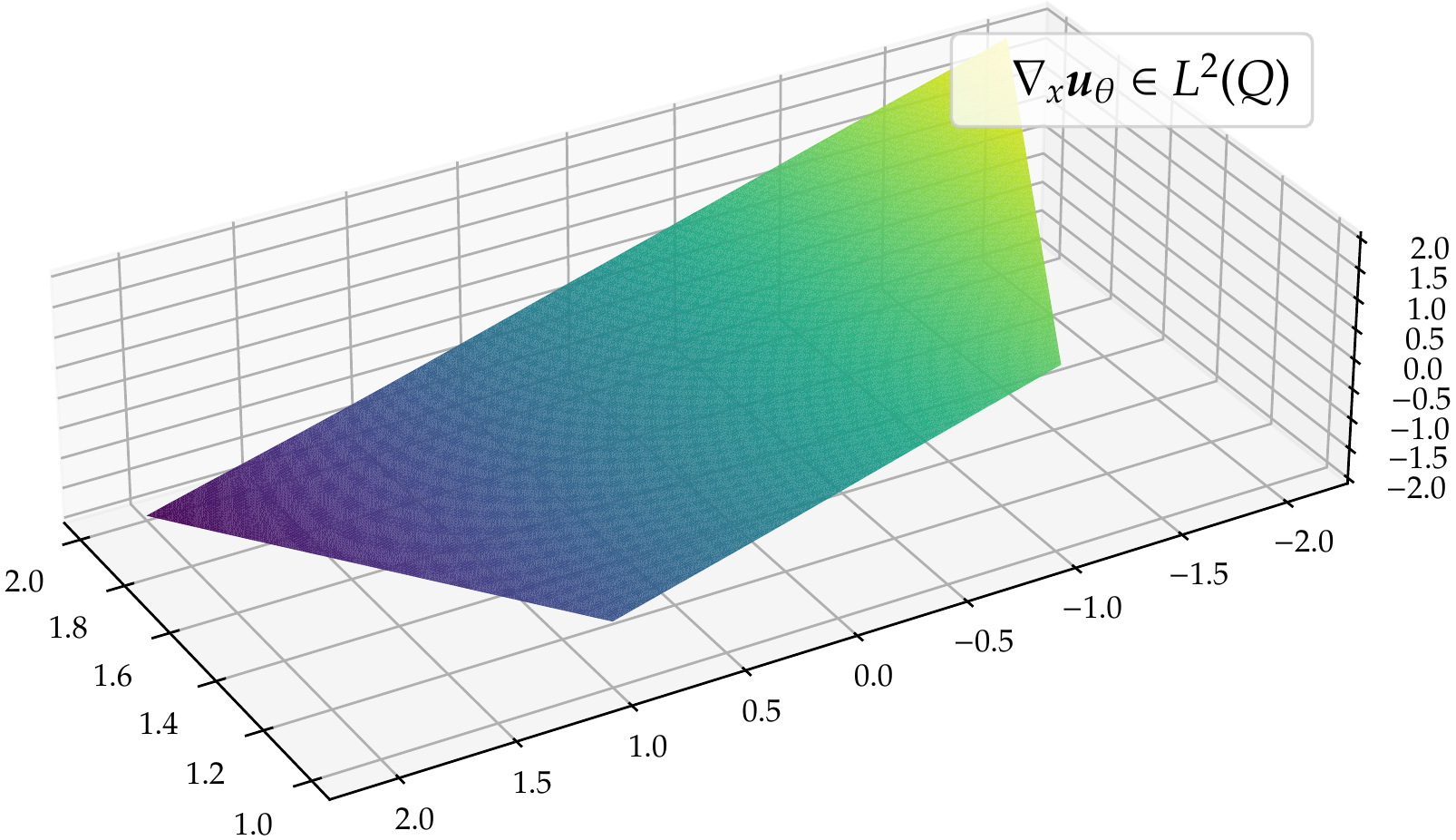}\vspace{-5mm}
            \includegraphics[width=8cm]{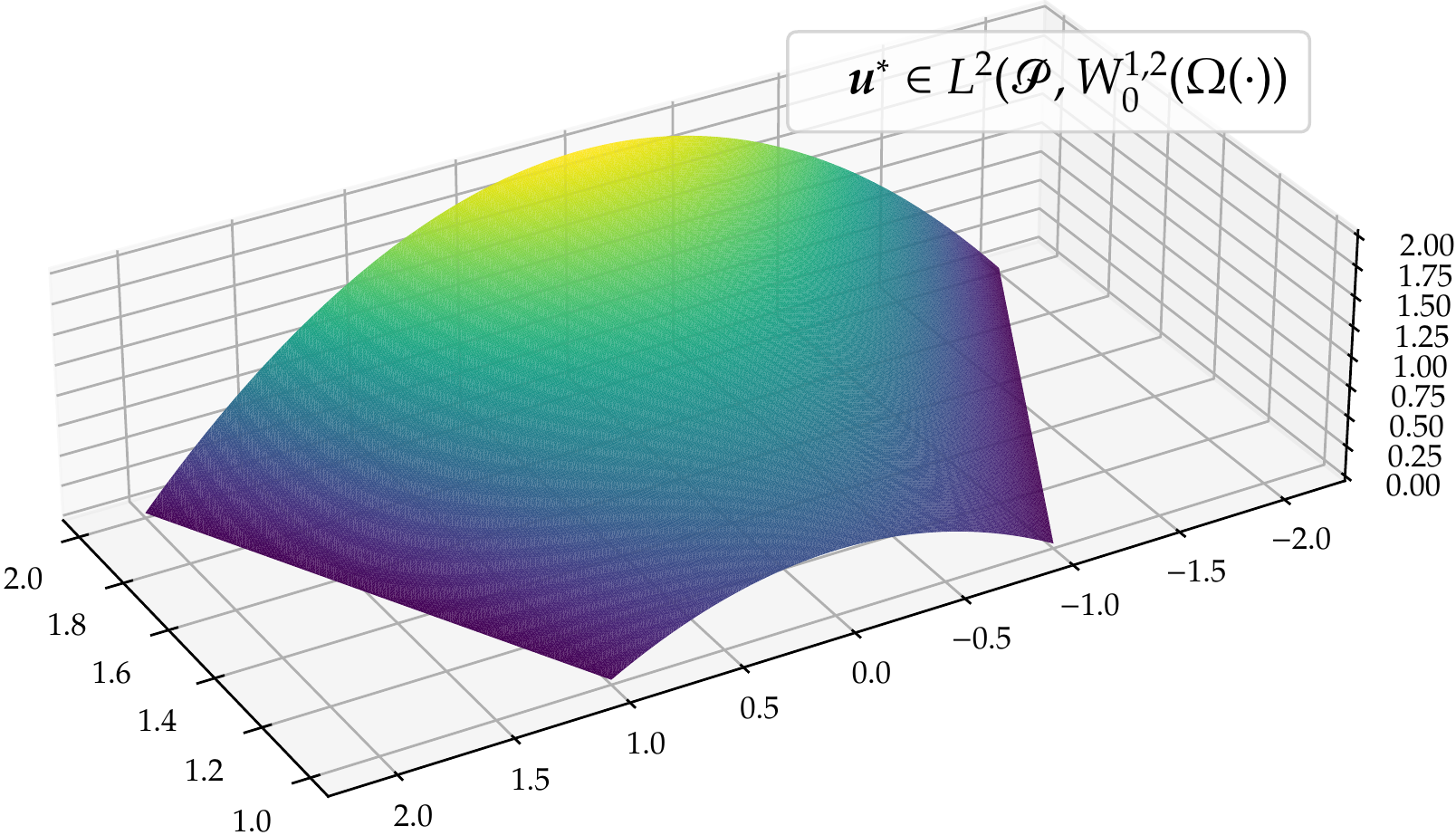}\hspace{3mm}\includegraphics[width=8cm]{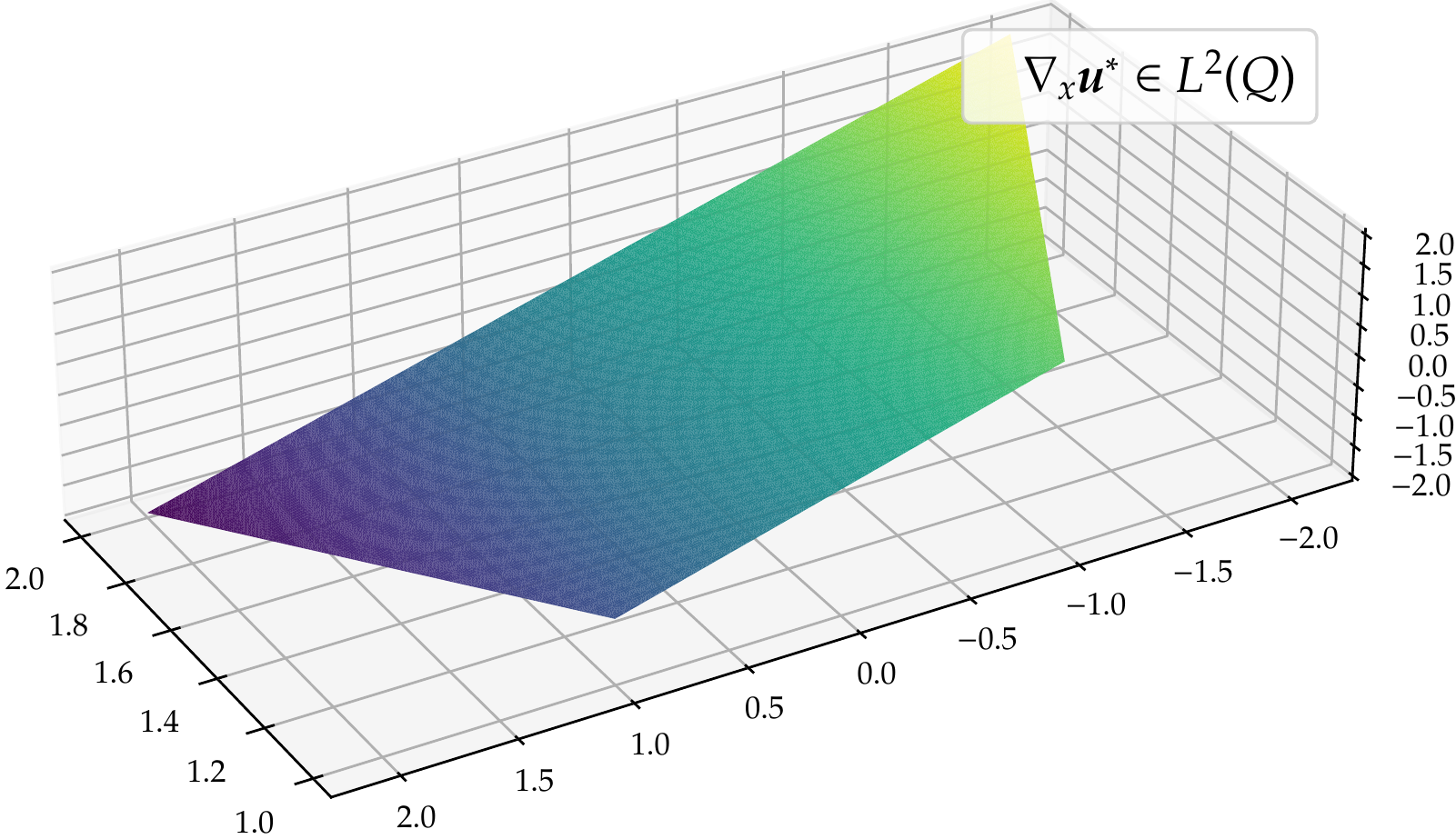}\vspace{-5mm}
            \includegraphics[width=8cm]{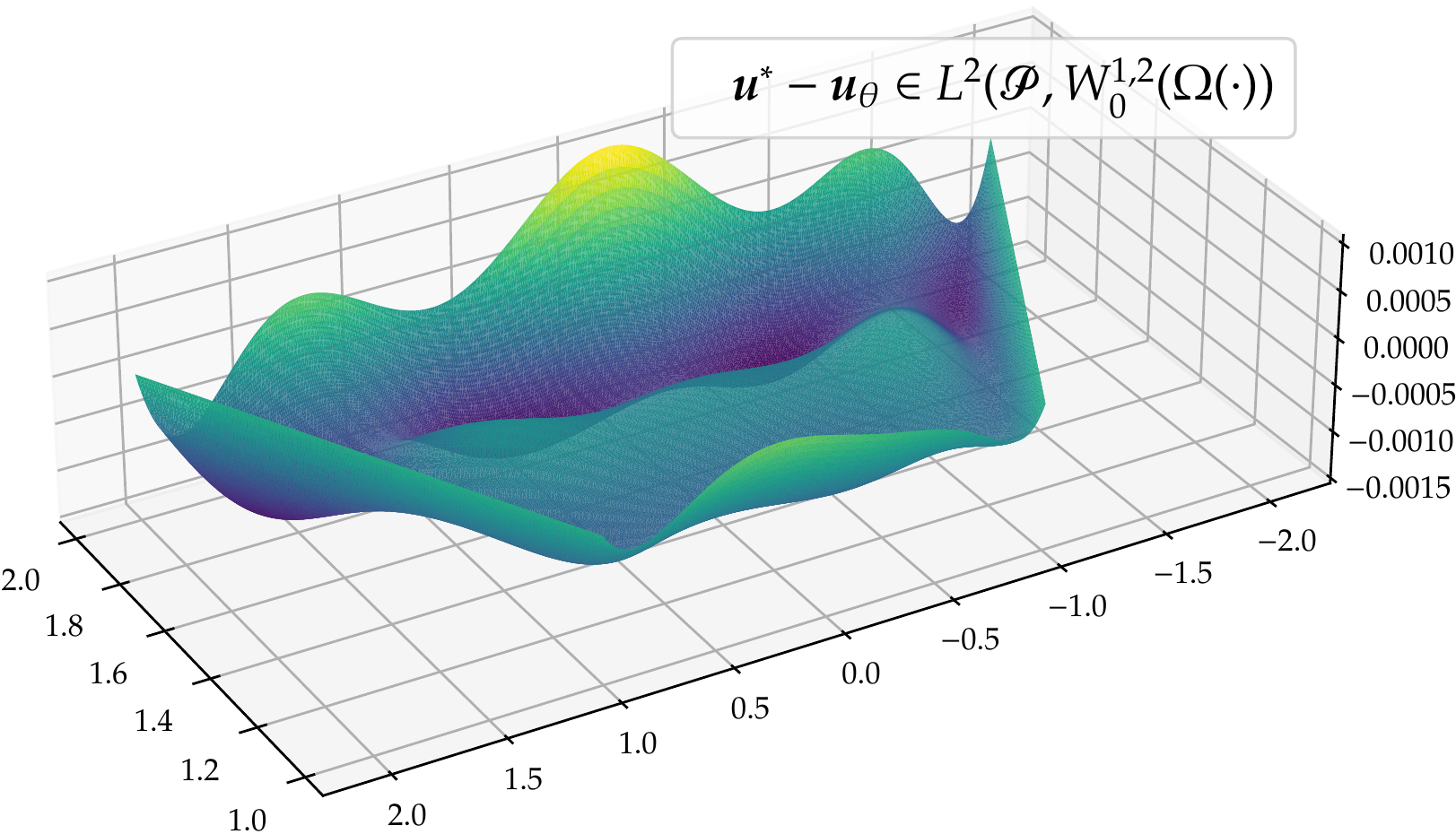}\hspace{3mm}\includegraphics[width=8cm]{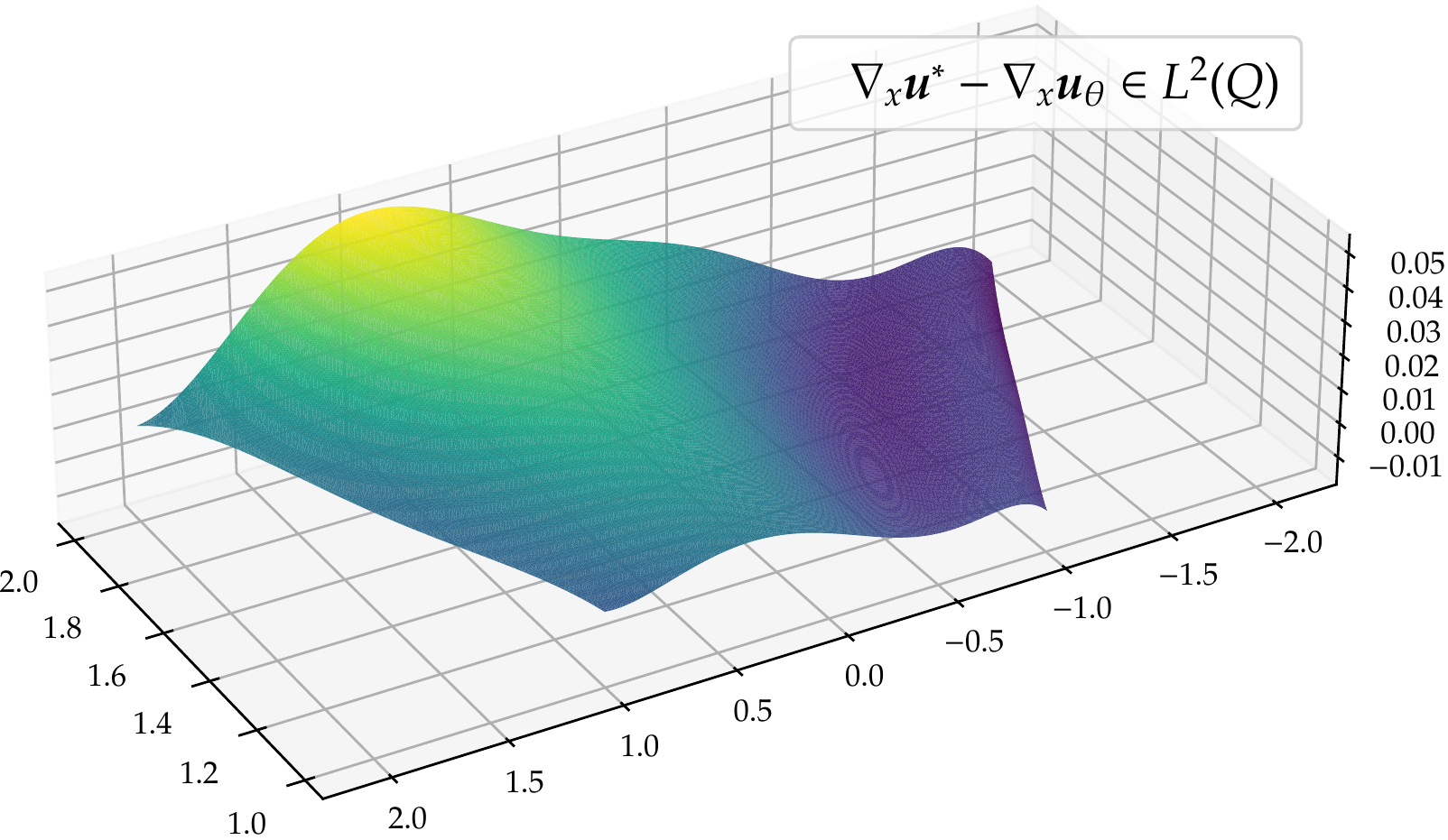}
             \caption{Plots of the trained parametric neural network  realization $\boldsymbol{u}_\theta\in \smash{L^2(\Ps,W^{\smash{1,2}}_0(\Omega(\cdot)))}$ (top left) and its spatial gradient $\nabla_x\boldsymbol{u}_\theta\in L^2(Q)$ (top right), the parametric minimizer $\boldsymbol{u}^*\in \smash{L^2(\Ps,W^{\smash{1,2}}_0(\Omega(\cdot)))}$ (middle left) and its spatial gradient $\nabla_x\boldsymbol{u}^*\in L^2(Q)$,
            and the error $\boldsymbol{u}^*-\boldsymbol{u}_\theta\in \smash{L^2(\Ps,W^{\smash{1,2}}_0(\Omega(\cdot)))}$ (bottom left) and its spatial gradient $\nabla_x\boldsymbol{u}^*-\nabla_x\boldsymbol{u}_\theta\in \smash{L^2(Q)}$ (bottom right).}
            \label{fig:VariableDomain}
        \end{figure}

        \begin{figure}[H]
            \centering
            \includegraphics[width=16.5cm]{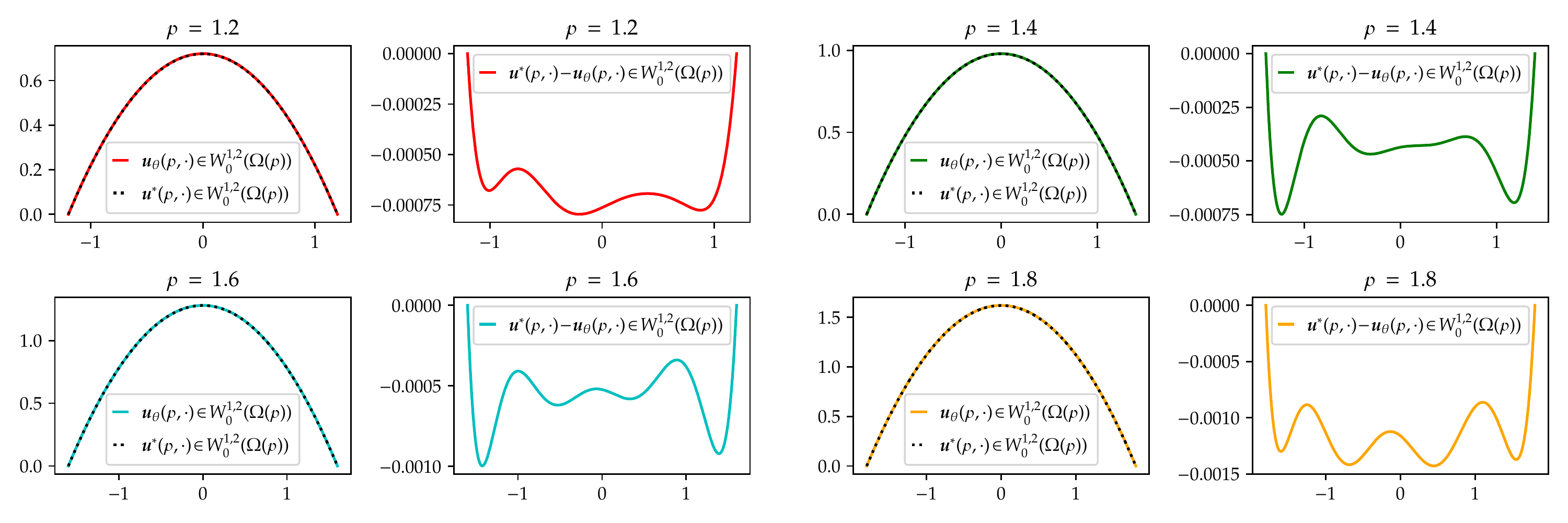}\vspace{-7mm}
            \caption{For $\p=1.2,1.4,1.6,1.8$, plots of the slice  $\boldsymbol{u}_\theta(\p,\cdot)\in \smash{W^{\smash{1,2}}_0(\Omega(\p))}$ (solid colored line; left) of the trained parametric neural network realization $\boldsymbol{u}_\theta\in \smash{L^2(\Ps,W^{\smash{1,2}}_0(\Omega(\cdot)))}$, of the slice $\boldsymbol{u}^*(\p,\cdot)\in W^{\smash{1,2}}_0(\Omega(\p))$ (dashed black~line;~left) of the
            parametric minimizer $\boldsymbol{u}^*\in \smash{L^2(\Ps,W^{\smash{1,2}}_0(\Omega(\cdot)))}$,  and the point-wise error $\boldsymbol{u}^*(\p,\cdot)-\boldsymbol{u}_\theta(\p,\cdot)\in  W^{\smash{1,2}}_0(\Omega(\p))$ (solid colored line; right).}
            \label{fig:VariableDomainSlices.1}
        \end{figure}\vspace{-7mm}
        
        \begin{figure}[H]
            \centering
            \includegraphics[width=16.5cm]{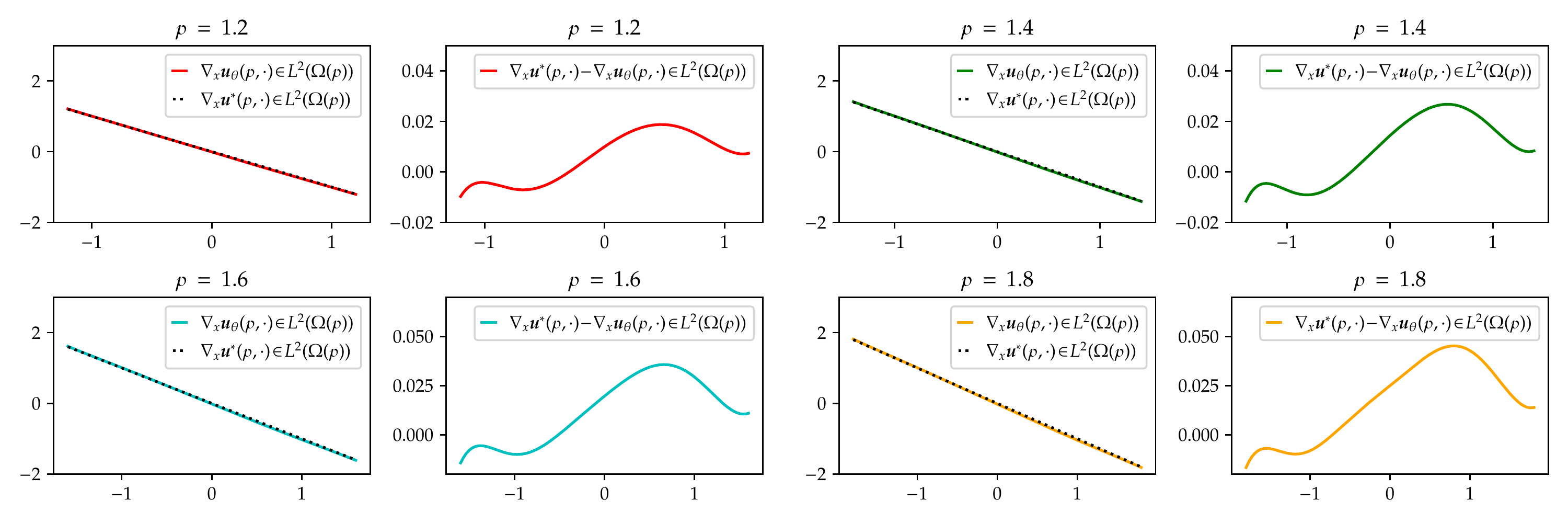}\vspace{-7mm}
            \caption{For $\p=1.2,1.4,1.6,1.8$, plots of the slice  $\nabla_x\boldsymbol{u}_\theta(\p,\cdot)\in \smash{L^2(\Omega(\p))}$ (solid colored line; left) of the gradient of the trained parametric neural network realization $\boldsymbol{u}_\theta\in \smash{L^2(\Ps,W^{\smash{1,2}}_0(\Omega(\cdot)))}$, of the slice $\nabla_x\boldsymbol{u}^*(\p,\cdot)\in L^2(\Omega(\p))$ (dashed black line; left) of the gradient of the 
            parametric minimizer $\boldsymbol{u}^*\in \smash{L^2(\Ps,W^{\smash{1,2}}_0(\Omega(\cdot)))}$,  and the point-wise error $\nabla_x\boldsymbol{u}^*(\p,\cdot)-\nabla_x\boldsymbol{u}_\theta(\p,\cdot)\in  L^2(\Omega(\p))$ (solid colored line; right).}
            \label{fig:VariableDomainSlices.2}
        \end{figure}
        
        \subsection{Parametric Right-Hand Side and Exponent}
        
        \qquad In this section, we examine a $7$-dimensional, parametric $p(\cdot)$-Dirichlet problem on~a~fixed~domain~$\Omega\coloneqq B_1^2(0)\subseteq \smash{\mathbb{R}^2}$ with a pure Neumann boundary condition, parameter-dependent right-hand side $\boldsymbol{f}\in C^\infty(\Ps\times \Omega)$,  for every $\p\coloneqq \smash{(A,\sigma,x_0,y_0,p)^\top}\in \Ps\coloneqq \smash{(\frac{3\pi}{2},\frac{5\pi}{2})}\times (0.2,0.5)\times (-0.3,0.3)\times (-0.3,0.3)\times (1.8,2.2)\subseteq\mathbb{R}^5$ and $\smash{(x,y)^\top}\in \Omega$ defined by
        \begin{align*}
            \boldsymbol{f}(\p,x,y)\coloneqq \frac{A}{2\pi \sigma}\exp\Big(-\frac{1}{2\sigma^2}\big\vert (x,y)^\top-(x_0,y_0)^\top\big\vert^2\Big)\,,
        \end{align*}
        and parameter-dependent exponent $p\in C^\infty(\Ps)$, defined by $p(\p)\coloneqq p$ for every $\p= \smash{(A,\sigma,x_0,y_0,p)^\top}\in \Ps$. More precisely, we are interested in approximating 
        for each fixed $\p\in \Ps$, a minimizer~${u_{\p}\in W^{\smash{1,p(\p)}}(\Omega)}$ of the $p$-Dirichlet energy $E_{\p}:W^{\smash{1,p(\p)}}(\Omega)\to \mathbb{R}$, for every $v\in W^{\smash{1,p(\p)}}(\Omega)$ defined by
        \begin{align}
            E_{\p}(v)\coloneqq \frac{1}{p(\p)}\int_{\Omega}{\vert \nabla v\vert^{p(\p)}\,\mathrm{d}x}+ \frac{1}{2}\int_{\Omega}{\vert  v\vert^2\,\mathrm{d}x}-\int_{\Omega}{\boldsymbol{f}(\p,\cdot)\,v\,\mathrm{d}x}\,.
        \end{align}
        Very similar to Proposition \ref{cor:variable_exponents} or Remark \ref{rmk:variable_exponents2}, for this, it suffices to approximate a minimizer $\boldsymbol{u}^*\in \boldsymbol{\mathcal{U}}$, where $\boldsymbol{\mathcal{U}}$ is the variable exponent Bochner--Lebesgue space defined in Remark \ref{rmk:variable_exponents2}, of the variable exponent $p(\cdot)$-Dirichlet energy $\boldsymbol{\mathcal{E}}:\boldsymbol{\mathcal{U}}\to \mathbb{R}$, for every $\boldsymbol{v}\in \boldsymbol{\mathcal{U}}$ defined by
        \begin{align*}
            \boldsymbol{\mathcal{E}}(\boldsymbol{v})\coloneqq\int_{\Ps}{\Bigg[\frac{1}{p(\p)}\int_{\Omega}{\vert \nabla_x \boldsymbol{v}(\p,\cdot)\vert^{p(\p)}\,\mathrm{d}x}+\frac{1}{2}\int_{\Omega}{\vert\boldsymbol{v}(\p,\cdot)\vert^{2}\,\mathrm{d}x}-\int_{\Omega}{\boldsymbol{f}(\p,\cdot)\,\boldsymbol{v}(\p,\cdot)\,\mathrm{d}x}\Bigg]\,\mathrm{d}\p}\,.
        \end{align*}
         \qquad  To approximate the parametric minimizer $\boldsymbol{u}^*\in  \boldsymbol{\mathcal{U}}$, we deploy a fully-connected feed-forward neural network four hidden layers of width $32$ and realization $\boldsymbol{v}_\theta\in \boldsymbol{\mathcal{U}}$. The total number of trainable~variables~is~$3.457$. As activation function, we employ the s2relu activation function, cf. \cite{LXZ20}.
        The neural network is trained using $200$ epochs consisting each of $300$ steps of the Adam optimization algorithm with a fixed learning~rate~of~$\varepsilon\coloneqq 1\mathrm{e}{-3}$. 
        At each epoch, we employ $n_{\textrm{int}}=140.625$ interior points in $\Ps\times\Omega$. 
        More precisely, at each epoch, we employ a grid generated by the Cartesian product of $n_{\mathcal{p}}=25$ uniformly random distributed
         points $\smash{\{\p_1,\dots,\p_{\smash{n_{\mathcal{p}}}}\}}$ in $\Ps$ and a Cartesian grid of $n_x=75\times 75= 5.625$ equi-distant points $\smash{\{x_1,\dots,x_{\smash{n_x}}\}}$ in $\Omega$, i.e., we employ $\smash{\{\p_1,\dots,\p_{\smash{n_{\mathcal{p}}}}\}}\times\smash{\{x_1,\dots,x_{\smash{n_x}}\}}$.
         Again, we deliberately select a coarser grid with respect to the parameter dimension to benefit from transfer learning between the parameters. Since the authors are not aware of an exact representation formula of the parametric minimizer $\boldsymbol{u}^*\in  \boldsymbol{\mathcal{U}}$, to examine the accuracy of the trained neural network realization $\boldsymbol{u}_\theta\in \boldsymbol{\mathcal{U}}$, we compare for $n_{\textrm{rand}}=1.200$ uniformly randomly sampled parameters $\p\!\in\! \Ps_{\textrm{rand}}\!=\!\smash{\{\p_1,\dots,\p_{\smash{n_{\textrm{rand}}}}\}}$, the slice $\boldsymbol{u}_\theta(\p,\cdot)\!\in\! W^{\smash{1,p(\p)}}(\Omega)$ to the~respective~continuous~Lagrange~minimizer $u_h^c(\p)\!\in \!P^1_c(\mathcal{T}_h)$ of $E_{\p}\!:\!P^1_c(\mathcal{T}_h)\!\to\! \mathbb{R}$, where $\mathcal{T}_h$ is a triangulation of $\Omega$, obtained using \textsf{gmsh} (version 4.6.0), cf. \cite{gmsh}, with mesh-size $h=3.125\textrm{e}{-2}$, i.e., $8.272$ degrees~of~freedom.~For~any~${\p\!\in\! \Ps_{\textrm{rand}}}$, $u_h^c(\p)\in P^1_c(\mathcal{T}_h)$ is approximated deploying the Newton line-search algorithm of \textsf{PETSc}, cf.~\cite{PETSc19}, with an absolute tolerance of $\tau_{abs}\!=\!1\textrm{e}{-}8$ and a relative~tolerance~of~${\tau_{rel}\!=\!1\textrm{e}{-}10}$. The linear system emerging~in~each Newton step is solved deploying \textsf{PETSc}'s  generalized minimal residual method (GMRES). Using a midpoint (i.e., barycenter) quadrature rule with respect to $\mathcal{T}_h$, we obtain the absolute~errors  
         \begin{align}
            \begin{aligned}\label{abs_error}
             \varepsilon_{abs}^{L^p}=\frac{1}{n_{\textrm{rand}}}\sum_{\p=(A,\sigma,x_0,y_0,p)^\top
             \in \Ps_{\textrm{rand}}}{\| u_h^c(\p)-\boldsymbol{u}_\theta(\p,\cdot)\|_{L^p(\Omega)}}&=2.863\textrm{e}{-2}\,,\\
             \varepsilon_{abs}^{W^{1,p}}=\frac{1}{n_{\textrm{rand}}}\sum_{\p=(A,\sigma,x_0,y_0,p)^\top
             \in \Ps_{\textrm{rand}}}{\| \nabla u_h^c(\p)-\nabla_x\boldsymbol{u}_\theta(\p,\cdot)\|_{L^p(\Omega)^2}}&=3.229\textrm{e}{-2}\,,
             \end{aligned}
         \end{align}
         and the relative errors
         \begin{align}
            \begin{aligned}\label{rel_error}
              \varepsilon_{rel}^{L^p}=\frac{1}{n_{\textrm{rand}}}\sum_{\p=(A,\sigma,x_0,y_0,p)^\top
             \in \Ps_{\textrm{rand}}}{\frac{\| u_h^c(\p)-\boldsymbol{u}_\theta(\p,\cdot)\|_{L^p(\Omega)}}{\| u_h^c(\p)\|_{L^p(\Omega)}}}&=2.712\textrm{e}{-2}\,,\\
             \varepsilon_{rel}^{W^{1,p}}=\frac{1}{n_{\textrm{rand}}}\sum_{\p=(A,\sigma,x_0,y_0,p)^\top
             \in \Ps_{\textrm{rand}}}{\frac{\| \nabla u_h^c(\p)-\nabla_x\boldsymbol{u}_\theta(\p,\cdot)\|_{L^p(\Omega)^2}}{\| \nabla u_h^c(\p)\|_{L^p(\Omega)^d}}}&=9.476\textrm{e}{-2}\,.
            \end{aligned}
         \end{align}
        \qquad Figure \ref{fig:int_points} indicates that the absolute errors, cf. \eqref{abs_error}, and relative errors, cf. \eqref{rel_error},~for~${n_{\textrm{rand}}= 1.200}$~randomly sampled points from the parameter space $\Ps$ are already sufficiently accurate, and randomly sampling additional points will change the error value only slightly.
        
        \qquad In Figure \ref{fig:Large}, for the generic parameter  $\p\!= \!(2\pi,0.3,0,0,2)^\top\!\in\! \Ps$, 
        we depict the~slice~of~${\boldsymbol{u}_\theta(\p,\cdot)\!\in\! W^{\smash{1,p(\p)}}(\Omega)}$ of \hspace{-0.1mm}trained \hspace{-0.1mm}parametric \hspace{-0.1mm}neural \hspace{-0.1mm}network \hspace{-0.1mm}realization \hspace{-0.1mm}$\boldsymbol{u}_\theta\hspace{-0.2em}\in\hspace{-0.2em} \smash{\boldsymbol{\mathcal{U}}}$,  
        \hspace{-0.1mm}the \hspace{-0.1mm}continuous~\hspace{-0.1mm}\mbox{Lagrange}~\hspace{-0.1mm}\mbox{minimizer}~\hspace{-0.1mm}${u_h^{c}(\p)\hspace{-0.2em}\in\hspace{-0.2em} W^{\smash{1,p(\p)}}(\Omega)}$, their gradients and  respective point-wise errors. In it, we see
        that although training~on~the~generic~parameter $\p= \smash{(2\pi,0.3,0,0,2)^\top}\in \Ps$ was not done directly, high accuracy was already achieved using transfer learning only.

         \begin{figure}[H]
            \centering
            \includegraphics[width=16.5cm]{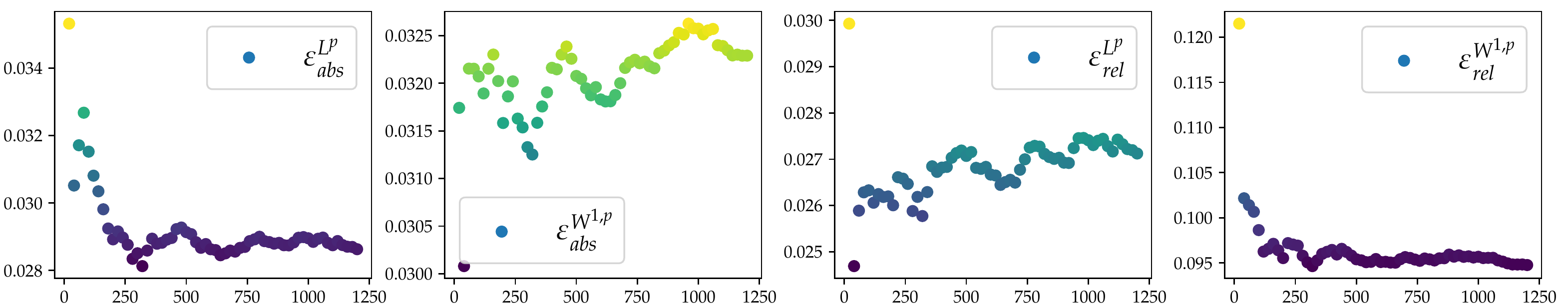}
            \caption{Plots of the error evolutions of $\varepsilon_{abs}^{L^p}$, $\varepsilon_{abs}^{W^{1,p}}$, $\varepsilon_{rel}^{L^p}$ and  $\varepsilon_{rel}^{W^{1,p}}$, cf. \eqref{abs_error} and\eqref{rel_error}, for increasing number of randomly uniform sampled parameters in $\Ps$, i.e., for $n_{\textup{rand}}\in \{20,\dots,1.200\}$. Starting from the first dot that represents the mean error of $20$ randomly uniform sampled parameters in $\Ps$, for $k\in \{1,\dots,60\}$, the $k$-th dot represents the mean of the $(k-1)$-th dot and new $20$ randomly uniform sampled parameters in $\Ps$.}
            \label{fig:int_points}
        \end{figure}
        
        \begin{figure}[H]
            \centering
            \includegraphics[width=16.5cm]{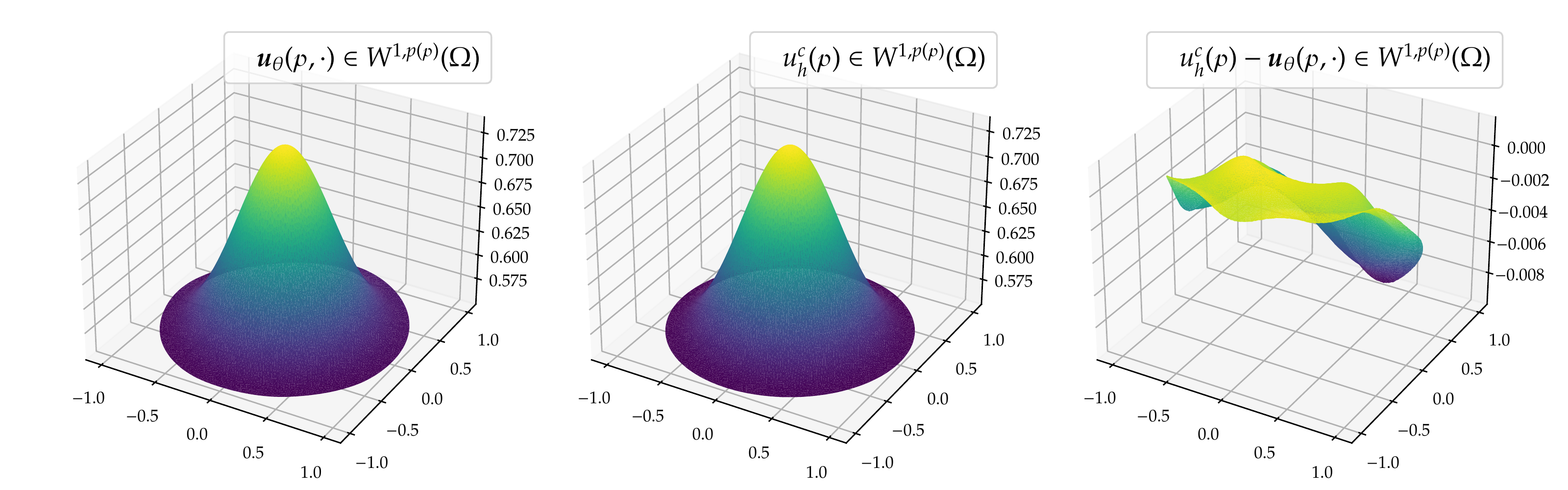}
            \includegraphics[width=16.5cm]{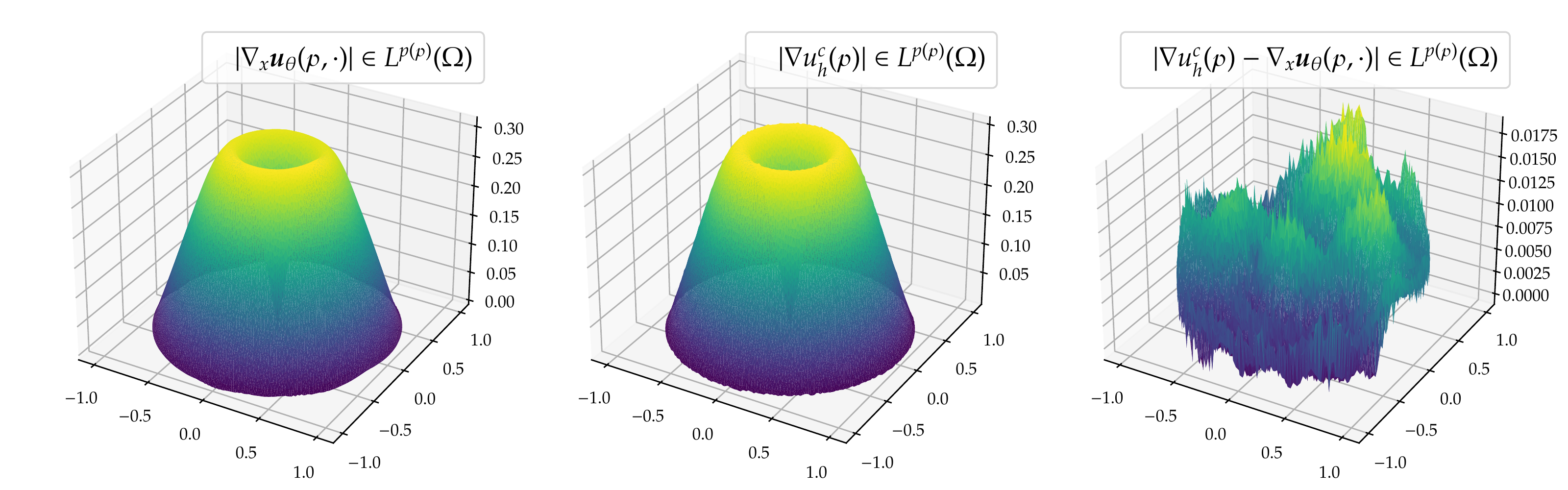}\vspace{-4mm}
            \caption{Plots of the parametric neural network realization $\boldsymbol{u}_\theta(\p,\cdot)\in  W^{\smash{1,p(\p)}}(\Omega)$~(top~left),~the  modulus of its gradient $\smash{\vert \nabla_x\boldsymbol{u}_\theta(\p,\cdot)\vert\!\in\! L^{p(\p)}(\Omega)}$ (bottom left),
            the continuous Lagrange~minimizer~$\smash{u_h^{c}(\p)\!\in\!  W^{\smash{1,p(\p)}}(\Omega)}$ (top middle), the modulus of its gradient $\smash{\vert \nabla u_h^c(\p)\vert\!\in\! L^{p(\p)}(\Omega)}$ (bottom middle), the error $\smash{u_h^c(\p)\! -\! \boldsymbol{u}_\theta(\p,\cdot)\!\in\!  W^{\smash{1,p(\p)}}(\Omega)}$ (top right), and the modulus of its gradient $\smash{\vert \nabla u_h^c(\p)\! -\!\nabla_x\boldsymbol{u}_\theta(\p,\cdot)\vert\! \in\! L^{p(\p)}(\Omega)}$ (bottom right) for  $\smash{\p\!=\! (2\pi,0.3,0,0,2)^\top\!\in\! \Ps}$ and mesh-size $h=3.125\textrm{e}{-2}$, i.e., $8.272$ degrees of freedom.\vspace{-4mm}}
            \label{fig:Large}
        \end{figure}
        
        \section*{Acknowledgement}
        MZ gratefully acknowledges support from the Research Council of Norway, grant 303362.
        
        \bibliographystyle{apalike}
        %\bibliography{references}

\end{document}